\newtheorem{theorem}{Theorem}[section]
\newtheorem{lemma}[theorem]{Lemma}
\newtheorem{proposition}[theorem]{Proposition}
\newtheorem{property}[theorem]{Property}
\newtheorem{corollary}[theorem]{Corollary}
\newtheorem{conjecture}[theorem]{Conjecture}
\journal{}
\begin{document}

\begin{frontmatter}



\title{Point-Normal Subdivision Curves and Surfaces}


\author{Xunnian Yang\corref{cor1}}
\date{}
\ead{yxn@zju.edu.cn}

\address{School of mathematical sciences, Zhejiang University, Hangzhou 310058, China}

\begin{abstract}
 This paper proposes to generalize linear subdivision schemes to nonlinear subdivision schemes for curve and surface modeling by refining vertex positions together with refinement of unit control normals at the vertices. For each round of subdivision, new control normals are obtained by projections of linearly subdivided normals onto unit circle or sphere while new vertex positions are obtained by updating linearly subdivided vertices along the directions of the newly subdivided normals. Particularly, the new position of each linearly subdivided vertex is computed by weighted averages of end points of circular or helical arcs that interpolate the positions and normals at the old vertices at one ends and the newly subdivided normal at the other ends.
 The main features of the proposed subdivision schemes are three folds:
  (1) The point-normal (PN) subdivision schemes can reproduce circles, circular cylinders and spheres using control points and control normals;
  (2) PN subdivision schemes generalized from convergent linear subdivision schemes converge and can have the same smoothness orders as the linear schemes;
  (3) PN $C^2$ subdivision schemes generalizing linear subdivision schemes that generate $C^2$ subdivision surfaces with flat extraordinary points can generate visually $C^2$ subdivision surfaces with non-flat extraordinary points.
  Experimental examples have been given to show the effectiveness of the proposed techniques for curve and surface modeling.
\end{abstract}

\begin{keyword}
nonlinear subdivision \sep PN subdivision schemes \sep preserving of geometric primitives \sep $C^2$ subdivision surfaces


\end{keyword}

\end{frontmatter}




\section{Introduction}
\label{Sec:intro}

Subdivision curves and surfaces are recursively generated free-form curves and surfaces from coarse polygons or rough initial meshes with arbitrary topology. Due to their flexility for shape representation and easiness to implement, subdivision curves and surfaces have become powerful tools for geometric modeling and computer graphics \citep{DeRoseKT98:subdivisionCharacter,DynLevin2002ActaNumerica}. This paper proposes a class of nonlinear subdivision schemes that generalize linear subdivision schemes for curve and surface modeling.

\subsection{Related work}

A large number of subdivision schemes used for geometric modeling are linear schemes. The subdivision algorithms presented by \citet{Chaikin74:Subdivision}, \citet{CatmullClarkSubdivision}, \citet{DooSabinSubdivision}, \citet{LoopSubdivision}, or \citet{LaneRiesenfeld80:DisplayPiecewiseSurface}, etc. are subdivision schemes generalizing uniform B-spline curves or surfaces. The schemes presented in \citep{Sederberg1998:NonuniformRecursiveSubdivision,Cashman2009:NURBSHighDegree} are the generalizations of non-uniform B-spline curves and surfaces.
The interpolatory subdivision schemes such as the 4-point scheme \citep{DynLG87FourPointSubdivision}, the butterfly scheme \citep{DynLG90butterflyScheme,ZorinSS96:InterpolationSubdivision} and the Kobbelt scheme \citep{Kobbelt96:InterpolatorySubdivision}, etc. can generate smooth curves and surfaces no longer consisting of piecewise polynomials. The linear non-stationary subdivision schemes have level dependent masks and they can be used to generate curves and surfaces defined in mixed spaces composed of polynomials and transcendental functions \citep{Fang2014GeneralizedSurfSubd,ContiDyn2021NonstationarySubdivisionPerspective}. Particularly, conics and rotational surfaces defined by trigonometric functions can be modelled by non-stationary subdivision schemes from regular control polygons or control meshes \citep{MorinWareenW01:subdivisionforsurfaceofrevolution}.

Nonlinear subdivision schemes include manifold valued subdivision and geometric subdivision. Linear subdivision schemes can be adapted to manifold valued subdivision by using geodesic averaging rules on manifolds, exponential map or by projection of linearly subdivided points onto surfaces \citep{WallnerDyn05ConvergenceByProximity}.
If the input data are scalars, the original data can be subdivided by nonlinear averaging \citep{SchaeferViugaGoldmn08:NonlinearAveraging}.
Newly subdivided vertices by geometric schemes for curve or surface modeling are computed by estimation of local geometric quantities like turning angles \citep{DynHormann12:GeometricCondTangentContinuity}, tangent lines or tangent planes \citep{Yang2005GeomSubd, Yang06:NormalBasedSubd}, osculating circles \citep{SabinDodgson2005CirclePreservingSubd,ChalmovianskyJuettler07:nonlinearCirclePreservingSubd}, or fitting Clothoids \citep{ReifWeinmann2020:ClothoidFitting}, etc. The geometric schemes for curve modeling can preserve circles or Clothoids and can generate tangent continuous curves as well but the geometric subdivision schemes for surface modeling have not been able to consistently outperform linear schemes \citep{Cashman12BeyondCatmullClark}. By replacing the linear averaging steps of recursive subdivision schemes with circle averaging, visually smooth subdivision curves and surfaces can be generated \citep{CashmanHormannReif13GenralizedLRalgorithm, LipovetskyD16weightedPointNormalPairs, LipovetskyDyn20PointNormalPairs}. Though the recursive circle averaging schemes are promising for fair curve and surface modeling, the convergence and smoothness analysis of the schemes are not available.

Popular subdivision schemes such as Catmull-Clark subdivision and Loop subdivision have only $C^1$ continuity at the extraordinary points. This is not enough for fair shape design.
\citet{Prautzsch1998G2subdivisionsurface} first proposed to improve the smoothness orders of Catmull-Clark subdivision at extraordinary points by tuning the eigenvalues of subdivision matrices. The modified Catmull-Clark subdivision scheme generates $C^2$ subdivision surfaces with forced zero curvature at the extraordinary points. \citet{Levin2006C2subdivision} proposed to update Catmull-Clark subdivision surfaces by blending with lower order polynomial patches near the extraordinary points. Similarly, \citet{Zorin2006C2ByBlending} proposed to blend Loop subdivision surfaces with parametric patches to achieve $C^2$ continuity at the extraordinary points. Using order 1 jet data, jet subdivision with the same subdivision stencils as the Loop scheme can achieve flexible $C^2$ continuity at extraordinary vertices of valence 3 \citep{XueYuDuchamp2006Jetsubdivision}. When a control mesh owns polar configuration, polar subdivision can be employed to generate $C^2$ subdivision surfaces \citep{MylesPeters2009Bi3C2PolarSubdivision}. Even though these pioneering algorithms work well under some situations, searching for a $C^2$ surface subdivision algorithm that is easy to implement and capable of generating perfect shape is still the ``holy grail" for geometric modeling \citep{ReifSabin19OldProblem}.

\subsection{Our approach}

We propose point-normal (PN) subdivision schemes for curve and surface modeling by generalizing traditional linear subdivision schemes.
In addition to control points within initial control polygons or control meshes, we assume unit control normals are also given at all or partial control points. Unlike previous approaches that use control normals to compute initial matrix weights for matrix weighted rational subdivision \citep{Yang16MatrixRational} or compute refined points and normals from circles each fits two old point-normal pairs \citep{LipovetskyDyn20PointNormalPairs}, we compute refined control normals by projecting the linearly subdivided normals onto unit circle or sphere and update the linearly subdivided vertices along the newly subdivided normals by weighted averages of end points of circular or helical arcs that interpolate the subdivided normals at one ends as well as the old points and normals at the other ends.

The PN subdivision schemes can reproduce circles, circular cylinders or spheres when the initial control points and control normals are sampled from those geometric primitives, even with uneven sampling. This type of nonlinear subdivision can reduce to traditional linear subdivision when the control normals vanish or equal the same vector. We prove that the convergence and smoothness orders of univariate PN subdivision schemes as well as the convergence and $C^1$ smoothness of bivariate PN subdivision schemes at the extraordinary points are the same as the corresponding linear subdivision schemes. Therefore, the proposed nonlinear subdivision can guarantee high orders of smoothness when the linear subdivision scheme does. We have also generalized the modified Catmull-Clark subdivision scheme that generates $C^2$ subdivision surfaces with flat extraordinary points to PN modified Catmull-Clark subdivision scheme. It is observed that the PN $C^2$ subdivision surfaces are curvature continuous too, but the curvatures at the extraordinary points can be no longer vanishing.

Briefly, the main contributions of the paper are as follows:
\begin{itemize}
\item We propose a class of nonlinear subdivision schemes by generalizing linear subdivision schemes. The new subdivision schemes permit shape control using control points and control normals and they can reproduce classical geometric primitives like circles, circular cylinders and spheres.
\item The proposed nonlinear subdivision schemes have solid theoretical foundations. It is proved that the convergence and high orders of smoothness of univariate PN subdivision schemes as well as the $C^1$ smoothness of bivariate PN subdivision at the extraordinary points are the same as the linear schemes.
\item PN subdivision schemes can be simple solutions to modeling fair $C^2$ subdivision surfaces. Particularly, the PN subdivision schemes generalizing linear schemes that generate $C^2$ subdivision surfaces with flat extraordinary points can generate visually $C^2$ subdivision surfaces with non-flat extraordinary points.
\end{itemize}

\subsection{Outline}
The paper is organized as follows. In Section~\ref{Sec:Basics} we review some basic results of binary subdivision and we present new subdivision schemes in Section~\ref{Sec:PN-subdscheme}. Section~\ref{Sec:Converngence} is devoted to the theoretical analysis of convergence and smoothness of the proposed subdivision schemes. We further construct PN $C^2$ subdivision surfaces in Section~\ref{Sec:PN-C2-subdivision surface}. We present several modeling examples by the proposed schemes in Section~\ref{Sec:Examples} as well as some discussions in Section~\ref{Sec:Discussions}. Section~\ref{Sec:conclusions} concludes the paper with a brief summary.


\section{Preliminaries and notations}
\label{Sec:Basics}
This section review some basic results about binary linear or nonlinear subdivision which serve as preliminaries of our proposed new subdivision schemes. Notations are introduced simultaneously.

\subsection{Univariate binary subdivision}
Assume $\{\mathbf{p}_i^0: i\in \mathbb{Z}\}$ are a sequence of points in 2D or 3D space, the binary subdivision of the polygon defined by the given points with mask $\mathbf{a}=\{a_i:i\in \mathbb{Z}\}$ is as follows
\begin{equation}
\label{Eqn:stationary binary subd}
  \mathbf{p}_i^{k+1}=\sum_{j\in \mathbb{Z}}a_{i-2j}\mathbf{p}_j^k, \ \ \ \  i\in \mathbb{Z}.
\end{equation}
Assume
\[
P^k=(\cdots;\mathbf{p}_{j-1}^k; \mathbf{p}_j^k;\mathbf{p}_{j+1}^k;\cdots)
\]
be a column of points $\mathbf{p}_j^k$, $j\in\mathbb{Z}$.
Note that the symbol $(\mathbf{p}_a,\mathbf{p}_b,\mathbf{p}_c)$ represents a row of elements $\mathbf{p}_a$, $\mathbf{p}_b$ and $\mathbf{p}_c$. We use $(\mathbf{p}_a,\mathbf{p}_b,\mathbf{p}_c)^{\top_{blk}}:=(\mathbf{p}_a;\mathbf{p}_b;\mathbf{p}_c)$ to denote the transpose of a matrix in terms of block elements in the following text. The conventional transpose of a vector or matrix $\mathbf{V}$ is represented by $\mathbf{V}^\top$.

Let
\[
S_a=\left(
      \begin{array}{ccccc}
        \cdots & \cdots & \cdots & \cdots & \cdots \\
        \cdots & a_{i-1-2(j-1)} & a_{i-1-2j} & a_{i-1-2(j+1)} & \cdots \\
        \cdots & a_{i-2(j-1)} & a_{i-2j} & a_{i-2(j+1)} & \cdots \\
        \cdots & a_{i+1-2(j-1)} & a_{i+1-2j} & a_{i+1-2(j+1)} & \cdots \\
        \cdots & \cdots & \cdots & \cdots & \cdots \\
      \end{array}
    \right)
\]
be a bi-infinite matrix.
Then Equation (\ref{Eqn:stationary binary subd}) can be reformulated in matrix form as
\begin{equation}
\label{Eqn:Pk+1=SaPk}
P^{k+1}=S_a P^k.
\end{equation}
Particularly, $\mathbf{p}_i^{k+1}=(S_a)_iP^k$, where $(S_a)_i$ represents the $i$th row of the matrix $S_a$.
It is always assumed that the mask $\mathbf{a}$ has a limited support for subdivision curve or surface modeling. This just implies that each row of matrix $S_a$ has a limited number of non-zero elements.

The symbol of subdivision scheme $S_a$ with mask $\mathbf{a}$ is given by
$a(z)=\sum_{i\in \mathbb{Z}}a_iz^i$.
A necessary condition for the convergence of the subdivision scheme $S_a$ is that the mask should satisfy
$\sum_j a_{2j}=\sum_j a_{2j+1}=1$.
See Theorem 1 in \citep{Dyn02analysisbyLaurentPolynomials}. Since $a(1)=2$ and $a(-1)=0$, the symbol can be factorized into
\[
a(z)=(1+z)q(z).
\]
Let $\Delta P^k=\{\Delta\mathbf{p}_i^k=\mathbf{p}_i^k-\mathbf{p}_{i-1}^k: i\in \mathbb{Z}\}$. From Theorem 2 in \citep{Dyn02analysisbyLaurentPolynomials} we know that
\[
\Delta P^{k+1} = \Delta (S_a P^k) = S_q \Delta P^k.
\]
Let $\Delta (S_a)_i = (S_a)_i - (S_a)_{i-1}$.
The elements within $\Delta P^{k+1}$ are computed by
\[
\Delta \mathbf{p}_i^{k+1} = \Delta (S_a)_i P^k = (S_q)_i \Delta P^k, \ \ \ i\in \mathbb{Z}.
\]

Assume $P^k(t)$ be a piecewise linear curve that interpolates points $\mathbf{p}_i^k$ at knots $2^{-k}i$ for $i\in \mathbb{Z}$. If the sequence of curves $\{P^k(t), k\in \mathbb{Z}_+\}$ converge uniformly to a limit curve $P(t)$ as $k$ approaches infinity, the curve $P(t)$ is continuous. Then, the subdivision scheme $S_a$ is convergent and denoted as $S_a\in C^0$.
On the other hand, if $\Delta P^{k+1}=S_q \Delta P^k$ tends to zero as $k$ approaches infinity, it means that the scheme $S_q$ is contractive.
It is shown (Theorem 3 in \citep{Dyn02analysisbyLaurentPolynomials}) that the subdivision scheme $S_a$ converges if and only if the scheme $S_q$ is contractive. For algorithm details on checking whether or not the scheme $S_q$ is contractive, we refer the readers to \citep{Dyn02analysisbyLaurentPolynomials}.

Besides convergence, higher orders of smoothness of a subdivision curve can also be checked by using the symbol $a(z)$ of the scheme. If $a(z)=\frac{(1+z)^m}{2^m}b(z)$, the $m$th order differences of $P^k$ can be computed by
\begin{equation}
\label{Eqn:Delta m Pk}
\frac{\Delta^m P^k}{(2^{-k})^m} = \frac{\Delta^m S_a P^{k-1}}{(2^{-k})^m} = S_b \frac{\Delta^m P^{k-1}}{(2^{-(k-1)})^m},
\end{equation}
where $\Delta^m = \Delta(\Delta^{m-1})$ is defined recursively.
From the representation $\Delta^m S_a = (\cdots; \Delta^m (S_a)_{i-1}; \Delta^m (S_a)_i; \Delta^m (S_a)_{i+1}; \cdots)$,
we know that the two operators used to compute the differences of subdivided vertices from old ones by Equation (\ref{Eqn:Delta m Pk}) satisfy
\begin{equation}
\label{Eqn:Delta m Sa_i}
\Delta^m (S_a)_i = \frac{(S_b)_i}{2^m}\Delta^m.
\end{equation}
From Theorem 4 in \citep{Dyn02analysisbyLaurentPolynomials} we know that the subdivision scheme $S_a\in C^m$ when $S_b$ is convergent. Particularly, the $m$th order derivative of the limit curve $P(t)$ at each dyadic point is obtained as
\[
\lim_{k\rightarrow \infty \atop k>l} \left(\frac{\Delta^m P^k}{(2^{-k})^m}\right)_{i2^{k-l}} = P^{(m)}(i2^{-l}).
\]
It is also known that a $C^m$ continuous subdivision curve has H\"{o}lder regularity of $C^{m+\alpha}$, where $0<\alpha\leq1$. How to compute the H\"{o}lder regularity has been discussed in \citep{Rioul1992RegularityCriteria,DynLevin2002ActaNumerica,HormannSabin08SubdSchemeswithCubicPrecision}. If the subdivision scheme $S_a\in C^m$, the differences of the subdivided points satisfy
\begin{equation}
\label{Eqn:holder regularity}
\left\|\frac{\Delta^m \mathbf{p}_i^k}{2^{-km}} - \frac{\Delta^m \mathbf{p}_{i-1}^k}{2^{-km}}\right\|<c_0 2^{-k\alpha}
\end{equation}
where $\|\cdot \|$ represents the Euclidean norm of a vector and $c_0$ is a constant.
We denote the norm of a point sequence or a difference sequence within this paper as follows
\[
\left\|\frac{\Delta^m P^k}{(2^{-k})^m}\right\|_\infty=\sup_{i\in\mathbb{Z}}\left\{\left\|\frac{\Delta^m \mathbf{p}_i^k}{2^{-km}}\right\|\right\}.
\]

In contrast to stationary subdivision that has a fixed mask during the whole subdivision process, the mask can also be level dependent or even position dependent when a non-stationary or non-uniform subdivision curve is generated.
Assume that $\mathbf{a}_k=\{a_i^k:i\in \mathbb{Z}\}_{k\in \mathbb{Z}_+}$, the points refined by the non-stationary subdivision scheme is obtained as
\[
P^{k+1}=S_{a_k} P^k.
\]
\citet{DynLevin1995AsymptoticallyEquivalent} first proposed the \emph{asymptotically equivalent} theory for analyzing the convergence and smoothness of non-stationary subdivision schemes by comparing with the stationary ones.
The subdivision scheme $S_{\{a_k\}}$ is asymptotically equivalent with $S_a$, if
\[
\sum_{k\in \mathbb{Z}_+} \|S_{a_k}-S_a\|_\infty < +\infty,
\]
where $\|S_{a_k}-S_a\|_\infty = \max_{i\in\{0,1\}} \sum_{j\in \mathbb{Z}}|a_{i-2j}^k - a_{i-2j}|$.
If $S_{\{a_k\}}$ is asymptotically equivalent with $S_a$, it is denoted as $S_{\{a_k\}}\approx {S_a}$.
Furthermore, the subdivision scheme $S_{\{a_k\}}$ is termed \emph{stable} if there exists a constant $K_a>0$ such that for all $k,n\in\mathbb{Z}_+$,
\[
\|S_{a_{k+n}}\cdots S_{a_{k+1}}S_{a_k}\|_\infty < K_a.
\]

\begin{proposition}
\label{Proposition:asympototically equivalent}
(Theorem 7b in \citep{DynLevin1995AsymptoticallyEquivalent}) If $S_{\{a_k\}}\approx {S_a}$, where $S_a$ is a $C^0$ stationary binary subdivision scheme with a finitely supported mask, then $S_{\{a_k\}}$ is $C^0$ and stable.
\end{proposition}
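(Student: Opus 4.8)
The plan is to reduce both claims --- $C^0$ convergence and stability --- to the behavior of the associated difference schemes, exactly as in the stationary theory reviewed above, and then to control the deviation of the non-stationary products from the stationary ones by a perturbation expansion. Throughout I assume, as is implicit in the convergence setting, that every mask $\mathbf{a}_k$ reproduces constants, i.e.\ $\sum_j a^k_{2j}=\sum_j a^k_{2j+1}=1$, so that $a_k(z)=(1+z)q_k(z)$ and the difference relation $\Delta P^{k+1}=S_{q_k}\Delta P^k$ holds at every level. First I would verify that asymptotic equivalence is inherited by the difference schemes: since $(1+z)\bigl(q_k(z)-q(z)\bigr)=a_k(z)-a(z)$, the coefficients of $q_k-q$ are alternating partial sums of those of $a_k-a$, so for finitely (uniformly) supported masks one gets $\|S_{q_k}-S_q\|_\infty\le C\,\|S_{a_k}-S_a\|_\infty$; hence $\sum_k\|S_{q_k}-S_q\|_\infty<\infty$, i.e.\ $S_{\{q_k\}}\approx S_q$. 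Because $S_a\in C^0$, the stationary difference scheme $S_q$ is contractive, so there exist an integer $L\ge1$ and $\mu\in(0,1)$ with $\|S_q^{\,L}\|_\infty\le\mu$.

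Next I would transport contractivity to the non-stationary products. Writing $D^q_{j}:=S_{q_{j}}-S_q$ and expanding the block product $S_{q_{k+L-1}}\cdots S_{q_k}=\prod_{j=0}^{L-1}\bigl(S_q+D^q_{k+j}\bigr)$, the leading term is $S_q^{\,L}$ and every remaining term carries at least one factor $D^q_{k+j}$. Since $\|S_{q_j}\|_\infty$ is uniformly bounded (the $q_j$ converge to $q$), this gives $\|S_{q_{k+L-1}}\cdots S_{q_k}-S_q^{\,L}\|_\infty\le C'\sum_{j=0}^{L-1}\|D^q_{k+j}\|_\infty$, which tends to $0$ as $k\to\infty$ by summability. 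Hence for all $k\ge N$ large enough the block norm is at most $\tilde\mu:=(1+\mu)/2<1$, and splitting an arbitrary product into such blocks yields geometric decay $\|S_{q_{k+n}}\cdots S_{q_0}\|_\infty\le C''\,\tilde\mu^{\lfloor n/L\rfloor}$, so that $\|\Delta P^{k}\|_\infty\to0$ geometrically. From constant reproduction, the displacement $\mathbf{p}^{k+1}_{2i+e}-\mathbf{p}^k_i$ ($e\in\{0,1\}$) is a fixed linear combination of the differences $\Delta\mathbf{p}^k_j$ near $i$, so $\|P^{k+1}(t)-P^k(t)\|_\infty\le C\|\Delta P^k\|_\infty$ is summable; the piecewise-linear interpolants $P^k(t)$ are therefore uniformly Cauchy and converge to a continuous limit, which is exactly $S_{\{a_k\}}\in C^0$.

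For stability I would expand the point-scheme product $B_{k,n}:=S_{a_{k+n}}\cdots S_{a_k}$ in the same spirit, writing $D_j:=S_{a_j}-S_a$ and $B_{k,n}=\prod_{j=0}^{n}\bigl(S_a+D_{k+j}\bigr)$. Grouping the $2^{n+1}$ terms by the positions of the difference factors and estimating each maximal block of consecutive $S_a$'s by the uniform power bound $M_0:=\sup_m\|S_a^{\,m}\|_\infty$, I obtain
\[
\|B_{k,n}\|_\infty\le M_0\prod_{j=0}^{n}\bigl(1+M_0\|D_{k+j}\|_\infty\bigr)\le M_0\exp\Bigl(M_0\sum_{j\ge0}\|S_{a_j}-S_a\|_\infty\Bigr)=:K_a,
\]
and the exponent is finite precisely because $S_{\{a_k\}}\approx S_a$. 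Thus the products are uniformly bounded, giving stability.

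The step I expect to be the crux is establishing the uniform power-boundedness $\sup_m\|S_a^{\,m}\|_\infty<\infty$ used above, since it is what converts summable perturbations into a bounded product rather than one that grows like $M_0^{\,n}$. I would deduce it from the fact that a $C^0$ scheme possesses a continuous, compactly supported basic limit function $\phi$: for finitely supported $\mathbf{a}$ each row of $S_a^{\,m}$ has only $O(1)$ nonzero entries, and these entries are (nearly) samples $\phi\bigl(i2^{-m}-j\bigr)$ of the bounded function $\phi$, so the $\ell_1$ norm of every row --- hence $\|S_a^{\,m}\|_\infty$ --- stays bounded independently of $m$. The remaining care is purely combinatorial bookkeeping in the two product expansions and the verification that the $O(1)$ support and the $\ell_1$ constants are genuinely uniform in the level index; none of this is deep, but it is where the finitely supported hypothesis and the constant-reproduction assumption are essential.
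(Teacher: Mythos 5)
The paper offers no proof of this proposition: it is imported verbatim as Theorem 7b of Dyn and Levin (1995), so your attempt must be measured against the statement as given and against the original argument. Your stability half is correct and is essentially the classical proof: the uniform power bound $M_0=\sup_m\|S_a^m\|_\infty<\infty$ does hold for a convergent stationary scheme with finitely supported mask (your basic-limit-function argument works, since each row of $S_a^m$ has a bounded number of nonzero entries given by the iterated mask, whose values converge uniformly to samples of the bounded function $\phi$; alternatively it follows at once from Banach--Steinhaus), and the expansion of $\prod_{j}(S_a+D_{k+j})$ with $D_j=S_{a_j}-S_a$, bounding each maximal stationary block by $M_0$, gives exactly $\|B_{k,n}\|_\infty\le M_0\prod_j\bigl(1+M_0\|D_{k+j}\|_\infty\bigr)\le M_0\exp\bigl(M_0\sum_j\|D_j\|_\infty\bigr)$, finite by asymptotic equivalence.

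The genuine gap is in the convergence half. Your blanket assumption that every mask $\mathbf{a}_k$ reproduces constants is not ``implicit in the convergence setting'': asymptotic equivalence only forces $\sum_j a^k_{i-2j}\to 1$, not equality at each level, and the motivating examples of Dyn and Levin --- including the non-stationary schemes reproducing conics and exponentials cited in this paper's introduction --- violate it. Without $a_k(-1)=0$ the difference symbol $q_k$ does not exist: $a_k(z)-a(z)$ is then not divisible by $1+z$, the ``alternating partial sums'' you invoke are not finitely supported, and the entire transport-of-contractivity mechanism (the bound $\|S_{q_k}-S_q\|_\infty\le C\|S_{a_k}-S_a\|_\infty$ and the blocked products of the $S_{q_j}$) collapses. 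So your argument proves the proposition only in the special case of level-wise constant reproduction, which is strictly weaker than the statement. The full statement follows instead from the stability you already established, with no difference schemes for the $a_k$: use the telescoping identity $S_{a_{k+n}}\cdots S_{a_k}-S_a^{n+1}=\sum_{j=0}^{n}S_{a_{k+n}}\cdots S_{a_{k+j+1}}\bigl(S_{a_{k+j}}-S_a\bigr)S_a^{j}$, bound its norm by $K_a M_0\sum_{j\ge k}\|S_{a_j}-S_a\|_\infty=:\epsilon_k\to 0$, let $g_k$ denote the continuous limit of the stationary scheme started from the level-$k$ data $f^k$ (which exists since $S_a\in C^0$ and the $f^k$ are bounded by stability), deduce $\|g_k-g_{k'}\|_\infty\le c\,(\epsilon_k+\epsilon_{k'})$ so that $\{g_k\}$ is uniformly Cauchy, and conclude that the non-stationary interpolants converge uniformly to $\lim_k g_k$, a continuous limit. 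A secondary point: even in your restricted setting, the estimates for $q_k-q$ and for the interpolant displacement require the masks $a_k$ to have supports in a fixed finite set, not merely $S_a$ finitely supported; you flag this at the end, but it must be a standing hypothesis, as it is in Dyn and Levin.
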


Though asymptotical equivalence is useful for convergence analysis of non-stationary or even nonlinear subdivision, but it is too restrictive for smoothness analysis of general non-stationary or nonlinear subdivision. Instead, the following proposition serves as a basic tool for such purposes.
\begin{proposition}
\label{Proposition:perturb subdivision}
(Proposition 3.1 in \citep{DynLevinYoon2014NewMethodForAnalysis}) Let $S_{\{a_k\}}$ be a linear and stable ($C^0$) subdivision scheme. Let $\{\varepsilon^k\}_{k\in \mathbb{Z}_+}$ be a sequence of sequences, $\varepsilon^k=\{\varepsilon_j^k\}_{j\in \mathbb{Z}}$, satisfying
\[
\sum_{k=1}^\infty \|\varepsilon^k\|_\infty < +\infty.
\]
Then, the perturbed subdivision scheme
\[
f^k=S_{a_k} f^{k-1} + \varepsilon^k, \ \ \ \ k=1,2,\ldots
\]
converges to a $C^0$ limit for any initial data $f^0\in l^\infty(\mathbb{Z})$.
\end{proposition}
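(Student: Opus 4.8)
The plan is to isolate the effect of the perturbations from the underlying linear subdivision and to sum them as continuous ``corrections'' over all refinement levels. First I would unfold the recurrence: writing $g^k=S_{a_k}g^{k-1}$ with $g^0=f^0$ for the unperturbed scheme, linearity gives $f^k=g^k+e^k$ with
\[
 e^k=\sum_{j=1}^{k} S_{a_k}S_{a_{k-1}}\cdots S_{a_{j+1}}\,\varepsilon^j ,
\]
the $j=k$ term being $\varepsilon^k$ and the empty product read as the identity. Since $S_{\{a_k\}}$ is $C^0$, the piecewise-linear interpolants $G^k$ of $g^k$ converge uniformly to a continuous function $G$, so it remains to prove that the interpolants $E^k$ of $e^k$ converge uniformly to a continuous limit, whence $F^k=G^k+E^k$ does as well.

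Next I would produce a candidate limit using stability. The stability constant bounds every finite product, $\|S_{a_k}\cdots S_{a_{j+1}}\|_\infty\le K_a$, so
\[
 \|e^k\|_\infty\le\sum_{j=1}^{k} K_a\,\|\varepsilon^j\|_\infty\le K_a\sum_{j=1}^{\infty}\|\varepsilon^j\|_\infty=:M<+\infty ,
\]
which already yields uniform boundedness of $\{f^k\}$. For the limit, observe that for each fixed $j$ the family $k\mapsto S_{a_k}\cdots S_{a_{j+1}}\varepsilon^j$ (for $k\ge j$) is the subdivision of the isolated datum $\varepsilon^j$, entered at level $j$, by the tail scheme $\{a_{j+1},a_{j+2},\dots\}$. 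I will argue this tail subdivision converges uniformly to a continuous function $\phi^j$ with $\|\phi^j\|_\infty\le K_a\|\varepsilon^j\|_\infty$ (the bound passing to the uniform limit from the level-$k$ interpolants). Then $\Phi:=\sum_{j\ge1}\phi^j$ is a uniformly convergent series of continuous functions, its norms being dominated by $K_a\sum_j\|\varepsilon^j\|_\infty$, and is therefore continuous; the candidate limit is $F=G+\Phi$.

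I would establish $E^k\to\Phi$ by a truncation argument that plays the two hypotheses against each other. Fix $\eta>0$ and choose $N$ so large that $K_a\sum_{j>N}\|\varepsilon^j\|_\infty<\eta$ and $\sum_{j>N}\|\phi^j\|_\infty<\eta$. Split $e^k$ into a head over $j\le N$ and a tail over $N<j\le k$. By stability the tail sequence has sup-norm below $\sum_{j>N}K_a\|\varepsilon^j\|_\infty<\eta$ \emph{for every} $k$, and the same bound holds for its interpolant (the sup-norm of a piecewise-linear interpolant equals that of its data). The head is a \emph{fixed finite} sum of the families above, so as $k\to\infty$ its interpolant converges uniformly to $\sum_{j\le N}\phi^j$, which lies within $\eta$ of $\Phi$. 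Combining these with $\|G^k-G\|_\infty\to0$ gives $\limsup_{k}\|F^k-F\|_\infty\le 2\eta$, and letting $\eta\to0$ proves uniform convergence to the continuous function $F$.

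The delicate step — and the one I expect to cost the most — is the claim that each tail subdivision is itself $C^0$, i.e.\ that refining the single datum $\varepsilon^j$ from level $j$ onward produces a continuous $\phi^j$ compatibly with the \emph{one} stability bound $K_a$, independently of the entry level $j$. This cannot follow from stability alone, since boundedness of operator products does not force convergence; it must be drawn from the $C^0$ hypothesis, most naturally through contractivity of the associated difference scheme $S_{q_k}$, which makes the differences of $S_{a_k}\cdots S_{a_{j+1}}\varepsilon^j$ decay as $k$ grows and thereby forces the interpolants to be uniformly Cauchy. The role of stability is precisely to keep this control uniform in $j$, so that perturbations entering at arbitrarily fine levels can be accumulated without the constants degenerating.
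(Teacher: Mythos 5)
The paper itself offers no proof of this statement: it is imported verbatim as Proposition~3.1 of Dyn, Levin and Yoon (2014) and used as a black-box tool, so there is no in-paper argument to compare against. Judged on its own merits, your reconstruction is sound and is essentially the standard argument for this result. The linear decomposition $f^k=g^k+e^k$ with $e^k=\sum_{j=1}^{k}S_{a_k}\cdots S_{a_{j+1}}\varepsilon^j$ is correct; the paper's definition of stability bounds all products $\|S_{a_{k+n}}\cdots S_{a_{k}}\|_\infty<K_a$ uniformly in the starting level, so your domination $\|e^k\|_\infty\le K_a\sum_j\|\varepsilon^j\|_\infty$, the bound $\|\phi^j\|_\infty\le K_a\|\varepsilon^j\|_\infty$ passing to the limit, the Weierstrass-type continuity of $\Phi=\sum_j\phi^j$, and the head/tail truncation giving $\limsup_k\|F^k-F\|_\infty\le2\eta$ all go through. (One small redundancy: your second smallness condition $\sum_{j>N}\|\phi^j\|_\infty<\eta$ already follows from the first via $\|\phi^j\|_\infty\le K_a\|\varepsilon^j\|_\infty$.)

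Your closing paragraph correctly isolates the one genuinely load-bearing step—convergence of the tail scheme applied to the single datum $\varepsilon^j$ entered at level $j$—but the repair you sketch is the wrong tool. For nonstationary schemes, convergence is not characterized by level-wise contractivity of the difference schemes $S_{q_k}$, and the bare $C^0$ hypothesis does not hand you factorized symbols to work with; trying to extract contractivity from convergence here would stall. The resolution is definitional rather than analytic: in this literature, ``convergent ($C^0$)'' for a nonstationary scheme means convergence for bounded data entered at \emph{any} starting level $k_0$, which is precisely the statement you need for each $\phi^j$, with stability (uniform over entry levels by the paper's definition) keeping the constants from degenerating in $j$. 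Moreover, in the only way the hypothesis is actually verified in this paper—via asymptotic equivalence to a stationary $C^0$ scheme (Proposition~\ref{Proposition:asympototically equivalent})—every tail $\{a_k\}_{k>j}$ is again asymptotically equivalent to the same stationary scheme, hence $C^0$ and stable with a uniform constant. With that reading your ``delicate step'' is immediate from the hypotheses, and the remainder of your proof closes as written.
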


Propositions \ref{Proposition:asympototically equivalent} and \ref{Proposition:perturb subdivision} together with Equation (\ref{Eqn:Delta m Pk}) and Equation (\ref{Eqn:Delta m Sa_i}) will be used as basic tools for the convergence and smoothness analysis of univariate PN subdivision schemes in Section \ref{Subsection:analysis of univariate PN subdivision}.

\subsection{Subdivision surfaces with extraordinary vertices}
\label{Subsection:subdivision of irregular meshes}
Bivariate subdivision schemes defined on regular quad meshes or regular triangulations can have symbol $a(z)=a(z_1,z_2)$. In particular, if $a(z_1,z_2)$ have factors like $(1+z_1)^m$ or $(1+z_2)^m$, etc., the convergence and smoothness of bivariate subdivision on regular meshes can be analyzed using the same technique as that for univariate subdivision. See references \citep{CavarettaDM1991StationarySubdivision,DynLevin2002ActaNumerica} for more details on this topic. On the other hand, convergence and smoothness of subdivision surfaces at extraordinary vertices have to be analyzed in a different way.

Vertices of valence not equal to 4 within a quad mesh and vertices of valence not equal to 6 within a triangular mesh are extraordinary vertices. While stationary subdivision surfaces with regular control meshes are actually parametric surfaces defined by control points and refinable basis functions, the subdivision surface near an extraordinary vertex is just composed of a sequence of surface rings \citep{Reif95:UnifiedApproachSubdivision}. We take similar notations as used in \citep{PetersReif08SubdivisionBook}.
Assume $Q=(\mathbf{q}_0;\ldots;\mathbf{q}_{\bar{l}})$ be a set of control points surrounding an isolated extraordinary vertex of valence $n$.
Let
\[
\mathbf{\Sigma}^0:=[0,1]^2\backslash[0,1/2)^2, \ \ \mathbf{\Sigma}^m:=2^{-m}\mathbf{\Sigma}^0, \ \ \mathbf{S}_n^m:=\mathbf{\Sigma}^m\times \mathbb{Z}_n, \ \ m\in \mathbb{N}_0,
\]
with $\mathbb{Z}_n$ the integers modulo $n$. Then the surface ring $\mathbf{x}_m$ is a parametric surface defined on domain $\mathbf{S}_n^m$ and the whole domain for the subdivision surface near the extraordinary vertex is
\[
\mathbf{S}_n=\bigcup_{m\in\mathbb{N}_0}\mathbf{S}_n^m \cup \{\mathbf{0}\}.
\]

Let $G:=(g_0,\ldots,g_{\bar{l}})$, where $g_l\in C^k(\mathbf{S}_n^0,\mathbb{R}), \ l=0,\ldots,\bar{l}$, are a set of scalar valued generating functions (see Definition 4.9 in \citep{PetersReif08SubdivisionBook}) that form a partition of unity,
$
\sum_{l=0}^{\bar{l}}g_l(\mathbf{s})=1, \ \mathbf{s}\in \mathbf{S}_n^0.
$
The surface ring $\mathbf{x}_0(\mathbf{s})$ is then represented as
\[
\mathbf{x}_0(\mathbf{s})=\sum_{l=0}^{\bar{l}}g_l(\mathbf{s})\mathbf{q}_l=G(\mathbf{s})Q.
\]
Let $S=(s_{ij})_{0\leq i,j\leq \bar{l}}$ be a subdivision matrix with all rows summing up to 1. The control points for the $m$th surface ring are obtained as
$Q_m=SQ_{m-1}=\ldots=S^mQ$ and the surface ring is
\begin{equation}
\label{Eqn:x^m(s)}
\mathbf{x}_m(\mathbf{s})=G(2^m\mathbf{s})Q_m=G(2^m\mathbf{s})S^mQ.
\end{equation}
When the surface rings $\{\mathbf{x}_m\}_{m\in \mathbb{N}_0}$ converge to a limit point, the subdivision surface converges at the extraordinary point. If the normal vectors of the surface rings also converge to a limit vector, the subdivision surface is normal continuous at the limit point \citep{DooSabinSubdivision,Reif95:UnifiedApproachSubdivision}.

Most popular linear subdivision algorithms for surface modeling are \emph{standard algorithms} of each the subdivision matrix $S$ has eigenvalues
\[
\lambda_0=1>\lambda_1=\lambda_2>|\lambda_3|\geq\ldots.
\]
Assume the right eigenvectors of the matrix $S$ are $v_i$, $i=0,1,\ldots,\bar{l}$, and the left ones are $w_i^\top$, $i=0,1,\ldots,\bar{l}$. Then the matrix $S$ can be decomposed as $S=VJV^{-1}$, where $V=(v_0,v_1,\ldots,v_{\bar{l}})$, $V^{-1}=(w_0^\top;w_1^\top;\ldots;w_{\bar{l}}^\top)$ and $J$ is the Jordan matrix in terms of the eigenvalues.
Since each row of the matrix $S$ sums up to one, the eigenvector corresponding to $\lambda_0=1$ is $v_0=\mathbbm{1}:=(1;1;\ldots;1)$.

Let $\lambda=\lambda_1=\lambda_2$, $F=GV=(f_0,f_1,f_2,\ldots)$ and $P=V^{-1}Q=(\mathbf{p}_0;\mathbf{p}_1;\mathbf{p}_2;\ldots)$. In particular, we have $f_0=Gv_0=1$, $f_i=Gv_i$, $i=1,2$, and $\mathbf{p}_i=w_i^\top Q$, $i=0,1,2$.
Since $\lambda<1$, by reformulating $\mathbf{x}_m$ as
\[
\mathbf{x}_m=GS^mQ=GVJ^mV^{-1}Q=FJ^mP,
\]
the surface ring can be asymptotically expanded as
\begin{equation}
\label{Eqn:x^m expanded}
\mathbf{x}_m \cong \mathbf{p}_0 + \lambda^m(f_1\mathbf{p}_1+f_2\mathbf{p}_2)
             = \mathbf{p}_0 + \lambda^m \Psi (\mathbf{p}_1;\mathbf{p}_2),
\end{equation}
where $\Psi=(f_1,f_2)$ is the characteristic ring \citep{Reif95:UnifiedApproachSubdivision}.
From Equation (\ref{Eqn:x^m expanded}), it is known that the surface rings converge to a central point as
\[
\lim_{m\rightarrow +\infty}\mathbf{x}_m=\mathbf{p}_0.
\]
By the eigen-decomposition above, one also has
\begin{equation}
\label{Eqn:S^m Q ->p_01}
\lim_{m\rightarrow +\infty}S^mQ=\lim_{m\rightarrow +\infty}VJ^mP=\mathbf{p}_0\mathbbm{1}.
\end{equation}
Let ${^\times}D\Psi=D_1f_1D_2f_2-D_2f_1D_1f_2$ be the Jacobian determinant of the characteristic ring. The characteristic ring $\Psi$ is regular when the sign of ${^\times}D\Psi$ does not change nor vanishes.

For almost all initial control nets, the control points around an extraordinary vertex may not lie on a line or degenerate to one point, it is then assumed that $\mathbf{p}_1$ and $\mathbf{p}_2$ within Equation (\ref{Eqn:x^m expanded}) are linear independent. Based on this assumption, the normal vector at the central point $\mathbf{p}_0$ will be defined and the subdivision surface can be normal continuous at the central point.
\begin{proposition}
\label{Propostion:normalcontinuityforsteadysubdivision}
(Theorem 5.6 in \citep{PetersReif08SubdivisionBook})
A standard algorithm with characteristic ring $\Psi$ is normal continuous with central normal
\[
\mathbf{n}^c=sign({^\times}D\Psi)\frac{\mathbf{p}_1\times \mathbf{p}_2}{\|\mathbf{p}_1\times \mathbf{p}_2\|},
\]
if $\Psi$ is regular.
\end{proposition}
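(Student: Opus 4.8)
The plan is to track the unit normal field of the surface rings as they shrink toward the central point $\mathbf{p}_0$ and to show it has a single limit. First I would reparametrize every ring over the \emph{fixed} compact annulus $\mathbf{S}_n^0$ by setting $\mathbf{y}_m(\mathbf{u}):=\mathbf{x}_m(2^{-m}\mathbf{u})=F(\mathbf{u})J^mP$ for $\mathbf{u}\in\mathbf{S}_n^0$, so that approaching the extraordinary point corresponds to $m\to\infty$ with $\mathbf{u}$ ranging over a compact set. Because $\mathbf{x}_m(\mathbf{s})=\mathbf{y}_m(2^m\mathbf{s})$, this reparametrization rescales both partials by the positive factor $2^m$ and hence leaves the unit normal unchanged; it therefore suffices to analyze the normals of $\mathbf{y}_m$. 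Using $f_0\equiv 1$ and separating the dominant, subdominant, and remaining eigen-contributions (as in Equation (\ref{Eqn:x^m expanded})) gives
\[
\mathbf{y}_m(\mathbf{u})=\mathbf{p}_0+\lambda^m\bigl(f_1(\mathbf{u})\mathbf{p}_1+f_2(\mathbf{u})\mathbf{p}_2\bigr)+\mathbf{r}_m(\mathbf{u}),
\]
where $\mathbf{r}_m$ collects the contributions of $\lambda_3,\lambda_4,\dots$ and is of order $O\bigl(m^{K}|\lambda_3|^m\bigr)$ uniformly in $\mathbf{u}$, the factor $m^{K}$ absorbing any Jordan blocks among the non-dominant eigenvalues while the subdominant eigenvalue $\lambda$ is semisimple for a standard algorithm.

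Second, I would differentiate in the two parameters and form the cross product defining the normal. Since differentiation annihilates $\mathbf{p}_0$, one has $D_j\mathbf{y}_m=\lambda^m\bigl(D_jf_1\,\mathbf{p}_1+D_jf_2\,\mathbf{p}_2\bigr)+D_j\mathbf{r}_m$, and expanding the cross product while using $\mathbf{p}_1\times\mathbf{p}_1=\mathbf{p}_2\times\mathbf{p}_2=\mathbf{0}$ isolates the leading term
\[
D_1\mathbf{y}_m\times D_2\mathbf{y}_m=\lambda^{2m}\,{^\times}D\Psi(\mathbf{u})\,(\mathbf{p}_1\times\mathbf{p}_2)+O\bigl(m^{K}|\lambda_3|^m\lambda^m\bigr),
\]
whose coefficient $D_1f_1D_2f_2-D_2f_1D_1f_2$ is exactly the Jacobian ${^\times}D\Psi$ of the characteristic ring. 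Dividing by $\lambda^{2m}$, the remainder has relative size $O\bigl(m^{K}(|\lambda_3|/\lambda)^m\bigr)$, which tends to $0$ because $|\lambda_3|<\lambda$.

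Third, I would invoke the two hypotheses to pass to the normalized limit. Linear independence of $\mathbf{p}_1,\mathbf{p}_2$ gives $\mathbf{p}_1\times\mathbf{p}_2\neq\mathbf{0}$, and regularity of $\Psi$ means ${^\times}D\Psi$ is continuous, non-vanishing, and of constant sign on the compact domain $\mathbf{S}_n^0$, so $|{^\times}D\Psi|$ is bounded below by a positive constant. Hence $\lambda^{-2m}\,D_1\mathbf{y}_m\times D_2\mathbf{y}_m$ has norm bounded uniformly away from zero, the normalization is well defined for large $m$, and the unit normal converges \emph{uniformly in} $\mathbf{u}$ to
\[
\lim_{m\to\infty}\frac{D_1\mathbf{y}_m\times D_2\mathbf{y}_m}{\|D_1\mathbf{y}_m\times D_2\mathbf{y}_m\|}=sign({^\times}D\Psi)\,\frac{\mathbf{p}_1\times\mathbf{p}_2}{\|\mathbf{p}_1\times\mathbf{p}_2\|}=:\mathbf{n}^c,
\]
a single constant vector, the constant sign of ${^\times}D\Psi$ being precisely what makes $sign({^\times}D\Psi)$ independent of $\mathbf{u}$. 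Since every sequence of surface points tending to $\mathbf{p}_0$ corresponds to $m\to\infty$ with parameters in the compact set $\mathbf{S}_n^0$, uniform convergence lets the normal field extend continuously by $\mathbf{n}^c$ at the central point, which is normal continuity.

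The step I expect to be the main obstacle is controlling the remainder \emph{uniformly} in the parameter and across ring boundaries. One must check that the generating functions and their first derivatives are uniformly bounded on $\mathbf{S}_n^0$ so that $\mathbf{r}_m$ and $D_j\mathbf{r}_m$ are genuinely $O(m^{K}|\lambda_3|^m)$ uniformly; that the $C^k$ (with $k\ge 1$) gluing of the generating functions makes $\Psi$ a $C^1$ map on the whole annulus so that its Jacobian is continuous and attains a positive minimum; and that the semisimplicity of the subdominant eigenvalue $\lambda$, part of the standard-algorithm hypothesis, rules out an extra $m\lambda^{m-1}$ factor that would otherwise spoil the clean factorization of the leading term.
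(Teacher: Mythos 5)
Your proof is correct and follows essentially the same route as the cited source for this result (Theorem 5.6 in Peters and Reif), which the paper states without proof: the eigen-decomposition and asymptotic expansion $\mathbf{x}_m \cong \mathbf{p}_0 + \lambda^m \Psi(\mathbf{p}_1;\mathbf{p}_2)$ you start from is exactly the paper's Equation (\ref{Eqn:x^m expanded}), and the cross-product of the partials with the $\lambda^{2m}\,{^\times}D\Psi\,(\mathbf{p}_1\times\mathbf{p}_2)$ leading term, the relative error $O\bigl(m^{K}(|\lambda_3|/\lambda)^m\bigr)$, and the normalization under regularity of $\Psi$ constitute the standard argument. Your attention to the semisimplicity of $\lambda$, the possible Jordan blocks among the non-dominant eigenvalues, the uniformity over the compact annulus $\mathbf{S}_n^0$, and the ``for almost all initial nets'' caveat ensuring $\mathbf{p}_1\times\mathbf{p}_2\neq\mathbf{0}$ (stated in the paper just before the proposition) closes the usual loose ends; no gaps.
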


Besides normal continuity, a subdivision surface can have even higher orders of smoothness at the extraordinary points \citep{Prautzsch1998GkSmoothness}. In particular, the $G^2$ (also $C^2$ by reparameterization) continuity at the extraordinary points can be guaranteed when the subdivision matrix $S$ satisfies the following condition.
\begin{proposition}
\label{Proposition:C2 continuity at extraordinary point}
(Theorem 2.1 in \citep{Prautzsch1998G2subdivisionsurface})\footnote{See also Theorem 1 in \citep{Prautzsch2000G2Loopsurface}}
Let 1, $\lambda$, $\lambda$, $\mu$, \ldots, $\zeta$ be all the (possibly complex) eigenvalues of $S$ where $1>|\lambda|>|\mu|\geq\ldots\geq|\zeta|$ and assume two eigenvectors $\mathbf{c}$ and $\mathbf{d}$ associated with the double real eigenvalue $\lambda$. If the first surface ring of the net given by $[\mathbf{c}_1\ldots \mathbf{c}_m]^\top=[\mathbf{c} \ \mathbf{d}]$ is regular without self-intersections and
\[
|\lambda|^k > |\mu|, \ \ \  k=1,2,
\]
then the limiting surface is a $G^k$-surface for almost all initial nets $\mathcal{M}_0$.
\end{proposition}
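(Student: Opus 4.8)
The plan is to reduce the $G^k$ statement to a scaling estimate on the asymptotic expansion of the surface rings, using the characteristic ring $\Psi=(f_1,f_2)$ as a reparametrization. First I would refine the expansion in Equation~(\ref{Eqn:x^m expanded}) by retaining the next eigenvalue block: writing $S=VJV^{-1}$ and $\mathbf{x}_m=FJ^mP$ as before, I would isolate the contribution of the subsubdominant eigenvalue $\mu$ to obtain
\[
\mathbf{x}_m=\mathbf{p}_0+\lambda^m\,\Psi(\mathbf{s})\,(\mathbf{p}_1;\mathbf{p}_2)+R_m(\mathbf{s}),\qquad \|R_m\|=O\!\left(m^{\,j}|\mu|^m\right),
\]
where the polynomial factor $m^{\,j}$ accounts for a possible Jordan block at $\mu$. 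The dominant part $\mathbf{p}_0+\lambda^m\Psi(\mathbf{p}_1;\mathbf{p}_2)$ describes the tangent configuration, and the whole argument rests on showing that $R_m$ is negligible to order $k$ in the intrinsic coordinates supplied by $\Psi$.

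Next I would set up those coordinates. Because the first surface ring of the net $[\mathbf{c}\ \mathbf{d}]$ is regular and free of self-intersections, the characteristic ring $\Psi$ is a regular, injective map of the annular domain $\mathbf{S}_n^0$; by the self-similar structure in Equation~(\ref{Eqn:x^m(s)}) this regularity and injectivity propagate to every ring, so $\Psi$ is an embedding of $\mathbf{S}_n$. I would then introduce characteristic coordinates $\mathbf{u}=\lambda^m\Psi(\mathbf{s})$ on the $m$th ring, under which $\|\mathbf{u}\|\asymp|\lambda|^m$ and, by genericity of the initial net (so that $\mathbf{p}_1,\mathbf{p}_2$ are linearly independent, as assumed before Proposition~\ref{Propostion:normalcontinuityforsteadysubdivision}), the span of $\mathbf{p}_1,\mathbf{p}_2$ is a genuine tangent plane over which $\mathbf{u}$ parametrizes the leading term. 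Proposition~\ref{Propostion:normalcontinuityforsteadysubdivision} already supplies the case $k=1$: regularity of $\Psi$ gives a continuous tangent plane and central normal $\mathbf{n}^c$.

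The heart of the proof is the passage to $k=2$. Expressed over $\mathbf{u}$, the distance of the surface from its tangent plane is carried by the normal component of $R_m$, which scales like $|\mu|^m=\|\mathbf{u}\|^{\,\ln|\mu|/\ln|\lambda|}$ up to the polynomial Jordan factor. The hypothesis $|\lambda|^k>|\mu|$ together with $|\lambda|<1$ gives $\ln|\mu|/\ln|\lambda|>k$, so this height is $o(\|\mathbf{u}\|^{k})$; differentiating the rescaled rings and using the bounded derivatives of $\Psi^{-1}$ supplied by its regularity, the same gap forces all partial derivatives up to order $k$ to converge to those of the flat planar $k$-jet as $m\to\infty$. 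Hence the limit surface agrees with its tangent plane to order $k$ at $\mathbf{p}_0$, and reparametrizing by $\Psi^{-1}$ converts this geometric contact into genuine $C^k$ smoothness, i.e. a $G^k$-surface.

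I expect the main obstacle to be making the remainder estimate uniform at the level of derivatives rather than values: controlling $R_m$ in $C^k$ across all rings simultaneously requires that the rescaling $\lambda^{-m}$ and the inverse characteristic map do not amplify the $m^{\,j}|\mu|^m$ bound beyond the margin left by $|\lambda|^{k}>|\mu|$. The Jordan factor $m^{\,j}$ and the interplay between the two eigenvalue scales must be tracked carefully, but since any fixed polynomial is dominated by the exponential ratio $(|\mu|/|\lambda|^k)^m\to 0$, the margin is never exhausted. The genericity caveat, ``for almost all initial nets'', enters precisely to guarantee the non-degeneracy of $(\mathbf{p}_1;\mathbf{p}_2)$, excluding the measure-zero set of nets on which the tangent configuration collapses.
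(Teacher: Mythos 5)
The paper does not prove this proposition at all: it is imported verbatim, with citation, as Theorem~2.1 of \citep{Prautzsch1998G2subdivisionsurface} (see also Theorem~1 of \citep{Prautzsch2000G2Loopsurface}), so there is no internal proof to compare against. Judged against the argument in the cited reference, your sketch reconstructs it faithfully and correctly in outline: the refined expansion $\mathbf{x}_m=\mathbf{p}_0+\lambda^m\Psi(\mathbf{p}_1;\mathbf{p}_2)+R_m$ with $\|R_m\|=O(m^{j}|\mu|^m)$, the characteristic coordinates $\mathbf{u}=\lambda^m\Psi(\mathbf{s})$ with $\|\mathbf{u}\|\asymp|\lambda|^m$, and the exponent comparison $\ln|\mu|/\ln|\lambda|>k$ (the inequality flips correctly because $|\lambda|<1$) are exactly the mechanism by which $|\lambda|^k>|\mu|$ yields $G^k$, with the polynomial Jordan factor absorbed by the exponential gap $(|\mu|/|\lambda|^k)^m\to 0$; the genericity clause ``for almost all initial nets'' is also placed correctly, guaranteeing linear independence of $(\mathbf{p}_1;\mathbf{p}_2)$. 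This is consistent with the paper's own use of the result, namely that for $k=2$ the limit surface has vanishing principal curvatures at the extraordinary point, which is precisely your conclusion that the surface agrees with its tangent plane to order $k$.

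Two steps in your sketch are thinner than they should be, though neither is structural. First, injectivity of the full characteristic map on $\mathbf{S}_n$ does not follow from injectivity of a single ring by ``self-similar structure'' alone; one needs the hypothesis that $\lambda$ is a \emph{double real} (positive) eigenvalue so that the rings $\lambda^m\Psi(\mathbf{S}_n^0)$ are genuinely scaled copies whose images are pairwise disjoint --- this deserves an explicit line, as it is where the regularity-without-self-intersection hypothesis on the net $[\mathbf{c}\ \mathbf{d}]$ is actually consumed. Second, order-$k$ contact with the tangent plane is not yet the stated conclusion: a $G^k$-surface requires a regular $C^k$ parametrization, so you must convert $\mathbf{x}(\mathbf{u})=\mathbf{p}_0+\mathbf{u}+r(\mathbf{u})$ into a graph over $\mathrm{span}\{\mathbf{p}_1,\mathbf{p}_2\}$, using that the \emph{tangential} component of $r$ (not only the normal height) has derivatives up to order $k$ tending to zero, so that projection onto the tangent plane is a local $C^k$ diffeomorphism near $\mathbf{p}_0$. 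Your derivative estimates supply exactly what this inverse-function step needs, but the step itself should be stated rather than folded into ``reparametrizing by $\Psi^{-1}$''.
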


As will be given in Section \ref{Subsection:analysis of PN subdivision surface}, we analyze the smoothness of a PN subdivision surface at an extraordinary point by comparing with a sequence of linear subdivision surfaces. By computing the central normal vector for every linear subdivision surface, the normal vector at the central point of the PN subdivision surface will be obtained and the normal continuity of the PN subdivision surface will be proved. Proposition \ref{Proposition:C2 continuity at extraordinary point} plays key roles for constructing $C^2$ subdivision surfaces as well as PN $C^2$ subdivision surfaces with arbitrary topology control meshes in Section \ref{Sec:PN-C2-subdivision surface}.

\subsection{Binary subdivision on sphere}
\label{Subsection:subdivision on sphere}
Subdivision of points on a circle or sphere can be used to construct smooth normal fields and have been applied successfully for rendering or animation purposes \citep{AlexaB08SubdivisionShading,WallnerPottmann06:subdivisionforanimation}. In this paper we study nonlinear subdivision schemes for curve and surface modeling along with construction of smooth normal fields by subdivision.

Though linear subdivision schemes can be adapted to data on sphere in several different ways, the projection method composed of linear subdivision followed by a normalization step is one simple but efficient method.
Assume $\{\mathbf{n}_i^0: i\in \mathbb{Z}\}$ are points lying on a unit circle or sphere, the subdivided points are computed by
\[
  \mathbf{n}_i^{k+1}=\frac{\sum_{j\in \mathbb{Z}}a_{i-2j}\mathbf{n}_j^k}{\|\sum_{j\in \mathbb{Z}}a_{i-2j}\mathbf{n}_j^k\|}, \ \ \ \  i\in \mathbb{Z}.
\]
We assume here that the input points on the circle or sphere is locally dense enough such that the denominator does not vanish.

The convergence and $C^1$ continuity of manifold valued subdivision can be analyzed by proximity \citep{WallnerDyn05ConvergenceByProximity}. \citet{XieYu2007SmoothnessInterpSubdProjection} showed that the projection based univariate interpolatory subdivision on a sphere have the same smoothness orders as well as the same H\"{o}lder regularity as that for linear subdivision while \citet{Grohs2009SmoothnessApproSubdProjection} proved the smoothness equivalence between the projection based univariate approximate subdivision on sphere and the linear subdivision. Assume the linear subdivision scheme $S_a\in C^m$ and has the H\"{o}lder regularity $C^{m+\alpha}$, where $0<\alpha\leq1$. Then the subdivision curve $\mathbf{n}(t)$ on sphere has maximum $m$th order of continuous derivatives. Similar to Equation (\ref{Eqn:holder regularity}), the differences of subdivided points on sphere satisfy
\begin{equation}
\label{Eqn:holder regularity_normal}
\left\|\frac{\Delta^m \mathbf{n}_i^k}{2^{-km}}-\frac{\Delta^m \mathbf{n}_{i-1}^k}{2^{-km}}\right\| < c_1 2^{-k\alpha},
\end{equation}
where $c_1$ is a constant.

Besides the univariate subdivision on sphere, linear subdivision schemes for regular or irregular meshes can also be adapted to meshes on sphere. In particular, we have to pay much attention to convergence and smoothness of subdivision near extraordinary vertices. Assume $N=[\mathbf{n}_0;\ldots;\mathbf{n}_{\bar{l}}]$ be a set of points surrounding an isolated extraordinary vertex of valence $n$ on sphere and $S=(s_{ij})_{0\leq i,j\leq \bar{l}}$ be the subdivision matrix as in Equation (\ref{Eqn:x^m(s)}). Then the points on sphere are refined recursively as follows
\begin{equation}
\label{Eqn:refinement of normals at extra vertex}
\mathbf{n}_i^{k+1}=\frac{\sum_{j=0}^{\bar{l}}s_{ij}\mathbf{n}_j^k}{\|\sum_{j=0}^{\bar{l}}s_{ij}\mathbf{n}_j^k\|}, \ \ \ i=0,1,\ldots,\bar{l}.
\end{equation}
\citet{Weinmann2010NonlinearSubdOnIrregularmesh} has shown that the manifold valued subdivision adapted from a standard scheme on irregular meshes converges and the limit function is $C^1$ continuous in the vicinity of an extraordinary point over Reif's characteristic parametrization.
We modify Proposition 2.7 in \citep{Weinmann2010NonlinearSubdOnIrregularmesh} for distance estimation between subdivided points near an extraordinary vertex on sphere, which will be used for convergence and smoothness analysis for our newly proposed nonlinear subdivision scheme for irregular meshes.
\begin{proposition}
\label{Proposition:distancebetweenrefinednormals}
Let $S=(s_{ij})_{0\leq i,j\leq \bar{l}}$ be a standard subdivision matrix. Assume $\mathbf{n}_j^0$, $j=0,1,\ldots,\bar{l}$, are unit normals corresponding to vertices in the vicinity of an extraordinary vertex on an irregular mesh, and $\mathbf{n}_j^k$, $j=0,1,\ldots,\bar{l}$, $k\in\mathbb{N}$, are given by Equation (\ref{Eqn:refinement of normals at extra vertex}). There exist constants $c_2>0$, $0<\gamma<1$, such that
\[
\|\mathbf{n}_j^k - \mathbf{n}_l^k\|\leq c_2\gamma^k, \ \ \ \ j,l\in\{0,1,\ldots,\bar{l}\}.
\]
\end{proposition}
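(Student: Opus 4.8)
The plan is to follow the proximity strategy used by \citet{Weinmann2010NonlinearSubdOnIrregularmesh}: realize the nonlinear projection scheme as the linear scheme $S$ perturbed by a term that is \emph{quadratic} in the spread of the normals, then show that the linear part contracts this spread geometrically while the quadratic perturbation is absorbed once the spread is small. Throughout I write $d_k:=\max_{j,l}\|\mathbf{n}_j^k-\mathbf{n}_l^k\|$ for the diameter of the normal set at level $k$; the assertion is equivalent to $d_k\le c_2\gamma^k$. By the local-density hypothesis (under which the denominators in Equation (\ref{Eqn:refinement of normals at extra vertex}) never vanish) I may assume $d_0$ is small enough to start a bootstrap.

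\emph{Step 1 (proximity: the perturbation is quadratic).} Writing $\tilde{\mathbf{n}}_i^{k+1}=\sum_{j}s_{ij}\mathbf{n}_j^k$ for the unnormalized linear step, and using $\sum_j s_{ij}=1$ together with $\langle\mathbf{n}_j^k,\mathbf{n}_{j'}^k\rangle=1-\tfrac12\|\mathbf{n}_j^k-\mathbf{n}_{j'}^k\|^2$, the first-order terms cancel and one obtains $\bigl\|\tilde{\mathbf{n}}_i^{k+1}\bigr\|^2=1-\tfrac12\sum_{j,j'}s_{ij}s_{ij'}\|\mathbf{n}_j^k-\mathbf{n}_{j'}^k\|^2$, whence $\bigl|\,\|\tilde{\mathbf{n}}_i^{k+1}\|-1\,\bigr|\le C_1 d_k^2$ with $C_1$ depending only on $\sum_{j,j'}|s_{ij}s_{ij'}|$ (finite, since each row of $S$ is finitely supported). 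The normalization therefore moves each point by $\|\mathbf{n}_i^{k+1}-\tilde{\mathbf{n}}_i^{k+1}\|=\bigl|\,\|\tilde{\mathbf{n}}_i^{k+1}\|-1\,\bigr|\le C_1 d_k^2$, so that $N^{k+1}=SN^k+E^k$ with $\|E^k\|_\infty\le C_1 d_k^2$; i.e. the scheme is a quadratically perturbed copy of $S$.

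\emph{Step 2 (linear contraction of the spread).} I would isolate the non-constant part. Since $S$ is standard, $\mathbb{R}^{\bar{l}+1}=\mathrm{span}(\mathbbm{1})\oplus U$, where $U=\{u:w_0^\top u=0\}$ is the $S$-invariant complement carrying the eigenvalues $\lambda_1=\lambda_2=\lambda,\lambda_3,\dots$, all of modulus $<1$. Decomposing each coordinate of $N^k$ as $\alpha_k\mathbbm{1}+u_k$ with $u_k\in U$, entry differences depend only on $u_k$, and since the diameter vanishes on $U$ only at $0$ (as $\mathbbm{1}\notin U$) it defines a norm on $U$; hence $d_k$ and $\|u_k\|$ are equivalent up to fixed constants $c,c'$. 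Because the spectral radius of $S|_U$ equals $|\lambda|<1$, Gelfand's formula lets me fix an integer $L$ with $\|S^L|_U\|$ small enough to dominate $cc'$ (this also absorbs any Jordan block at $\lambda$). Projecting the perturbed recursion onto $U$ gives $u_{k+1}=Su_k+\Pi_U E^k$ with $\|\Pi_U E^k\|\le C_2 d_k^2$; iterating $L$ times and using that the intermediate diameters grow by at most a fixed factor yields $u_{k+L}=S^L u_k+\sum_{r=0}^{L-1}S^{\,L-1-r}\Pi_U E^{k+r}$, so after translating back to diameters $d_{k+L}\le\rho' d_k+C_3 d_k^2=(\rho'+C_3 d_k)\,d_k$ with $\rho'<1$.

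\emph{Step 3 (bootstrap and conclusion).} Choosing $d_0$ small enough that $\rho'+C_3 d_0\le\rho''<1$, an induction shows the recursion both enforces $d_{k+L}\le\rho'' d_k$ and keeps $d_k$ inside the regime where $\rho'+C_3 d_k\le\rho''$, so $d_{mL}\le(\rho'')^m d_0$. Bounding the at-most-$\|S\|^L$ growth over the $L$ intermediate levels and setting $\gamma:=(\rho'')^{1/L}$ gives $\|\mathbf{n}_j^k-\mathbf{n}_l^k\|\le d_k\le c_2\gamma^k$ for all $j,l$. The main obstacle is Step 2: a single application of $S$ need not shrink the diameter (the one-step difference operator may have norm $\ge1$), so contraction must be extracted spectrally over a block of $L$ steps; one must then verify that the perturbation accumulated across the block stays genuinely lower order, which is exactly where the \emph{second}-order proximity of Step 1, rather than a mere first-order bound, is indispensable.
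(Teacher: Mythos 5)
Your proof is sound and follows essentially the same route the paper relies on: the paper states this proposition without an in-line proof, deferring to (a modification of) Proposition 2.7 of \citet{Weinmann2010NonlinearSubdOnIrregularmesh}, whose argument is exactly your combination of quadratic proximity between the projection scheme and $S$ (your Step 1 identity $\|\tilde{\mathbf{n}}_i^{k+1}\|^2=1-\tfrac12\sum_{j,j'}s_{ij}s_{ij'}\|\mathbf{n}_j^k-\mathbf{n}_{j'}^k\|^2$) with spectral contraction of the normal spread on the $S$-invariant complement of $\mathrm{span}(\mathbbm{1})$, extracted over a block of $L$ steps via Gelfand's formula. The only caveat is that your bootstrap requires the initial spread $d_0$ to be small enough for the constants, which is strictly stronger than the nonvanishing of the denominators but matches the standing ``locally dense enough'' hypothesis the paper imposes in Section 2.3, so it is consistent with the intended scope of the proposition.
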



\section{Point-normal subdivision schemes}
\label{Sec:PN-subdscheme}

We generalize linear subdivision schemes that only refine polygon or mesh vertices to point-normal subdivision schemes that refine polygon or mesh vertices along with the refinement of unit control normals at the vertices. Some basic geometric properties of PN subdivision schemes will be given.

\subsection{The PN subdivision schemes}
Assume $\mathbf{a}=\{a_i:i\in \mathbb{Z}^s\}$ is the mask for univariate ($s=1$) or bivariate ($s=2$) linear subdivision on regular meshes. Let $\{(\mathbf{p}_i^0,\mathbf{n}_i^0): i\in \mathbb{Z}^s\}$ be the initial control points and unit control normals on a polygon or a regular mesh.
The polygon or the mesh with initial control normals is subdivided as follows
\begin{equation}
\label{Eqn:PN binary subd}
\left\{
\begin{array}{ccl}
  \mathbf{q}_i^{k+1}&=&\sum_{j\in \mathbb{Z}^s}a_{i-2j}\mathbf{p}_j^k, \\
  \mathbf{n}_i^{k+1}&=&\frac{\sum_{j\in \mathbb{Z}^s}a_{i-2j}\mathbf{n}_j^k}{\|\sum_{j\in \mathbb{Z}^s}a_{i-2j}\mathbf{n}_j^k\|},
  \hskip 2.0cm i\in \mathbb{Z}^s,\\
  \mathbf{p}_i^{k+1}&=&\mathbf{q}_i^{k+1}+\sum_{j\in \mathbb{Z}^s} a_{i-2j}h_{ij}^k \mathbf{n}_i^{k+1},
\end{array}
\right.
\end{equation}
where
\[
h_{ij}^k=\frac{(\mathbf{n}_j^k+\mathbf{n}_i^{k+1})^\top(\mathbf{p}_j^k-\mathbf{q}_i^{k+1})}
            {(\mathbf{n}_j^k+\mathbf{n}_i^{k+1})^\top\mathbf{n}_i^{k+1}}.
\]
Besides uniform binary subdivision on regular meshes, any other linear subdivision schemes on regular or irregular control meshes can also be extended to PN subdivision. Replacing $a_{i-2j}$ within Equation (\ref{Eqn:PN binary subd}) with $s_{ij}$, $i,j\in\{0,1,\ldots,\bar{l}\}$, which are originally given in Equation (\ref{Eqn:x^m(s)}), we obtain PN subdivision schemes for irregular meshes surrounding extraordinary vertices or extraordinary faces.
The new subdivision schemes are referred as PN-4-point, PN-Catmull-Clark, PN-Butterfly, etc. when they are generalized from traditional linear subdivision schemes 4-point, Catmull-Clark, Butterfly, etc.

\begin{figure}[htp]
  \centering
  \includegraphics[width=0.5\linewidth]{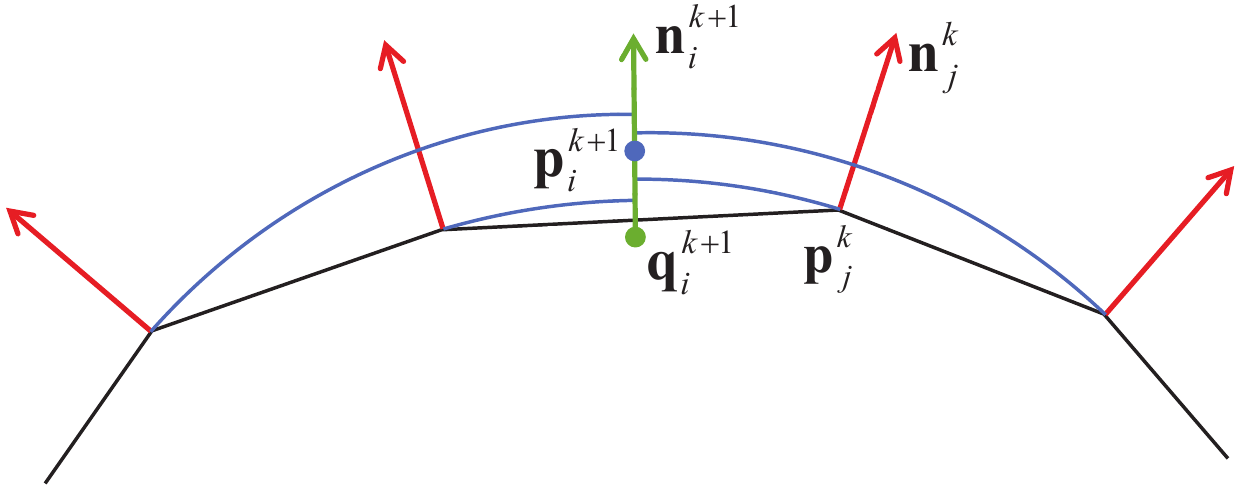}
  \caption{Geometric interpretation of the PN subdivision scheme.}
  \label{Fig:PNsubdivisionscheme}
\end{figure}

Figure~\ref{Fig:PNsubdivisionscheme} illustrates how a newly subdivided point is computed by a PN subdivision scheme. When a linearly subdivided point $\mathbf{q}_i^{k+1}$ and a unit vector $\mathbf{n}_i^{k+1}$ are computed, a line $L$ that passes through point $\mathbf{q}_i^{k+1}$ in the direction $\mathbf{n}_i^{k+1}$ is obtained. Then a height $h_{ij}^k$ from $\mathbf{q}_i^{k+1}$ along the line $L$ is derived based on the assumption that a circular or helical arc interpolates points $\mathbf{p}_j^k$ and $\mathbf{q}_i^{k+1}+h_{ij}^k\mathbf{n}_i^{k+1}$ as well as the normal vectors $\mathbf{n}_j^k$ and $\mathbf{n}_i^{k+1}$ at the two points. The interpolating curve is a circular arc when points $\mathbf{p}_j^k$, $\mathbf{q}_i^{k+1}$ and vectors $\mathbf{n}_j^k$, $\mathbf{n}_i^{k+1}$ lie on the same plane; otherwise, the interpolating curve is a helix segment on a circular cylinder that passes through point $\mathbf{p}_j^k$ and be perpendicular to normals $\mathbf{n}_j^k$ and $\mathbf{n}_i^{k+1}$ at the two ends. The weighted average of the arc end points lying on the line $L$ gives the final new point $\mathbf{p}_i^{k+1}$.
Actually, if the original control points and control normals are sampled from a smooth curve or surface without inflection point or inflection line, the newly subdivided normal may approximate the curve or surface normal very well and the mentioned circular arcs are just the approximate osculating arcs of the curve or surface at the sampled points, which guarantees that the newly subdivided point lies on or close to the original curve or surface. As explained later, this kind of nonlinear subdivision can preserve circles, circular cylinders and spheres, and they even have the same convergence and smoothness orders as the corresponding linear subdivision.

We note that selected initial control normals or a linearly subdivided normal can vanish. If a linearly subdivided normal is a zero vector, it will not be normalized and the new vertex computed by Equation (\ref{Eqn:PN binary subd}) is just the linearly subdivided vertex. Even if a newly subdivided normal $\mathbf{n}_i^{k+1}$ does not vanish, it may have opposite direction with an old control normal $\mathbf{n}_j^k$ and the updating height $h_{ij}^k$ within Equation (\ref{Eqn:PN binary subd}) will not be defined. If this is the case, one can just perturb the normal vector $\mathbf{n}_i^{k+1}$ into e.g. $2\mathbf{n}_i^{k+1}$ within the formula for computing the height $h_{ij}^k$. If the subdivided control normals are computed with no singularities in the first round of subdivision, there will be no singularities in the following subdivision. This is because the subdivided normals will become denser and denser during the subdivision and the newly subdivided normals will be very close to their old neighboring normals.

For convenience of convergence and smoothness analysis to be developed in next section, we reformulate the univariate PN subdivision scheme in matrix form.
Since the normal vectors are subdivided independent of mesh vertices, we rewrite the last expression in Equation (\ref{Eqn:PN binary subd}) as
\begin{equation}
\label{Eqn:PN binary subd A_ij^k}
\mathbf{p}_i^{k+1} = \mathbf{q}_i^{k+1}+\sum_{j\in \mathbb{Z}} a_{i-2j}A_{ij}^k (\mathbf{p}_j^k - \mathbf{q}_i^{k+1}),
\end{equation}
where
\[
A_{ij}^k=\frac{\mathbf{n}_i^{k+1}(\mathbf{n}_j^k+\mathbf{n}_i^{k+1})^\top}{(\mathbf{n}_j^k+\mathbf{n}_i^{k+1})^\top\mathbf{n}_i^{k+1}}.
\]
Note that when the subdivided normals converge, the denominator within $A_{ij}^k$ will converge to 2 as $k$ approaches infinity. Then, the matrices $A_{ij}^k$ are usually well defined for PN subdivision.
Recall that $\sum_{j\in\mathbb{Z}}a_{i-2j}=1$. Substituting the expression of $\mathbf{q}_i^{k+1}$, Equation (\ref{Eqn:PN binary subd A_ij^k}) can be further reformulated as
\begin{equation}
\label{Eqn:PN binary subd reform}
\mathbf{p}_i^{k+1} = \sum_{j\in \mathbb{Z}} a_{i-2j}M_{ij}^k \mathbf{p}_j^k, \ \ \ i\in\mathbb{Z}
\end{equation}
where $M_{ij}^k = I+\sum_{l\in\mathbb{Z}}a_{i-2l}(A_{ij}^k-A_{il}^k)$ and $I$ is the identity matrix. It is easily verified that
$\sum_{j\in\mathbb{Z}}a_{i-2j}M_{ij}^k = I$.

Let $P^k$ be as defined in Equation (\ref{Eqn:Pk+1=SaPk}) and let
\[
M^k=\left(
      \begin{array}{ccccc}
        \cdots & \cdots & \cdots & \cdots & \cdots \\
        \cdots & M_{i-1,j-1}^k & M_{i-1,j}^k & M_{i-1,j+1}^k & \cdots \\
        \cdots & M_{i,j-1}^k   & M_{i,j}^k   & M_{i,j+1}^k   & \cdots \\
        \cdots & M_{i+1,j-1}^k & M_{i+1,j}^k & M_{i+1,j+1}^k & \cdots \\
        \cdots & \cdots & \cdots & \cdots & \cdots \\
      \end{array}
    \right)
\]
be a bi-infinite matrix. Then Equation (\ref{Eqn:PN binary subd reform}) can be reformulated as
\begin{equation}
\label{Eqn:Pk+1=SaMk Pk}
P^{k+1} = (S_a\circ M^k)P^k,
\end{equation}
where $S_a\circ M^k=(a_{i-2j}M_{i,j}^k)_{i,j\in\mathbb{Z}}$ is the Hadamard product of matrices $S_a$ and $M^k$.
We use $\{S_a\circ M^k\}$ to denote the PN subdivision scheme that is generalized from a stationary subdivision scheme $S_a$.
From Equation (\ref{Eqn:Pk+1=SaMk Pk}) we have
\begin{equation}
\label{Eqn:P_i k+1 in matrix form}
    \begin{array}{ccl}
    \mathbf{p}_i^{k+1} &=& (S_a\circ M^k)_i P^k \\
    &=& ((S_a)_i\circ M_i^k) P^k \\
    &=& (S_a)_i ((M_i^k)^{\top_{blk}}\circ P^k), \ \ \ i\in\mathbb{Z}
    \end{array}
\end{equation}
where $(M)_i=M_i$ is the $i$th row of the matrix $M$. In particular,
\[
(S_a)_i=(\ldots, a_{i-2(j-1)}, a_{i-2j}, a_{i-2(j+1)}, \ldots)
\]
and
\[
(M_i^k)^{\top_{blk}}=(\ldots; M_{i,j-1}^k; M_{i,j}^k; M_{i,j+1}^k; \ldots).
\]
We note that $(S_a)_i$ also means $(\ldots, a_{i-2(j-1)}I, a_{i-2j}I, a_{i-2(j+1)}I, \ldots)$ when it is used to compute subdivided vertices.

\subsection{Basic geometric properties}
We present several basic geometric properties of the proposed PN subdivision schemes, which are useful for curve and surface modeling by employing the new subdivision technique.

\begin{property}
(Geometric invariance)
The PN subdivision curves and surfaces are translation/scaling invariant, and the shapes of the subdivision curves and surfaces are also invariant under the rotation of the coordinate system.
\end{property}
\begin{proof}
The translation/scaling invariant property is obvious based on Equation (\ref{Eqn:PN binary subd reform}), we prove that the PN subdivision is invariant under the rotation of the coordinate system. We rewrite Equation (\ref{Eqn:PN binary subd reform}) as follows
\[
\mathbf{p}_i^{k+1} = \sum_{j} a_{i-2j} \sum_{l} a_{i-2l} (I + A_{ij}^k - A_{il}^k) \mathbf{p}_j^k.
\]
We only check that $A_{ij}^k \mathbf{p}_j^k$ is invariant under the rotation of the coordinate system, $A_{il}^k \mathbf{p}_j^k$ can be checked similarly.
Assume $R$ is a rotation matrix that satisfies $R^\top R=I$ and $R^{-1}=R^\top$. We have
\[
\begin{array}{ccl}
R(A_{ij}^k \mathbf{p}_j^k) &=& \frac{R\mathbf{n}_i^{k+1}(R^\top R\mathbf{n}_j^k + R^\top R\mathbf{n}_i^{k+1})^\top\mathbf{p}_j^k}
{(\mathbf{n}_j^k + \mathbf{n}_i^{k+1})^\top R^\top R \mathbf{n}_i^{k+1}} \\
&=& \frac{R\mathbf{n}_i^{k+1}(R\mathbf{n}_j^k + R\mathbf{n}_i^{k+1})^\top R\mathbf{p}_j^k}
{(R\mathbf{n}_j^k + R\mathbf{n}_i^{k+1})^\top R \mathbf{n}_i^{k+1}}.
\end{array}
\]
Since $R\mathbf{p}_j^k$, $R\mathbf{n}_j^k$ and $R\mathbf{n}_i^{k+1}$ are points or vectors in the rotated coordinate system, the proposition is proven.
\end{proof}

\begin{property}\label{Property:normal invariance}
(Invariance to normal direction)
The PN subdivision curves and surfaces are invariant when all control normals have been inversed.
\end{property}
\begin{proof}
Equation (\ref{Eqn:PN binary subd}) holds when all vectors within the equation have been replaced with their opposite vectors. So, the property holds.
\end{proof}

Based on Property \ref{Property:normal invariance}, we will not emphasize the side of a control polygon or a control mesh in which the control normals lie when constructing a PN subdivision curve or surface.

\begin{property}
(Reduce to linear subdivision). If all initial control normals are the same vector, the PN subdivision scheme presented in Equation (\ref{Eqn:PN binary subd}) reduces to a linear subdivision scheme.
\end{property}
\begin{proof}
Assume $\mathbf{n}_{i}^0=\mathbf{n}_0$, $i\in\mathbb{Z}^s$, we have $\mathbf{n}_{i}^k=\mathbf{n}_0$ for all $i\in\mathbb{Z}^s$ and $k\in\mathbb{Z}_+$. Then Equation (\ref{Eqn:PN binary subd}) can be simplified as
\[
\begin{array}{ccl}
\mathbf{p}_i^{k+1}&=&\mathbf{q}_i^{k+1}+\sum_{j\in \mathbb{Z}^s} a_{i-2j} (\mathbf{n}_0)^\top(\mathbf{p}_j^k - \mathbf{q}_i^{k+1}) \mathbf{n}_0 \\
&=& \mathbf{q}_i^{k+1}+ (\mathbf{n}_0)^\top(\sum_{j\in \mathbb{Z}^s} a_{i-2j}\mathbf{p}_j^k - \mathbf{q}_i^{k+1}) \mathbf{n}_0 \\
&=& \sum_{j\in \mathbb{Z}^s} a_{i-2j}\mathbf{p}_j^k.
\end{array}
\]
This proves the property.
\end{proof}

Same as linear subdivision schemes, PN subdivision schemes can reproduce straight lines and planes. Moreover, PN subdivision schemes can also reproduce circles, circular cylinders and spheres.

\begin{property}
(Circle preserving). If the initial control data $(\mathbf{p}_i^0, \mathbf{n}_i^0)$, $i\in\mathbb{Z}$, are sampled from a circle, then all the newly subdivided points and normals by PN subdivision lie on the same circle.
\end{property}
\begin{proof}
Due to the geometric invariance property, we assume the initial control data are sampled from a unit circle centered at the origin. It implies that
$\mathbf{p}_i^0 =\mathbf{n}_i^0$, $i\in\mathbb{Z}$. To prove the property, we should then prove that all newly subdivided vertices lie on the unit circle as the initial data.
Assume $\mathbf{p}_i^k =\mathbf{n}_i^k$, $i\in\mathbb{Z}$, are points and normals lying on the unit circle. Let
\[
l_i^{k+1} = \left\|\sum_{j\in\mathbb{Z}}a_{i-2j}\mathbf{p}_j^k\right\| = \left\|\sum_{j\in\mathbb{Z}}a_{i-2j}\mathbf{n}_j^k\right\|.
\]
We have $\mathbf{n}_i^{k+1} = \frac{1}{l_i^{k+1}} \sum_{j\in\mathbb{Z}}a_{i-2j}\mathbf{n}_j^k$
and $\mathbf{q}_i^{k+1} = \sum_{j\in\mathbb{Z}}a_{i-2j}\mathbf{p}_j^k = l_i^{k+1}\mathbf{n}_i^{k+1}$.
Then the heights $h_{ij}^k$ are computed as
\[
\begin{array}{ccl}
h_{ij}^k &=& \frac{(\mathbf{n}_j^k + \mathbf{n}_i^{k+1})^\top(\mathbf{p}_j^k - \mathbf{q}_i^{k+1})}{(\mathbf{n}_j^k + \mathbf{n}_i^{k+1})^\top\mathbf{n}_i^{k+1}} \\
&=& \frac{(\mathbf{n}_j^k + \mathbf{n}_i^{k+1})^\top(\mathbf{n}_j^k - l_i^{k+1}\mathbf{n}_i^{k+1})}{(\mathbf{n}_j^k + \mathbf{n}_i^{k+1})^\top\mathbf{n}_i^{k+1}} \\
&=& 1-l_i^{k+1}.
\end{array}
\]
Now, the newly subdivided point is obtained as
\[
\begin{array}{ccl}
\mathbf{p}_i^{k+1} &=& \mathbf{q}_i^{k+1} +  \sum_{j\in\mathbb{Z}}a_{i-2j}h_{ij}^k \mathbf{n}_i^{k+1}\\
&=& l_i^{k+1}\mathbf{n}_i^{k+1} +  \sum_{j\in\mathbb{Z}}a_{i-2j}(1-l_i^{k+1}) \mathbf{n}_i^{k+1} \\
&=& \mathbf{n}_i^{k+1}.
\end{array}
\]
Since $\|\mathbf{p}_i^{k+1}\| = \|\mathbf{n}_i^{k+1}\| = 1$, the newly subdivided points and normals lie on the same circle as the initial control data.
\end{proof}

\begin{figure}[h]
  \centering
  \subfigure[]{\includegraphics[width=0.28\linewidth]{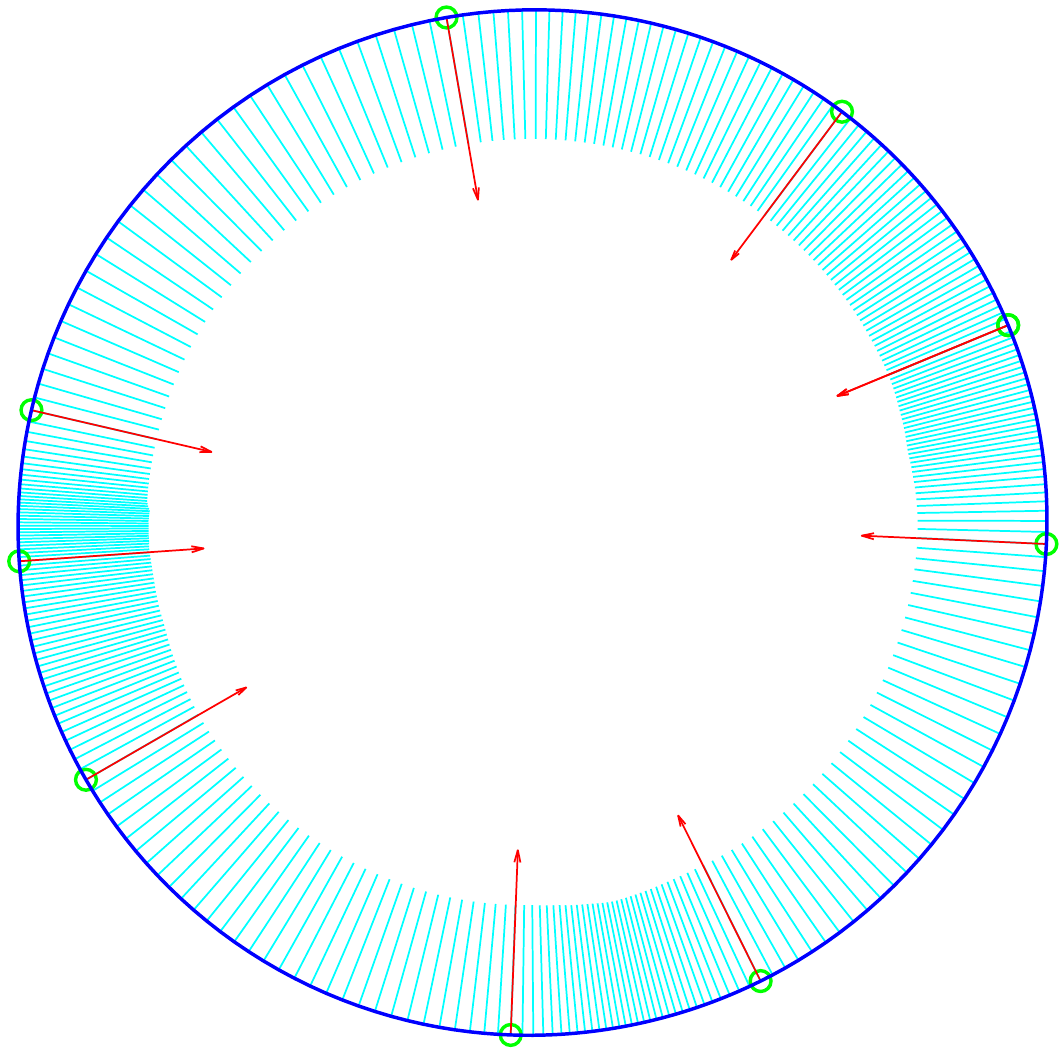}} \ \ \ \ \ \
  \subfigure[]{\includegraphics[width=0.28\linewidth]{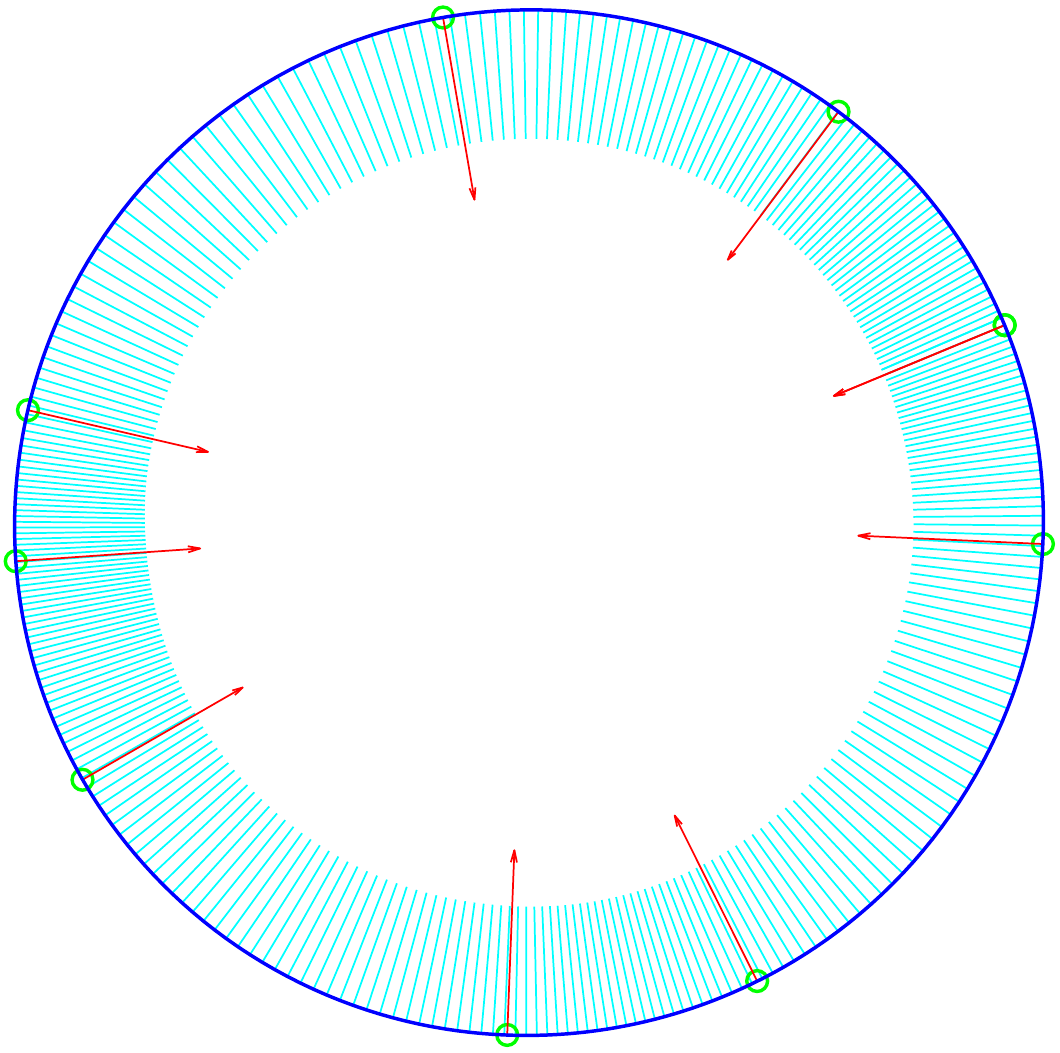}}
  \caption{PN subdivision curves with curvature combs: (a) PN-6-point subdivision; (b) PN cubic B-spline subdivision.}
  \label{Fig:CirclePreservingSubdivision}
\end{figure}

Figure \ref{Fig:CirclePreservingSubdivision} illustrates the circle preserving property of PN subdivision schemes. With unevenly sampled points and normals from a circle, two PN subdivision curves are obtained by PN-6-point subdivision scheme or by PN cubic B-spline subdivision scheme, respectively. The curvature combs show that both of the two PN subdivision schemes reproduce the circle exactly. As old vertices are generally not preserved by PN cubic B-spline subdivision, it generates a subdivision curve with more uniform vertices than the PN-6-point subdivision curve which interpolates all vertices.

\begin{property}
(Cylinder and sphere preserving). If the initial control data $(\mathbf{p}_i^0, \mathbf{n}_i^0)$, are sampled from a circular cylinder or a sphere, then all the newly subdivided points and normals $(\mathbf{p}_i^k, \mathbf{n}_i^k)$, $k\in\mathbb{Z}_+$, by PN subdivision lie on the same cylinder or sphere.
\end{property}
\begin{proof}
The proof of sphere preserving is the same as that for circle preserving, we prove the property of cylinder preserving.

W.l.o.g, we assume the generatrix of a cylinder is parallel to the $z$-axis, and the coordinates of the $k$th subdivided points and normals are given by $\mathbf{p}_i^k=(\mathbf{n}_{ix}^k,\mathbf{n}_{iy}^k,z_i^k)^\top$ and $\mathbf{n}_i^k=(\mathbf{n}_{ix}^k,\mathbf{n}_{iy}^k,0)^\top$. The perpendicular projection of the points and normals onto the $xy$-plane are $\bar{\mathbf{p}}_i^k=\bar{\mathbf{n}}_i^k=\mathbf{n}_i^k$.
Based on Equation (\ref{Eqn:PN binary subd}), the projection of the subdivided point $\mathbf{p}_i^{k+1}$ onto the $xy$-plane is obtained as
\[
\begin{array}{ccl}
\bar{\mathbf{p}}_i^{k+1} &=& \bar{\mathbf{q}}_i^{k+1} + \sum_j a_{i-2j}h_{ij}^k \bar{\mathbf{n}}_i^{k+1} \\
&=& \bar{\mathbf{q}}_i^{k+1} + \sum_j a_{i-2j} \frac{(\mathbf{n}_j^k + \mathbf{n}_i^{k+1})^\top(\mathbf{p}_j^k - \mathbf{q}_i^{k+1})}{(\mathbf{n}_j^k + \mathbf{n}_i^{k+1})^\top\mathbf{n}_i^{k+1}}  \bar{\mathbf{n}}_i^{k+1} \\
&=& \bar{\mathbf{q}}_i^{k+1} + \sum_j a_{i-2j} \frac{(\bar{\mathbf{n}}_j^k + \bar{\mathbf{n}}_i^{k+1})^\top(\bar{\mathbf{p}}_j^k - \bar{\mathbf{q}}_i^{k+1})}{(\bar{\mathbf{n}}_j^k + \bar{\mathbf{n}}_i^{k+1})^\top\bar{\mathbf{n}}_i^{k+1}}\bar{\mathbf{n}}_i^{k+1},
\end{array}
\]
where $\bar{\mathbf{q}}_i^{k+1} = \sum_j a_{i-2j} \bar{\mathbf{p}}_j^k$
and
$\bar{\mathbf{n}}_i^{k+1} = \frac{\sum_j a_{i-2j} \bar{\mathbf{n}}_j^k}{\|\sum_j a_{i-2j} \bar{\mathbf{n}}_j^k\|}$.
From the above expression we know that $\bar{\mathbf{p}}_i^{k+1}$ is also the subdivided point by the projected points $\bar{\mathbf{p}}_j^k$ and projected normals $\bar{\mathbf{n}}_j^k$.
Because the PN subdivision of the projected data $(\bar{\mathbf{p}}_i^0, \bar{\mathbf{n}}_i^0)$ is circle preserving, the PN subdivision of original data $(\mathbf{p}_i^0, \mathbf{n}_i^0)$ is cylinder preserving.
\end{proof}

\begin{figure}[h]
  \centering
  \subfigure[]{\includegraphics[width=0.25\linewidth]{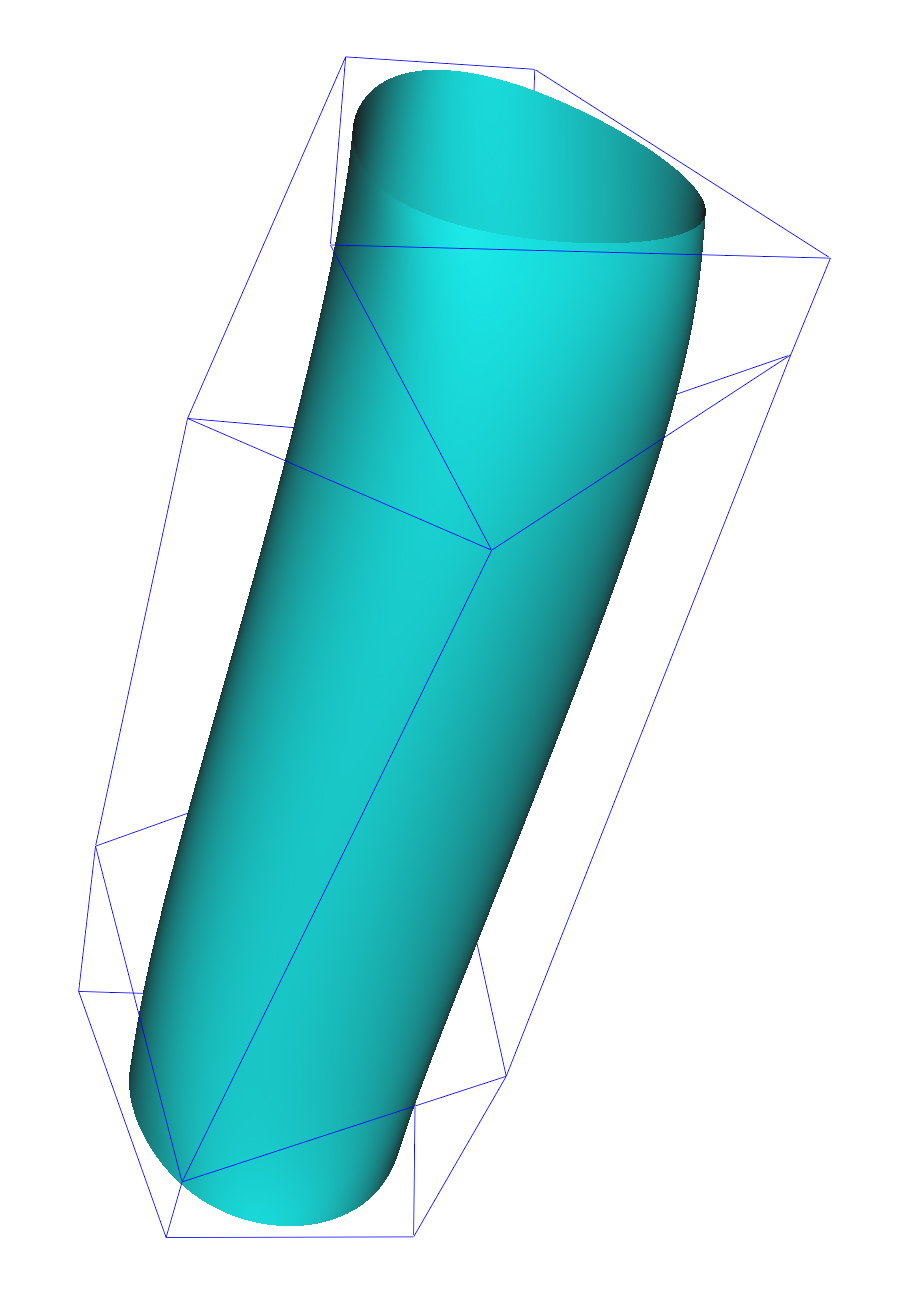}} \ \ \ \ \ \
  \subfigure[]{\includegraphics[width=0.25\linewidth]{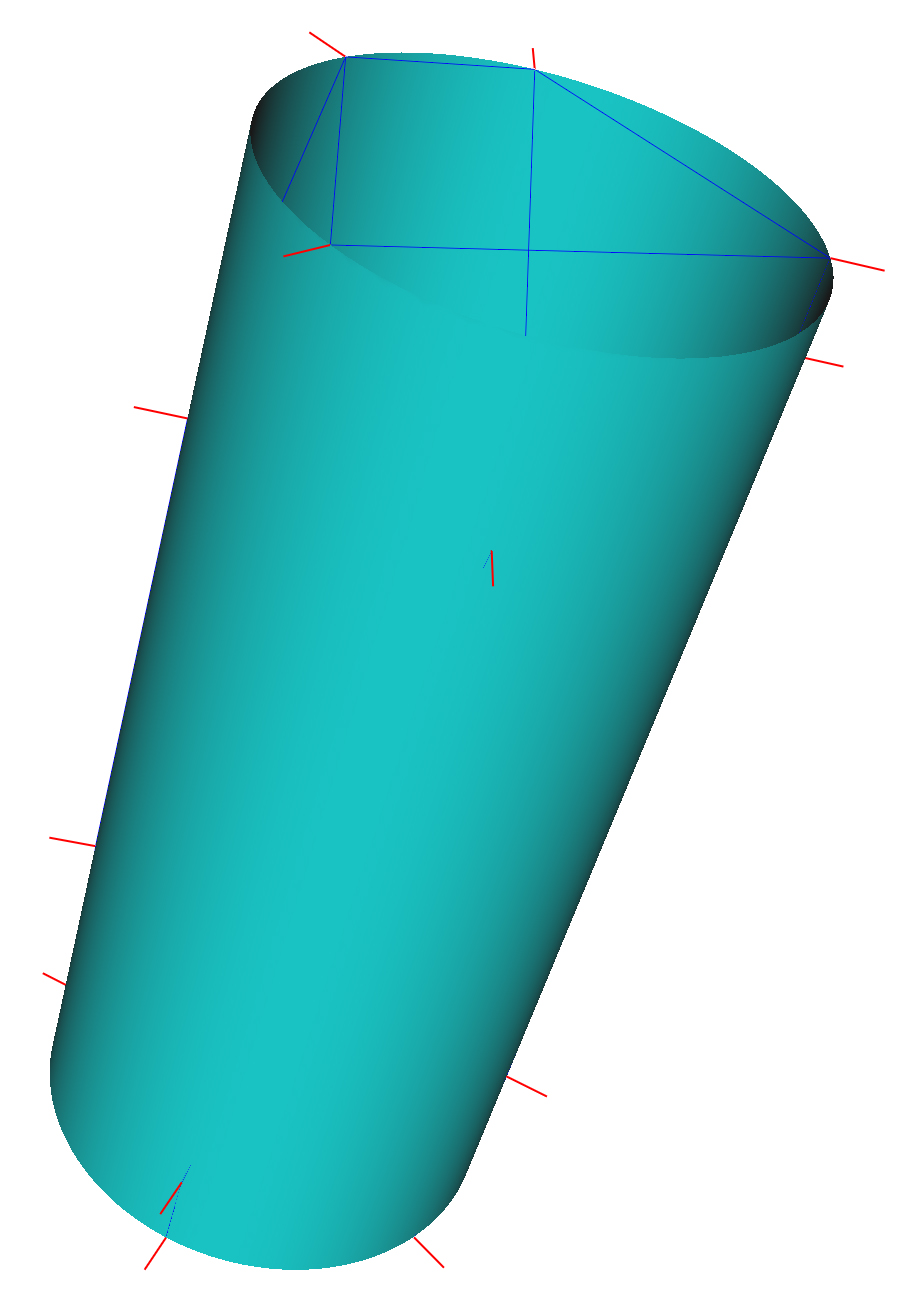}}
  \caption{(a) Catmll-Clark subdivision and (b) PN-Catmull-Clark subdivision of a quad mesh with vertices and normals sampled from a cylinder.}
  \label{Fig:CylinderPreservingSubdivision}
\end{figure}

\begin{figure}[h]
  \centering
  \subfigure[]{\includegraphics[width=0.25\linewidth]{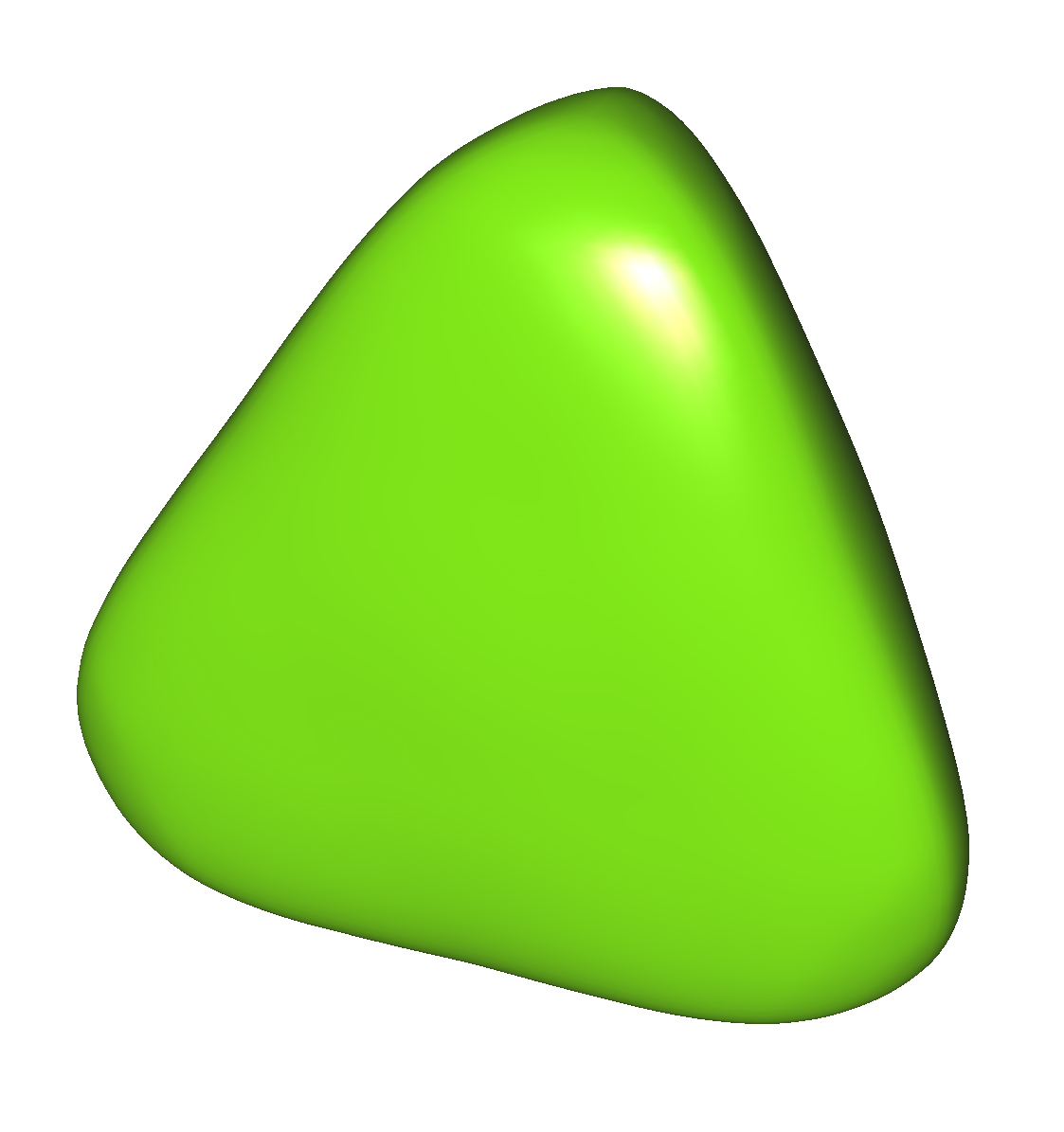}} \ \ \ \ \ \
  \subfigure[]{\includegraphics[width=0.25\linewidth]{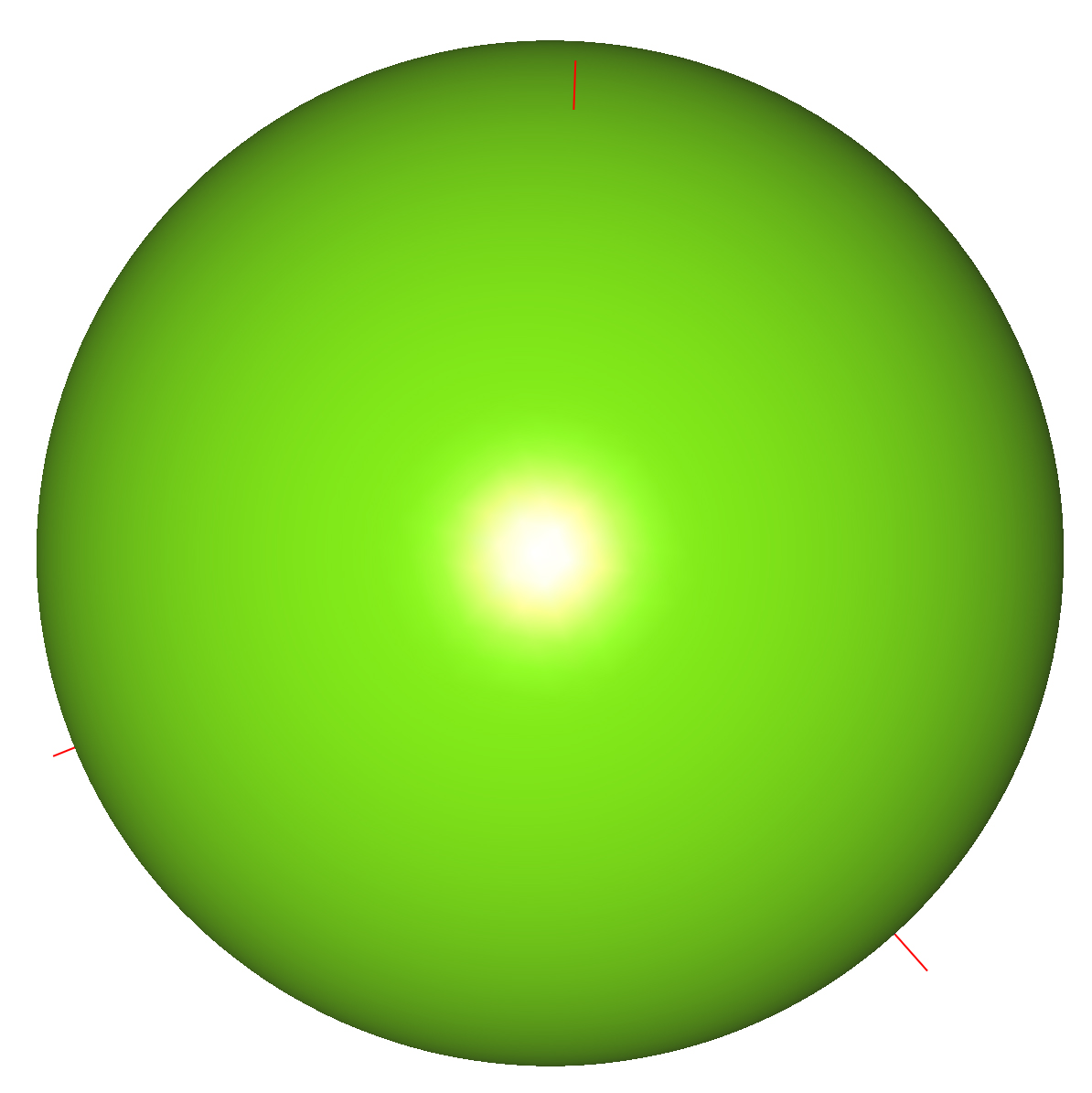}}
  \caption{(a) Butterfly and (b) PN-Butterfly subdivision of a triangular mesh with vertices and normals sampled from a sphere.}
  \label{Fig:SpherePreservingSubdivision}
\end{figure}

Figure \ref{Fig:CylinderPreservingSubdivision}(a) illustrates a quad mesh with vertices sampled from a circular cylinder and a deformed cylinder-like surface generated by traditional Catmull-Clark subdivision. If normal vectors at the vertices are also sampled, a circular cylinder surface that passes through all control vertices is obtained by PN-Catmull-Clark subdivision; see Figure \ref{Fig:CylinderPreservingSubdivision}(b). Figure \ref{Fig:SpherePreservingSubdivision}(a) illustrates a Butterfly subdivision surface constructed from a tetrahedron. By choosing all initial vertex normals as of a sphere, the PN-Butterfly subdivision surface reproduces the sphere exactly; see Figure \ref{Fig:SpherePreservingSubdivision}(b) for the obtained surface.


\section{Convergence and smoothness analysis}
\label{Sec:Converngence}
This section presents convergence and smoothness analysis of the proposed subdivision schemes. As the convergence and smoothness of subdivided normals by Equation (\ref{Eqn:PN binary subd}) have already been discussed in Section \ref{Subsection:subdivision on sphere}, we pay our attention to convergence and smoothness analysis of PN subdivision curves and surfaces.

\subsection{Analysis of univariate PN subdivision schemes}
\label{Subsection:analysis of univariate PN subdivision}

We first analyze the convergence of univariate PN subdivision scheme defined by Equation (\ref{Eqn:PN binary subd}) or given by Equation (\ref{Eqn:PN binary subd reform}).
\begin{theorem}
\label{Theoem:convergence univar_PN subd}
Assume $S_a$ be the linear subdivision scheme as defined by Equation (\ref{Eqn:stationary binary subd}) and $\{S_a\circ M^k\}$ be the PN subdivision scheme as given by Equation (\ref{Eqn:PN binary subd reform}). If $S_a\in C^0$, then $\{S_a\circ M^k\} \approx S_a$ and $\{S_a\circ M^k\}$ converges.
\end{theorem}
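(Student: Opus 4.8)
The plan is to reduce the claim to the asymptotic-equivalence criterion of Proposition~\ref{Proposition:asympototically equivalent}, from which convergence follows at once once equivalence is established. Reading off the matrix-valued mask of the PN scheme from Equation~(\ref{Eqn:PN binary subd reform}), its entries are $a_{i-2j}M_{ij}^k$ whereas the target linear scheme $S_a$ has entries $a_{i-2j}I$; hence
\[
\|S_a\circ M^k-S_a\|_\infty=\max_i\sum_j|a_{i-2j}|\,\|M_{ij}^k-I\|.
\]
Since $M_{ij}^k-I=\sum_l a_{i-2l}(A_{ij}^k-A_{il}^k)$, the whole estimate reduces to controlling $\|A_{ij}^k-A_{il}^k\|$. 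Finiteness of the support of $a$ means only indices $j,l$ in a fixed window around $i/2$ contribute, so it is enough to bound this difference for neighboring indices and then sum the bound over $k$.

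First I would prove a Lipschitz estimate. Regarding $A_{ij}^k=\frac{\mathbf{n}_i^{k+1}(\mathbf{n}_j^k+\mathbf{n}_i^{k+1})^\top}{(\mathbf{n}_j^k+\mathbf{n}_i^{k+1})^\top\mathbf{n}_i^{k+1}}$ as a smooth matrix-valued function of the unit vector $\mathbf{n}_j^k$ with $\mathbf{n}_i^{k+1}$ held fixed, its derivative is bounded as long as the denominator $1+(\mathbf{n}_j^k)^\top\mathbf{n}_i^{k+1}$ stays away from zero. Because the normal subdivision converges (Section~\ref{Subsection:subdivision on sphere}), for large $k$ the neighboring normals $\mathbf{n}_j^k$ and $\mathbf{n}_i^{k+1}$ are close, so this denominator is near $2$; for the finitely many initial levels the local-density (non-degeneracy) assumption on the data rules out any singularity. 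This furnishes a uniform constant $L$ with $\|A_{ij}^k-A_{il}^k\|\le L\,\|\mathbf{n}_j^k-\mathbf{n}_l^k\|$.

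Next I would invoke the geometric decay of the normal differences. Since $S_a\in C^0$, the projection-based normal scheme inherits a positive H\"{o}lder exponent $\alpha\in(0,1]$, so Equation~(\ref{Eqn:holder regularity_normal}) specialized to $m=0$ gives $\|\mathbf{n}_i^k-\mathbf{n}_{i-1}^k\|<c_1 2^{-k\alpha}$. Summing over the bounded index window yields $\|\mathbf{n}_j^k-\mathbf{n}_l^k\|\le C\,2^{-k\alpha}$, hence $\|M_{ij}^k-I\|\le C'2^{-k\alpha}$ and $\|S_a\circ M^k-S_a\|_\infty\le C''2^{-k\alpha}$. Consequently
\[
\sum_{k\in\mathbb{Z}_+}\|S_a\circ M^k-S_a\|_\infty\le\sum_k C''2^{-k\alpha}<+\infty,
\]
which is precisely $\{S_a\circ M^k\}\approx S_a$. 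Proposition~\ref{Proposition:asympototically equivalent}, read for matrix masks, then certifies that $\{S_a\circ M^k\}$ is $C^0$ and stable, so it converges.

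The hard part will be making the Lipschitz constant $L$ genuinely uniform in $k$: one needs the denominators $(\mathbf{n}_j^k+\mathbf{n}_i^{k+1})^\top\mathbf{n}_i^{k+1}$ bounded away from zero simultaneously over all levels and all contributing indices. This is where the two ingredients are coupled---the bound relies on the normals having already clustered, which is itself the convergence of the sphere subdivision---so I expect to split the estimate into the finitely many low levels, handled by the non-degeneracy assumption, and the tail, handled by the clustering together with the H\"{o}lder decay of Equation~(\ref{Eqn:holder regularity_normal}).
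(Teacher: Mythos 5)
Your proposal is correct and follows essentially the same route as the paper: reduce to asymptotic equivalence via the bound $\|S_a\circ M^k-S_a\|_\infty\leq c\,\gamma^k$ with $\gamma=2^{-\alpha}$, obtained by controlling $\|A_{ij}^k-A_{il}^k\|$ through the H\"{o}lder decay of the normal differences in Equation~(\ref{Eqn:holder regularity_normal}), and then invoke Proposition~\ref{Proposition:asympototically equivalent}. Your only deviation is to make explicit, via a Lipschitz estimate with denominators bounded away from zero (split into finitely many low levels and the clustered tail), what the paper compresses into the statement that $A(X_1,\mathbf{n}(t))$ inherits the $C^{0+\alpha}$ regularity of $\mathbf{n}(t)$---a welcome extra detail, but the same argument.
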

\begin{proof}
We prove the convergence of the PN subdivision scheme $\{S_a\circ M^k\}$ by comparing with the linear subdivision scheme $S_a$.
Let
\begin{equation}
\label{Eqn:A(X1,X2)}
A(X_1,X_2) = \frac{X_1(X_1+X_2)^\top}{(X_1+X_2)^\top X_1}, \ \ \ \ X_1,X_2\in \mathbb{S}^2.
\end{equation}
The matrix $A_{ij}^k$ in Equation (\ref{Eqn:PN binary subd A_ij^k}) is given by $A_{ij}^k =  A(\mathbf{n}_i^{k+1},\mathbf{n}_j^k)$.
Under the assumption that $S_a\in C^0$, we know that the subdivided normal vectors $\mathbf{n}_i^k$ converge and the spherical subdivision curve $\mathbf{n}(t)$ discussed in Section \ref{Subsection:subdivision on sphere} is continuous. Assume $\mathbf{n}(t)$ has the H\"{o}lder regularity of $C^{0+\alpha}$, where $0<\alpha\leq1$. It follows that the function $A(X_1,\mathbf{n}(t))$ has also the H\"{o}lder regularity of $C^{0+\alpha}$ with the variable $X_1$ a fixed vector.

Suppose $|j-l|<B$, where $B$ is the bound of the support of the mask of $S_a$. Applying Equation (\ref{Eqn:holder regularity_normal}), we have  $\|\mathbf{n}_j^k - \mathbf{n}_l^k\| < Bc_12^{-k\alpha}$ and
\[
\begin{array}{ccl}
\|A_{ij}^k - A_{il}^k\|_\infty &=& \|A(\mathbf{n}_i^{k+1},\mathbf{n}_j^k) - A(\mathbf{n}_i^{k+1},\mathbf{n}_l^k)\|_\infty  \\
&\leq& c_A 2^{-k\alpha} \\
&\leq& c_A \gamma^k,
\end{array}
\]
where $c_A$ is a constant and $\gamma=2^{-\alpha}\in(0,1)$.
Based on the expression $M_{ij}^k = I+\sum_{l\in\mathbb{Z}}a_{i-2l}(A_{ij}^k-A_{il}^k)$, we have $\|M_{ij}^k-I\|_\infty \leq \|S_a\|_\infty c_A\gamma^k$. It follows that
\[
\|(S_a)_i\circ M_i^k - (S_a)_i\|_\infty = \sum_{j}\|a_{i-2j}(M_{ij}^k - I)\|_\infty \leq c_M\gamma^k,
\]
where $c_M=\|S_a\|^2_\infty c_A$. Since the constant $c_M$ is independent of the index $i$, we have
\[
\|S_a\circ M^k -S_a\|_\infty \leq c_M\gamma^k.
\]
From this inequality, we have $\sum_{k}\|S_a\circ M^k -S_a\|_\infty < +\infty$. This implies that $\{S_a\circ M^k\}\approx S_a$.
Based on Proposition~\ref{Proposition:asympototically equivalent} we know that the PN subdivision scheme $\{S_a\circ M^k\}$ converges.
\end{proof}

We then prove the higher orders of smoothness of univariate PN subdivision schemes. In the remainder part of this subsection we assume that the linear subdivision scheme $S_a\in C^m$ is given by the symbol $a(z)=\frac{(1+z)^m}{2^m}b(z)$, where $S_b$ has the H\"{o}lder regularity of $C^{0+\alpha}$. Let $a_0(z)=a(z)$, $a_1(z)=\frac{(1+z)^{m-1}}{2^{m-1}}b(z)$, $\ldots$, $a_{m-1}(z)=\frac{1+z}{2}b(z)$ and $a_m(z)=b(z)$. The corresponding subdivision schemes are referred as $S_a$, $S_{a_1}$, $\ldots$, $S_{a_{m-1}}$ and $S_{a_m}$, respectively.
We also introduce partial differences when a sequence has two sub-indexes:
\[
\Delta_1 M_{ij}^k = M_{ij}^k - M_{i-1,j}^k, \ \
\Delta_2 M_{ij}^k = M_{ij}^k - M_{i,j-1}^k.
\]
For sequences with only one sub-index, the finite difference $\Delta_1 \mathbf{p}_i^k$ is given by
$
\Delta_1 \mathbf{p}_i^k = \Delta \mathbf{p}_i^k = \mathbf{p}_i^k - \mathbf{p}_{i-1}^k.
$
Similarly, we have $\Delta_2 \mathbf{p}_j^k = \Delta \mathbf{p}_j^k = \mathbf{p}_j^k - \mathbf{p}_{j-1}^k$.
Based on first order partial differences, higher order partial differences will be computed by operators $\Delta_1^m=\Delta_1(\Delta_1^{m-1})$, $\Delta_1\Delta_2=\Delta_1(\Delta_2)$, etc.

Before presenting the main theorem for the smoothness analysis, we introduce a lemma about the norm estimation of the differences of the coefficient matrices given in Equation (\ref{Eqn:PN binary subd reform}).
\begin{lemma}
\label{lemma:difference of matrices}
Assume $S_a$ is a linear binary subdivision scheme with mask defined by the symbol $a(z)=\frac{(1+z)^m}{2^m}b(z)$, where $S_b\in C^0$. Assume the matrices $M_{ij}^k$ are given by Equation (\ref{Eqn:PN binary subd reform}). Then for any nonnegative integers $m_1$, $m_2$ satisfying $1\leq m_1+m_2\leq m$, the following inequality holds
\[
\left\|\frac{\Delta_1^{m_1}\Delta_2^{m_2}M_{ij}^k}{2^{-(m_1+m_2)k}}\right\|_\infty \leq c_m\gamma^k,
\]
where $c_m$ and $0<\gamma<1$ are constants.
\end{lemma}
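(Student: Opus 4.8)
The plan is to regard each entry of $M_{ij}^k$ as a smooth function of the two subdivided normals and to convert the regularity of the normal field into the claimed decay of the scaled partial differences. First I would record the analytic behaviour of the map $A$ of Equation~(\ref{Eqn:A(X1,X2)}). On the neighbourhood of the diagonal $\{X_1=X_2\}$ of $\mathbb{S}^2\times\mathbb{S}^2$ on which the denominator $(X_1+X_2)^\top X_1$ stays bounded away from zero, $A$ is $C^\infty$ and all of its partial derivatives up to order $m$ are bounded by a constant $L_m$. Since the subdivided normals converge, so that for the relevant neighbours $j$ the pair $(\mathbf{n}_i^{k+1},\mathbf{n}_j^k)$ becomes arbitrarily close to the diagonal once $k$ is large, every argument of $A_{ij}^k=A(\mathbf{n}_i^{k+1},\mathbf{n}_j^k)$ eventually lies in this neighbourhood and these bounds hold uniformly in $i,j$.

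Second I would assemble the regularity of the normals. Because $S_a\in C^m$ with $a(z)=\frac{(1+z)^m}{2^m}b(z)$ and $S_b\in C^0$, the projection subdivision on the sphere inherits the same smoothness (Section~\ref{Subsection:subdivision on sphere}): the scaled differences $\|\Delta^p\mathbf{n}_i^k/2^{-pk}\|$ are uniformly bounded for $0\le p\le m$, while the top-order estimate~(\ref{Eqn:holder regularity_normal}) supplies the refined bound $\|\Delta^m\mathbf{n}_i^k/2^{-mk}-\Delta^m\mathbf{n}_{i-1}^k/2^{-mk}\|<c_1 2^{-\alpha k}$. This last inequality is the source of the decisive factor $\gamma^k$ with $\gamma=2^{-\alpha}$; everything else in the estimate will only be bounded.

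Third, the main computation, I would expand $\Delta_1^{m_1}\Delta_2^{m_2}M_{ij}^k$ by a discrete Leibniz / finite-difference chain rule. Writing $M_{ij}^k-I=\sum_l a_{i-2l}(A_{ij}^k-A_{il}^k)$ and using the smoothness of $A$, each application of $\Delta_1$ brings down a difference of the level-$(k+1)$ normals and each $\Delta_2$ a difference of the level-$k$ normals, multiplied by bounded derivatives of $A$ evaluated at intermediate normals (mean-value/Taylor remainders). Collecting the terms according to how the $m_1+m_2$ difference operators are distributed, every term is a product of scaled normal differences whose orders add up to $m_1+m_2$, so after dividing by $2^{-(m_1+m_2)k}$ each such product is bounded by a combination of $L_m$ and the constants of the previous paragraph.

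The hard part will be precisely the passage from boundedness to genuine decay $\gamma^k$. A naive order count only yields $\|\Delta_1^{m_1}\Delta_2^{m_2}M_{ij}^k/2^{-(m_1+m_2)k}\|_\infty\le c_m$, and squeezing out the extra factor $2^{-\alpha k}$ forces one to use that the order of differentiation is strictly controlled by the regularity, $m_1+m_2\le m$, and to apply the H\"older refinement~(\ref{Eqn:holder regularity_normal}) to the highest-order difference factor occurring in each term of the Leibniz expansion. Arranging the expansion so that every term carries such a factor, and verifying that the resulting bounds are uniform in $i,j$ and over all admissible pairs $(m_1,m_2)$, is the delicate point; the constants $c_m$ and $\gamma=2^{-\alpha}$ are then read off from $L_m$, $\|S_a\|_\infty$ and $c_1$. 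This mirrors, at the level of the coefficient matrices, the single-difference estimate already carried out in the proof of Theorem~\ref{Theoem:convergence univar_PN subd}.
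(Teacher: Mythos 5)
Your overall plan --- Leibniz expansion of the differences of $M_{ij}^k$, boundedness of $A$ and its derivatives near the diagonal, and the H\"{o}lder refinement (\ref{Eqn:holder regularity_normal}) as the source of $\gamma^k$ --- is the same as the paper's, but your sketch has a genuine gap in the $\Delta_1$ direction. You assert that ``each application of $\Delta_1$ brings down a difference of the level-$(k+1)$ normals,'' treating $M_{ij}^k$ as a smooth function of the pair $(\mathbf{n}_i^{k+1},\mathbf{n}_j^k)$. But $M_{ij}^k=I+\sum_{l}a_{i-2l}(A_{ij}^k-A_{il}^k)$ depends on $i$ also through the mask coefficients $a_{i-2l}$, and for a binary scheme neighbouring rows of $S_a$ are \emph{not} close (even- and odd-indexed rules are different masks), so a $\Delta_1$ falling on the coefficient part produces a term of size $O(1)\cdot\|A_{ij}^k-A_{il}^k\|$ with no factor $2^{-k}$ attached; once $m_1\geq 1$ your order count does not even yield the crude bound $c_m$ after scaling by $2^{(m_1+m_2)k}$. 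The paper's proof resolves exactly this: it writes $M_{ij}^k=I+(S_a)_iU_{ij}^k$ with $U_{ij}^k$ the column of differences $A_{ij}^k-A_{il}^k$, and uses the operator identity (\ref{Eqn:Delta m Sa_i}) together with the divided symbols $a_1(z)=\frac{(1+z)^{m-1}}{2^{m-1}}b(z),\ldots,a_m(z)=b(z)$, so that each $\Delta_1$ hitting the mask row $(S_a)_i$ is converted into a $\Delta_2$ acting on $U_{ij}^k$ at the cost of a factor $\tfrac12$. This is where the factorization hypothesis $a(z)=\frac{(1+z)^m}{2^m}b(z)$ and the restriction $m_1+m_2\leq m$ enter for the first-index direction --- a mechanism entirely absent from your sketch, and not supplied by the zero-order estimate of Theorem \ref{Theoem:convergence univar_PN subd} that you appeal to at the end.

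Second, the step you yourself flag as ``the delicate point'' --- arranging the expansion so that every term carries the decay factor --- is left open in your proposal, and the paper closes it differently from what you suggest. There is no need to hunt for a ``highest-order difference factor'' inside each Leibniz term: after the conversion above, every term has the form $2^{-s_1}(S_{a_{s_1}})_i\,\Delta_1^{m_1-s_1}\Delta_2^{s_1+m_2}U_{ij}^k$, and the single uniform estimate
\[
\left\|\frac{\Delta_1^{s_1}\Delta_2^{s_2}U_{ij}^k}{2^{-(s_1+s_2)k}}\right\|_\infty\leq c_s\gamma^k,
\qquad s_1+s_2=m_1+m_2\leq m,
\]
suffices. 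The factor $\gamma^k$ here is automatic because every entry of $U_{ij}^k$ is \emph{already} a difference $A(\mathbf{n}_i^{k+1},\mathbf{n}_j^k)-A(\mathbf{n}_i^{k+1},\mathbf{n}_l^k)$ over a bounded index gap $|j-l|\leq B$: the total number of difference operations exceeds the number of scalings $2^{k}$ by one, and the $C^{m+\alpha}$ regularity of $A$ composed with the spherical limit curve (the source of (\ref{Eqn:holder regularity_normal})) turns that surplus difference into $\gamma^k=2^{-\alpha k}$, uniformly in $i,j$ and in $(s_1,s_2)$. In short, the extra decay you were trying to extract term by term is built into the definition of $U_{ij}^k$ once the mask-row differences have been transferred onto it; without that transfer, your expansion cannot be made to close.
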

\begin{proof}
Let $U_{ij}^k=(\ldots;A_{ij}^k-A_{i,l-1}^k;A_{ij}^k-A_{il}^k;A_{ij}^k-A_{i,l+1}^k;\ldots)$. Then the matrix $M_{ij}^k$ can be rewritten as $M_{ij}^k = I+(S_a)_i U_{ij}^k$. The finite differences of the matrices can be computed by the Leibniz rule and Equation (\ref{Eqn:Delta m Sa_i}) as follows
\[
\begin{aligned}
\frac{\Delta_1^{m_1}\Delta_2^{m_2}M_{ij}^k}{2^{-(m_1+m_2)k}}
&=\frac{\Delta_1^{m_1}\Delta_2^{m_2}[(S_a)_i U_{ij}^k]}{2^{-(m_1+m_2)k}}  \\
&=\frac{1}{2^{-(m_1+m_2)k}} \Delta_1^{m_1}[(S_a)_i \Delta_2^{m_2}U_{ij}^k] \\
&=\frac{1}{2^{-(m_1+m_2)k}} \big[\Delta_1^{m_1}(S_a)_i \Delta_2^{m_2}U_{ij}^k
  +C_{m_1}^1 \Delta_1^{m_1-1}(S_a)_i \Delta_1\Delta_2^{m_2}U_{ij}^k
  +\cdots
  +C_{m_1}^{m_1} (S_a)_i \Delta_1^{m_1}\Delta_2^{m_2}U_{ij}^k\big]     \\
&=\frac{(S_{a_{m_1}})_i}{2^{m_1}} \frac{\Delta_2^{m_1+m_2}U_{ij}^k}{2^{-(m_1+m_2)k}}
  +C_{m_1}^1 \frac{(S_{a_{m_1-1}})_i}{2^{m_1-1}} \frac{\Delta_1\Delta_2^{m_1+m_2-1}U_{ij}^k}{2^{-(m_1+m_2)k}}
  +\cdots
  +C_{m_1}^{m_1} (S_a)_i \frac{\Delta_1^{m_1}\Delta_2^{m_2}U_{ij}^k}{2^{-(m_1+m_2)k}},
\end{aligned}
\]
where $C_{m_1}^l=\frac{{m_1}!}{l!(m_1-l)!}$.
As discussed in Section \ref{Subsection:subdivision on sphere}, the spherical subdivision curve $\mathbf{n}(t)$ has the H\"{o}lder regularity of $C^{m+\alpha}$ when the linear subdivision scheme $S_a\in C^m$. It follows that the function $A(X_1(t),X_2(t))$ also has the H\"{o}lder regularity of $C^{m+\alpha}$ when $X_1(t)\approx X_2(t)$ are the spherical subdivision curves.
Since the support of the mask of $S_a$ is bounded, we assume that $|j-l|$ is bounded too.
For any $s_1,s_2\in \mathbb{N}_0$, $s_1+s_2=m_1+m_2\leq m$, we have
\[
\begin{array}{cl}
\left\|\frac{\Delta_1^{s_1}\Delta_2^{s_2}U_{ij}^k}{2^{-(s_1+s_2)k}}\right\|_\infty
&= \left\|\frac{\Delta_1^{s_1}\Delta_2^{s_2}(A_{ij}^k-A_{il}^k)}{2^{-(s_1+s_2)k}}\right\|_\infty   \\
&= \left\|\frac{\Delta_1^{s_1}\Delta_2^{s_2}(A(\mathbf{n}_i^{k+1},\mathbf{n}_j^k)-A(\mathbf{n}_i^{k+1},\mathbf{n}_l^k))}{2^{-(s_1+s_2)k}}\right\|_\infty \\
&\leq c_s\gamma^k,
\end{array}
\]
where $c_s$ and $\gamma\in(0,1)$ are constants.
Let
\[
k_a=\max_{0\leq m_1\leq m} \left\{\frac{|S_{a_{m_1}}|_\infty}{2^{m_1}} + C_{m_1}^1 \frac{|S_{a_{m_1-1}}|_\infty}{2^{m_1-1}}
    +\cdots+C_{m_1}^{m_1}|S_a|_\infty \right\}.
\]
We have
\[
\left\|\frac{\Delta_1^{m_1}\Delta_2^{m_2}M_{ij}^k}{2^{-(m_1+m_2)k}}\right\|_\infty
\leq k_ac_s\gamma^k \doteq c_m\gamma^k.
\]
This completes the proof.
\end{proof}

We now show that the univariate PN subdivision schemes have the same orders of smoothness as linear subdivision schemes.
\begin{theorem}
\label{Theorem:smoothness for univariate PN subd}
Assume $S_a$ is a linear binary subdivision scheme with mask defined by the symbol $a(z)=\frac{(1+z)^m}{2^m}b(z)$, where $S_b\in C^0$. Let $\{S_a\circ M^k\}$ be the PN subdivision scheme originally defined by Equation (\ref{Eqn:PN binary subd}). Then $\{S_a\circ M^k\}\in C^m$.
\end{theorem}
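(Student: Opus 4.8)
The plan is to prove the $C^m$ property exactly as in the linear theory: I will track the normalized $m$th differences $g^k:=\dfrac{\Delta^m P^k}{2^{-km}}$ of the PN points and show that they converge uniformly to a continuous limit. By the characterization behind Equation~(\ref{Eqn:Delta m Pk}) and Equation~(\ref{Eqn:Delta m Sa_i}) (Theorem 4 in \citep{Dyn02analysisbyLaurentPolynomials}), uniform convergence of $g^k$ identifies the $m$th derivative of the limit curve and, together with the $C^0$ convergence already furnished by Theorem~\ref{Theoem:convergence univar_PN subd}, yields $\{S_a\circ M^k\}\in C^m$. To obtain convergence of $g^k$ I will realize it as the output of a \emph{perturbed linear scheme} whose base is the convergent scheme $S_b$, and then invoke Proposition~\ref{Proposition:perturb subdivision}.

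First I would derive the governing recursion by splitting $M_{ij}^k=I+(M_{ij}^k-I)$ in Equation~(\ref{Eqn:PN binary subd reform}), so that
\[
\mathbf{p}_i^{k+1}=(S_aP^k)_i+E_i^k,\qquad E_i^k:=\sum_{j}a_{i-2j}(M_{ij}^k-I)\mathbf{p}_j^k .
\]
Applying $\Delta_1^m$ to the linear part and using Equation~(\ref{Eqn:Delta m Sa_i}) reproduces, after normalization, the operator $S_b$ acting on $g^k$ exactly as in the linear case. The error part defines the perturbation, giving
\[
g^{k+1}=S_b\,g^k+\varepsilon^{k+1},\qquad \varepsilon^{k+1}:=\frac{\Delta_1^m E^k}{2^{-(k+1)m}} .
\]
The whole argument then reduces to showing $\sum_k\|\varepsilon^k\|_\infty<+\infty$.

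The crux, and the step I expect to be the main obstacle, is the estimate of $\|\varepsilon^{k+1}\|_\infty$. I would expand $\Delta_1^m E_i^k$ by the discrete Leibniz rule, distributing the $m$ differences between the mask factor $(S_a)_i$ and the matrix factor $M_i^k-I$, exactly in the spirit of the expansion in the proof of Lemma~\ref{lemma:difference of matrices}. Each difference that falls on the mask is converted, via Equation~(\ref{Eqn:Delta m Sa_i}), into a column difference $\Delta_2$ carried by $S_b$ together with a factor $2^{-1}$; each difference that falls on the matrix produces a factor $\Delta_1^{\ell}M_{ij}^k$ with $\ell\ge 1$, which by Lemma~\ref{lemma:difference of matrices} is $O(\gamma^k 2^{-\ell k})$; and the single term in which no difference touches the matrix retains a bare factor $M_{ij}^k-I$, which is $O(\gamma^k)$ by the bound $\|M_{ij}^k-I\|_\infty\le\|S_a\|_\infty c_A\gamma^k$ established in the proof of Theorem~\ref{Theoem:convergence univar_PN subd}. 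Hence every term in the expansion carries at least one spare geometric factor $\gamma^k$, while the leftover mask-difference powers of $2^{-1}$ and point-difference factors reassemble, after division by $2^{-(k+1)m}$, into normalized differences of $P^k$ of orders $\le m$. The delicate point is the bookkeeping of the powers of two, so that each term's normalization consumes precisely the point- and mask-difference scalings and leaves the factor $\gamma^k$; carried out carefully this yields $\|\varepsilon^{k+1}\|_\infty\le C\gamma^k(1+\|g^k\|_\infty)$.

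Finally I would close the loop. Since $S_b\in C^0$ has a finitely supported mask, it is stable by Proposition~\ref{Proposition:asympototically equivalent} (with constant masks), so $\|S_b^{\,n}\|_\infty$ is uniformly bounded. Unrolling $g^{k+1}=S_b g^k+\varepsilon^{k+1}$ and applying a discrete Gronwall/bootstrap argument against the summable weights $\gamma^k$ then gives $\sup_k\|g^k\|_\infty<+\infty$ (the boundedness of the lower-order normalized differences needed along the way also follows from the $C^{m-1}$ case by induction on $m$). With $g^k$ bounded, the a posteriori estimate becomes $\|\varepsilon^{k+1}\|_\infty\le C'\gamma^k$, which is summable, so Proposition~\ref{Proposition:perturb subdivision} applies to $g^{k+1}=S_b g^k+\varepsilon^{k+1}$ and yields a continuous limit for $g^k$. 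Combined with the reduction in the first paragraph, this proves $\{S_a\circ M^k\}\in C^m$. The principal difficulty remains the Leibniz estimate, together with the mild circularity that $\varepsilon^{k+1}$ depends on $g^k$, which the Gronwall step (or the induction on $m$) is designed to dissolve.
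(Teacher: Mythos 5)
Your proposal is correct, and it runs on the same machinery as the paper's proof (Leibniz expansion of $\Delta_1^m$ against Equation~(\ref{Eqn:Delta m Sa_i}), the decay estimates of Lemma~\ref{lemma:difference of matrices} and the bound $\|M_{ij}^k-I\|_\infty=O(\gamma^k)$ from Theorem~\ref{Theoem:convergence univar_PN subd}, induction on the smoothness order to bound the lower-order normalized differences, and Proposition~\ref{Proposition:perturb subdivision} to conclude), but it differs in one genuine structural choice. The paper does \emph{not} split $M_{ij}^k=I+(M_{ij}^k-I)$: it keeps the matrices attached to the leading term, arriving at
\[
\frac{\Delta^m \mathbf{p}_i^{k+1}}{2^{-m(k+1)}} = \bigl((S_b)_i\circ M_i^k\bigr)\frac{\Delta^m P^k}{2^{-mk}} + \varepsilon_i^k,
\qquad \|\varepsilon_i^k\|<c\gamma^k,
\]
so the base scheme of the perturbed recursion is the non-stationary linear scheme $\{S_b\circ M^k\}$ (linear in the point data, since the $M^k$ are driven solely by the independently subdivided normals), whose $C^0$ convergence and stability come for free from Theorem~\ref{Theoem:convergence univar_PN subd} together with Proposition~\ref{Proposition:asympototically equivalent}; crucially, the perturbation is then bounded by $c\gamma^k$ with \emph{no} dependence on the unknown differences $g^k$, and Proposition~\ref{Proposition:perturb subdivision} applies immediately. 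Your splitting makes the base scheme the stationary $S_b$, which is simpler, but it pushes the term $(S_b)_i\bigl[(M_i^k-I)^{\top_{blk}}\circ \Delta^m P^k\bigr]$ into the perturbation, producing exactly the self-referential bound $\|\varepsilon^{k+1}\|_\infty\le C\gamma^k(1+\|g^k\|_\infty)$ you identify; your repair --- stability of $S_b$ (Proposition~\ref{Proposition:asympototically equivalent} with constant masks), unrolling, and a discrete Gronwall argument against the summable weights $\gamma^k$ to get $\sup_k\|g^k\|_\infty<+\infty$ before invoking Proposition~\ref{Proposition:perturb subdivision} --- is sound and closes the loop. What each route buys: yours needs only stationary-scheme stability and is self-contained modulo the Gronwall bookkeeping, while the paper's reuses its convergence theorem to absorb $M^k$ into the base scheme, eliminating the circularity and the a priori bound entirely; the paper's version is therefore shorter at this step, but your argument is a legitimate and correct alternative.
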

\begin{proof}
We prove the theorem by induction. From Theorem \ref{Theoem:convergence univar_PN subd} we know that $\{S_a\circ M^k\}\in C^0$. We then prove that $\{S_a\circ M^k\}\in C^m$ under the assumption that $\{S_a\circ M^k\}\in C^l$, $l=0,1,\ldots,m-1$.

From Equation (\ref{Eqn:P_i k+1 in matrix form}) we have $\mathbf{p}_i^{k+1}=(S_a)_i ((M_i^k)^{\top_{blk}}\circ P^k)$. We first compute the differences of the point sequence $\{\mathbf{p}_i^{k+1}\}$ using the Leibniz rule and Equation (\ref{Eqn:Delta m Sa_i}):
\[
\begin{aligned}
 \Delta^m \mathbf{p}_i^{k+1}
&= \Delta_1^m \{(S_a)_i [(M_i^k)^{\top_{blk}}\circ P^k]\} \\
&= \Delta_1^m (S_a)_i[(M_i^k)^{\top_{blk}} \circ P^k] + C_m^1 \Delta_1^{m-1} (S_a)_i[\Delta_1(M_i^k)^{\top_{blk}} \circ P^k]
   + \cdots + C_m^m (S_a)_i[\Delta_1^m(M_i^k)^{\top_{blk}} \circ P^k]    \\
&= \frac{(S_b)_i}{2^m}\Delta_2^m[(M_i^k)^{\top_{blk}}\circ P^k]
   + C_m^1\frac{(S_{a_{m-1}})_i}{2^{m-1}} \Delta_2^{m-1}[\Delta_1(M_i^k)^{\top_{blk}}\circ P^k]
   +\cdots + C_m^m (S_a)_i[\Delta_1^m(M_i^k)^{\top_{blk}} \circ P^k] \\
&= \frac{(S_b)_i}{2^m} \big[(M_i^k)^{\top_{blk}} \circ \Delta^m P^k + C_m^1\Delta_2(M_i^k)^{\top_{blk}}\circ \Delta^{m-1} P^k + \cdots
   + C_m^m\Delta_2^m(M_i^k)^{\top_{blk}}\circ P^k \big]  \\
& \ \ \ \ +C_m^1\frac{(S_{a_{m-1}})_i}{2^{m-1}}\big[\Delta_1(M_i^k)^{\top_{blk}}\circ \Delta^{m-1}P^k
   +C_{m-1}^1\Delta_2\Delta_1(M_i^k)^{\top_{blk}}\circ \Delta^{m-2}P^k + \cdots
   + C_{m-1}^{m-1}\Delta_2^{m-1}\Delta_1(M_i^k)^{\top_{blk}} \circ P^k \big] \\
& \ \ \ \ +\cdots \\
& \ \ \ \ +C_m^m (S_a)_i\left[\Delta_1^m(M_i^k)^{\top_{blk}} \circ P^k\right].
\end{aligned}
\]
From this expression, we have
\[
\begin{aligned}
\frac{\Delta^m \mathbf{p}_i^{k+1}}{2^{-m(k+1)}} &= ((S_b)_i\circ M_i^k)\frac{\Delta^m P^k}{2^{-mk}}\\
&  \ \ \ \ + (S_b)_i \bigg[C_m^1\frac{\Delta_2(M_i^k)^{\top_{blk}}}{2^{-k}}\circ \frac{\Delta^{m-1} P^k}{2^{-(m-1)k}} + \cdots
   + C_m^m\frac{\Delta_2^m(M_i^k)^{\top_{blk}}}{2^{-mk}}\circ P^k \bigg]  \\
&  \ \ \ \ + 2C_m^1(S_{a_{m-1}})_i \bigg[\frac{\Delta_1(M_i^k)^{\top_{blk}}}{2^{-k}}\circ \frac{\Delta^{m-1}P^k}{2^{-(m-1)k}}
 +C_{m-1}^1\frac{\Delta_2\Delta_1(M_i^k)^{\top_{blk}}}{2^{-2k}}\circ \frac{\Delta^{m-2}P^k}{2^{-(m-2)k}} + \cdots
 + C_{m-1}^{m-1}\frac{\Delta_2^{m-1}\Delta_1(M_i^k)^{\top_{blk}}}{2^{-mk}} \circ P^k\bigg] \\
&  \ \ \ \ + \cdots \\
&  \ \ \ \ + 2^m C_m^m (S_a)_i\left[\frac{\Delta_1^m(M_i^k)^{\top_{blk}}}{2^{-mk}} \circ P^k\right].
\end{aligned}
\]
Under the assumption that $\{S_a\circ M^k\}\in C^l$, $l=0,1,\ldots,m-1$, we have
\[
\left\|\frac{\Delta^l P^k}{2^{-lk}}\right\|_\infty < K, \ \ \ \ \  0\leq l <m,
\]
where $K$ is the bound of the derivatives of the subdivision curve as well as the bound of the finite differences of the sequence of subdivided points. From Lemma \ref{lemma:difference of matrices} we know that the differences of all element matrices of $(M_i^k)^{\top_{blk}}$ within above equation have a bound $c_m\gamma^k$. Then the above equation can be simplified as
\[
\frac{\Delta^m \mathbf{p}_i^{k+1}}{2^{-m(k+1)}} = ((S_b)_i\circ M_i^k)\frac{\Delta^m P^k}{2^{-mk}} + \varepsilon_i^k,
\]
where $\|\varepsilon_i^k\|<c\gamma^k$ and $\gamma\in(0,1)$.
Since $S_b\in C^0$, and by Theorem \ref{Theoem:convergence univar_PN subd}, we know $\{S_b\circ M^k\}$ converges. Based on Proposition \ref{Proposition:perturb subdivision} we know that the difference sequence $\{\frac{\Delta^m P^k}{2^{-mk}}\}_{k\in\mathbb{N}}$ converges too when $k$ approaches infinity. This implies that $\{S_a\circ M^k\}\in C^m$.
\end{proof}

Besides by subdividing the old normals using scheme $S_a$ and projecting the linearly subdivided normals onto sphere, the normal vectors within Equation (\ref{Eqn:PN binary subd}) can also be generated by masks of schemes other than $S_a$ or sampled directly from a smooth curve on sphere. In the same way as the proof of Theorem {\ref{Theorem:smoothness for univariate PN subd}} we obtain the smoothness orders of this kind of PN subdivision schemes.
\begin{corollary}
\label{Corollary:smoothness for univariate PN subd}
Assume $S_a\in C^m$ and $S_{a'}\in C^{m'}$ are two binary linear subdivision schemes, where $m,m'\in \mathbb{N}_0$. If a PN subdivision scheme is defined by Equation (\ref{Eqn:PN binary subd}) with points computed using mask of $S_a$ and with unit normals computed using mask of $S_{a'}$, then the PN subdivision scheme $\{S_a\circ M^k\}\in C^{\min\{m,m'\}}$.
\end{corollary}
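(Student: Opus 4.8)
The plan is to re-run the argument of Theorem~\ref{Theorem:smoothness for univariate PN subd} essentially unchanged, but to keep track of which of the two orders, that of the point mask or that of the normal mask, is the binding constraint. Write $m^*=\min\{m,m'\}$. The one structural difference from Theorem~\ref{Theorem:smoothness for univariate PN subd} is that the normals are now produced by $S_{a'}$ instead of $S_a$, so the spherical normal curve $\mathbf{n}(t)$ generated by Equation~(\ref{Eqn:PN binary subd}) inherits its regularity from $S_{a'}\in C^{m'}$, namely H\"older regularity $C^{m'+\alpha'}$ for some $0<\alpha'\leq 1$, as recalled in Section~\ref{Subsection:subdivision on sphere}. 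Everything in the point update still uses the mask $\mathbf{a}$, so only the smoothness fed into the coefficient matrices $M_{ij}^k$ is affected.

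First I would factorize the point mask at the reduced order. Since $S_a\in C^m$ and $m\geq m^*$, its symbol admits $a(z)=\frac{(1+z)^{m^*}}{2^{m^*}}b^*(z)$ with $b^*(z)=\frac{(1+z)^{m-m^*}}{2^{m-m^*}}b(z)$, and the associated scheme satisfies $S_{b^*}\in C^{m-m^*}\subseteq C^0$. This is exactly the hypothesis needed to run the induction of Theorem~\ref{Theorem:smoothness for univariate PN subd} up to level $m^*$, with $S_{b^*}$ in place of $S_b$. Next I would re-examine Lemma~\ref{lemma:difference of matrices}: its proof uses only the regularity of the composite map $A(X_1(t),X_2(t))$, which is inherited from the regularity of the normal curve. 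Because the normals now have regularity $C^{m'+\alpha'}$ with $m'\geq m^*$, this map is at least $C^{m^*+\alpha'}$, so the matrix-difference bound
\[
\left\|\frac{\Delta_1^{m_1}\Delta_2^{m_2}M_{ij}^k}{2^{-(m_1+m_2)k}}\right\|_\infty \leq c_{m^*}\gamma^k
\]
survives for all $1\leq m_1+m_2\leq m^*$. Convergence, i.e. $\{S_a\circ M^k\}\in C^0$, follows verbatim from Theorem~\ref{Theoem:convergence univar_PN subd}, which needs only $S_a\in C^0$.

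With these two adjustments in hand, I would carry out the same Leibniz-rule expansion of $\Delta^{m^*}\mathbf{p}_i^{k+1}$ as in Theorem~\ref{Theorem:smoothness for univariate PN subd}, using Equation~(\ref{Eqn:Delta m Sa_i}) with the factorization $a=\frac{(1+z)^{m^*}}{2^{m^*}}b^*$ and stopping at level $m^*$. Every term carrying at least one difference of an $M$-block is controlled by the bound above together with the induction hypothesis that $\|\Delta^l P^k/2^{-lk}\|_\infty$ is bounded for $l<m^*$, so all such terms collapse into a perturbation $\varepsilon_i^k$ with $\|\varepsilon_i^k\|<c\gamma^k$, yielding
\[
\frac{\Delta^{m^*}\mathbf{p}_i^{k+1}}{2^{-m^*(k+1)}}=\big((S_{b^*})_i\circ M_i^k\big)\frac{\Delta^{m^*}P^k}{2^{-m^*k}}+\varepsilon_i^k.
\]
Since $S_{b^*}\in C^0$, Theorem~\ref{Theoem:convergence univar_PN subd} gives that $\{S_{b^*}\circ M^k\}$ converges and is stable, and Proposition~\ref{Proposition:perturb subdivision} then forces the normalized difference sequence $\{\Delta^{m^*}P^k/2^{-m^*k}\}$ to converge, so $\{S_a\circ M^k\}\in C^{m^*}$.

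The main obstacle I expect is justifying precisely why $m^*$, and not something larger, is the best one can assert. The argument breaks exactly at the step where one would want a matrix-difference bound of order greater than $m'$: the composite map $A(X_1(t),X_2(t))$ is only as smooth as the normal curve, so Lemma~\ref{lemma:difference of matrices} caps the usable order at $m'$. On the other side, the point mask can always be factorized at order $m^*$ because $m\geq m^*$. The two constraints meet exactly at $m^*=\min\{m,m'\}$, which I would make explicit in the write-up rather than treating it as a mere specialization of the previous theorem.
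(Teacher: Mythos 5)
Your proposal is correct and follows exactly the route the paper intends: the paper proves this corollary simply by remarking that it goes ``in the same way as the proof of Theorem~\ref{Theorem:smoothness for univariate PN subd},'' and your write-up is precisely that argument, with the point mask factorized at order $m^*=\min\{m,m'\}$ so that $S_{b^*}\in C^0$, Lemma~\ref{lemma:difference of matrices} rechecked using the $C^{m'+\alpha'}$ regularity of the $S_{a'}$-generated normal curve, and the Leibniz/perturbation argument via Proposition~\ref{Proposition:perturb subdivision} closing the induction at level $m^*$. Your closing observation identifying where each of the two constraints ($m$ versus $m'$) becomes binding is a useful clarification the paper leaves implicit.
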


\begin{figure*}[htb]
  \centering
  \subfigure[]{\includegraphics[width=0.28\linewidth]{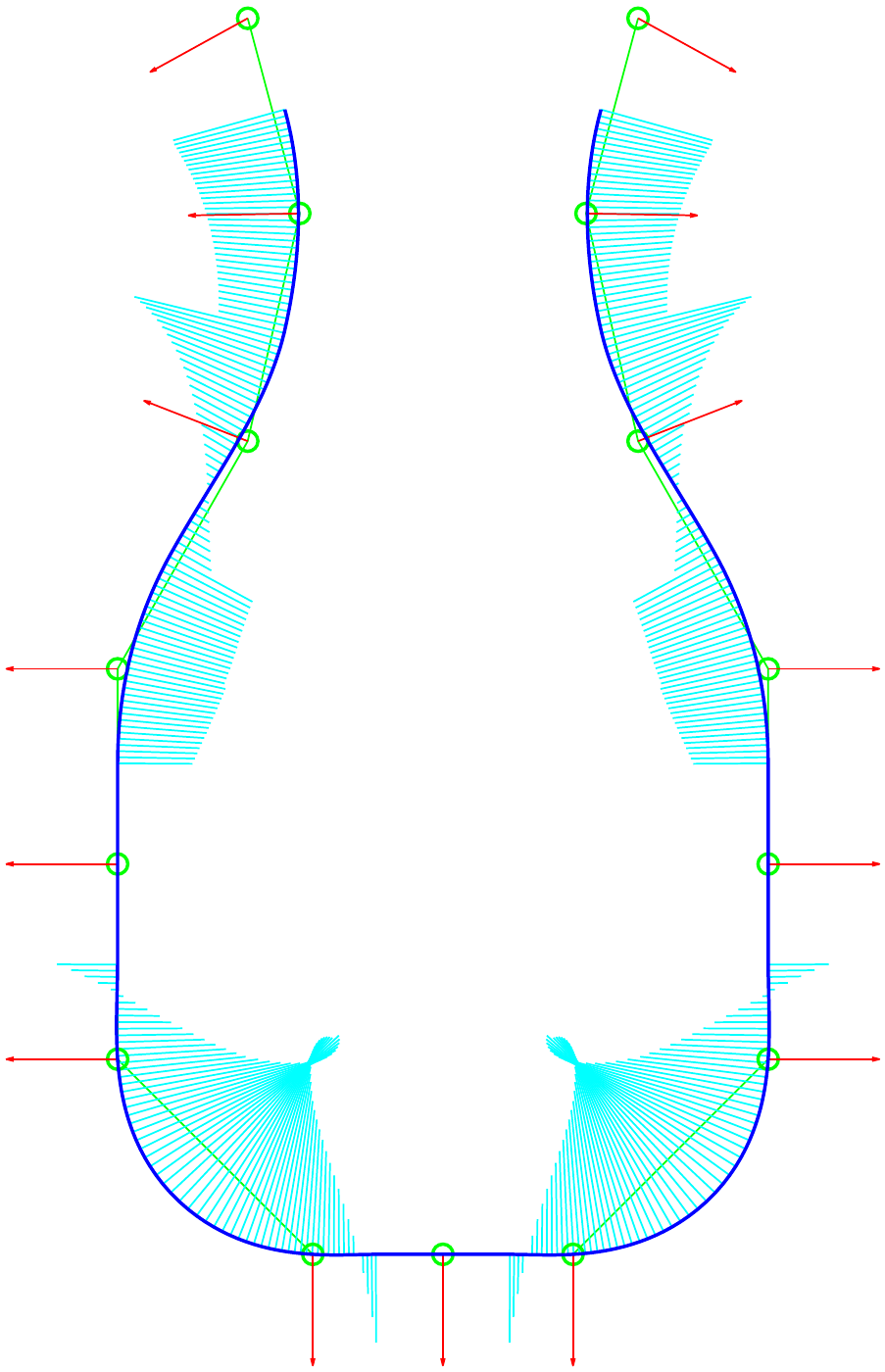}} \ \ \ \ \
  \subfigure[]{\includegraphics[width=0.28\linewidth]{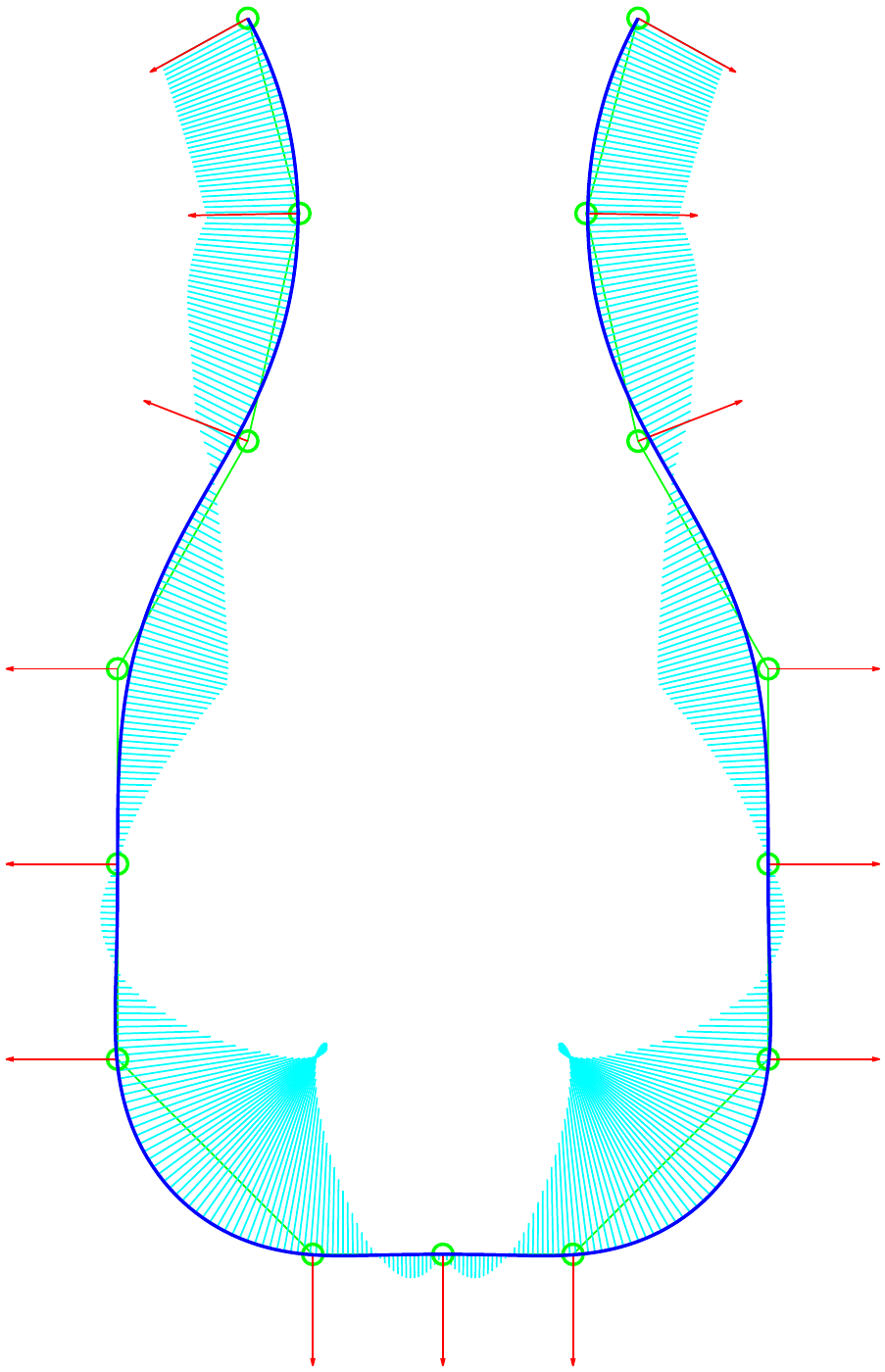}} \ \ \ \ \
  \subfigure[]{\includegraphics[width=0.28\linewidth]{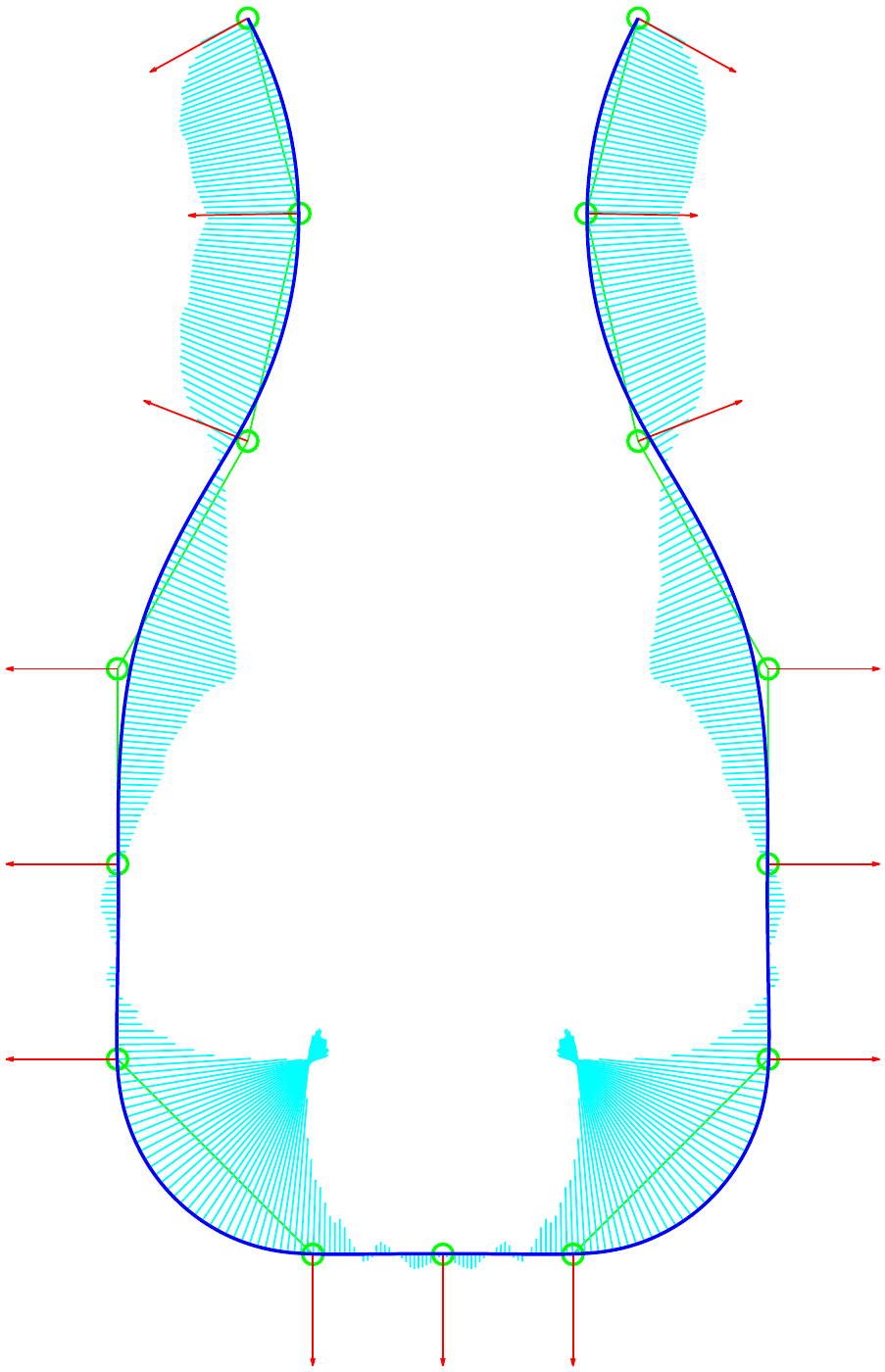}}
  \caption{PN B-spline subdivision curves with curvature combs: (a) PN quadratic B-spline subdivision; (b) PN cubic B-spline subdivision; (c) PN cubic B-spline subdivision using normal field generated by spherical 4-point subdivision.}
  \label{Fig:bottle PN Bspline Subdivision}
\end{figure*}

It is known that a uniform B-spline subdivision curve of degree $m$ has continuity order of $m-1$. From Theorem \ref{Theorem:smoothness for univariate PN subd} we know that a PN B-spline subdivision curve of degree $m$ has also the continuity order of $m-1$. Figure \ref{Fig:bottle PN Bspline Subdivision}(a) illustrates a PN quadratic B-spline subdivision curve. It is clear that the subdivision curve is tangent continuous but not curvature continuous. The PN cubic B-spline subdivision curve illustrated in Figure \ref{Fig:bottle PN Bspline Subdivision}(b) is curvature continuous, just as expected as a cubic B-spline curve.
Figure \ref{Fig:bottle PN Bspline Subdivision}(c) illustrates another PN cubic B-spline subdivision curve, but using normal field generated by spherical 4-point subdivision scheme. As 4-point subdivision has only $C^1$ continuity, the obtained PN subdivision curve is no longer as smooth as that in Figure \ref{Fig:bottle PN Bspline Subdivision}(b).

\subsection{Analysis of PN subdivision on irregular meshes}
\label{Subsection:analysis of PN subdivision surface}

Corresponding to the theoretical analysis of linear subdivision on irregular meshes, convergence and smoothness analysis of PN subdivision on irregular meshes also consists of two parts: analysis of PN subdivision on regular meshes and analysis of PN subdivision on meshes surrounding an extraordinary vertex or face.

Same as univariate subdivision, bivariate linear subdivision on regular quad meshes or triangular meshes can also be generalized to PN subdivision using Equation (\ref{Eqn:PN binary subd}). If the symbol $a(z_1,z_2)=\sum_{(i_1,i_2)\in\mathbb{Z}^2}a_{i_1,i_2}z_1^{i_1}z_2^{i_2}$ for a bivariate subdivision scheme is factorizable, the convergence and smoothness of the obtained PN subdivision scheme can be analyzed in the same way as univariate PN subdivision. Similar to Theorem \ref{Theorem:smoothness for univariate PN subd} and Corollary \ref{Corollary:smoothness for univariate PN subd}, the smoothness order of any bivariate PN subdivision on regular meshes can be derived from the smoothness order of the linear subdivision $S_a$ and the smoothness order of the subdivided normal field. Based on the smoothness equivalence between projection based bivariate subdivision and linear bivariate subdivision for regular control meshes (see Theorem 2.6 and Corollary 2.7 in \citep{Weinmann2012Subdwithgeneraldilation}), we know that the smoothness order of the subdivided normal field and the smoothness order of bivariate PN subdivision on regular meshes are the same as that for the corresponding linear subdivision scheme.

We present here the convergence and normal continuity analysis of PN subdivision of irregular quad meshes surrounding an isolated extraordinary vertex, the same result holds for subdivision of irregular quad meshes surrounding an extraordinary face or irregular triangle meshes surrounding an extraordinary vertex.
By taking the notations used in Section \ref{Subsection:subdivision of irregular meshes}, we assume $Q=(\mathbf{q}_0;\ldots;\mathbf{q}_{\bar{l}})$ be a set of control points surrounding an isolated extraordinary vertex and $N=(\mathbf{n}_0;\ldots;\mathbf{n}_{\bar{l}})$ be the initial control normals at the control points. Let $S=(s_{ij})_{0\leq i,j\leq \bar{l}}$ be the subdivision matrix and $Q_m=(\mathbf{q}_0^m;\ldots;\mathbf{q}_{\bar{l}}^m)$ be the control points for the $m$th surface ring. Assume the normal vectors $\mathbf{n}_i^k$, $0\leq i\leq \bar{l}$, at the control points are refined by Equation (\ref{Eqn:refinement of normals at extra vertex}). Let
\begin{equation}
\label{Eqn:M_k square}
\tilde{M}_k=\left(
      \begin{array}{ccc}
        \tilde{M}_{0,0}^k & \cdots & \tilde{M}_{0,\bar{l}}^k  \\
        \vdots   & \ddots   & \vdots    \\
        \tilde{M}_{\bar{l},0}^k & \cdots & \tilde{M}_{\bar{l},\bar{l}}^k  \\
      \end{array}
    \right),
\end{equation}
where $\tilde{M}_{ij}^k = I + \sum_{l=0}^{\bar{l}}s_{il}(A_{ij}^k-A_{il}^k)$ and $A_{ij}^k=A(\mathbf{n}_i^{k+1},\mathbf{n}_j^k)$ using Equation (\ref{Eqn:A(X1,X2)}).
Then the control points for the surface ring $\mathbf{\tilde{x}}_m$ by PN subdivision are computed by
\begin{equation}
\label{Eqn:Q_m = Sm...S1Q}
Q_m = S_{m-1}\cdots S_1 S_0 Q,
\end{equation}
where $S_k=S\circ \tilde{M}_k$, $k=0,\ldots,m-1$. We denote the subdivision scheme as $\{S_k\}$.
It is verified that $\sum_{j=0}^{\bar{l}}s_{ij}\tilde{M}_{ij}^k=I$ for $i=0,1,\ldots,\bar{l}$. Then we have
\begin{equation}
\label{Eqn:S_k I1}
S_k(I;I;\ldots;I) = S_k(I\mathbbm{1}) = I\mathbbm{1}.
\end{equation}

For convenience of comparison between $S_kQ_m$ and $SQ_m$ in the following text, we introduce matrix $E$ as
\[
E=\left(
      \begin{array}{ccc}
        I & \cdots & I  \\
        \vdots   & \ddots   & \vdots    \\
        I & \cdots & I  \\
      \end{array}
    \right)_{(\bar{l}+1)\times(\bar{l}+1)}
\]
such that $SQ_m=(S\circ E)Q_m$.
From the control points $Q_m$ and based on Equation (\ref{Eqn:x^m(s)}), a surface ring is obtained as $\mathbf{x}_m(\mathbf{s})=G(2^m\mathbf{s})Q_m$, where $G$ is the vector of scalar valued generating functions. On the other hand, $\mathbf{x}_m(\mathbf{s})$ can also be generated from the control mesh by linear subdivision directly. Similar to uniform refinement of curves \citep{MicchelliPrautzsch:UniformRefinement}, for any coordinates $\mathbf{s}\in\mathbf{S}_n^m$, the point $\mathbf{x}_m(\mathbf{s})$  can be computed recursively as follows
\begin{equation}
\label{Eqn:x_m(s)1}
\lim_{j\rightarrow +\infty}B_{m+j}(\mathbf{s})\cdots B_{m+1}(\mathbf{s})B_m(\mathbf{s})Q_m = \mathbf{x}_m(\mathbf{s})\mathbbm{1},
\end{equation}
where $B_{m+j}(\mathbf{s})$, $j=0,1,\ldots$, are the matrices for binary subdivision for regular control meshes with a fixed size. Correspondingly, the point on the surface ring by PN subdivision is obtained as
\begin{equation}
\label{Eqn:x_tilde m(s)1}
\lim_{j\rightarrow +\infty}\tilde{B}_{m+j}(\mathbf{s})\cdots \tilde{B}_{m+1}(\mathbf{s})\tilde{B}_m(\mathbf{s})Q_m = \mathbf{\tilde{x}}_m(\mathbf{s})\mathbbm{1},
\end{equation}
where $\tilde{B}_{m+j}(\mathbf{s})=B_{m+j}(\mathbf{s})\circ \tilde{M}_{m+j}(\mathbf{s})$, $j=0,1,\ldots$, and the matrices $\tilde{M}_{m+j}(\mathbf{s})$ are defined in a similar way as Equation (\ref{Eqn:M_k square}) using the refined control normals at the subdivided points. See Figure~\ref{Fig:PNsubd_mesh_rings} for the surface rings computed by PN subdivision from control points and control normals or by linear subdivision from the same sequence of control meshes.

Let $\mathbf{e}_0=(I;0;\cdots;0)$, $\mathbf{e}_1=(0;I;\cdots;0)$, $\ldots$, $\mathbf{e}_{\bar{l}}=(0;0;\cdots;I)$. Assume $\mathbf{\tilde{g}}_l^m(\mathbf{s})$, $l=0,1,\ldots,\bar{l}$, are the generating functions computed by Equation (\ref{Eqn:x_tilde m(s)1}) with $Q_m$ replaced by $\mathbf{e}_l$, $l=0,1,\ldots,\bar{l}$. By the same reason as Equation (\ref{Eqn:S_k I1}), we have $\tilde{B}_{m+j}(\mathbf{s})(I\mathbbm{1})=I\mathbbm{1}$, $j=0,1,\ldots$. It follows
\[
\begin{aligned}
    I\mathbbm{1}&=\lim_{j\rightarrow +\infty}\tilde{B}_{m+j}(\mathbf{s})\cdots \tilde{B}_{m+1}(\mathbf{s})\tilde{B}_m(\mathbf{s})(I\mathbbm{1}) \\
    &=\lim_{j\rightarrow +\infty}\tilde{B}_{m+j}(\mathbf{s})\cdots \tilde{B}_{m+1}(\mathbf{s})\tilde{B}_m(\mathbf{s})\left(\sum_{l=0}^{\bar{l}}\mathbf{e}_l\right)  \\
    &=\sum_{l=0}^{\bar{l}}\lim_{j\rightarrow +\infty}\tilde{B}_{m+j}(\mathbf{s})\cdots \tilde{B}_{m+1}(\mathbf{s})\tilde{B}_m(\mathbf{s})\mathbf{e}_l  \\
    &=\sum_{l=0}^{\bar{l}}\mathbf{\tilde{g}}_l^m(\mathbf{s})\mathbbm{1}.
\end{aligned}
\]
Therefore, the generating functions satisfy $\sum_{l=0}^{\bar{l}}\mathbf{\tilde{g}}_l^m(\mathbf{s})=I$. Obviously, these generating functions are no longer scalar valued but matrix valued.
Representing the control points as $Q_m=\sum_{l=0}^{\bar{l}}\mathbf{e}_l\mathbf{q}_l^m$, the PN subdivision ring is obtained as
$\mathbf{\tilde{x}}_m=\sum_{l=0}^{\bar{l}}\mathbf{\tilde{g}}_l^m(\mathbf{s})\mathbf{q}_l^m=\tilde{G}_m(\mathbf{s})Q_m$, where  $\tilde{G}_m(\mathbf{s})=(\mathbf{\tilde{g}}_0^m(\mathbf{s}),\mathbf{\tilde{g}}_1^m(\mathbf{s}),\ldots,\mathbf{\tilde{g}}_{\bar{l}}^m(\mathbf{s}))$.

\begin{figure}[htb]
  \centering
  \subfigure[]{\includegraphics[width=0.255\linewidth]{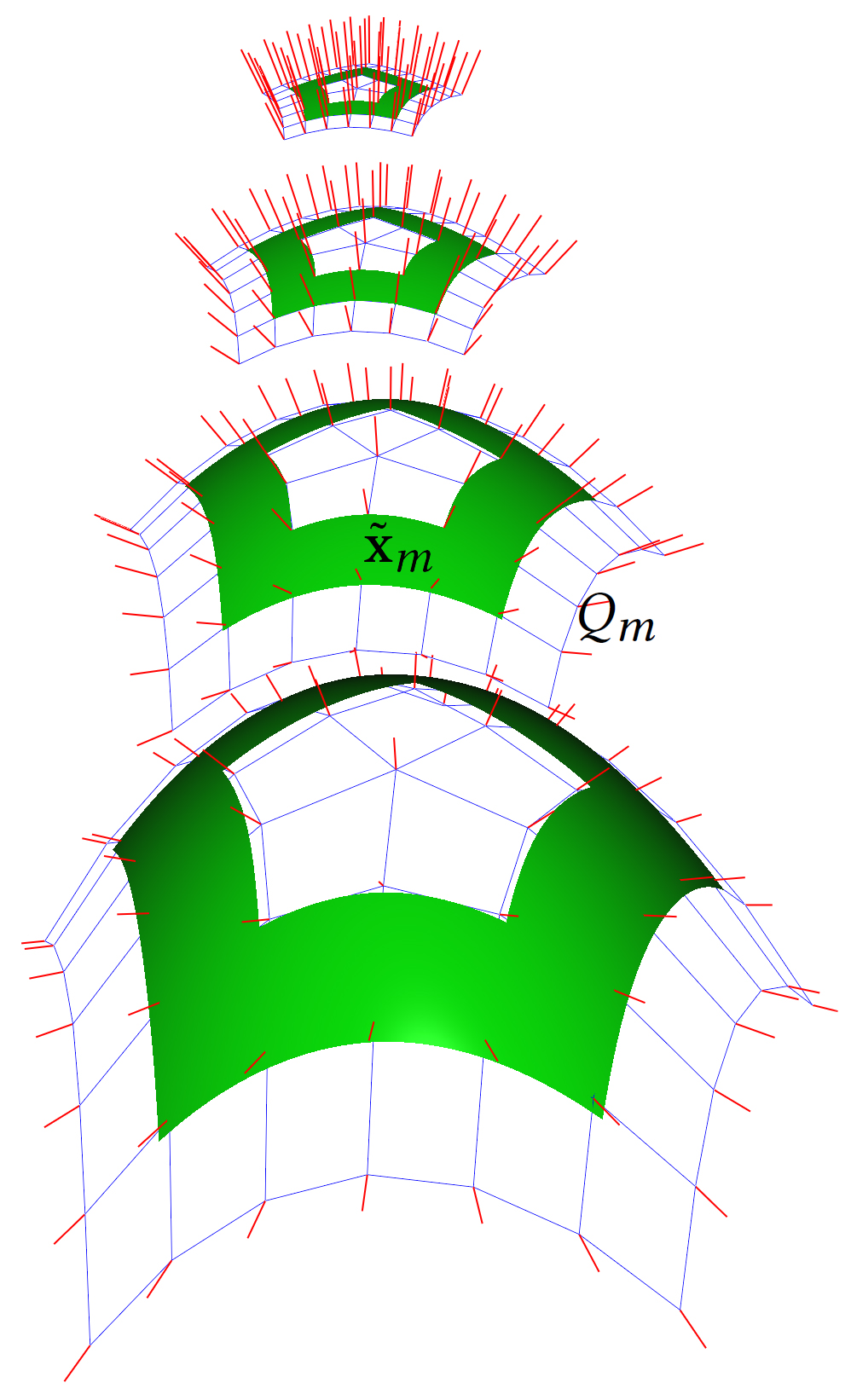}} \ \ \ \ \ \
  \subfigure[]{\includegraphics[width=0.255\linewidth]{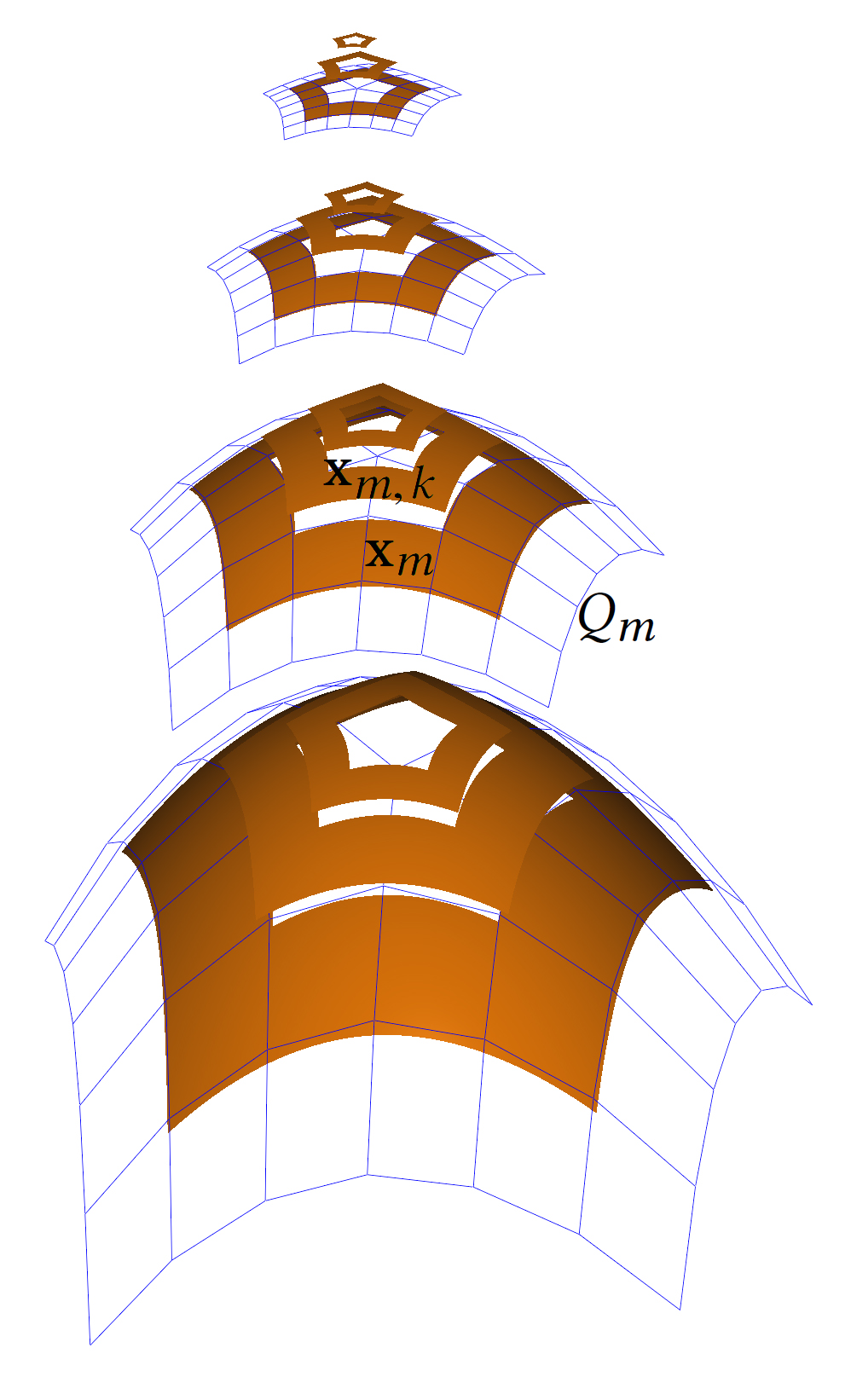}}
  \caption{(a) The sequence of surface rings around an extraordinary vertex and their control points and control normals obtained by PN subdivision; (b) the sequence of ring sequences by linear subdivision from the meshes computed by PN subdivision. The surface rings are shifted for clarity. }
  \label{Fig:PNsubd_mesh_rings}
\end{figure}

Before proving that the sequence of surface rings $\mathbf{\tilde{x}}_m$ converge to a limit point, we show that all block matrices $\tilde{M}_{ij}^k$ within Equation (\ref{Eqn:M_k square}) converge to $I$, which implies that $\tilde{M}_k$ converge to $E$, when the normal vectors $\mathbf{n}_i^k$ converge.
\begin{lemma}
\label{lemma:norm for M_ij^k - I}
Assume $S$ is a standard subdivision scheme and normal vectors $\mathbf{n}_i^k$ are refined by Equation (\ref{Eqn:refinement of normals at extra vertex}). Let $\tilde{M}_{ij}^k$ be the matrices as defined in Equation (\ref{Eqn:M_k square}). Then
\[
\|\tilde{M}_{ij}^k-I\|_\infty \leq K_M\gamma^k,
\]
where $K_M$ and $\gamma\in(0,1)$ are constants.
\end{lemma}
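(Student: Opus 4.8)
The plan is to mirror the proof of Theorem~\ref{Theoem:convergence univar_PN subd}, replacing the Hölder estimate~(\ref{Eqn:holder regularity_normal}) for the univariate spherical curve—which is unavailable in the irregular-mesh setting—by the geometric decay bound of Proposition~\ref{Proposition:distancebetweenrefinednormals}. Expanding the definition in~(\ref{Eqn:M_k square}) gives $\tilde{M}_{ij}^k - I = \sum_{l=0}^{\bar{l}} s_{il}(A_{ij}^k - A_{il}^k)$, so that
\[
\|\tilde{M}_{ij}^k - I\|_\infty \le \|S\|_\infty \max_{l}\|A_{ij}^k - A_{il}^k\|_\infty ,
\]
and the whole estimate reduces to bounding $\|A_{ij}^k - A_{il}^k\|_\infty = \|A(\mathbf{n}_i^{k+1},\mathbf{n}_j^k) - A(\mathbf{n}_i^{k+1},\mathbf{n}_l^k)\|_\infty$. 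The two terms share the first argument $\mathbf{n}_i^{k+1}$ and differ only in the second, so it suffices to control the variation of $A(X_1,X_2)$ in its second slot.

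First I would establish a uniform Lipschitz estimate for $A$ near the diagonal. The map defined in~(\ref{Eqn:A(X1,X2)}) is rational with denominator $(X_1+X_2)^\top X_1 = 1 + X_1^\top X_2$, which tends to $2$ as $X_1\to X_2$ on the unit sphere. Hence $A$ is continuously differentiable on a closed neighborhood $U$ of the diagonal $\{X_1=X_2\}\subset\mathbb{S}^2\times\mathbb{S}^2$ on which the denominator stays bounded away from zero, and by compactness there is a constant $L_A$ with $\|A(X_1,X_2)-A(X_1,X_2')\|_\infty \le L_A\|X_2-X_2'\|$ for all $(X_1,X_2),(X_1,X_2')\in U$.

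Next I would verify that all normals entering $A_{ij}^k$ and $A_{il}^k$ lie in $U$ once $k$ is large. Proposition~\ref{Proposition:distancebetweenrefinednormals} gives $\|\mathbf{n}_j^k-\mathbf{n}_l^k\|\le c_2\gamma^k$, and since $\mathbf{n}_i^{k+1}$ is the normalization of an $S$-average of exactly these level-$k$ normals—whence $\|\sum_j s_{ij}\mathbf{n}_j^k - \mathbf{n}_j^k\|\le\|S\|_\infty c_2\gamma^k$ after subtracting off a reference normal and using $\sum_j s_{ij}=1$—it also lies within a constant multiple of $\gamma^k$ of each of them. Thus there is a level $k_0$ beyond which $(\mathbf{n}_i^{k+1},\mathbf{n}_j^k)$ and $(\mathbf{n}_i^{k+1},\mathbf{n}_l^k)$ belong to $U$, and the Lipschitz bound yields $\|A_{ij}^k-A_{il}^k\|_\infty \le L_A\|\mathbf{n}_j^k-\mathbf{n}_l^k\|\le L_A c_2\gamma^k$. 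Combined with the reduction above this gives $\|\tilde{M}_{ij}^k-I\|_\infty \le \|S\|_\infty L_A c_2\,\gamma^k$ for $k\ge k_0$; for the finitely many levels $k<k_0$ each $\|\tilde{M}_{ij}^k-I\|_\infty$ is finite (the denominators being nonvanishing, as assumed throughout), so enlarging the constant to absorb them produces a single $K_M$ valid for all $k$.

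The hard part will be controlling the denominator $(X_1+X_2)^\top X_1$ away from zero uniformly, that is, justifying that the relevant normals genuinely concentrate near the diagonal of $\mathbb{S}^2\times\mathbb{S}^2$. This is precisely where Proposition~\ref{Proposition:distancebetweenrefinednormals} is indispensable, since without the geometric contraction of the spread of the normals the matrix $A$ need not be Lipschitz and can even become singular. Once that concentration is secured, the remainder is the routine bookkeeping of a finite stencil sum together with the absorption of the initial levels into the constant $K_M$.
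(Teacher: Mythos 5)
Your proposal is correct and follows essentially the same route as the paper's proof: Proposition~\ref{Proposition:distancebetweenrefinednormals} supplies the geometric decay $\|\mathbf{n}_j^k-\mathbf{n}_l^k\|\leq c_2\gamma^k$, a Lipschitz-type bound on $A(X_1,X_2)$ gives $\|A_{ij}^k-A_{il}^k\|_\infty\leq c_A\gamma^k$, and the stencil sum with $c_S=\|S\|_\infty$ yields the claim. In fact you make explicit what the paper compresses into ``in the same way as the proof of Theorem~\ref{Theoem:convergence univar_PN subd}'' --- namely the uniform control of the denominator $1+X_1^\top X_2$ near the diagonal, the proximity of $\mathbf{n}_i^{k+1}$ to the level-$k$ normals, and the absorption of finitely many initial levels into $K_M$ --- which is a welcome tightening rather than a different argument.
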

\begin{proof}
Based on Proposition \ref{Proposition:distancebetweenrefinednormals} we know that the subdivided normals satisfy
\[
\|\mathbf{n}_j^k - \mathbf{n}_l^k\|\leq c_2\gamma^k, \ \ \ j,l\in\{0,1,\ldots,\bar{l}\}
\]
where $c_2$ and $\gamma\in(0,1)$ are constants.
In the same way as the proof of Theorem \ref{Theoem:convergence univar_PN subd} we have
$\|A_{ij}^k - A_{il}^k\|_\infty \leq c_A\gamma^k$.
As $\tilde{M}_{ij}^k = I + \sum_{l=0}^{\bar{l}}s_{il}(A_{ij}^k-A_{il}^k)$, it follows that
\[
\|\tilde{M}_{ij}^k - I\|_\infty \leq c_Sc_A \gamma^k,
\]
where $c_S=\|S\|_{\infty}=\max_{0\leq i\leq\bar{l}}\sum_{l=0}^{\bar{l}} |s_{il}|$. The lemma is proven by choosing $K_M=c_Sc_A$.
\end{proof}

Now, we show that the PN subdivision scheme $\{S_j\}$ is stable and convergent and the obtained surface is $C^0$ continuous at isolated extraordinary points.
\begin{theorem}
\label{Theorem:stable+convergent for {S_j}}
Assume $S$ is a standard subdivision scheme. Assume $Q_m$ are the subdivided points and $S_j$, $j=0,1,\ldots$ are the subdivision matrices as defined in Equation (\ref{Eqn:Q_m = Sm...S1Q}). Then the PN subdivision scheme $\{S_j\}$ is stable and convergent.
\end{theorem}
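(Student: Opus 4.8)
The plan is to treat $\{S_j\}$ as an exponentially small perturbation of the stationary linear scheme $S$ (identified with $S\circ E$ on point data) and to show that, although the individual factors $S_j$ need not be contractive, their products stay uniformly bounded and drive $Q_m$ to a single limit point. The engine of the whole argument is the vanishing row-sum identity $\sum_j s_{ij}(\tilde M_{ij}^k-I)=0$, which follows from $\sum_j s_{ij}=1$ together with $\sum_j s_{ij}\tilde M_{ij}^k=I$.

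First I would record the decomposition that makes everything work. Writing $\mathbf{q}_i^{m+1}=\sum_j s_{ij}\tilde M_{ij}^m\mathbf{q}_j^m$ and subtracting a fixed reference block $\mathbf{q}_0^m$ under the vanishing row sum gives
\[
Q_{m+1}=SQ_m+R_m,\qquad R_{m,i}=\sum_j s_{ij}(\tilde M_{ij}^m-I)(\mathbf{q}_j^m-\mathbf{q}_0^m).
\]
The crucial point is that $R_m$ depends only on the differences $\mathbf{q}_j^m-\mathbf{q}_0^m$; combining this with Lemma \ref{lemma:norm for M_ij^k - I} yields $\|R_m\|_\infty\le \|S\|_\infty K_M\gamma^m\, d_m$, where $d_m:=\max_{i,j}\|\mathbf{q}_i^m-\mathbf{q}_j^m\|$ is the spread of $Q_m$. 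Thus the perturbation is not merely summable but is proportional to the very quantity I want to contract.

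Next I would pass to the quotient by the constant block sequences. Since $S$ is standard, $S\mathbbm{1}=\mathbbm{1}$ and the induced operator $\bar S$ on $(\mathbb{R}^{\bar l+1}/\langle\mathbbm{1}\rangle)\otimes\mathbb{R}^3$ has spectral radius $|\lambda_1|<1$, so there is an adapted equivalent norm in which $\|\bar S\|\le\rho<1$. Projecting the recursion gives $\|[Q_{m+1}]\|\le(\rho+c\gamma^m)\|[Q_m]\|$; since $\sum_m\gamma^m<\infty$ the product $\prod_{j=0}^{m-1}(\rho+c\gamma^j)\le C_0\rho^m$, hence $d_m\le C_0\rho^m d_0\to0$ geometrically (the same bound, with a smaller tail product, holds for products started at any index $k$). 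For the dominant component I would apply the left eigenvector $w_0^\top$ of $\lambda_0=1$: from $w_0^\top S=w_0^\top$ one gets $\mathbf{p}_0^{m+1}=\mathbf{p}_0^m+w_0^\top R_m$ with $\|w_0^\top R_m\|\le c'(\gamma\rho)^m$ summable, so $\mathbf{p}_0^m$ converges to some $\mathbf{p}_0$. Together with $d_m\to0$ this gives $Q_m\to\mathbf{p}_0\mathbbm{1}$, i.e.\ convergence of the rings to the central point.

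Finally, stability follows from the same two estimates: bounding $\|\mathbf{p}_0^m\|\le\|\mathbf{p}_0^0\|+\sum_k\|w_0^\top R_k\|$ and the spread by $d_m\le C_0 d_0$ shows $\|Q_m\|_\infty\le K\|Q\|_\infty$ with $K$ independent of $m$ and of the starting index, which is exactly $\|S_{k+n}\cdots S_k\|_\infty\le K$. The step I expect to be the main obstacle is the contraction estimate for the spread: a single-step bound $\|\bar S\|$ can exceed $1$, so one genuinely needs the adapted norm (or an $N$-step block argument) built from the subdominant spectral radius of the standard matrix, and this must be coupled with the fact that $R_m$ is controlled by $d_m$ rather than by a fixed summable sequence. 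It is precisely the vanishing row-sum identity that links these two facts and keeps the perturbations from accumulating in the product.
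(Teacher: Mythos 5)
Your proof is correct, but it takes a genuinely different route from the paper. The paper's proof is essentially two lines: from Lemma~\ref{lemma:norm for M_ij^k - I} it gets $\|S_j-S\circ E\|_\infty=\|S\circ(\tilde{M}_j-E)\|_\infty\leq\|S\|_\infty K_M\gamma^j$, observes this is summable so that $\{S_j\}\approx S$, and then invokes Theorem~6 of \citet{DynLevin1995AsymptoticallyEquivalent} as a black box to conclude both convergence and stability. You instead make the argument self-contained: the vanishing row-sum identity $\sum_j s_{ij}(\tilde{M}_{ij}^k-I)=0$ (which the paper does record, as $\sum_j s_{ij}\tilde M_{ij}^k=I$, but uses for a different purpose in Equation~(\ref{Eqn:S_k I1})) lets you write $Q_{m+1}=SQ_m+R_m$ with $R_m$ controlled by $\gamma^m$ times the spread, after which the spectral gap of the standard matrix — adapted norm on the quotient by $\langle\mathbbm{1}\rangle$, spectral radius $\lambda<1$ — contracts the spread geometrically, and the left eigenvector $w_0^\top$ handles the dominant mode via summable increments. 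What your route buys is quantitative and stronger output: you obtain $d_m\leq C_0\rho^m d_0$ and $Q_m\to\mathbf{p}_0\mathbbm{1}$, which subsumes the first half of the paper's Theorem~\ref{Theorem:C0 PN subdivision surface}; indeed your left-eigenvector/Cauchy-increment step and your uniform-in-$k$ tail estimates closely mirror the paper's later proof of that theorem (its $\|\mathbf{p}_{m,0}-\mathbf{p}_{m-1,0}\|\leq K_p\gamma^{m-1}$ bound and the telescoping identity behind Equation~(\ref{Eqn:Q_m+l-S^lQ_m})), deployed here instead of there. What the paper's route buys is brevity and generality, since asymptotic equivalence theory does not need the spread-proportional refinement at all: for stability alone, a perturbation summable in plain operator norm together with boundedness of $\{S^m\}$ already suffices, so your coupling of $R_m$ to $d_m$ — while correct and the key to your clean quotient contraction — is stronger than strictly necessary for the statement as posed. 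One small point done right on your side that is easy to miss: stability requires uniformity over products started at an arbitrary index $k$, and your observation that the perturbations from index $k$ onward are bounded by $\gamma^{k+j}$, hence uniformly summable, is exactly what closes that gap.
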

\begin{proof}
We prove the stability and convergence of $\{S_j\}$ by comparing with the stationary subdivision scheme $S$.
Based on Lemma \ref{lemma:norm for M_ij^k - I}, we have
\[
\|S_j-S\circ E\|_\infty = \|S\circ(\tilde{M}_j-E)\|_\infty \leq \|S\|_\infty K_M\gamma^j
\]
for $j\in \mathbb{Z}_+$. It follows that
\[
\sum_{j\in \mathbb{Z}_+}\|S_j-S\circ E\|_\infty < +\infty,
\]
which implies $\{S_j\}\approx S$. Based on Theorem 6 in \citep{DynLevin1995AsymptoticallyEquivalent}, we conclude that the PN subdivision scheme $\{S_j\}$ is convergent and stable.
\end{proof}

\begin{theorem}
\label{Theorem:C0 PN subdivision surface}
Assume $S$ is a standard subdivision scheme and $\mathbf{\tilde{x}}_m$ are PN subdivision rings with control points $Q_m$ which are computed by Equation (\ref{Eqn:Q_m = Sm...S1Q}) and control normals $\mathbf{n}_l^m$, $l=0,1,\ldots,\bar{l}$. Then $\mathbf{\tilde{x}}_m$ converge to a point as $m$ approaches infinity.
\end{theorem}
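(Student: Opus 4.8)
The plan is to reduce the assertion to two facts: that the control nets $Q_m$ collapse to a single limiting point $\mathbf{p}^{*}\mathbbm{1}$, and that the rings $\mathbf{\tilde{x}}_m$ follow these nets because their generating functions form a uniformly bounded partition of unity. First I would collect what is already available. By Theorem~\ref{Theorem:stable+convergent for {S_j}} the scheme $\{S_j\}$ is stable, so $\|Q_m\|_\infty=\|S_{m-1}\cdots S_0 Q\|_\infty\leq K\|Q\|_\infty$ and the nets stay bounded; by Lemma~\ref{lemma:norm for M_ij^k - I} we have $\|S_k-S\circ E\|_\infty\leq\|S\|_\infty K_M\gamma^k$, so $S_k=(S\circ E)+\delta_k$ with $\sum_k\|\delta_k\|_\infty<+\infty$. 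The decisive structural identity is $S_k(I\mathbbm{1})=I\mathbbm{1}$ from Equation~(\ref{Eqn:S_k I1}), which says every constant field is fixed by each $S_k$.

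The main step is to prove $Q_m\to\mathbf{p}^{*}\mathbbm{1}$. I would split $Q_m$ along the eigenstructure of the standard matrix $S=VJV^{-1}$, whose block action $S\circ E$ has dominant eigenvalue $\lambda_0=1$ with right eigenvector $v_0=\mathbbm{1}$ and left eigenvector $w_0^\top$ (normalised so that $w_0^\top\mathbbm{1}=1$), while every other eigenvalue has modulus at most $|\lambda|<1$. For the \emph{central} coordinate set $\mathbf{p}_0^m:=w_0^\top Q_m$; using that $w_0^\top$ is a left eigenvector of $S$ for eigenvalue $1$ one gets $\mathbf{p}_0^{m+1}-\mathbf{p}_0^{m}=w_0^\top\delta_m Q_m$, and since $Q_m$ is bounded, $\|w_0^\top\delta_m Q_m\|\leq C\gamma^m$, so $\{\mathbf{p}_0^m\}$ is Cauchy and converges to some $\mathbf{p}^{*}$. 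It remains to show the complementary part $R_m:=Q_m-\mathbf{p}^{*}\mathbbm{1}$ tends to zero. Because constants are fixed, $R_{m+1}=S_m R_m$, and in the eigen-coordinates of $S$ the component of $R_m$ along $v_0$ equals $\mathbf{p}_0^m-\mathbf{p}^{*}\to 0$, whereas the remaining components are governed by the block of $J$ of spectral radius $|\lambda|<1$ perturbed by the summable $\delta_m$; a perturbed-contraction (infinite-product) estimate then forces $\|R_m\|_\infty\leq c\,\rho^{m}\to 0$ for some $\rho\in(|\lambda|,1)$. Hence $Q_m\to\mathbf{p}^{*}\mathbbm{1}$ and in particular $\max_l\|\mathbf{q}_l^m-\mathbf{p}^{*}\|\to 0$.

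Finally I would transfer this to the rings. By construction $\mathbf{\tilde{x}}_m=\sum_{l=0}^{\bar{l}}\mathbf{\tilde{g}}_l^m(\mathbf{s})\mathbf{q}_l^m$ with $\sum_{l=0}^{\bar{l}}\mathbf{\tilde{g}}_l^m(\mathbf{s})=I$, so
\[
\mathbf{\tilde{x}}_m(\mathbf{s})-\mathbf{p}^{*}=\sum_{l=0}^{\bar{l}}\mathbf{\tilde{g}}_l^m(\mathbf{s})\,(\mathbf{q}_l^m-\mathbf{p}^{*}),
\]
and it suffices to bound $\|\mathbf{\tilde{g}}_l^m(\mathbf{s})\|_\infty$ uniformly in $m$ and $\mathbf{s}$. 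These generating functions arise from the regular-mesh products $\tilde{B}_{m+j}(\mathbf{s})=B_{m+j}(\mathbf{s})\circ\tilde{M}_{m+j}(\mathbf{s})$ of Equation~(\ref{Eqn:x_tilde m(s)1}); the same Lemma~\ref{lemma:norm for M_ij^k - I} estimate applied on the regular rings shows they form a scheme asymptotically equivalent to the linear regular-mesh scheme, hence stable with uniformly bounded basis functions. Combining the uniform bound with $\max_l\|\mathbf{q}_l^m-\mathbf{p}^{*}\|\to 0$ yields $\mathbf{\tilde{x}}_m\to\mathbf{p}^{*}$ uniformly in $\mathbf{s}$, which is the claim.

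I expect the genuine obstacle to be the collapse step for $R_m$: the weights $s_{ij}\tilde{M}_{ij}^m$ are matrices rather than scalar convex weights, so contraction is not automatic from the row-sum identity $\sum_j s_{ij}\tilde{M}_{ij}^k=I$ and must be extracted from the true spectral gap $|\lambda|<1$ of the standard scheme together with the summability $\sum_k\|\delta_k\|_\infty<+\infty$. Making the perturbed infinite-product argument rigorous---ideally in a norm adapted to $J$ in which $S\circ E$ is a strict contraction on the complement of $\mathrm{span}(\mathbbm{1})$---is the technical heart; everything else is bookkeeping with the stability and asymptotic equivalence already in hand.
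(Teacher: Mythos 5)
Your proposal is correct, and its first half is essentially the paper's own argument: the paper also sets $\mathbf{p}_{m,0}=\omega_0^\top Q_m$, uses $\omega_0^\top S=\omega_0^\top$ together with Lemma~\ref{lemma:norm for M_ij^k - I} and the boundedness of $Q_m$ to obtain $\|\mathbf{p}_{m,0}-\mathbf{p}_{m-1,0}\|\leq K_p\gamma^{m-1}$, and concludes that the central points form a Cauchy sequence with limit $\mathbf{p}_c$. Where you genuinely diverge is the collapse step $Q_m\to\mathbf{p}_c\mathbbm{1}$. You run a perturbed spectral argument: split $R_m=Q_m-\mathbf{p}^*\mathbbm{1}$ into its $v_0$-component and the complement, and control the latter by an infinite-product contraction in a norm adapted to $J$. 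The paper sidesteps all of that by freezing the stationary scheme and telescoping,
\[
Q_{m+k}-S^kQ_m=\sum_{j=0}^{k-1}S_{m+k-1}\cdots S_{m+j+1}\bigl(S\circ(\tilde{M}_{m+j}-E)\bigr)S^jQ_m,
\]
which by the stability of $\{S_j\}$ (Theorem~\ref{Theorem:stable+convergent for {S_j}}) and Lemma~\ref{lemma:norm for M_ij^k - I} is bounded by $\frac{K_q}{1-\gamma}\gamma^m$ uniformly in $k$; inserting this into the three-term identity $Q_{m+k}-\mathbf{p}_c\mathbbm{1}=(Q_{m+k}-S^kQ_m)+(S^kQ_m-\mathbf{p}_{m,0}\mathbbm{1})+(\mathbf{p}_{m,0}-\mathbf{p}_c)\mathbbm{1}$ and using the stationary limit $S^kQ_m\to\mathbf{p}_{m,0}\mathbbm{1}$ finishes the step with no adapted norm, no Jordan-block bookkeeping, and no cross-coupling analysis. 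Your route does close --- the coupling terms $\delta_mR_m$ are $O(\gamma^m)$ since $R_m$ is bounded, so the perturbed contraction is sound --- and it buys an explicit geometric rate $\|R_m\|=O(\rho^m)$, but the ``technical heart'' you flag is real work that the paper's comparison trick makes unnecessary; if you keep your route, you must actually construct the adapted norm and verify the product estimate rather than assert it. In the final transfer to the rings you are in fact slightly more careful than the paper: the paper invokes only $\sum_{l=0}^{\bar{l}}\mathbf{\tilde{g}}_l^m(\mathbf{s})=I$, which alone does not imply $\tilde{G}_m(\mathbf{s})Q_m\to\mathbf{p}_c$ because the generating functions are matrix-valued and could a priori be unbounded; your explicit uniform bound on $\|\mathbf{\tilde{g}}_l^m(\mathbf{s})\|_\infty$, obtained from stability of the regular-mesh PN scheme via the $\tilde{B}_{m+j}(\mathbf{s})$ estimates, supplies exactly the bound the paper leaves implicit.
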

\begin{proof}
Based on Theorem \ref{Theorem:stable+convergent for {S_j}} and Equation (\ref{Eqn:Q_m = Sm...S1Q}) we know that the mesh sequence $\{Q_m\}_{m=1}^{+\infty}$ converges. It follows that $Q_m$, $m=1,2,\ldots$, are bounded. To prove the theorem, we first prove that the mesh sequence converges to a central point, then we show that the PN subdivision rings $\mathbf{\tilde{x}}_m$ also converge to the central point. Assume $\omega_0^\top$ is the left eigenvector of $S$.
Using Equation (\ref{Eqn:S^m Q ->p_01}), we have
\[
\lim_{k\rightarrow+\infty}S^k Q_m = \mathbf{p}_{m,0}\mathbbm{1}
\]
and
\[
\lim_{k\rightarrow+\infty}S^k Q_{m-1} = \lim_{k\rightarrow+\infty}S^{k-1} SQ_{m-1} = \mathbf{p}_{m-1,0}\mathbbm{1},
\]
where $\mathbf{p}_{m,0}=\omega_0^\top Q_m$ and $\mathbf{p}_{m-1,0}=\omega_0^\top SQ_{m-1}$.
By Lemma \ref{lemma:norm for M_ij^k - I} and because the sequence $\{Q_m\}_{m=1}^\infty$ are bounded, we have
\[
\begin{aligned}
    \|\mathbf{p}_{m,0} - \mathbf{p}_{m-1,0}\| &= \|\omega_0^\top Q_m - \omega_0^\top SQ_{m-1}\|  \\
    &= \|\omega_0^\top (S_{m-1}-S)Q_{m-1}\|  \\
    &= \|\omega_0^\top S\circ(\tilde{M}_{m-1}-E)Q_{m-1}\|   \\
    &\leq \|\omega_0^\top\|_1 \|S\|_{\infty} \|Q_{m-1}\|_{\infty} K_M \gamma^{m-1}    \\
    &\leq K_p \gamma^{m-1},
\end{aligned}
\]
where $K_p$ and $\gamma\in(0,1)$ are constants with $\|\omega_0^\top\|_1$ the $l_1$ norm of the eigenvector. This implies that $\{\mathbf{p}_{m,0}\}_{m=1}^\infty$ is a Cauchy sequence.
Therefore, we have
\[
\lim_{m\to +\infty}\mathbf{p}_{m,0}=\mathbf{p}_c.
\]
To prove the surface rings $\mathbf{\tilde{x}}_m(\mathbf{s})$ converge to $\mathbf{p}_c$, we prove all points within mesh $Q_m$ converge to $\mathbf{p}_c$. We write
\[
\begin{aligned}
    Q_{m+k}-S^k Q_m &= (S_{m+k-1}\cdots S_m-S^k)Q_m \\
    &=\sum_{j=0}^{k-1} S_{m+k-1}\cdots S_{m+j+1}(S_{m+j}-S)S^jQ_m \\
    &=\sum_{j=0}^{k-1} S_{m+k-1}\cdots S_{m+j+1}(S\circ(\tilde{M}_{m+j}-E))S^jQ_m.
\end{aligned}
\]
By applying Lemma \ref{lemma:norm for M_ij^k - I} and because $\{S_j\}$ is stable, we have
\begin{equation}
\label{Eqn:Q_m+l-S^lQ_m}
\|Q_{m+k}-S^k Q_m\|_\infty \leq K_q \sum_{j=0}^{k-1} \gamma^{m+j} \leq \frac{K_q}{1-\gamma}\gamma^m,
\end{equation}
where $\gamma\in(0,1)$.
Based on the identity
\[
Q_{m+k} -  \mathbf{p}_c\mathbbm{1} = (Q_{m+k}-S^kQ_m) + (S^kQ_m-\mathbf{p}_{m,0}\mathbbm{1}) + (\mathbf{p}_{m,0}\mathbbm{1}-\mathbf{p}_c\mathbbm{1})
\]
as well as the definitions of $\mathbf{p}_{m,0}$ and $\mathbf{p}_c$, we have
\[
\lim_{m\rightarrow+\infty \atop k\to+\infty}{Q_{m+k}}= \mathbf{p}_c\mathbbm{1}.
\]
Since $\lim_{m\rightarrow+\infty}Q_m=\mathbf{p}_c\mathbbm{1}$, and because the generating functions of $\mathbf{\tilde{x}}_m(\mathbf{s})$ sum up to $I$, it yields that
\[
\lim_{m\rightarrow+\infty}\mathbf{\tilde{x}}_m(\mathbf{s})= \lim_{m\rightarrow+\infty}\tilde{G}_m(\mathbf{s})Q_m = \mathbf{p}_c.
\]
This proves the theorem.
\end{proof}

Besides being $C^0$ continuous, the PN subdivision surfaces can also be $C^1$ continuous at the extraordinary points. We prove that the normals of the sequence of surface rings $\mathbf{\tilde{x}}_m(\mathbf{s})$ by PN subdivision converge by comparing with a sequence of surface rings obtained by linear subdivision using the same set of control nets. We present a lemma before proving the theorem for $C^1$ continuity.

\begin{lemma}
\label{lemma:normal continuity}
Assume $S$ is a standard subdivision scheme and the characteristic map $\Psi$ is regular. Assume $Q_m$ be the control points given by Equation (\ref{Eqn:Q_m = Sm...S1Q}) and $G$ is the vector of scalar valued generating functions. Then the normals of surface rings $\mathbf{x}_m=GQ_m$ converge for almost all initial control nets.
\end{lemma}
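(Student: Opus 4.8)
The plan is to reduce the convergence of the ring normals to the convergence of the subdominant spectral coefficients of the PN control nets $Q_m$, and then to track those coefficients through a perturbed power iteration. Writing $S=VJV^{-1}$ with left eigenvectors $w_i^\top$ and right eigenvectors $v_i$ as in Section~\ref{Subsection:subdivision of irregular meshes}, I set $f_i=Gv_i$ and $\mathbf{p}_{m,i}=w_i^\top Q_m$, so that the identity $Q_m=\sum_{i=0}^{\bar l}v_i(w_i^\top Q_m)$ gives $\mathbf{x}_m=GQ_m=\sum_{i=0}^{\bar l} f_i\,\mathbf{p}_{m,i}$ with $f_0=1$ and $\Psi=(f_1,f_2)$. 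Differentiating, $D_\alpha\mathbf{x}_m=\sum_{i\ge1}D_\alpha f_i\,\mathbf{p}_{m,i}$ for $\alpha=1,2$, and the ring normal is the normalization of $D_1\mathbf{x}_m\times D_2\mathbf{x}_m$. Exactly as in Proposition~\ref{Propostion:normalcontinuityforsteadysubdivision}, once I show that the rescaled coefficients $\lambda^{-m}\mathbf{p}_{m,1}$ and $\lambda^{-m}\mathbf{p}_{m,2}$ converge to linearly independent limits $\bar{\mathbf{p}}_1,\bar{\mathbf{p}}_2$ while the remaining $\mathbf{p}_{m,i}$, $i\ge3$, are $o(\lambda^m)$, the cross product factors as $\lambda^{2m}\big({}^{\times}D\Psi\,(\bar{\mathbf{p}}_1\times\bar{\mathbf{p}}_2)+o(1)\big)$ uniformly on $\mathbf{S}_n^0$, and its direction converges to the constant vector $\mathrm{sign}({}^{\times}D\Psi)\,\bar{\mathbf{p}}_1\times\bar{\mathbf{p}}_2/\|\bar{\mathbf{p}}_1\times\bar{\mathbf{p}}_2\|$; regularity of $\Psi$ guarantees the leading coefficient never vanishes. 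This is precisely the normal continuity to be proved.

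The heart of the argument is a recursion for $\mathbf{p}_{m,i}$. Splitting $S_{m-1}=S\circ\tilde{M}_{m-1}=S+S\circ(\tilde{M}_{m-1}-E)$ and using $w_i^\top S=\lambda_i w_i^\top$, I obtain $\mathbf{p}_{m,i}=\lambda_i\mathbf{p}_{m-1,i}+\mathbf{r}_{m,i}$, where $\mathbf{r}_{m,i}=w_i^\top\big[S\circ(\tilde{M}_{m-1}-E)\big]Q_{m-1}$. The key observation is that the perturbation annihilates constants: since $\sum_j s_{ij}\tilde{M}_{ij}^{k}=I$ and $\sum_j s_{ij}=1$, one has $\big[S\circ(\tilde{M}_{m-1}-E)\big](\mathbf{p}_c\mathbbm{1})=0$, so in fact $\mathbf{r}_{m,i}=w_i^\top\big[S\circ(\tilde{M}_{m-1}-E)\big](Q_{m-1}-\mathbf{p}_c\mathbbm{1})$. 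Combining Lemma~\ref{lemma:norm for M_ij^k - I}, which gives $\|\tilde{M}_{m-1}-E\|_\infty\le K_M\gamma^{m-1}$, with a geometric decay $\|Q_{m-1}-\mathbf{p}_c\mathbbm{1}\|_\infty\le C\rho^{m-1}$ toward the central point $\mathbf{p}_c$ of Theorem~\ref{Theorem:C0 PN subdivision surface}, this yields the sharpened bound $\|\mathbf{r}_{m,i}\|\le C'(\gamma\rho)^{m-1}$.

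Solving the recursion for $i=1,2$ gives $\lambda^{-m}\mathbf{p}_{m,i}=\mathbf{p}_{0,i}+\sum_{j=1}^m\lambda^{-j}\mathbf{r}_{j,i}$, which converges provided $\sum_j(\gamma\rho/\lambda)^j<\infty$, that is, provided $\gamma\rho<\lambda$; the limit is $\bar{\mathbf{p}}_i$. For $i\ge3$ the eigenvalue satisfies $|\lambda_i|<\lambda$, so the same recursion gives $\|\mathbf{p}_{m,i}\|\le C\max(|\lambda_i|,\gamma\rho)^m=o(\lambda^m)$, as required. Linear independence of $\bar{\mathbf{p}}_1,\bar{\mathbf{p}}_2$ holds for almost all initial nets, since $\bar{\mathbf{p}}_i=w_i^\top Q+\sum_{j\ge1}\lambda^{-j}\mathbf{r}_{j,i}$ reduces to the linearly independent $w_1^\top Q,\,w_2^\top Q$ when the normal variation vanishes, and the exceptional set remains of measure zero under the perturbation. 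Substituting $\bar{\mathbf{p}}_1,\bar{\mathbf{p}}_2$ into the first paragraph then completes the proof.

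I expect the rate inequality $\gamma\rho<\lambda$ to be the main obstacle. Both factors are controlled by the subdominant eigenvalue: $\gamma$ from Proposition~\ref{Proposition:distancebetweenrefinednormals} measures how fast neighbouring refined normals coalesce, and $\rho$ how fast the control net contracts to $\mathbf{p}_c$, and each behaves like $\lambda$ near the extraordinary vertex, so their product decays like $\lambda^2$ and comfortably beats $\lambda$. The delicate point is that this margin exists only because the constant-annihilation supplies the extra factor $\|Q_{m-1}-\mathbf{p}_c\mathbbm{1}\|$; without it one is left with the crude bound $\|\mathbf{r}_{m,i}\|\le C\gamma^{m-1}$ and the borderline requirement $\gamma<\lambda$, which need not hold. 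I would therefore take care to establish a clean geometric contraction rate $\rho<1$ for $\|Q_m-\mathbf{p}_c\mathbbm{1}\|$, refining the bare convergence of Theorem~\ref{Theorem:C0 PN subdivision surface}, and to record that $\gamma$ may be taken as the normal-contraction rate, so that $\gamma\rho<\lambda$ is guaranteed.
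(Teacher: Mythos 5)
Your proposal is correct in substance, but it takes a genuinely different route from the paper. The paper never expands the PN rings spectrally: for each level $m$ it runs the \emph{stationary} scheme on the frozen net $Q_m$, forming surfaces $\mathbf{x}_{m,k}=GS^kQ_m$ whose central normals $\mathbf{n}_m^c$ come directly from Proposition~\ref{Propostion:normalcontinuityforsteadysubdivision}; it then argues that the coefficients $\mathbf{p}_{m,i}=\omega_i^\top Q_m$, $i=1,2$, form Cauchy sequences (by the same $O(\gamma^{m-1})$ estimate as in Theorem~\ref{Theorem:C0 PN subdivision surface}), concludes $\mathbf{n}_m^c\to\mathbf{n}_c$, and finally transfers convergence to the normals of $\mathbf{x}_m=GQ_m$ via the proximity bound (\ref{Eqn:Q_m+l-S^lQ_m}) and the three-term identity $\mathbf{n}_{m+k}-\mathbf{n}_c=(\mathbf{n}_{m+k}-\mathbf{n}_{m,k})+(\mathbf{n}_{m,k}-\mathbf{n}_m^c)+(\mathbf{n}_m^c-\mathbf{n}_c)$. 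You instead prove a Reif-type asymptotic expansion for the nonstationary product $S_{m-1}\cdots S_0$ itself, tracking the eigencoefficients through the perturbed power iteration $\mathbf{p}_{m,i}=\lambda_i\mathbf{p}_{m-1,i}+\mathbf{r}_{m,i}$ and using the constant-annihilation identity $[S\circ(\tilde{M}_{m-1}-E)](\mathbf{p}\mathbbm{1})=0$ (which indeed follows from Equation~(\ref{Eqn:S_k I1}) together with the unit row sums of $S$) to sharpen $\|\mathbf{r}_{m,i}\|$ from $O(\gamma^m)$ to $O((\gamma\rho)^m)$. This buys something real: since $\mathbf{p}_{m,i}\to\mathbf{0}$ for $i\geq 1$ (these coefficients decay like $\lambda^m$), the paper's step ``$\mathbf{p}_{m,i}\to\mathbf{t}_i$, hence $\mathbf{n}_c=\pm\,\mathbf{t}_1\times\mathbf{t}_2/\|\mathbf{t}_1\times\mathbf{t}_2\|$'' is really a statement about \emph{directions}, and convergence of the unit vectors genuinely requires the rescaled coefficients $\lambda^{-m}\mathbf{p}_{m,i}$ to converge — exactly what your sharpened bound delivers, whereas the crude $O(\gamma^m)$ perturbation would suffice only under the unverified condition $\gamma<\lambda$. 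Your route makes this quantitative and self-contained; the paper's route is shorter because it reuses the stationary theory wholesale at every level.

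Two points to tighten. First, your rate condition $\gamma\rho<\lambda$ does not need the heuristic that $\gamma$ and $\rho$ both behave like $\lambda$: writing $D_m=Q_m-(\omega_0^\top Q_m)\mathbbm{1}$, annihilation of constants gives $D_m=SD_{m-1}+(I-\mathbbm{1}\omega_0^\top)\bigl[S\circ(\tilde{M}_{m-1}-E)\bigr]D_{m-1}$, and since $\ker\omega_0^\top$ is $S$-invariant with spectral radius $\lambda$ there, an adapted norm yields $\|D_m\|\leq C_\epsilon(\lambda+\epsilon)^m$ for every $\epsilon>0$; likewise $\|\mathbf{p}_{m,0}-\mathbf{p}_c\|=O((\gamma(\lambda+\epsilon))^m)$, so $\|Q_m-\mathbf{p}_c\mathbbm{1}\|\leq C\rho^m$ with $\rho=\lambda+\epsilon$. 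Choosing $\epsilon<\lambda(1-\gamma)/\gamma$, the bare $\gamma<1$ supplied by Proposition~\ref{Proposition:distancebetweenrefinednormals} and Lemma~\ref{lemma:norm for M_ij^k - I} already guarantees $\gamma\rho<\lambda$. Second, your measure-zero argument for the linear independence of $\bar{\mathbf{p}}_1,\bar{\mathbf{p}}_2$ is informal, since the perturbations $\mathbf{r}_{j,i}$ depend nonlinearly on the initial net through the normals; but the paper is no more precise on this point and simply rests on the ``almost all initial control nets'' hypothesis, so this is a shared, not a new, limitation. (The possible Jordan blocks for $i\geq 3$ only introduce polynomial factors and do not disturb your $o(\lambda^m)$ conclusion.)
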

\begin{proof}
We first show that a limit vector exists and then we show that the normals of surface rings $\mathbf{x}_m(\mathbf{s})$ converge to the limit vector.

Let $\mathbf{x}_{m,k}=GS^kQ_m$. See the surfaces illustrated in Figure~\ref{Fig:PNsubd_mesh_rings}(b) for reference. Similar to Equation (\ref{Eqn:x^m expanded}), we have
\[
\mathbf{x}_{m,k} \cong \mathbf{p}_{m,0} + \lambda^k\Psi(\mathbf{p}_{m,1}; \mathbf{p}_{m,2}),
\]
where $\lambda$ is the second large eigenvalue with multiplicity 2, $\Psi$ is the characteristic map and $\mathbf{p}_{m,i} = \omega_i^\top Q_m$, $i=0,1,2$, with $\omega_i^\top$ the left eigenvector of the matrix $S$.
Let $\mathbf{n}_{m,k}$ be the normal vector of the surface $\mathbf{x}_{m,k}$. Under the assumption that the characteristic map $\Psi$ is regular, by Proposition \ref{Propostion:normalcontinuityforsteadysubdivision}, we have
\[
\mathbf{n}_m^c := \lim_{k\to+\infty}\mathbf{n}_{m,k} = sign({^\times}D\Psi)\frac{\mathbf{p}_{m,1}\times \mathbf{p}_{m,2}}{\|\mathbf{p}_{m,1}\times \mathbf{p}_{m,2}\|}.
\]
We show the central normal sequence $\{\mathbf{n}_m^c\}_{m=1}^\infty$ converges to a limit vector. Similar to the asymptotic expansion of $\mathbf{x}_{m,k}$, by expanding $\mathbf{x}_{m-1,k}=GS^kQ_{m-1}=GS^{k-1}SQ_{m-1}$, we have $\mathbf{p}_{m-1,i} = \omega_i^\top Q_{m-1} = \omega_i^\top SQ_{m-1}$, $i=1,2$.
By the same reason for $\{\mathbf{p}_{m,0}\}_{m=1}^\infty$ within the proof of Theorem \ref{Theorem:C0 PN subdivision surface},
we know that $\{\mathbf{p}_{m,i}\}_{m=1}^\infty$, $i=1,2$, are also Cauchy sequences.
Therefore, we have
\[
\lim_{m\to +\infty}\mathbf{p}_{m,i}=\mathbf{t}_i, \ \ \ \ i=1,2.
\]
It follows that
\[
\mathbf{n}_c := \lim_{m\to+\infty}\mathbf{n}_m^c = sign({^\times}D\Psi)\frac{\mathbf{t}_1\times \mathbf{t}_2}{\|\mathbf{t}_1\times \mathbf{t}_2\|}.
\]

Let $\mathbf{n}_m$ be the normal vector of surface ring $\mathbf{x}_m=GQ_m$. We prove that the normal vectors $\mathbf{n}_m$ converge to $\mathbf{n}_c$.
Based on Equation (\ref{Eqn:Q_m+l-S^lQ_m}), we know that the surface difference
\[
\mathbf{x}_{m+k}(\mathbf{s}) - \mathbf{x}_{m,k}(\mathbf{s})=G(2^{m+k}\mathbf{s})(Q_{m+k}-S^kQ_m)
\]
as well as the differences between partial derivatives of the two surfaces $\mathbf{x}_{m+k}$ and $\mathbf{x}_{m,k}$ approach zero when $m$ goes to infinity.
By direct computation of normals for the two surfaces, we have
\[
\lim_{m\to +\infty} (\mathbf{n}_{m+k} - \mathbf{n}_{m,k}) = \mathbf{0}.
\]
Based on the identity
\[
\mathbf{n}_{m+k} -  \mathbf{n}_c = (\mathbf{n}_{m+k}-\mathbf{n}_{m,k}) + (\mathbf{n}_{m,k}-\mathbf{n}_m^c) + (\mathbf{n}_m^c-\mathbf{n}_c)
\]
as well as the definitions of $\mathbf{n}_m^c$ and $\mathbf{n}_c$, we have
\[
\lim_{m\rightarrow+\infty \atop k\to+\infty}{\mathbf{n}_{m+k}}= \mathbf{n}_c.
\]
This completes the proof.
\end{proof}

\begin{theorem}
\label{Theorem:C1 PN subdivision surface}
Assume $S$ is a standard subdivision scheme and the characteristic map $\Psi$ is regular. If the control normals at the mesh vertices are refined by Equation (\ref{Eqn:refinement of normals at extra vertex}), then the PN subdivision surface is normal continuous at the extraordinary point for almost all initial control nets.
\end{theorem}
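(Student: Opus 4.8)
The plan is to build on Lemma~\ref{lemma:normal continuity}, which already shows that the \emph{linear} surface rings $\mathbf{x}_m = GQ_m$ formed on the PN-subdivided nets $Q_m$ of Equation~(\ref{Eqn:Q_m = Sm...S1Q}) have normals $\mathbf{n}_m$ converging to a fixed limit $\mathbf{n}_c$ for almost all initial nets. The PN surface is instead carried by the rings $\tilde{\mathbf{x}}_m = \tilde{G}_m(\mathbf{s})Q_m$, which use the \emph{same} nets $Q_m$ but the matrix-valued generating functions $\tilde{G}_m$ rather than the scalar $G$. Writing $\tilde{\mathbf{n}}_m$ for the normal of $\tilde{\mathbf{x}}_m$, I would split
\[
\tilde{\mathbf{n}}_m - \mathbf{n}_c = (\tilde{\mathbf{n}}_m - \mathbf{n}_m) + (\mathbf{n}_m - \mathbf{n}_c),
\]
so that, the second term vanishing by Lemma~\ref{lemma:normal continuity}, the whole theorem reduces to showing that the PN and linear rings over the common net $Q_m$ share asymptotically the same tangent plane, i.e. $\tilde{\mathbf{n}}_m - \mathbf{n}_m \to \mathbf{0}$.

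Next I would compare the two rings through the infinite-product representations in Equations~(\ref{Eqn:x_m(s)1}) and~(\ref{Eqn:x_tilde m(s)1}), which generate $\mathbf{x}_m$ and $\tilde{\mathbf{x}}_m$ from $Q_m$ by products of the regular-mesh operators $B_{m+j}(\mathbf{s})$ and $\tilde{B}_{m+j}(\mathbf{s})=B_{m+j}(\mathbf{s})\circ\tilde{M}_{m+j}(\mathbf{s})$. By Lemma~\ref{lemma:norm for M_ij^k - I} the perturbations obey $\|\tilde{M}_{m+j}-E\|_\infty \le K_M\gamma^{m+j}$ with $\gamma\in(0,1)$, whence $\|\tilde{B}_{m+j}-B_{m+j}\|_\infty \le \|B_{m+j}\|_\infty K_M\gamma^{m+j}$. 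Telescoping the product difference exactly as in the proof of Theorem~\ref{Theorem:C0 PN subdivision surface}, and using the uniform boundedness of the partial products guaranteed by the stability of Theorem~\ref{Theorem:stable+convergent for {S_j}}, would bound $\|\tilde{\mathbf{x}}_m(\mathbf{s})-\mathbf{x}_m(\mathbf{s})\|$ by a multiple of $\gamma^m$, giving positional closeness of the two families of rings uniformly in $\mathbf{s}$.

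I expect the main obstacle to be the passage from positional to tangent-plane convergence, since the normals depend on first derivatives. Because the ring domains $\mathbf{S}_n^m$ shrink by $2^{-m}$, differentiating $\tilde{G}_m$ and $G$ amplifies errors by $2^m$, so a naive bound on the tangent-vector difference need not decay against the merely geometric factor $\gamma^m$. I would circumvent this by reading both tangent planes off the sub-dominant eigen-data of $Q_m$ through the asymptotic expansion~(\ref{Eqn:x^m expanded}), precisely as in Lemma~\ref{lemma:normal continuity}, where regularity of the characteristic map $\Psi$ and Proposition~\ref{Propostion:normalcontinuityforsteadysubdivision} keep the limiting tangent frame uniformly bounded away from degeneracy. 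On such a non-degenerate frame the $O(\gamma^{m})$ operator perturbation enters only as a relative error in the normalized cross product, which is invariant under rescaling the tangent vectors and hence immune to the $2^m$ amplification; this yields $\tilde{\mathbf{n}}_m - \mathbf{n}_m \to \mathbf{0}$ and, combined with the first paragraph, $\tilde{\mathbf{n}}_m \to \mathbf{n}_c$. The qualifier ``for almost all initial nets'' is inherited verbatim from Lemma~\ref{lemma:normal continuity}, which requires linear independence of the two sub-dominant eigen-components.
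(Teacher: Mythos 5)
Your first two steps coincide with the paper's own proof: the same splitting $\tilde{\mathbf{n}}_m-\mathbf{n}_c=(\tilde{\mathbf{n}}_m-\mathbf{n}_m)+(\mathbf{n}_m-\mathbf{n}_c)$ with Lemma~\ref{lemma:normal continuity} absorbing the second term, and the comparison of the two families of rings through the product representations (\ref{Eqn:x_m(s)1}) and (\ref{Eqn:x_tilde m(s)1}) with the factor-wise $O(\gamma^{m+j})$ perturbation supplied by Lemma~\ref{lemma:norm for M_ij^k - I}. The gap is in your third paragraph, exactly at the point you yourself flag. The paper closes the passage from positions to tangent planes with an estimate your proposal never supplies: by the techniques of Lemma~\ref{lemma:norm for M_ij^k - I} \emph{and} Lemma~\ref{lemma:difference of matrices} it bounds the derivatives of the perturbation matrices themselves, $\left\|\partial \tilde{M}_{m+j}(\mathbf{s})/\partial u\right\|_\infty\leq \tilde{c}_u\gamma^{m+j}$, hence $\partial\tilde{B}_{m+j}(\mathbf{s})/\partial u=(\partial B_{m+j}(\mathbf{s})/\partial u)\circ E+O(\gamma^{m+j})$, and then differentiates the infinite product by the Leibniz rule to get $\partial\tilde{\mathbf{x}}_m(\mathbf{s})/\partial u=\partial\mathbf{x}_m(\mathbf{s})/\partial u+O(\gamma^m)$ (likewise in $v$), from which $\tilde{\mathbf{n}}_m-\mathbf{n}_m\to\mathbf{0}$ follows via the normalized cross product. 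This derivative bound is the essential new ingredient: since $\tilde{M}_{m+j}(\mathbf{s})$ depends on $\mathbf{s}$ through the refined normals, a factor that is merely $O(\gamma^{m+j})$ in size could a priori oscillate in $\mathbf{s}$ at frequency $2^{m+j}$, leaving the tangent vectors uncontrolled even though the positions converge --- which is precisely the $2^m$-amplification you identify but do not defuse.

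Your proposed substitute does not work as stated. The expansion (\ref{Eqn:x^m expanded}) and Proposition~\ref{Propostion:normalcontinuityforsteadysubdivision} apply only to rings generated by iterating the single stationary matrix $S$, i.e.\ to $\mathbf{x}_{m,k}=GS^kQ_m$ as used inside Lemma~\ref{lemma:normal continuity}; the PN rings $\tilde{\mathbf{x}}_m$ are produced by the nonstationary, normal-dependent products $\tilde{B}_{m+j}$ and admit no eigen-decomposition from which their tangent planes could be ``read off''. Moreover, the claim that the $O(\gamma^m)$ operator perturbation ``enters only as a relative error in the normalized cross product'' assumes what must be proved: the cross product is indeed scale-invariant in the tangent vectors, but one still needs the \emph{absolute} tangent-vector error to be small compared with the tangent vectors themselves, whose length decays like $\lambda^m$, and positional $O(\gamma^m)$ closeness yields no such comparison --- your own remark about the $2^m$ amplification concedes as much. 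The repair is to establish the derivative estimate on $\tilde{M}_{m+j}(\mathbf{s})$, which follows from the H\"{o}lder regularity of the refined normal field exactly as in Lemma~\ref{lemma:difference of matrices}, and then differentiate the product representation as above; this recovers the paper's argument.
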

\begin{proof}
Assume $Q_m$ are the control points computed by Equation (\ref{Eqn:Q_m = Sm...S1Q}) and $N_m$ are the control normals at the control points. Let $\mathbf{n}_m(\mathbf{s})$ and $\mathbf{\tilde{n}}_m(\mathbf{s})$, $\mathbf{s}\in\mathbf{S}_n^m$, be the unit normals of surface rings $\mathbf{x}_m(\mathbf{s})$, $\mathbf{\tilde{x}}_m(\mathbf{s})$ that are generated from the control points and control normals by linear subdivision or PN subdivision, respectively. We prove the theorem by showing that the normals $\mathbf{\tilde{n}}_m(\mathbf{s})$ and $\mathbf{n}_m(\mathbf{s})$ converge to the same limit vector when $m$ goes to infinity.

\begin{figure}[hb]
  \centering
  \subfigure[]{\includegraphics[width=0.25\linewidth]{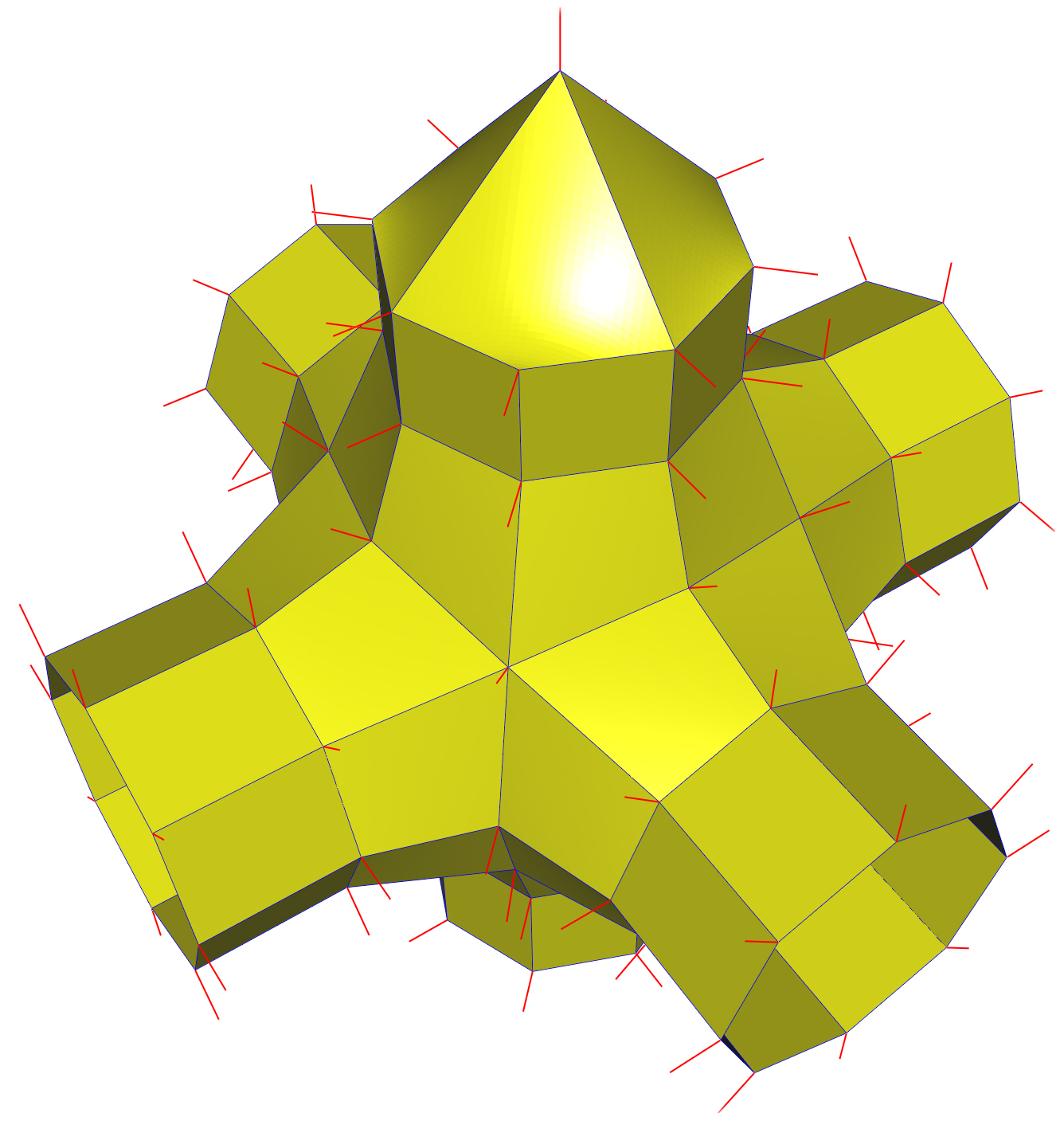}} \ \ \ \ \ \
  \subfigure[]{\includegraphics[width=0.25\linewidth]{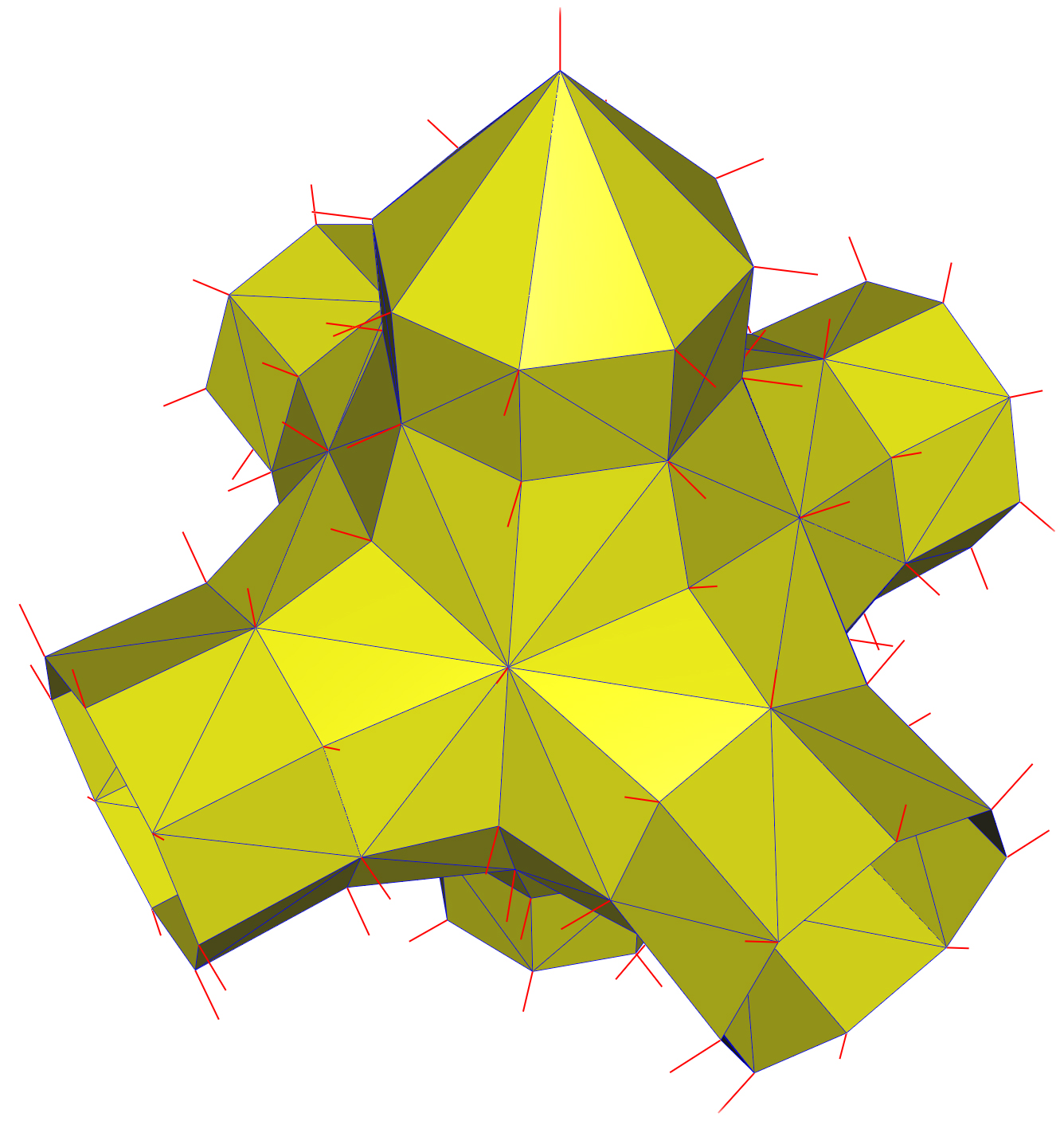}}
  \caption{A quad mesh and its triangulation together with pre-computed unit normal vectors at the vertices.}
  \label{Fig:Six+ Control mesh}
\end{figure}

Let $B_{m+j}(\mathbf{s})$ and $\tilde{B}_{m+j}(\mathbf{s})=B_{m+j}(\mathbf{s})\circ \tilde{M}_{m+j}(\mathbf{s})$, $j=0,1,\ldots$, be the subdivision matrices given in Equation (\ref{Eqn:x_m(s)1}) and Equation (\ref{Eqn:x_tilde m(s)1}). By the same technique as Lemma \ref{lemma:norm for M_ij^k - I} and Lemma \ref{lemma:difference of matrices}, we have
\[
\begin{aligned}
\|\tilde{M}_{m+j}(\mathbf{s})-E\|_\infty &\leq \tilde{c} \gamma^{m+j},  \ \ \ \
\left\|\frac{\partial \tilde{M}_{m+j}(\mathbf{s})}{\partial u}\right\|_\infty &\leq \tilde{c}_{u} \gamma^{m+j}, \ \ j=0,1,\ldots
\end{aligned}
\]
where $\tilde{c}$, $\tilde{c}_{u}$ and $\gamma\in(0,1)$ are constants. It follows that
\[
\tilde{B}_{m+j}(\mathbf{s}) = B_{m+j}(\mathbf{s})\circ(E+\tilde{M}_{m+j}(\mathbf{s})-E) = B_{m+j}(\mathbf{s})\circ E + O(\gamma^{m+j})
\]
and
\[
\begin{aligned}
\frac{\partial \tilde{B}_{m+j}(\mathbf{s})}{\partial u} &= \frac{\partial B_{m+j}(\mathbf{s})}{\partial u}\circ \tilde{M}_{m+j}(\mathbf{s})
        + B_{m+j}(\mathbf{s}) \circ \frac{\partial \tilde{M}_{m+j}(\mathbf{s})}{\partial u} \\
        &= \frac{\partial B_{m+j}(\mathbf{s})}{\partial u}\circ(E + O(\gamma^{m+j})) + B_{m+j}(\mathbf{s}) \circ \frac{\partial \tilde{M}_{m+j}(\mathbf{s})}{\partial u} \\
        &= \frac{\partial B_{m+j}(\mathbf{s})}{\partial u}\circ E + O(\gamma^{m+j}).
\end{aligned}
\]
By substituting above two equalities, we compute the partial derivatives of $\mathbf{\tilde{x}}_m(\mathbf{s})$ as follows:
\[
\begin{aligned}
& \frac{\partial}{\partial u}(\tilde{B}_{m+j}(\mathbf{s})\cdots \tilde{B}_m(\mathbf{s})Q_m)   \\
& = \sum_{l=0}^j \tilde{B}_{m+j}(\mathbf{s})\cdots \frac{\partial \tilde{B}_{m+l}(\mathbf{s})}{\partial u} \cdots \tilde{B}_m(\mathbf{s})Q_m \\
& = \sum_{l=0}^j ((B_{m+j}(\mathbf{s})\cdots \frac{\partial B_{m+l}(\mathbf{s})}{\partial u} \cdots B_m(\mathbf{s}))\circ E) Q_m + O(\gamma^m)  \\
& = \frac{\partial}{\partial u}(B_{m+j}(\mathbf{s})\cdots B_m(\mathbf{s})Q_m) + O(\gamma^m).
\end{aligned}
\]
When $j$ goes to infinity, we have
\[
\frac{\partial \mathbf{\tilde{x}}_m(\mathbf{s})}{\partial u} = \frac{\partial \mathbf{x}_m(\mathbf{s})}{\partial u} + O(\gamma^m).
\]
Similarly, we have
\[
\frac{\partial \mathbf{\tilde{x}}_m(\mathbf{s})}{\partial v} = \frac{\partial \mathbf{x}_m(\mathbf{s})}{\partial v} + O(\gamma^m).
\]
Since $\mathbf{\tilde{n}}_m(\mathbf{s}) // \frac{\partial \mathbf{\tilde{x}}_m(\mathbf{s})}{\partial u} \times \frac{\partial \mathbf{\tilde{x}}_m(\mathbf{s})}{\partial v}$
and $\mathbf{n}_m(\mathbf{s}) // \frac{\partial \mathbf{x}_m(\mathbf{s})}{\partial u} \times \frac{\partial \mathbf{x}_m(\mathbf{s})}{\partial v}$, we have
\[
\lim_{m\rightarrow+\infty}  (\mathbf{\tilde{n}}_m(\mathbf{s}) - \mathbf{n}_m(\mathbf{s})) = \mathbf{0}.
\]
By applying the result of Lemma \ref{lemma:normal continuity}, we have
\[
\lim_{m\rightarrow+\infty}{\mathbf{\tilde{n}}_m(\mathbf{s})} = \lim_{m\rightarrow+\infty}(\mathbf{\tilde{n}}_m(\mathbf{s})-\mathbf{n}_m(\mathbf{s}))
+\lim_{m\rightarrow+\infty}\mathbf{n}_m(\mathbf{s}) = \mathbf{n}_c.
\]
This proves the theorem.
\end{proof}

\begin{figure*}[htb]
  \centering
  \subfigure[PN-Catmull-Clark subdivision vs. Catmull-Clark subdivision]{\includegraphics[width=0.24\linewidth]{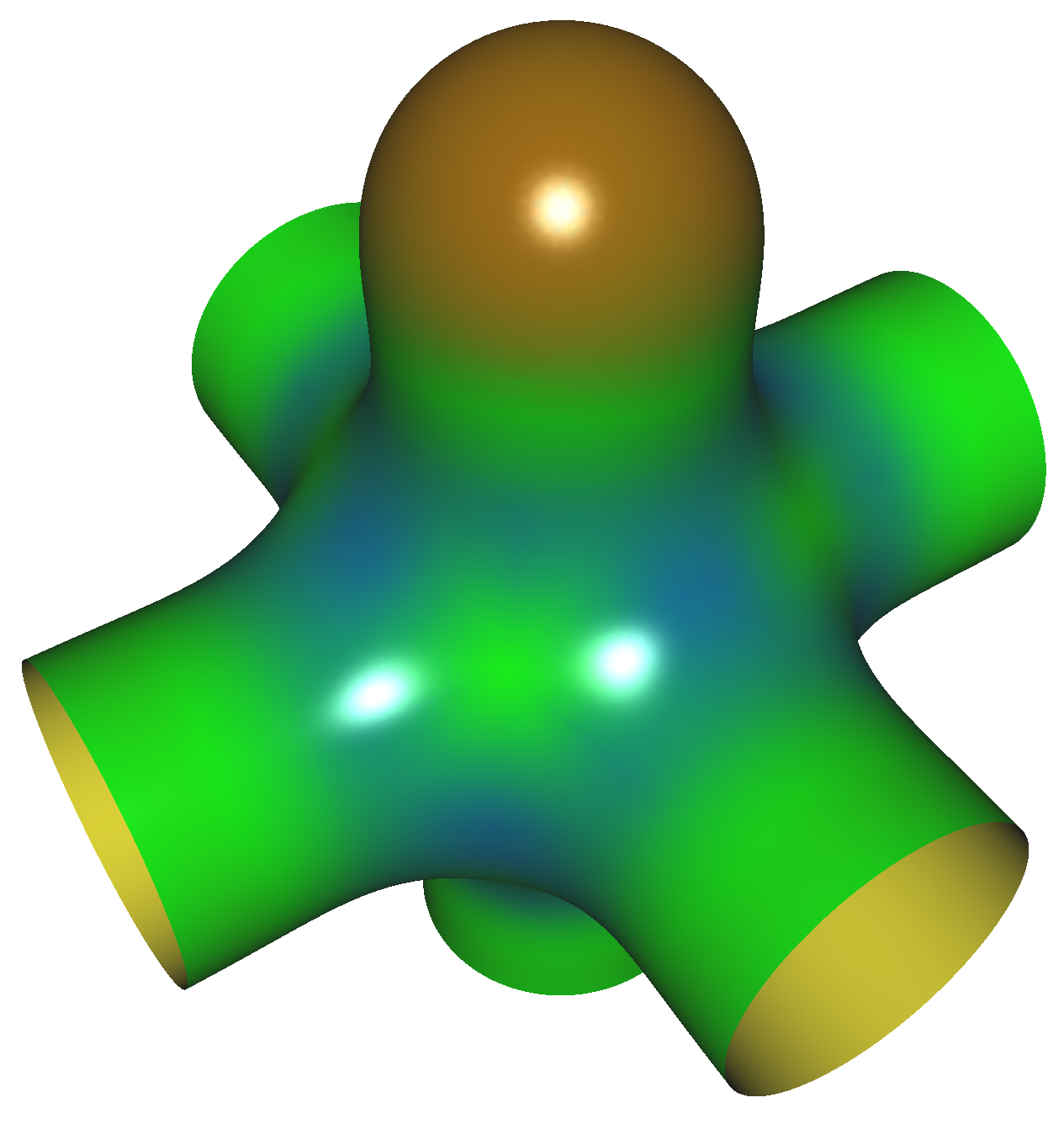}
  \includegraphics[width=0.245\linewidth]{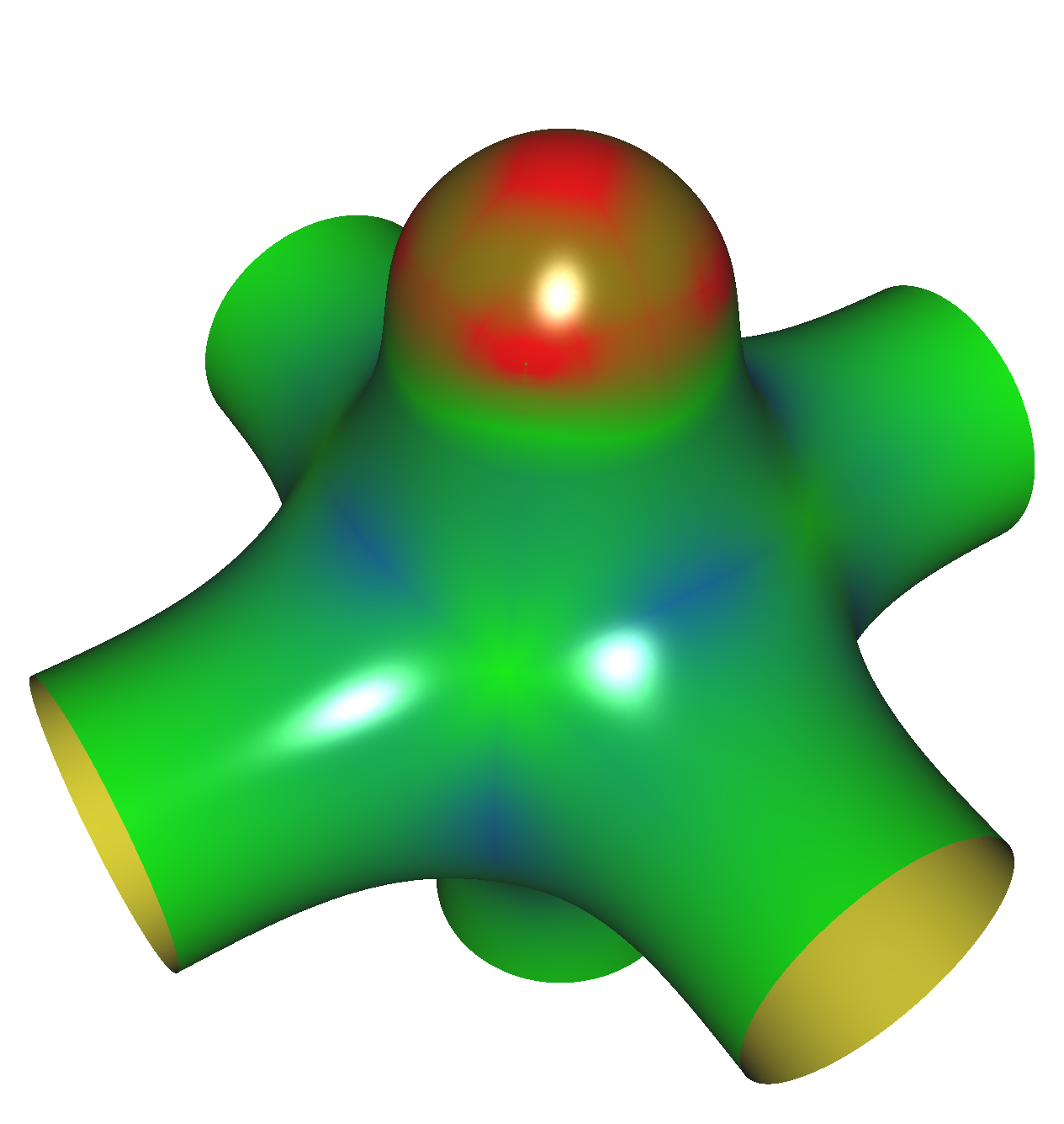}}
  \subfigure[PN-Doo-Sabin subdivision vs. Doo-Sabin subdivision]{\includegraphics[width=0.24\linewidth]{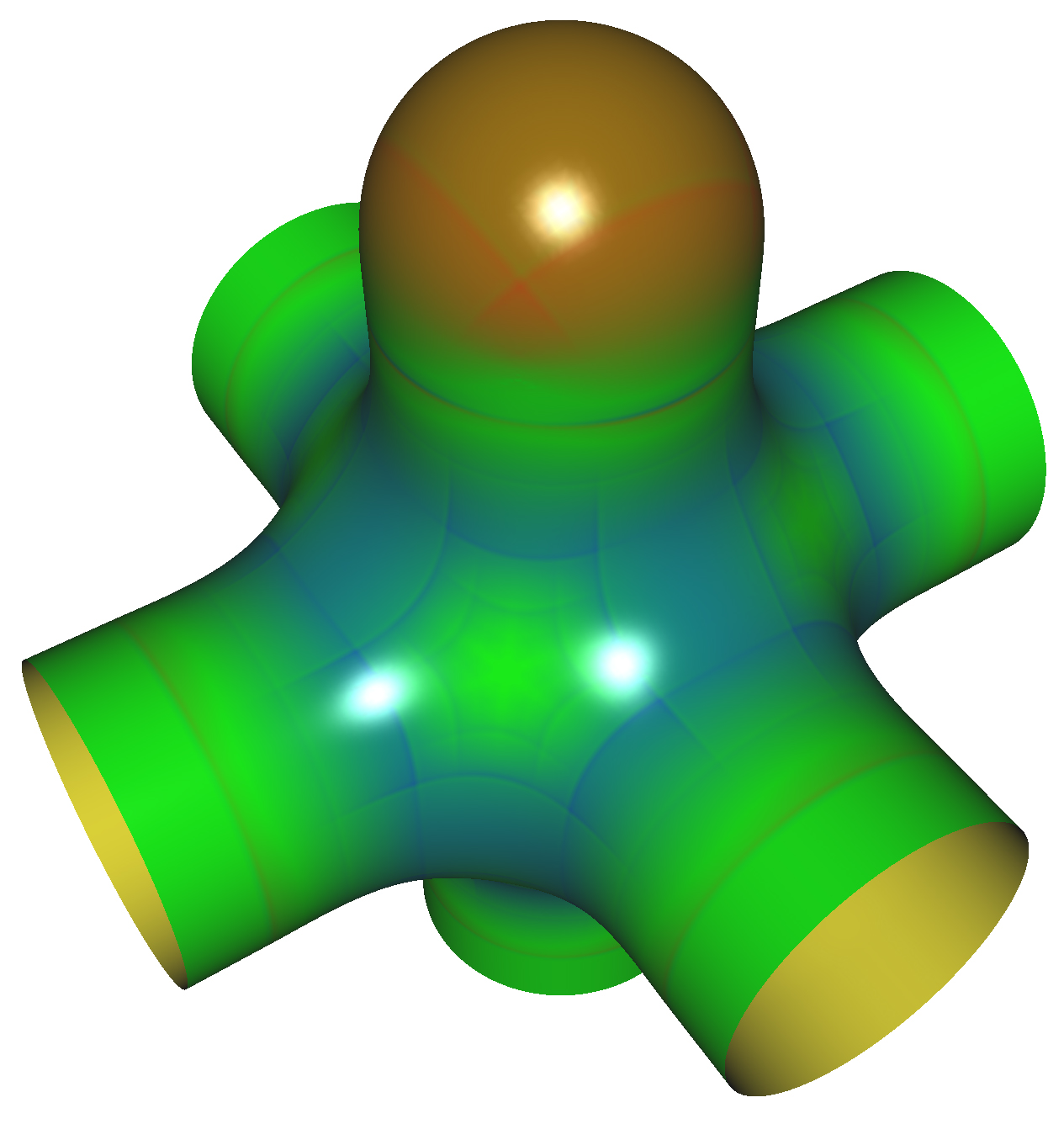}
  \includegraphics[width=0.245\linewidth]{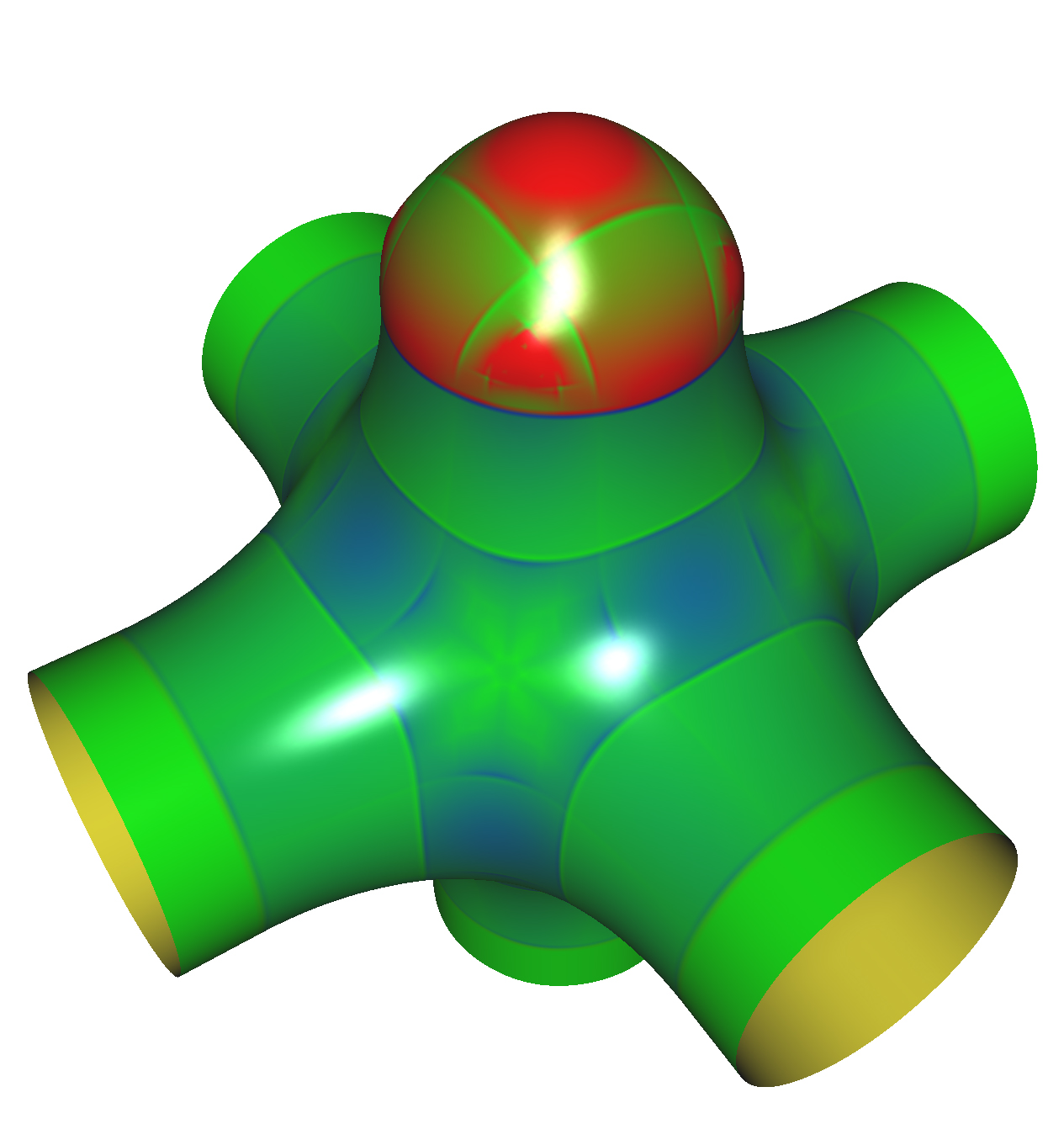}} \\
  \subfigure[PN-Kobbelt subdivision vs. Kobbelt subdivision]{\includegraphics[width=0.24\linewidth]{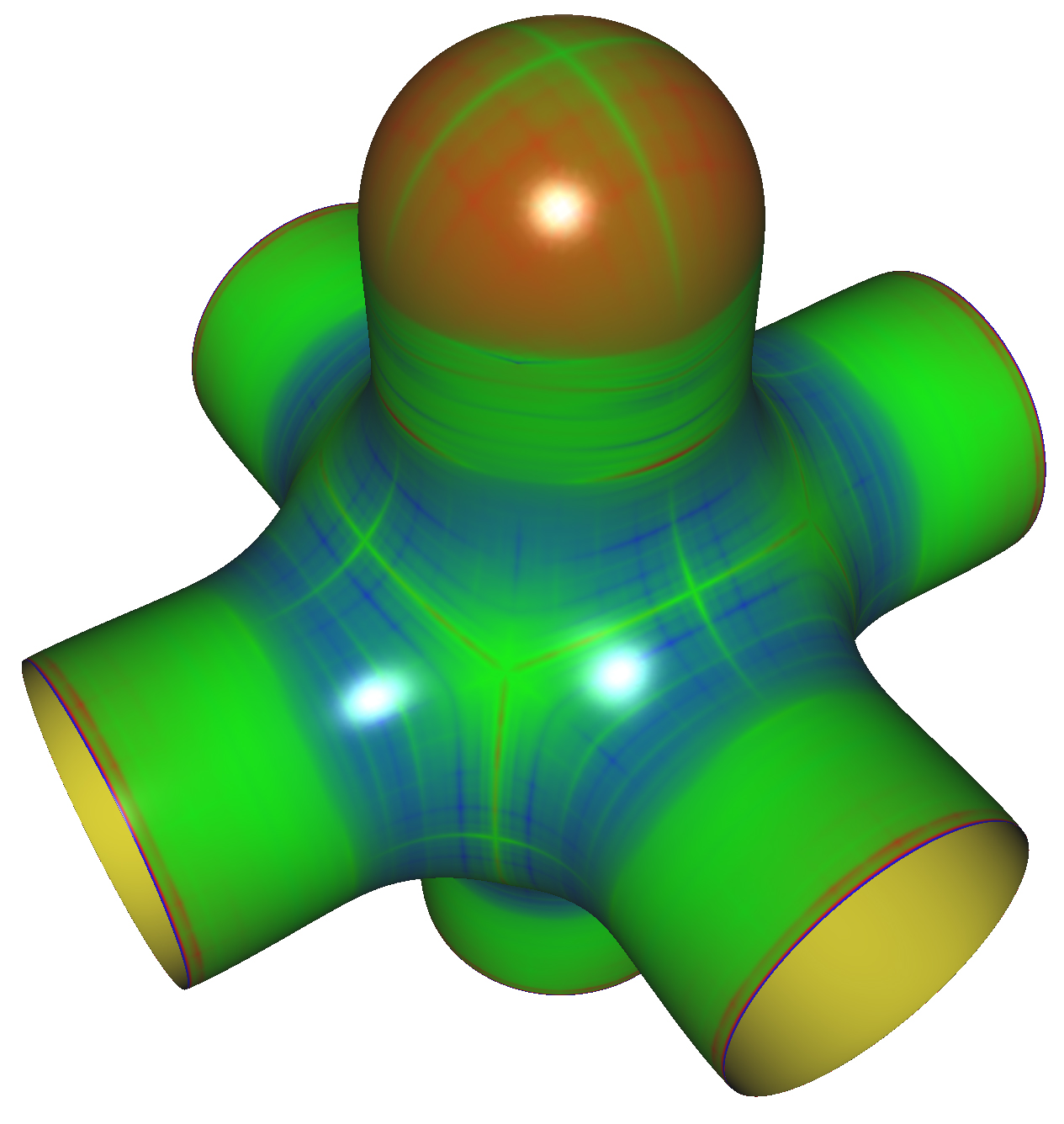}
  \includegraphics[width=0.245\linewidth]{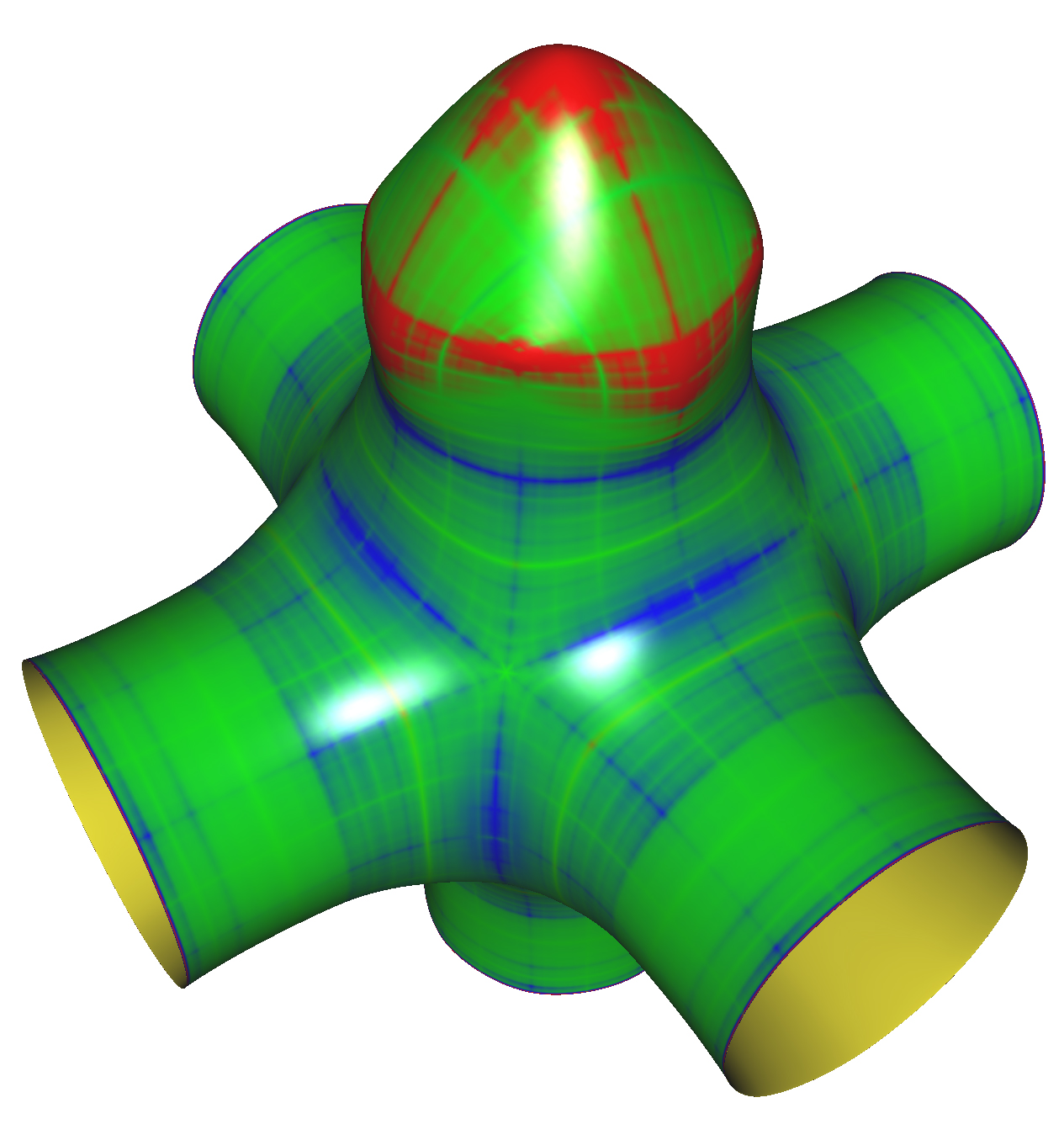}}
  \subfigure[PN-Loop subdivision vs. Loop subdivision]{\includegraphics[width=0.24\linewidth]{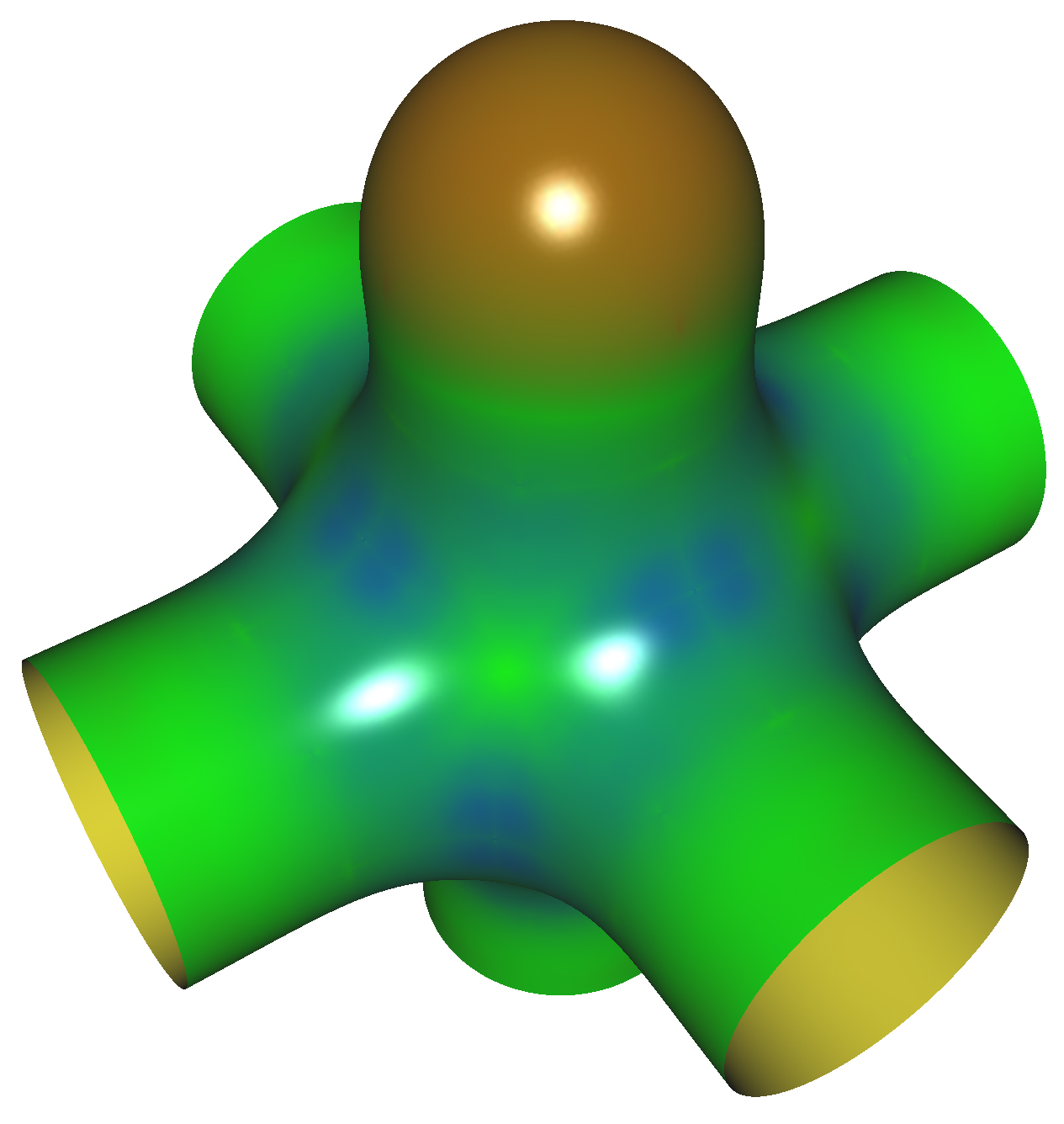}
  \includegraphics[width=0.245\linewidth]{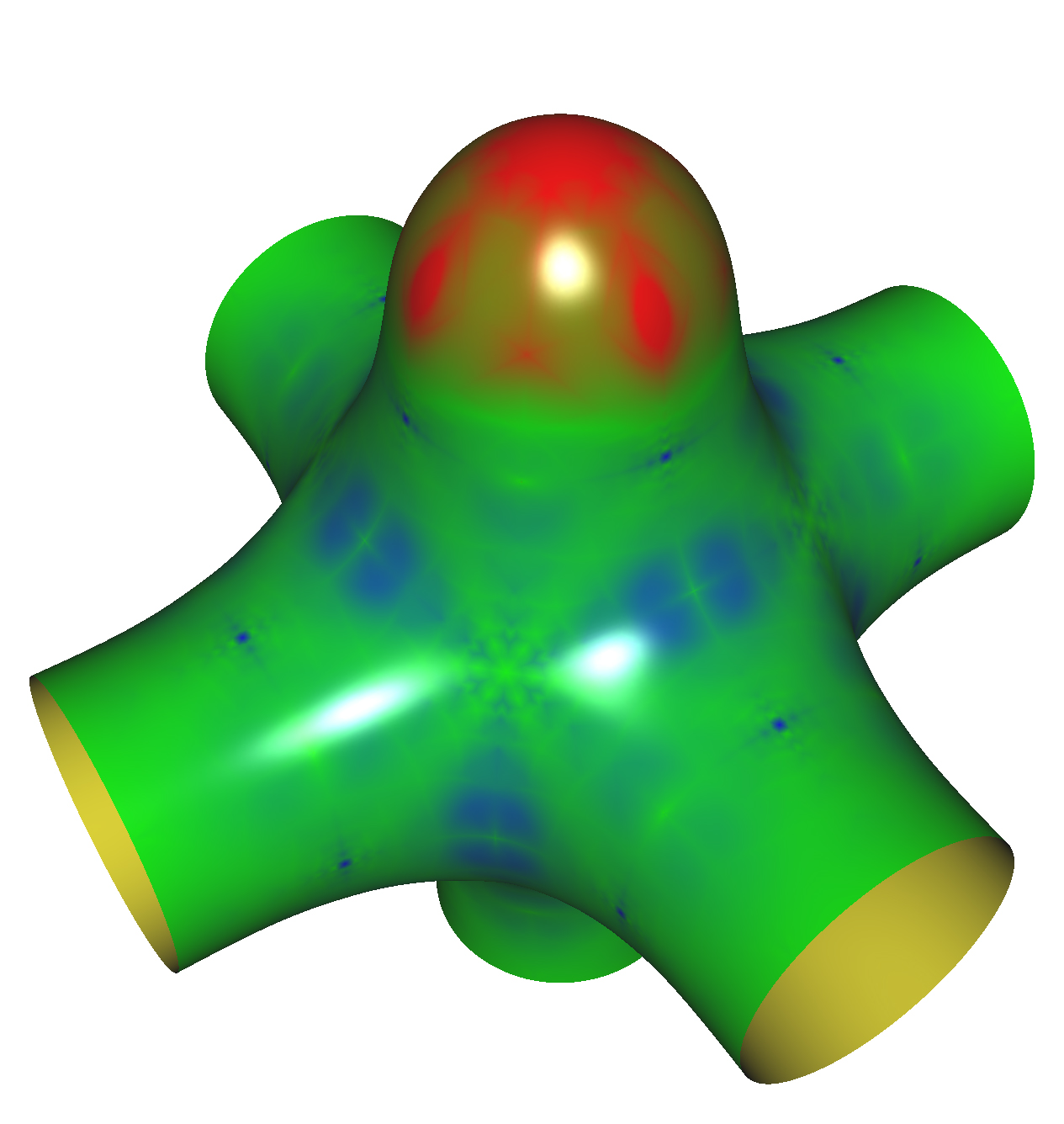}}
  \caption{Surface modeling by PN subdivision schemes or linear subdivision schemes. Gaussian curvatures of the subdivision surfaces change from (positive) high values through zero to (negative) low values when the colors change from red through green to blue.}
  \label{Fig:Six PN subdivision_Gauss}
\end{figure*}

Figure \ref{Fig:Six+ Control mesh} illustrates a quad mesh and its triangulation. The control normals at all control points are estimated from the input mesh. We subdivide the quad mesh by PN-Catmull-Clark, PN-Doo-Sabin or PN-Kobbelt subdivision schemes. A PN-Loop subdivision surface has been constructed from the triangulated mesh. For comparison purposes, the initial meshes are also subdivided by the corresponding linear subdivision schemes. To check the smoothness of all subdivision surfaces, the Gaussian curvatures of the surfaces have been computed. All surfaces illustrated in Figure \ref{Fig:Six PN subdivision_Gauss} are rendered by meshes after 5 iterations of subdivision. Particularly, the curvature plots are computed discretely by employing a high accuracy algorithm presented in \citep{YangZheng13:CurvatureTensor}. From the figures we see that the PN subdivision schemes and the linear subdivision schemes can achieve the same smoothness orders, over regular regions as well as regions near extraordinary points. Due to the properties of preserving circles, cylinders and spheres of the proposed subdivision schemes, the PN subdivision surfaces have exact circular boundaries, circular cylinder parts or approximate hemispheres on the top parts defined by the control points and control normals.


\section{PN $C^2$ subdivision surfaces}
\label{Sec:PN-C2-subdivision surface}

In addition to generalizing linear subdivision surfaces that have $C^1$ continuity at the extraordinary points to PN subdivision surfaces,
we are also interested in generalizing modified Catmull-Clark subdivision \citep{Prautzsch1998G2subdivisionsurface} or modified Loop subdivision \citep{Prautzsch2000G2Loopsurface} to PN subdivision schemes. These two modified schemes are simple to implement and can generate $C^2$ subdivision surfaces with flat extraordinary points. It is found that the generalized PN $C^2$ subdivision surfaces are curvature continuous too but the extraordinary points can be no longer flat.

Assume $S$ is the subdivision matrix for control points surrounding an isolated extraordinary vertex within a control mesh using Catmull-Clark subdivision. To improve the smoothness order at the extraordinary point, \citet{Prautzsch1998G2subdivisionsurface} proposed to modify the Catmull-Clark subdivision scheme by tuning the eigenvalues of the subdivision matrix. Let $V$ be the matrix of which the columns represent the right eigenvectors of $S$, the subdivision matrix is decomposed into $S=V\Lambda V^{-1}$, where $\Lambda=\textrm{diag}(1,\lambda,\lambda,\mu,\ldots,\zeta)$ and $1>\lambda>|\mu|\geq \ldots \geq |\zeta|$ are the eigenvalues of the matrix. When the matrix $\Lambda$ has been changed into $\Lambda'=\textrm{diag}(1,\lambda,\lambda,\mu',\ldots,\zeta')$, a modified subdivision scheme is obtained by using stencils given in the modified subdivision matrix $S'=V\Lambda' V^{-1}$.
According to Proposition \ref{Proposition:C2 continuity at extraordinary point}, if the prescribed eigenvalues satisfy $|\mu'|<\lambda^2$, $\dots$, $|\zeta'|<\lambda^2$, the modified Catmull-Clark subdivision surface is $C^2$ continuous with vanishing principal curvatures at the extraordinary point. Similarly, conventional Loop subdivision can also be modified to produce $C^2$ subdivision surfaces with flat extraordinary points \citep{Prautzsch2000G2Loopsurface}.

Even though the subdivision surfaces obtained by the modified Catmull-Clark subdivision or the modified Loop subdivision are curvature continuous, they may suffer the unfairness or concentric undulations due to the restricted zero curvature at the extraordinary points. These restrictions make the modified subdivision schemes less practical in high quality surface modeling.

By utilizing control points together with control normals, we propose to construct high quality subdivision surfaces using PN modified $C^2$ subdivision schemes. We just explain the steps of PN modified Catmull-Clark subdivision, PN modified Loop subdivision can be implemented similarly. An arbitrary topology control mesh together with given or estimated control normals are first subdivided by PN-Catmull-Clark subdivision. From the second round of subdivision, all faces within the meshes are quadrangles. The positions and control normals at the refined vertices corresponding to old irregular vertices, their abutting edges or their abutting faces are computed by Equation (\ref{Eqn:PN binary subd}) using stencils for the modified Catmull-Clark subdivision scheme. The remaining parts of the meshes are still subdivided by PN-Catmull-Clark subdivision.

\begin{figure*}[htb]
  \centering
  \includegraphics[width=0.23\linewidth]{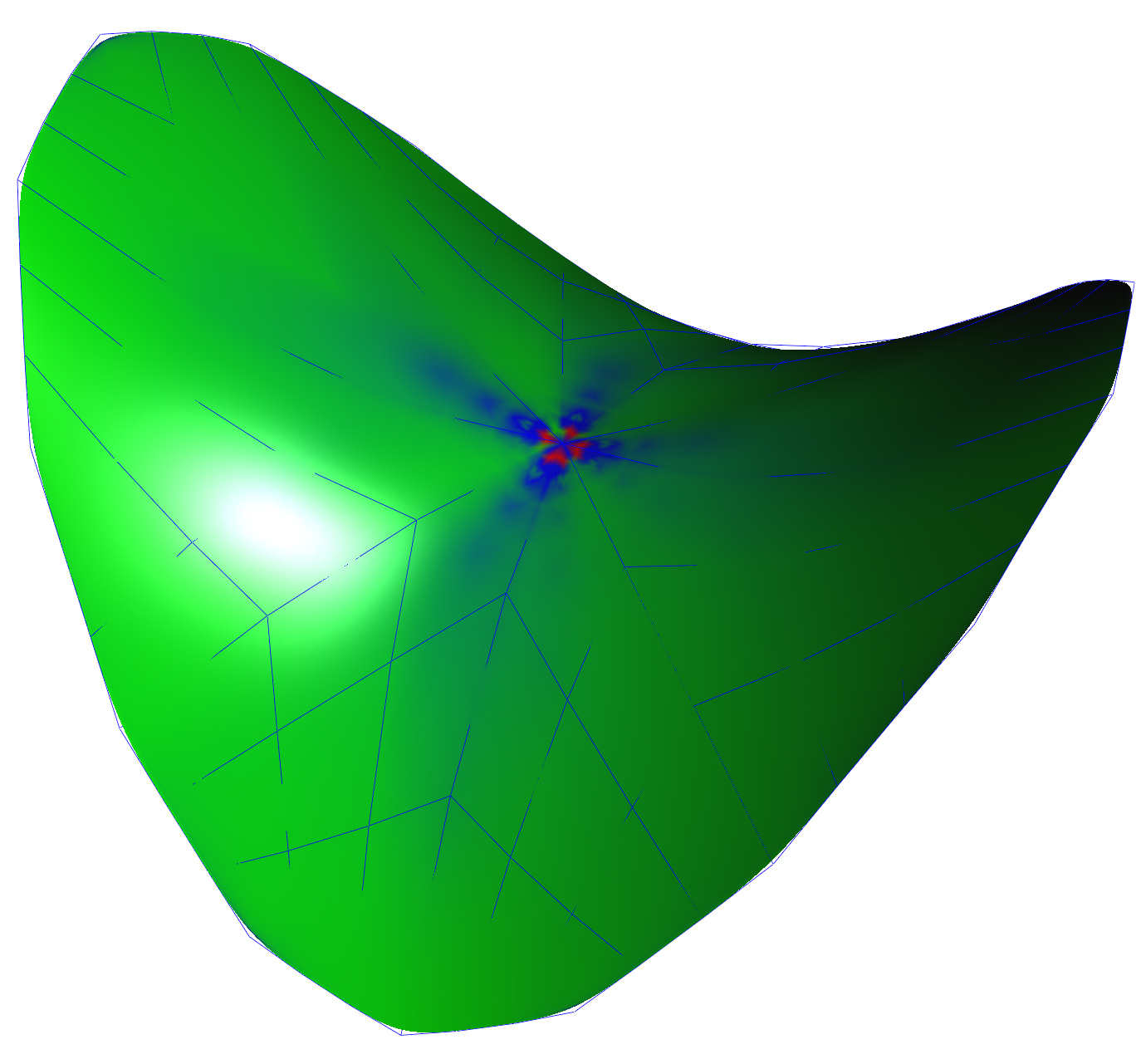}
  \includegraphics[width=0.23\linewidth]{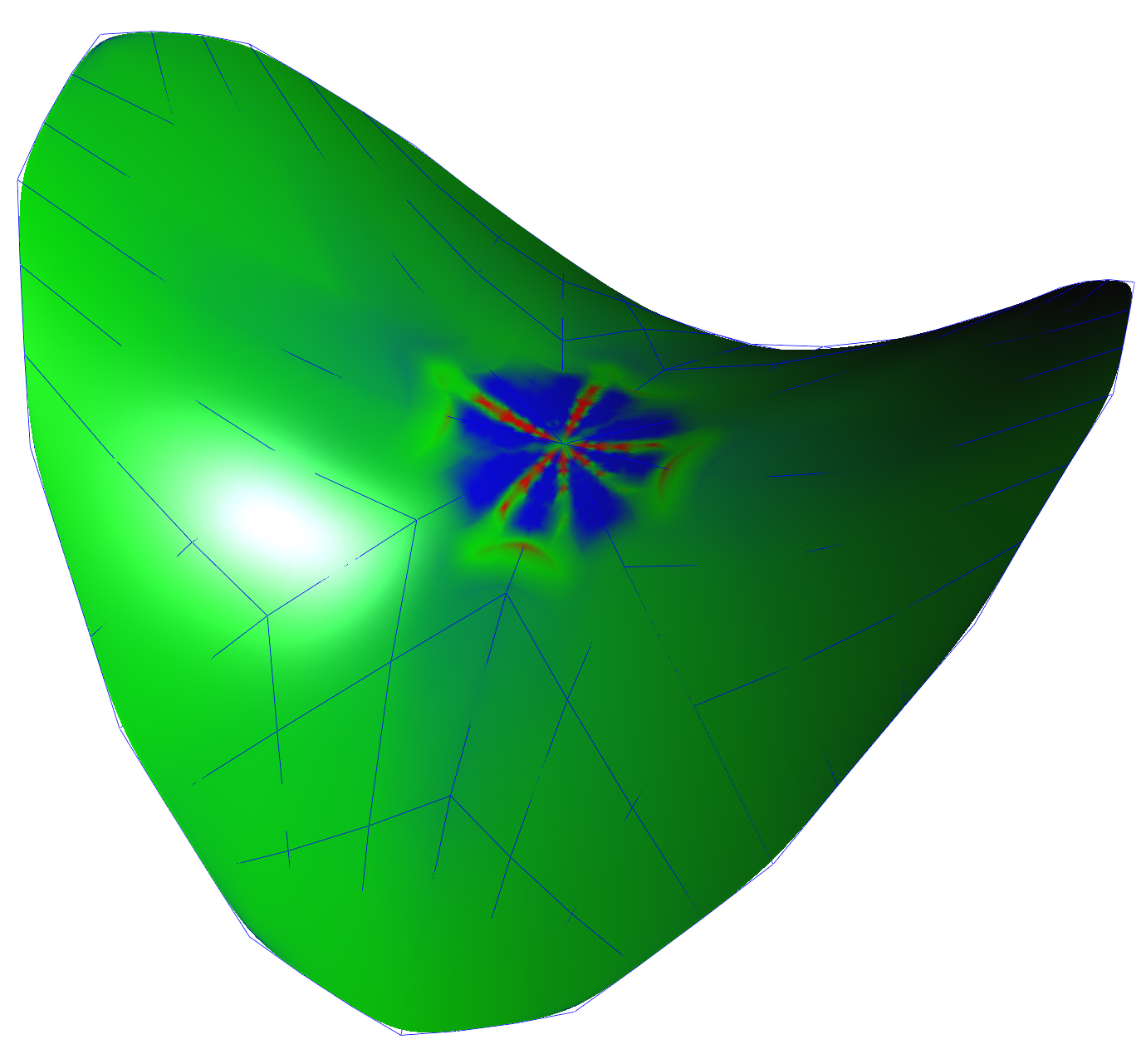}
  \includegraphics[width=0.23\linewidth]{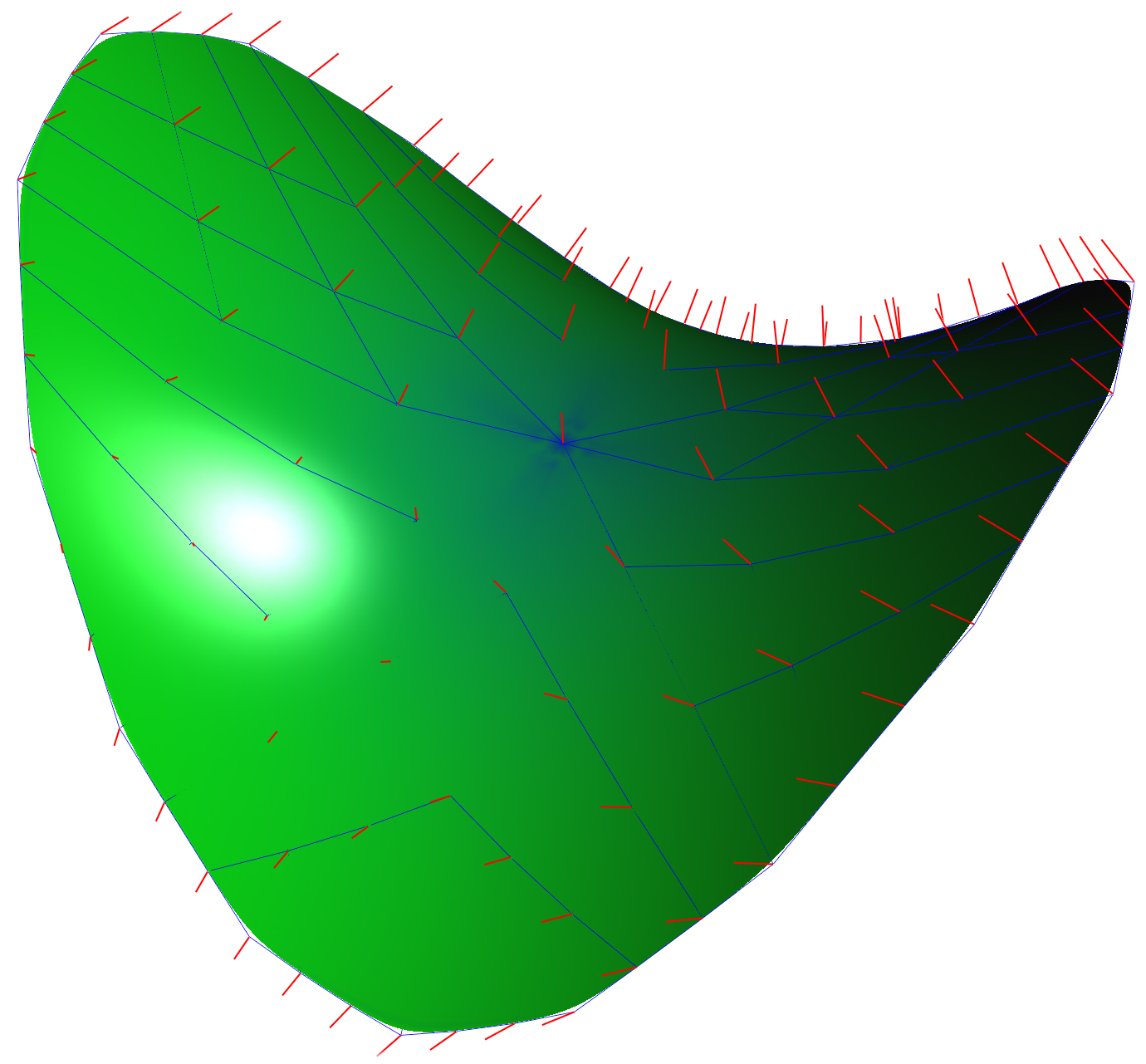}
  \includegraphics[width=0.23\linewidth]{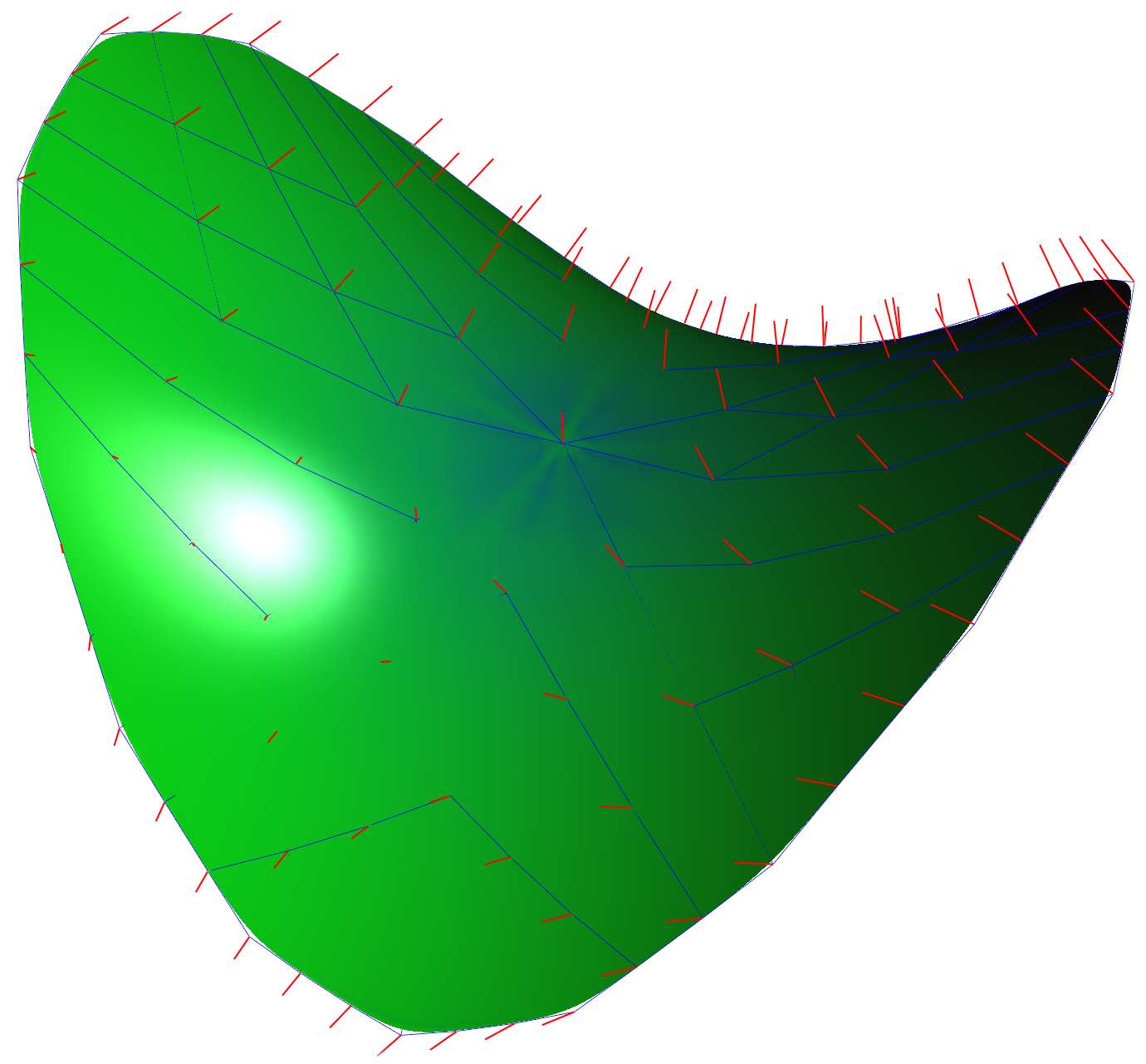} \\

  \includegraphics[width=0.23\linewidth]{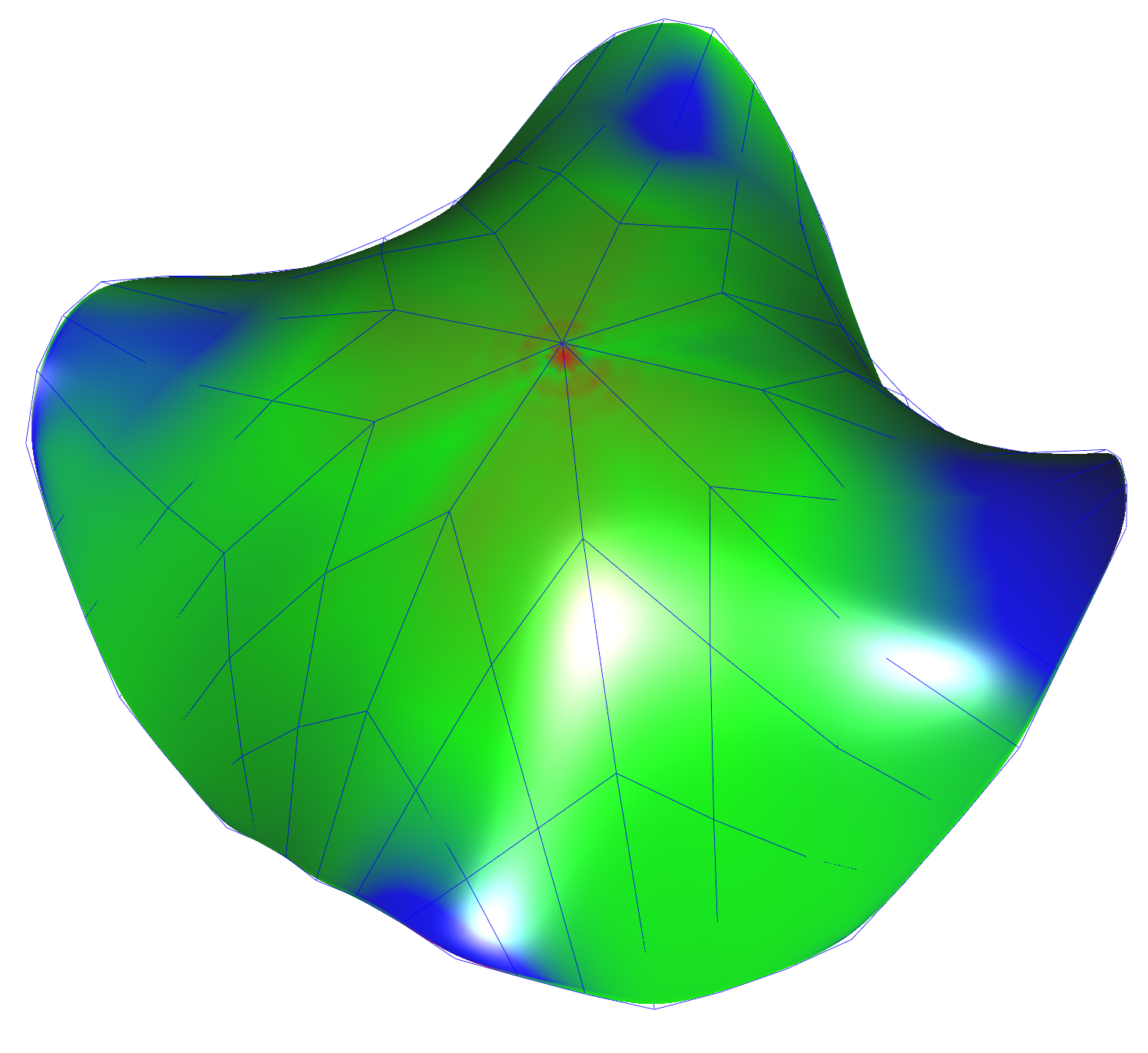}
  \includegraphics[width=0.23\linewidth]{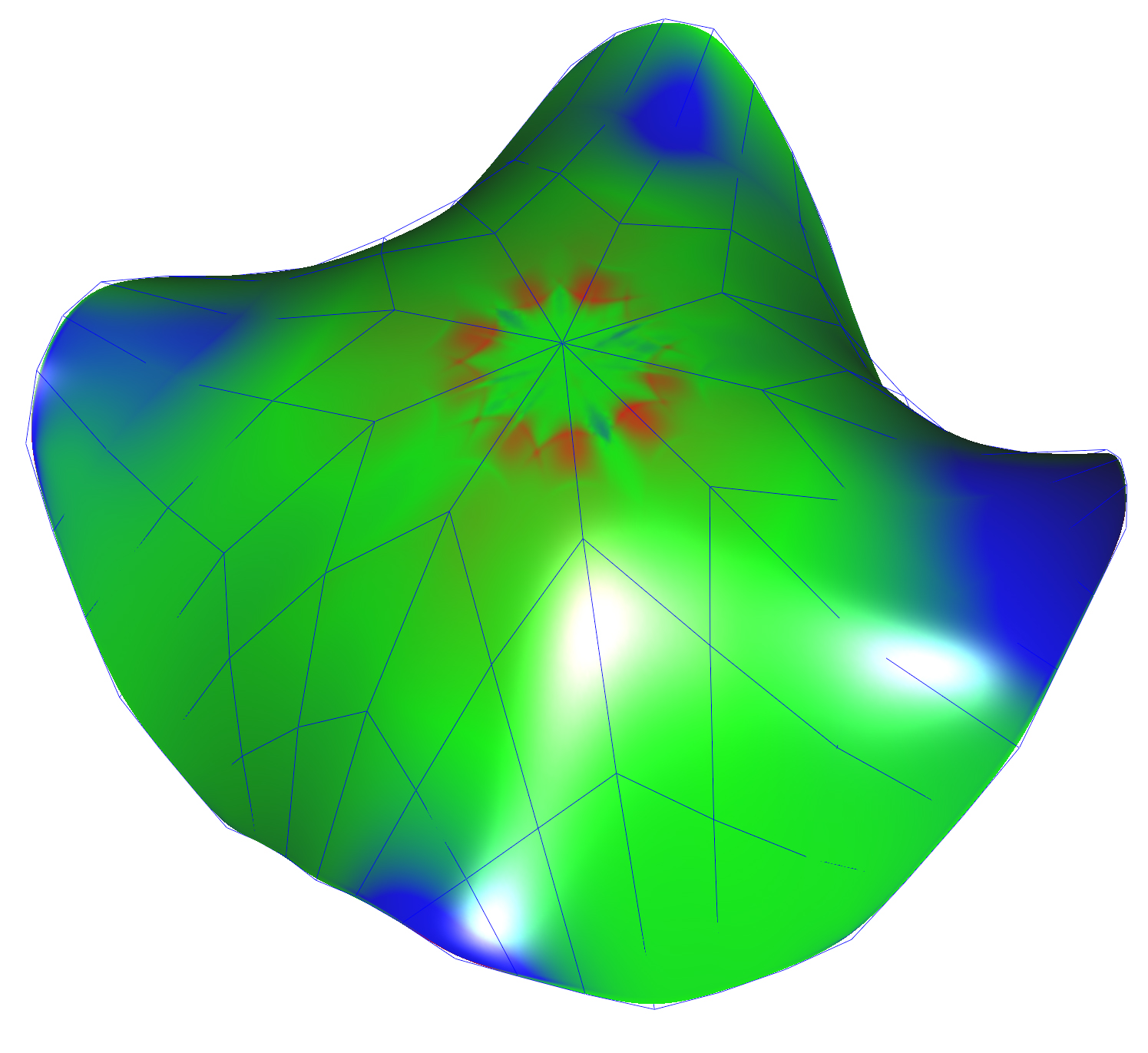}
  \includegraphics[width=0.23\linewidth]{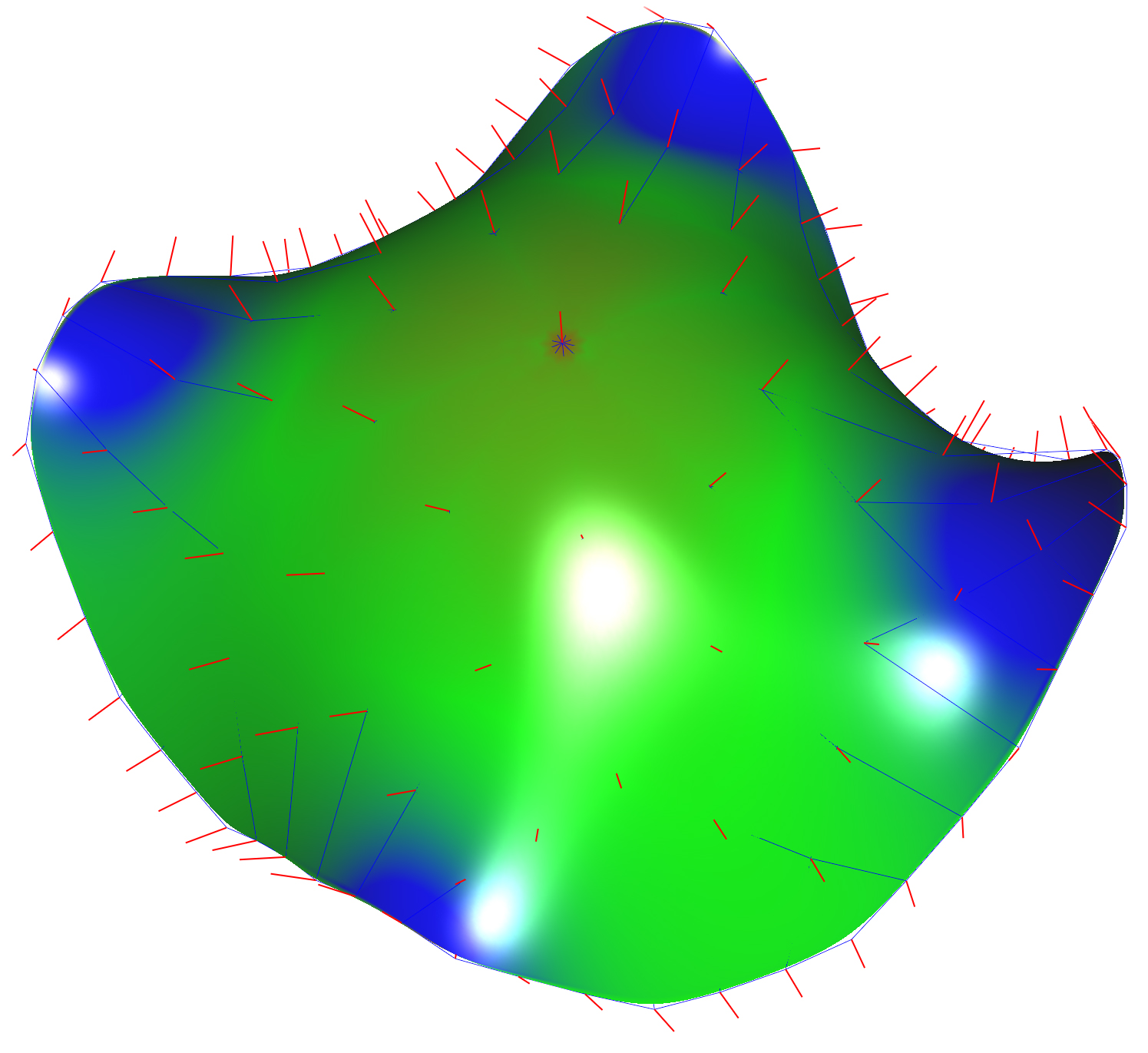}
  \includegraphics[width=0.23\linewidth]{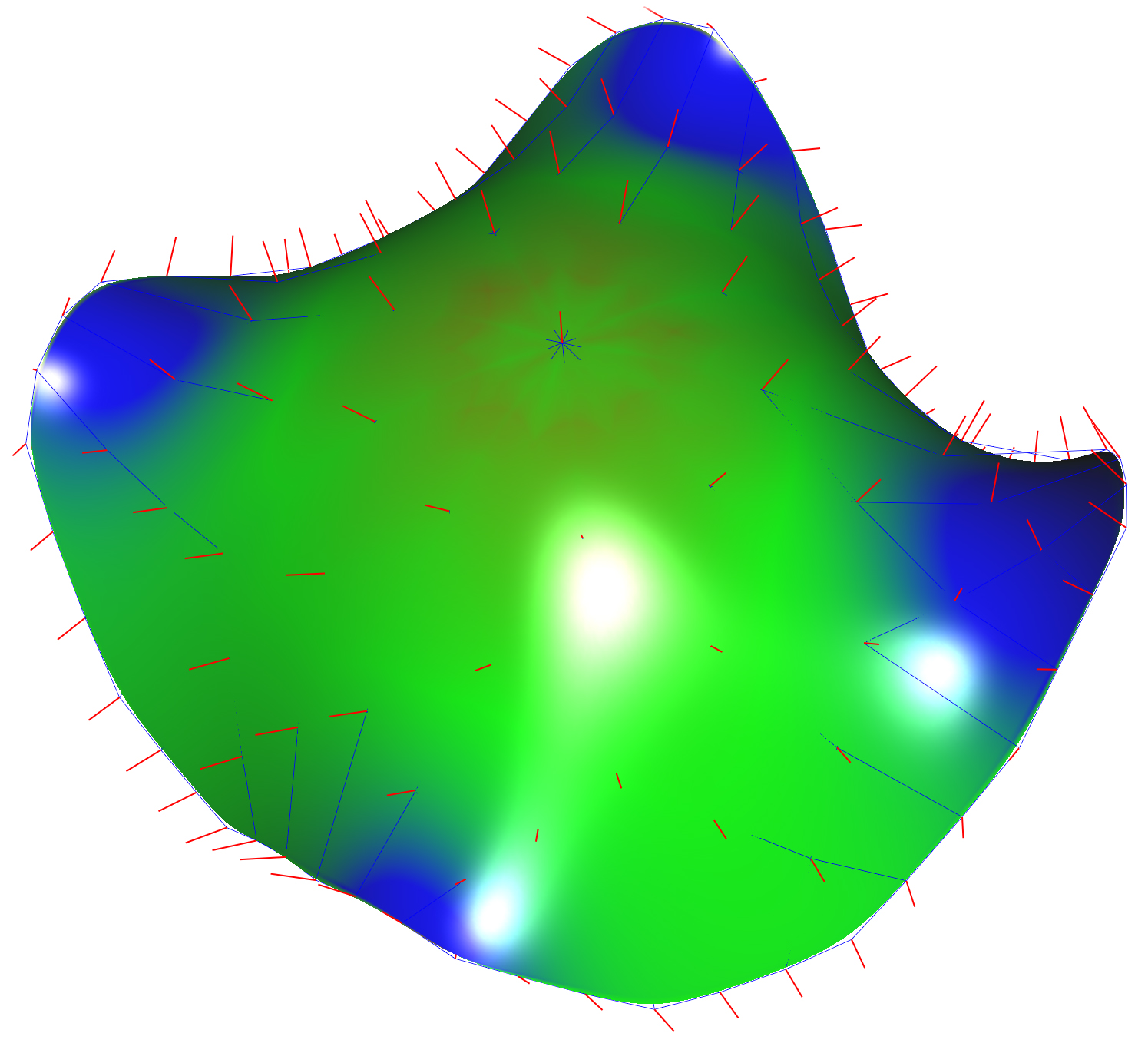} \\

  \subfigure[]{\includegraphics[width=0.23\linewidth]{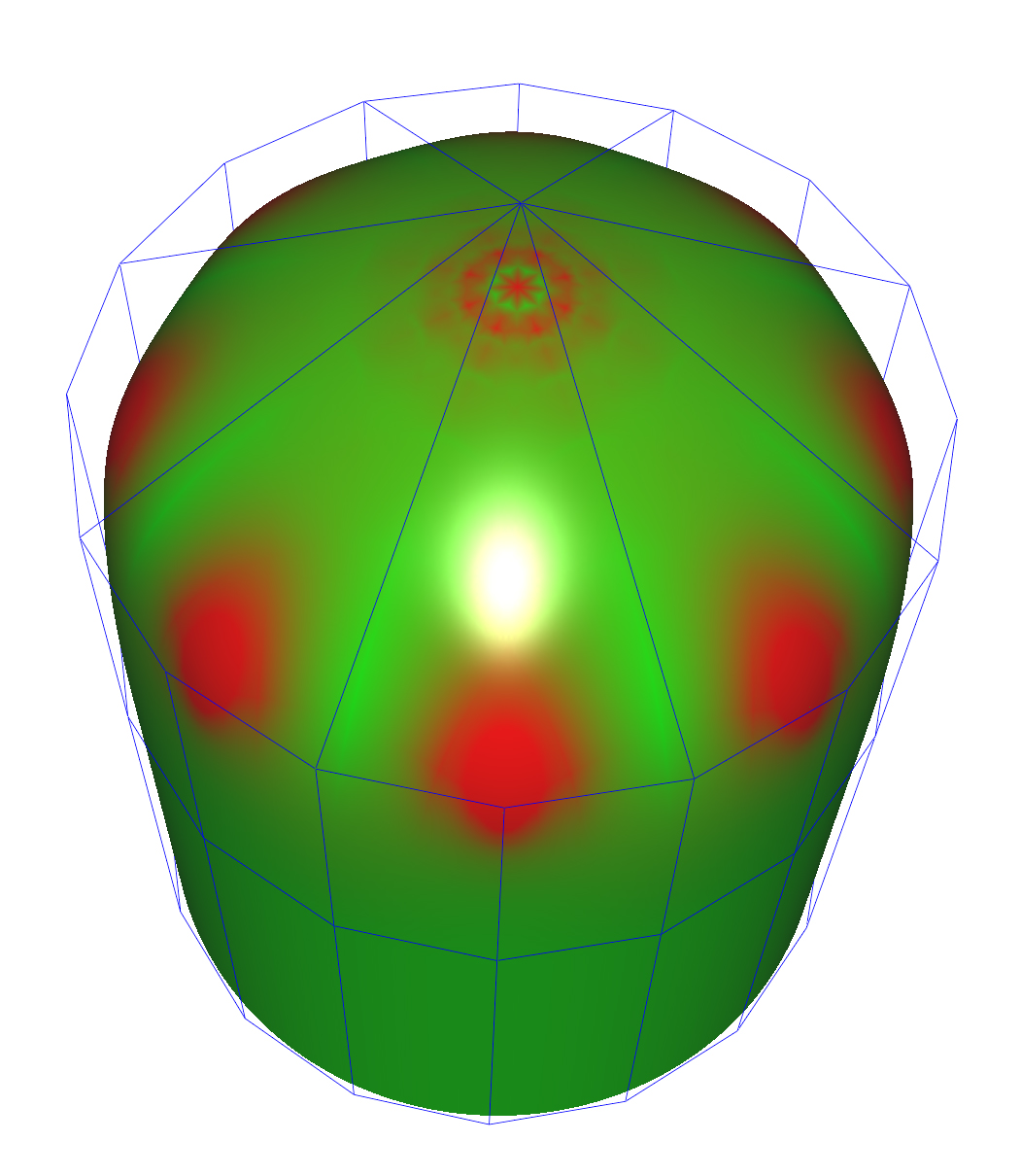}}
  \subfigure[]{\includegraphics[width=0.23\linewidth]{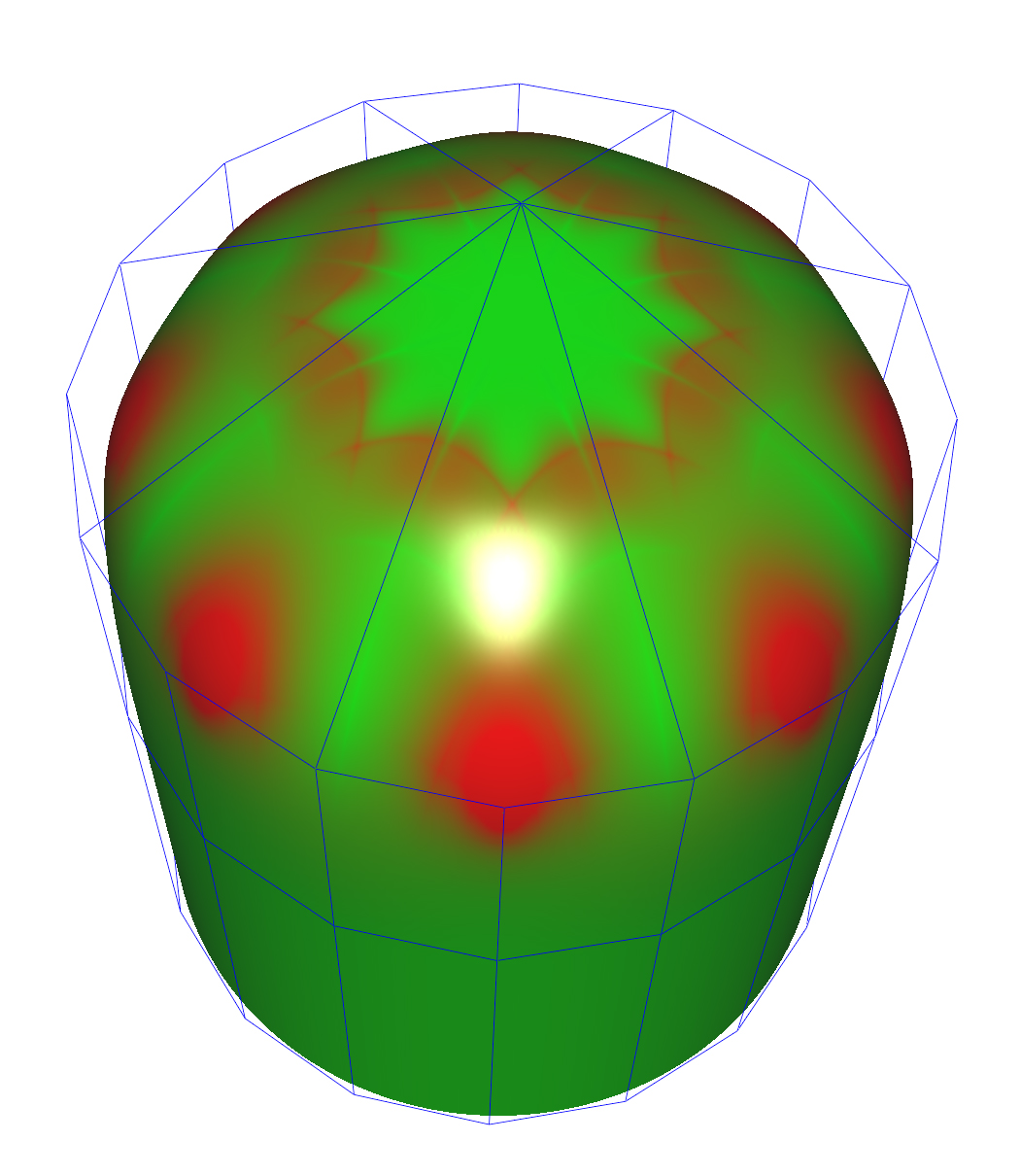}}
  \subfigure[]{\includegraphics[width=0.23\linewidth]{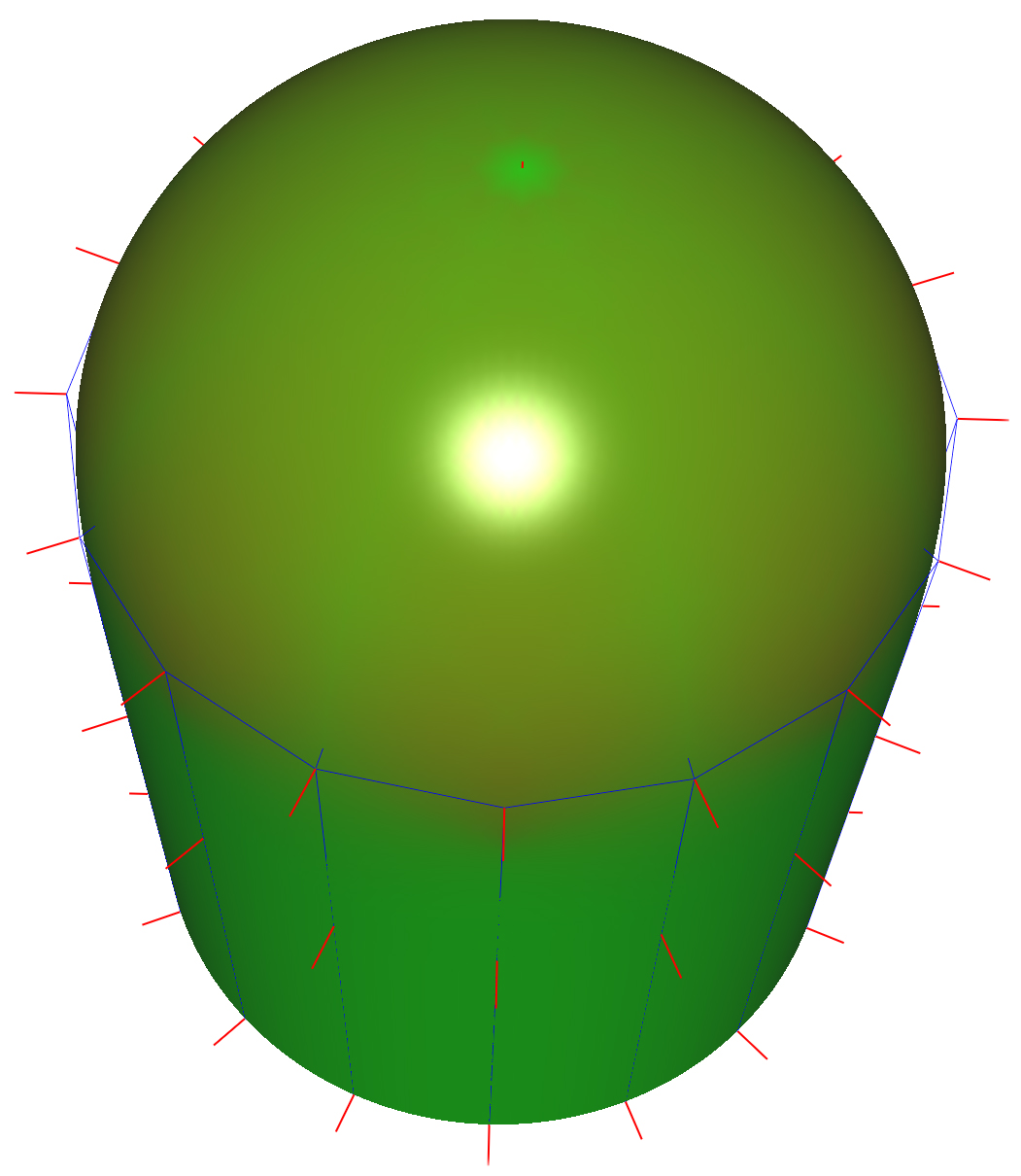}}
  \subfigure[]{\includegraphics[width=0.23\linewidth]{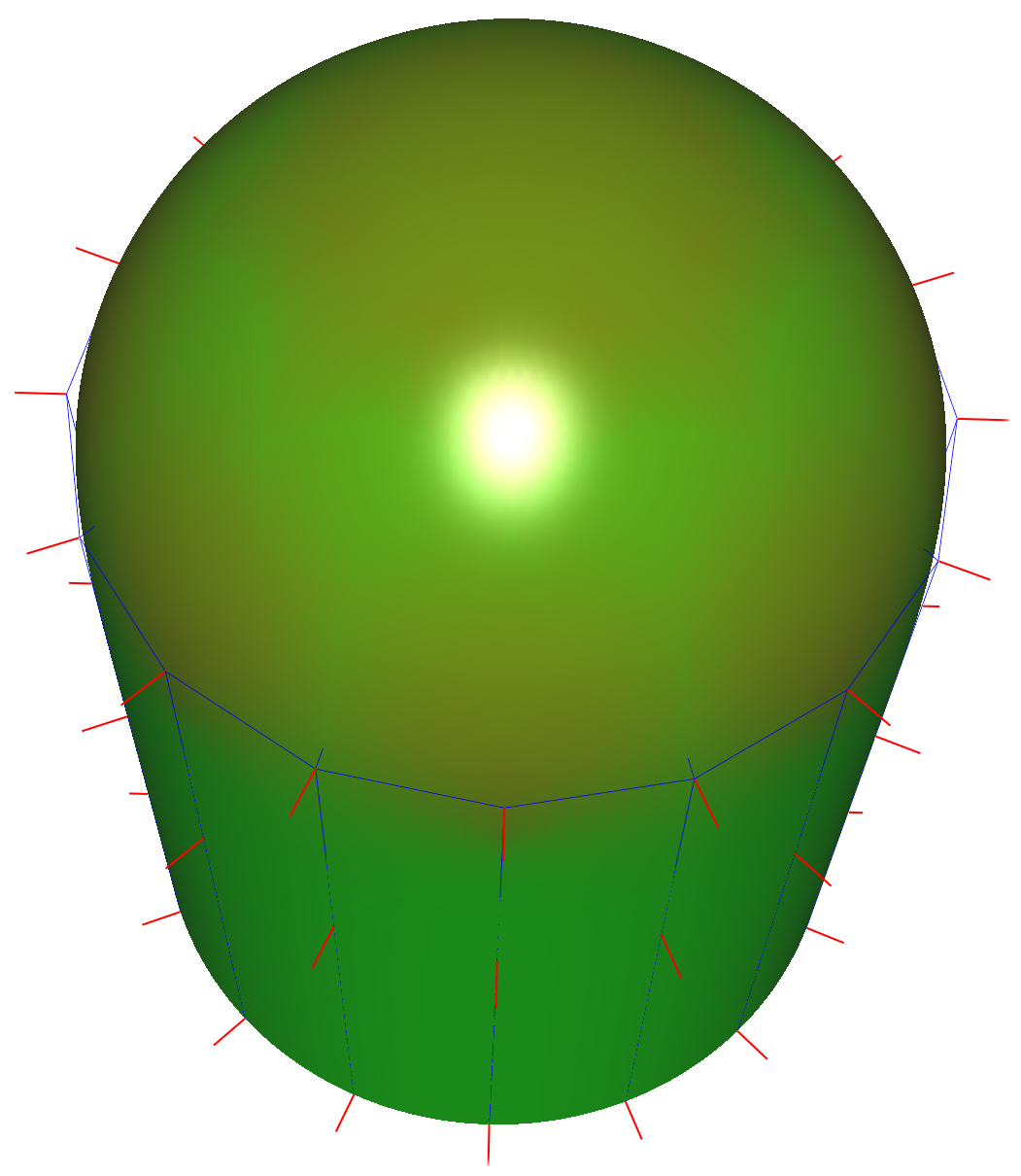}}

  \caption{Gaussian curvature plots of subdivision surfaces obtained by (a) Catmull-Clark subdivision; (b) modified Catmull-Clark subdivision~\citep{Prautzsch1998G2subdivisionsurface}; (c) PN-Catmull-Clark subdivision; (d) PN-modified Catmull-Clark subdivision. }
  \label{Fig:PN C2 subdivision_surfaces}
\end{figure*}

Since the modified Catmull-Clark subdivision is $C^2$ continuous, it is also $C^1$ continuous. Based on Theorem \ref{Theorem:C0 PN subdivision surface} and Theorem \ref{Theorem:C1 PN subdivision surface}, we know that the PN-modified Catmull-Clark subdivision converges and the obtained subdivision surfaces are at least normal continuous at the extraordinary points. It is observed that the surfaces generated by PN-modified Catmull-Clark subdivision are $C^2$ continuous too and the curvatures at the extraordinary points can be no longer vanishing. However, the theoretical proof of $C^2$ continuity of PN-modified Catmull-Clark subdivision is not available at present. We present the assertion as a conjecture.
\begin{conjecture}
The PN-modified Catmull-Clark subdivision can generate curvature continuous subdivision surfaces and the extraordinary points of the surfaces can be no longer flat when the control normals are not a constant vector nor vanish.
\end{conjecture}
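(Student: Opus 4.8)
The plan is to analyse the \emph{second}-order behaviour of the PN surface rings by reparametrizing over the (regular) characteristic map of the modified scheme and isolating the genuinely nonlinear part of the refinement as a normal-direction height field. Writing the PN ring as $\tilde{\mathbf{x}}_m=\tilde G_m Q_m$ and the modified-linear reconstruction from the \emph{same} control net as $\mathbf{x}_m=G Q_m$, one has $\tilde{\mathbf{x}}_m-\mathbf{x}_m=(\tilde G_m-G)Q_m=(\tilde G_m-G)(Q_m-\mathbf{p}_c\mathbbm{1})$, the last step using that both families of generating functions reproduce constants. By Lemma~\ref{lemma:norm for M_ij^k - I} the matrix corrections obey $\tilde M_{m+j}-E=O(\gamma^{m+j})$, whence $\tilde G_m-G=O(\gamma^m)$; combined with the Cauchy estimate $Q_m-\mathbf{p}_c\mathbbm{1}=O(\gamma^m)$ from the proof of Theorem~\ref{Theorem:C0 PN subdivision surface}, this produces a position correction $\tilde{\mathbf{x}}_m-\mathbf{x}_m=O(\gamma^{2m})$ directed, to leading order, along the limiting normal $\mathbf{n}_c$. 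The crucial point is a matching of rates: for the modified scheme $\gamma=\lambda$ (the normals inherit the subdominant rate $\lambda$ of $S'$), the in-plane extent of ring $m$ is also $\lambda^m$, while the flat modified height decays like $(\mu')^m=o(\lambda^{2m})$ since $|\mu'|<\lambda^2$. Hence the $O(\lambda^{2m})$ PN correction is exactly of quadratic order in the characteristic coordinate and \emph{dominates} the linear height, which is the mechanism that must generate a nonzero curvature.

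First I would make the comparison of Theorem~\ref{Theorem:C1 PN subdivision surface} second-order. This needs the bound $\|\partial^2_{\mathbf{s}}(\tilde M_{m+j}(\mathbf{s})-E)\|_\infty\le\tilde c\,\gamma^{m+j}$, obtained exactly as the first-derivative bound there once one uses that the projected normal field is $C^2$ on the regular rings (the smoothness-equivalence results already quoted for the $C^1$ analysis). This gives $\partial^2_{\mathbf{s}}\tilde{\mathbf{x}}_m=\partial^2_{\mathbf{s}}\mathbf{x}_m+O(\lambda^m)$ in raw coordinates, so $\mathbf{x}_m$ carries only the flat $C^2$ jet inherited from the modified scheme and all genuine curvature lives in the correction. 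Next I would upgrade the $O(\cdot)$ bounds to leading-order asymptotics: since $\mathbf{p}_{m,1},\mathbf{p}_{m,2}\to\mathbf{t}_1,\mathbf{t}_2$ (Lemma~\ref{lemma:normal continuity}) and the subdominant normal variation admits an expansion $\mathbf{n}^m_j-\mathbf{n}_c=\lambda^m\mathbf{v}_j+o(\lambda^m)$ with $\{\mathbf{v}_j\}$ convergent (Proposition~\ref{Proposition:distancebetweenrefinednormals}), the rescaled correction $\lambda^{-2m}(\tilde{\mathbf{x}}_m-\mathbf{x}_m)$ should converge, uniformly on $\Sigma^0$, to a fixed continuous height $H(\mathbf{s})\,\mathbf{n}_c$, where $H$ is a quadratic form in the characteristic coordinates determined by $\mathbf{t}_1,\mathbf{t}_2$ and the limiting normal gradient $\{\mathbf{v}_j\}$.

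It then remains to promote pointwise convergence of the rescaled second jet to genuine curvature continuity. I would verify the self-similar consistency of $H$ across consecutive rings under the $\lambda$-scaling of the characteristic map, so that the limiting second fundamental form is well defined and continuous at the central point, yielding $C^2$ (equivalently $G^2$) regularity. The non-flatness claim follows from the explicit shape of $H$: it is built from the subdominant normal variation $\{\mathbf{v}_j\}$, which is nonzero precisely when the control normals are not constant around the extraordinary vertex. When the normals are a common constant vector, or vanish, the height field is identically zero, the scheme reduces to the modified-linear scheme by the reduction property, and the flat $C^2$ point is recovered, establishing the stated dichotomy.

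The hard part will be this last step: showing that the rescaled height converges to a continuous quadratic form \emph{and} is ring-to-ring self-consistent. Unlike the linear theory, the curvature here is produced by the nonlinear correction rather than by an eigenvalue of any fixed subdivision matrix, so Proposition~\ref{Proposition:C2 continuity at extraordinary point} cannot be applied directly --- no single matrix carries the $\mu=\lambda^2$ eigenstructure that a non-flat $C^2$ point would otherwise require. One therefore needs a nonlinear, second-order analogue of the spectral $C^2$ criterion, controlling the second-order behaviour of a non-stationary, nonlinearly perturbed scheme and establishing the self-similarity of its limiting Hessian. Constructing such a tool, and checking that the geometric circle/helix height rule yields a nondegenerate, parametrization-consistent quadratic form for generic nets, is the essential obstacle and the reason the statement is posed here only as a conjecture.
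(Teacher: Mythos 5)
You cannot be compared against the paper's proof here because the paper offers none: the statement is posed explicitly as a conjecture, with the authors noting that ``the theoretical proof of $C^2$ continuity of PN-modified Catmull-Clark subdivision is not available at present,'' and the only supporting evidence in the paper is empirical (Gaussian curvature plots in Figures 9, 14, 15). Your proposal is therefore rightly a research program rather than a proof, and to your credit you identify the same essential obstacle the authors implicitly leave open: Proposition~\ref{Proposition:C2 continuity at extraordinary point} is a spectral criterion for a \emph{fixed} subdivision matrix, whereas here the candidate curvature is produced by a non-stationary nonlinear correction, so a second-order analogue of the spectral $C^2$ theory would have to be built from scratch. By your own admission the decisive step --- convergence of $\lambda^{-2m}(\mathbf{\tilde{x}}_m-\mathbf{x}_m)$ to a ring-to-ring self-consistent quadratic height --- is not carried out, so the statement remains unproven under your proposal, which is consistent with its status in the paper.

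Beyond the admitted gap, two concrete steps in your sketch would fail as written. First, the rate bookkeeping conflates two different quantities: the Cauchy estimate in the proof of Theorem~\ref{Theorem:C0 PN subdivision surface} gives $\|\mathbf{p}_{m,0}-\mathbf{p}_c\|=O(\gamma^m)$ for the \emph{central} sequence, but the contraction of the whole net is generically $Q_m-\mathbf{p}_c\mathbbm{1}=\Theta(\lambda^m)$, driven by the surviving subdominant components $\mathbf{p}_{m,1},\mathbf{p}_{m,2}\to\mathbf{t}_1,\mathbf{t}_2\neq\mathbf{0}$; your $O(\gamma^{2m})$ correction estimate thus requires the identification $\gamma=\lambda$ together with a first-order expansion $\mathbf{n}_j^m-\mathbf{n}_c=\lambda^m\mathbf{v}_j+o(\lambda^m)$ with convergent coefficients, and Proposition~\ref{Proposition:distancebetweenrefinednormals} supplies only an upper bound $c_2\gamma^k$ on mutual distances --- no such asymptotic expansion is available at the extraordinary vertex, where the paper cites only $C^1$-type results for the projection scheme. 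Second, your claim that ``$\mathbf{x}_m$ carries only the flat $C^2$ jet'' and that all curvature lives in $\mathbf{\tilde{x}}_m-\mathbf{x}_m$ is incorrect: the comparison ring $\mathbf{x}_m=GQ_m$ is built from the PN-perturbed nets, and the step-$j$ perturbation $S\circ(\tilde{M}_j-E)Q_j$, of size $O(\lambda^{2j})$ by Lemma~\ref{lemma:norm for M_ij^k - I}, injects $\mu'$-eigencomponents that accumulate as $\sum_{j\le m}|\mu'|^{m-j}O(\lambda^{2j})=O(\lambda^{2m})$ precisely because $|\mu'|<\lambda^2$ --- i.e.\ at exactly curvature scale. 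So the limiting second fundamental form, if it exists, has two sources of the same order (the instantaneous nonlinear height rule and the accumulated net perturbation), and any candidate height field $H$ and any non-flatness argument must account for both; in particular, the blanket second-jet bound $\partial^2_{\mathbf{s}}\mathbf{\tilde{x}}_m=\partial^2_{\mathbf{s}}\mathbf{x}_m+O(\lambda^m)$ in raw coordinates is too coarse to isolate an $O(\lambda^{2m})$-scale curvature signal, and the analysis must instead be performed in characteristic coordinates where the second fundamental form is an $O(1)$ quantity and the errors must be shown to be $o(1)$.
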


Figure \ref{Fig:PN C2 subdivision_surfaces} illustrates examples of surface modeling by Catmull-Clark type subdivision schemes or their adapted PN subdivision schemes. The control points and control normals for the control mesh in the top row are sampled from a hyperbolic surface $z=2xy$ while the control points and control normals for the control mesh in the middle row are sampled from a bicubic B\'{e}zier surface, both with one extraordinary vertex in the center. The control points and control normals for the control mesh in the bottom row are partially sampled from a circular cylinder with radius 15. An irregular vertex of valence 8 lies above the center of the upper base of the cylinder with height 10 and the control normal at the point is chosen the unit upright vector. Since the eigenvalues of subdivision matrices for meshes containing single irregular vertices of valence 3 already satisfy the $G^2$ condition stated in Proposition \ref{Proposition:C2 continuity at extraordinary point}, we only modify subdivision stencils for meshes surrounding irregular vertices of valences greater than 4 for the modified Catmull-Clark subdivision or PN modified Catmull-Clark subdivision. Figures \ref{Fig:PN C2 subdivision_surfaces}(a) and \ref{Fig:PN C2 subdivision_surfaces}(b) show clearly that Catmull-Clark subdivision surfaces are not curvature continuous at the extraordinary points while the surfaces obtained by the modified Catmull-Clark subdivision scheme have flat extraordinary points. Though the PN-Catmull-Clark subdivision scheme can generate much fairer subdivision surfaces than Catmull-Clark subdivision, they still suffer the curvature discontinuities at the extraordinary points; see Figure \ref{Fig:PN C2 subdivision_surfaces}(c). The pictures in Figure \ref{Fig:PN C2 subdivision_surfaces}(d) show that the surfaces obtained by PN-modified Catmull-Clark subdivision are visually curvature continuous and the curvatures at the extraordinary points are not vanishing.


\section{Experimental examples}
\label{Sec:Examples}
In this section we present several interesting examples to show the modeling effects of PN subdivision schemes, comparisons with some linear subdivision schemes are also given.

\begin{figure*}[htb]
  \centering
  \subfigure[]{\includegraphics[width=0.36\linewidth]{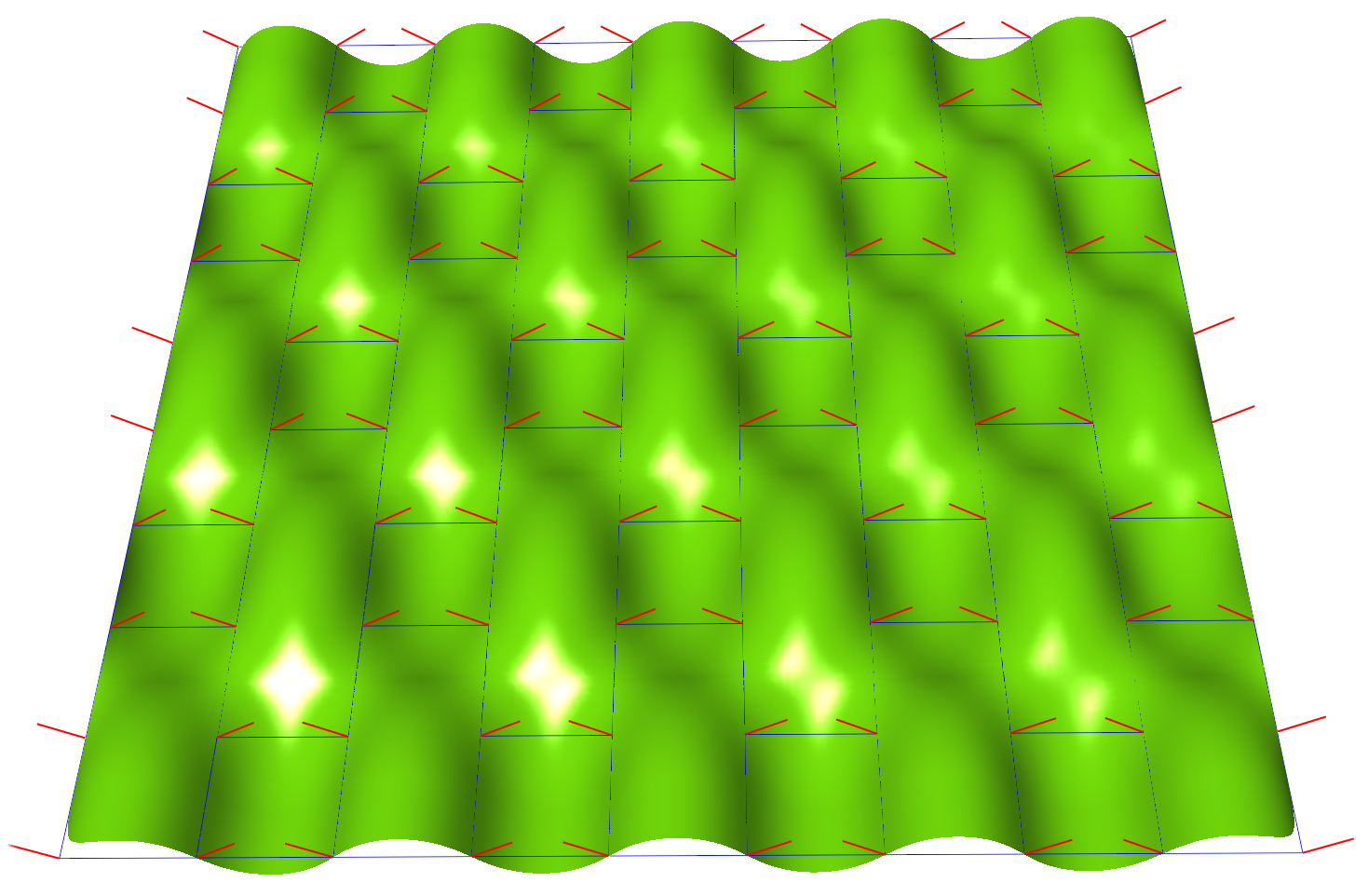}} \ \ \ \
  \subfigure[]{\includegraphics[width=0.36\linewidth]{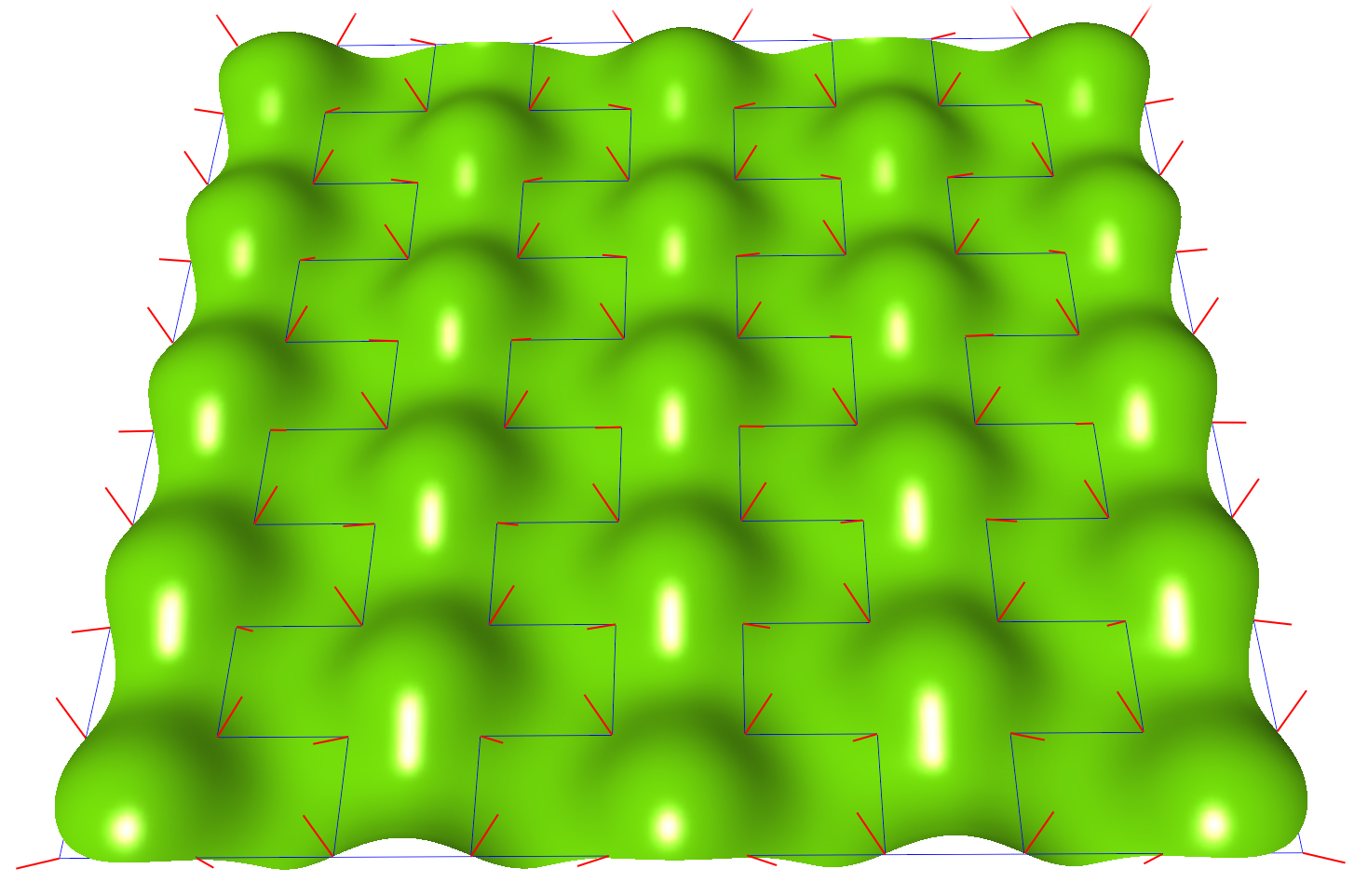}}
  \caption{PN-Doo-Sabin subdivision surfaces with planar uniform control grid and edited control normals: (a) wave like shape; (b) circular bumps. }
  \label{Fig:plane-balls-waves}
\end{figure*}

Figure \ref{Fig:plane-balls-waves} illustrates two examples of surface detail modeling by PN subdivision. Given a planar uniform control grid, obviously, any linear subdivision scheme can only yield a planar patch. We edit surface details by editing control normals at the vertices. Firstly, the control normals at the vertices are chosen from two given vectors alternately in the horizontal direction and every two control normals are parallel with each other in the vertical direction. A wave-like shape following the control normals is obtained by PN-Doo-Sabin subdivision; see Figure \ref{Fig:plane-balls-waves}(a). Besides wave-like shape, we can also model bumps on the subdivision surface by editing control normals. Assume four unit vectors are uniformly chosen from a hemisphere. We line up the vertices of the uniform control grid row by row and set control normals for the vertices from the four vectors repeatedly. As a result, a surface with regular distributed circular bumps is obtained by PN-Doo-Sabin subdivision; see Figure \ref{Fig:plane-balls-waves}(b) for the subdivision surface.

\begin{figure*}[htb]
  \centering
  \subfigure[]{\includegraphics[width=0.3\linewidth]{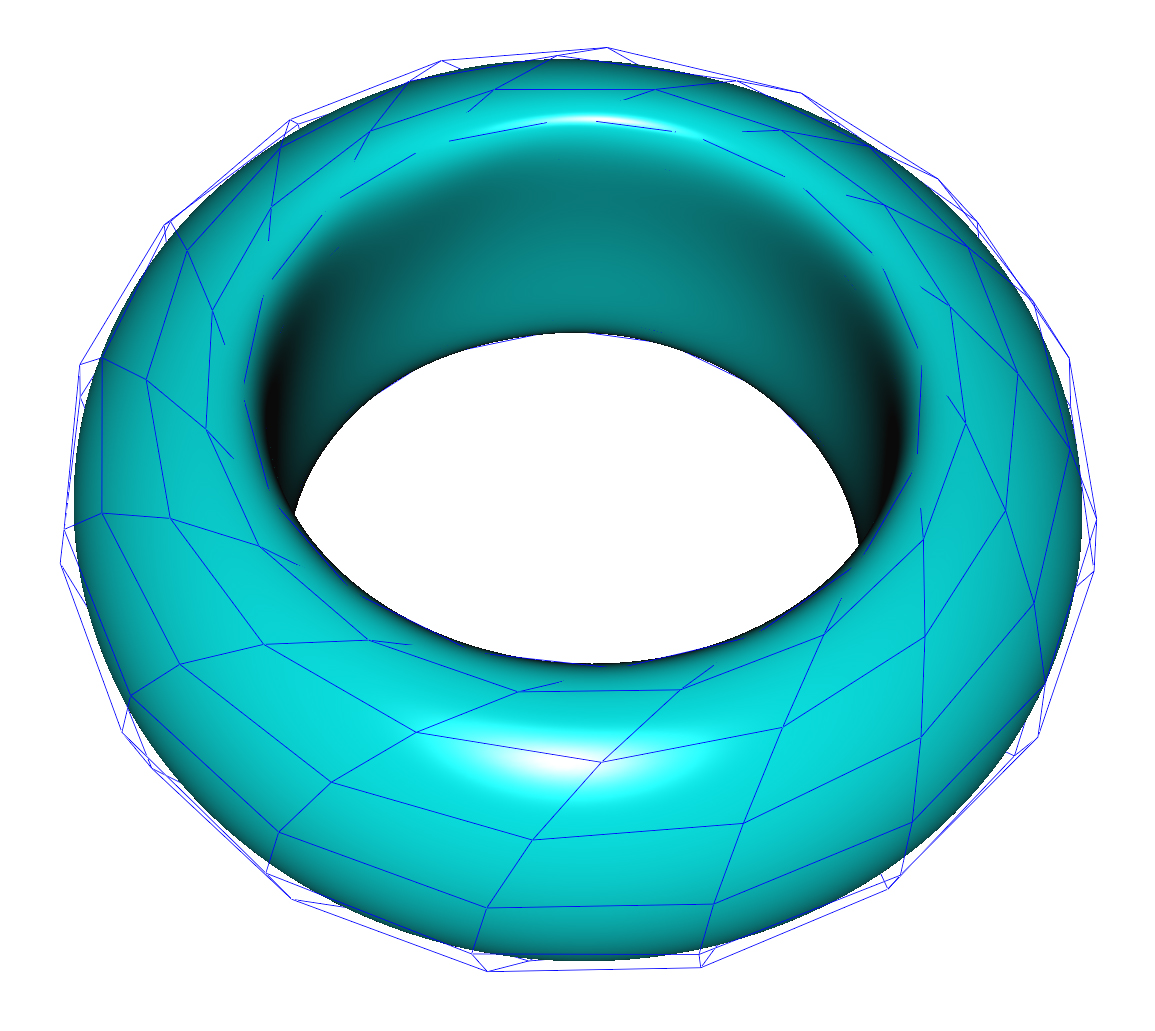}}
  \subfigure[]{\includegraphics[width=0.3\linewidth]{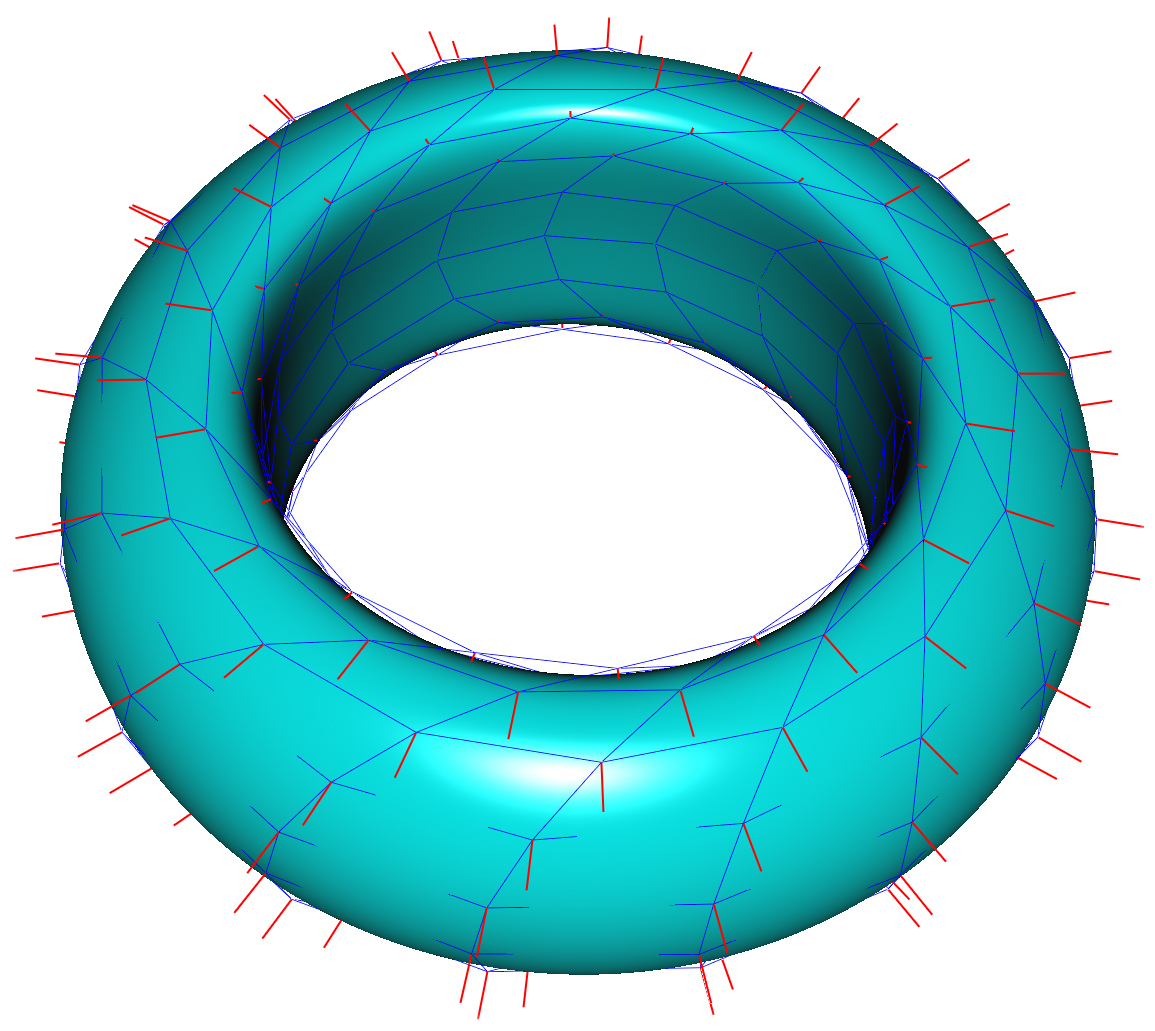}}
  \subfigure[]{\includegraphics[width=0.3\linewidth]{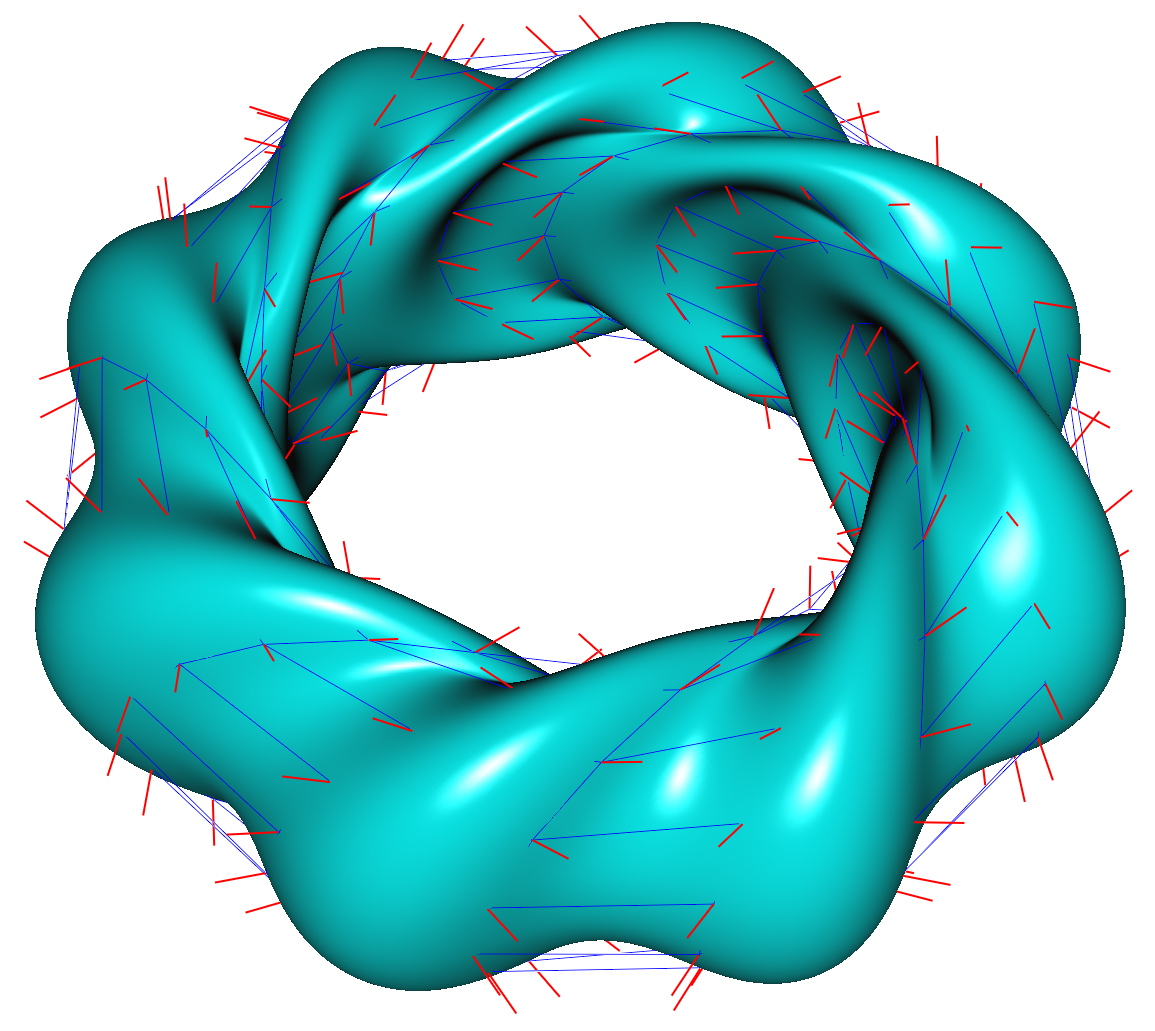}}
  \caption{Ring shape modeling by (a) Catmull-Clark subdivision; (b)\&(c) PN-Catmull-Clark subdivision.}
  \label{Fig:RingShape}
\end{figure*}

Figure \ref{Fig:RingShape}(a) illustrates a ring shape surface by Catmull-Clark subdivision. The control mesh for the surface is constructed by rotating a closed regular polygon along an axis that does not lie on the same plane with the polygon. Since the Catmull-Clark subdivision surface with regular control mesh is actually a bicubic B-spline surface, it is not exactly a rotating surface. By choosing all control normals pointing outwards and being parallel to the bottom plane, an exact rotating surface is obtained by PN-Catmull-Clark subdivision; see Figure \ref{Fig:RingShape}(b). If the control normals at the vertices of the control mesh are edited further, a ring shape surface with complex details is obtained by PN-Catmull-Clark subdivision; see Figure \ref{Fig:RingShape}(c).

\begin{figure*}[htb]
  \centering
  \subfigure[]{\includegraphics[width=0.285\linewidth]{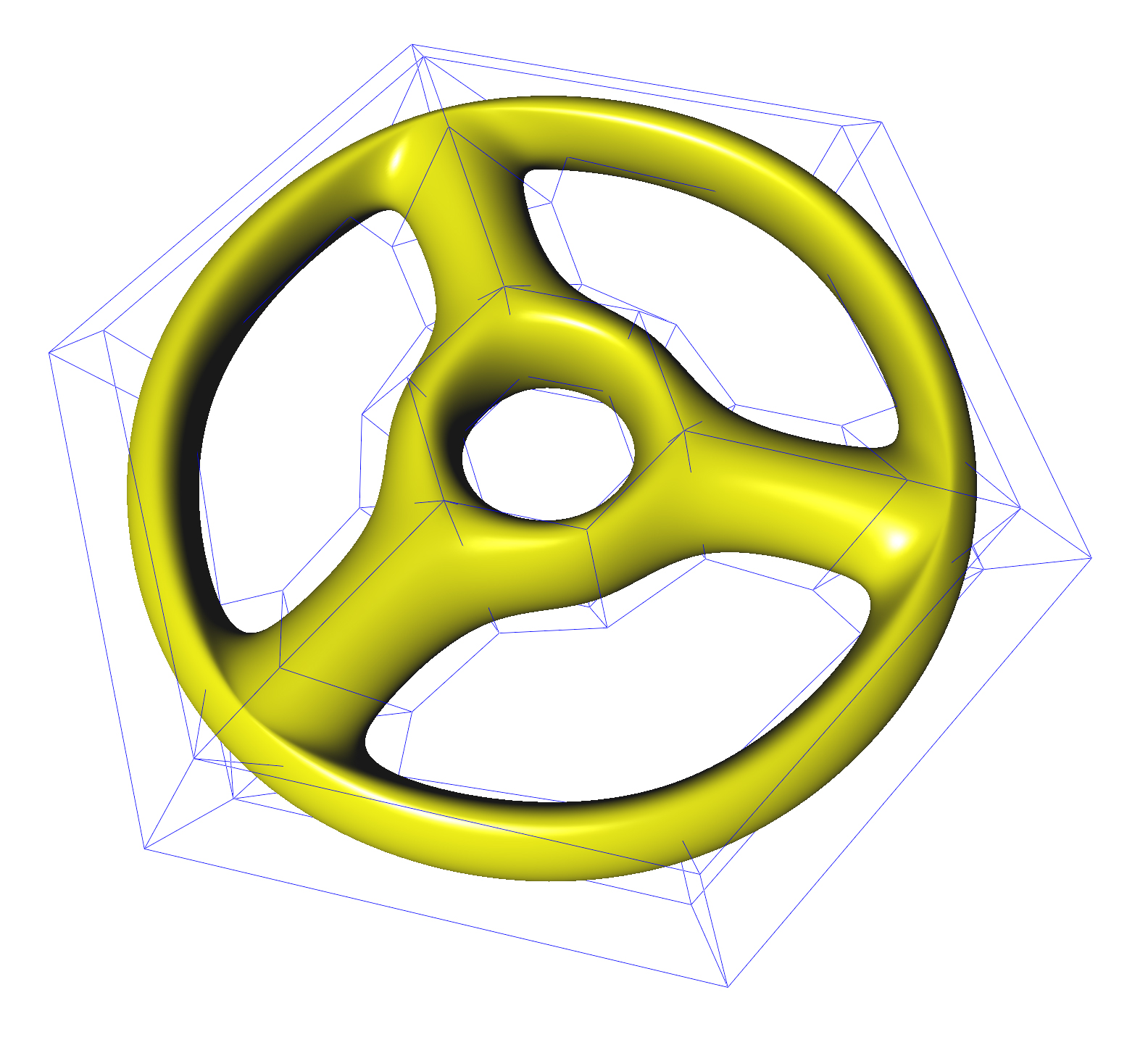}}
  \subfigure[]{\includegraphics[width=0.285\linewidth]{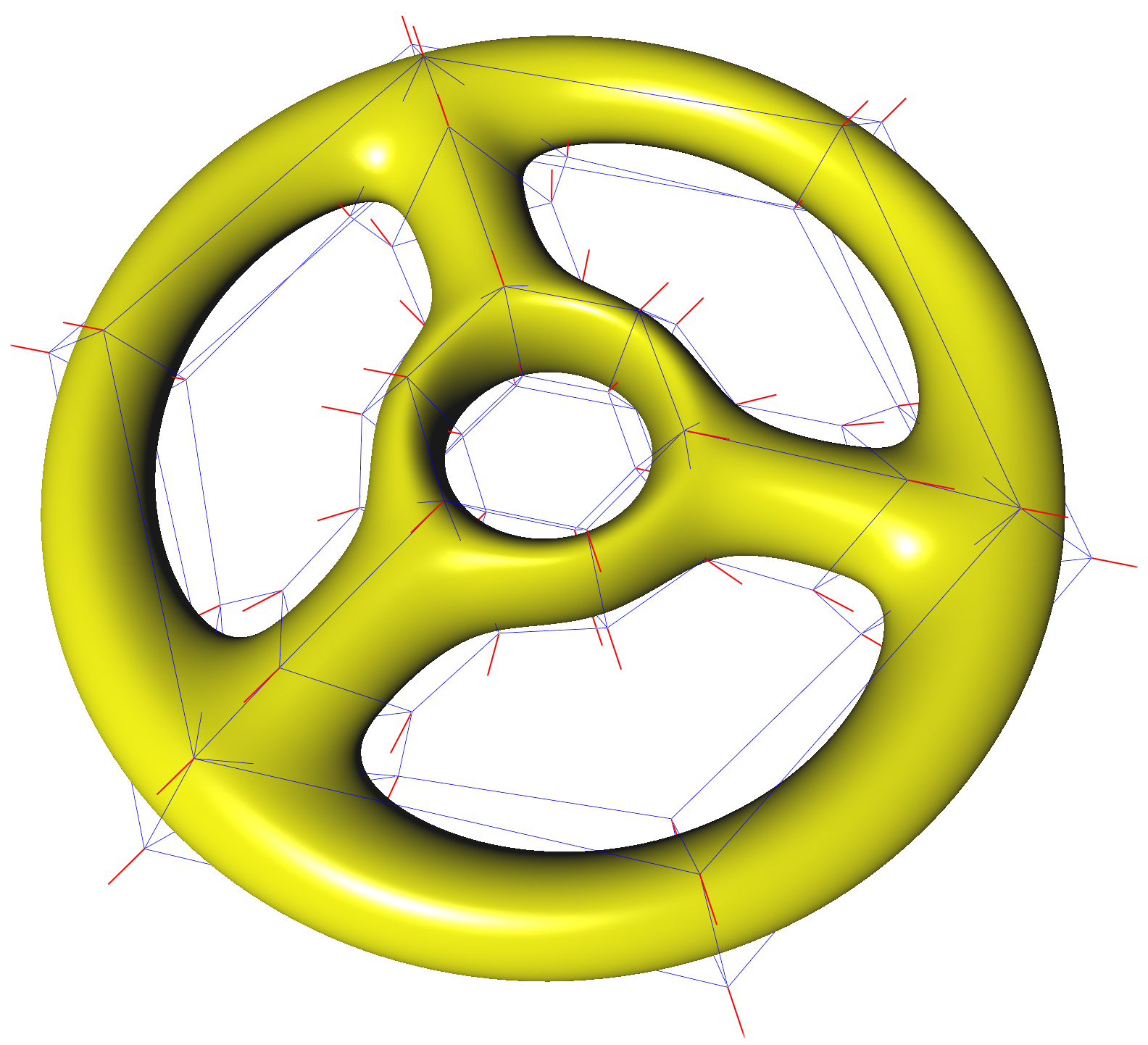}}
  \subfigure[]{\includegraphics[width=0.285\linewidth]{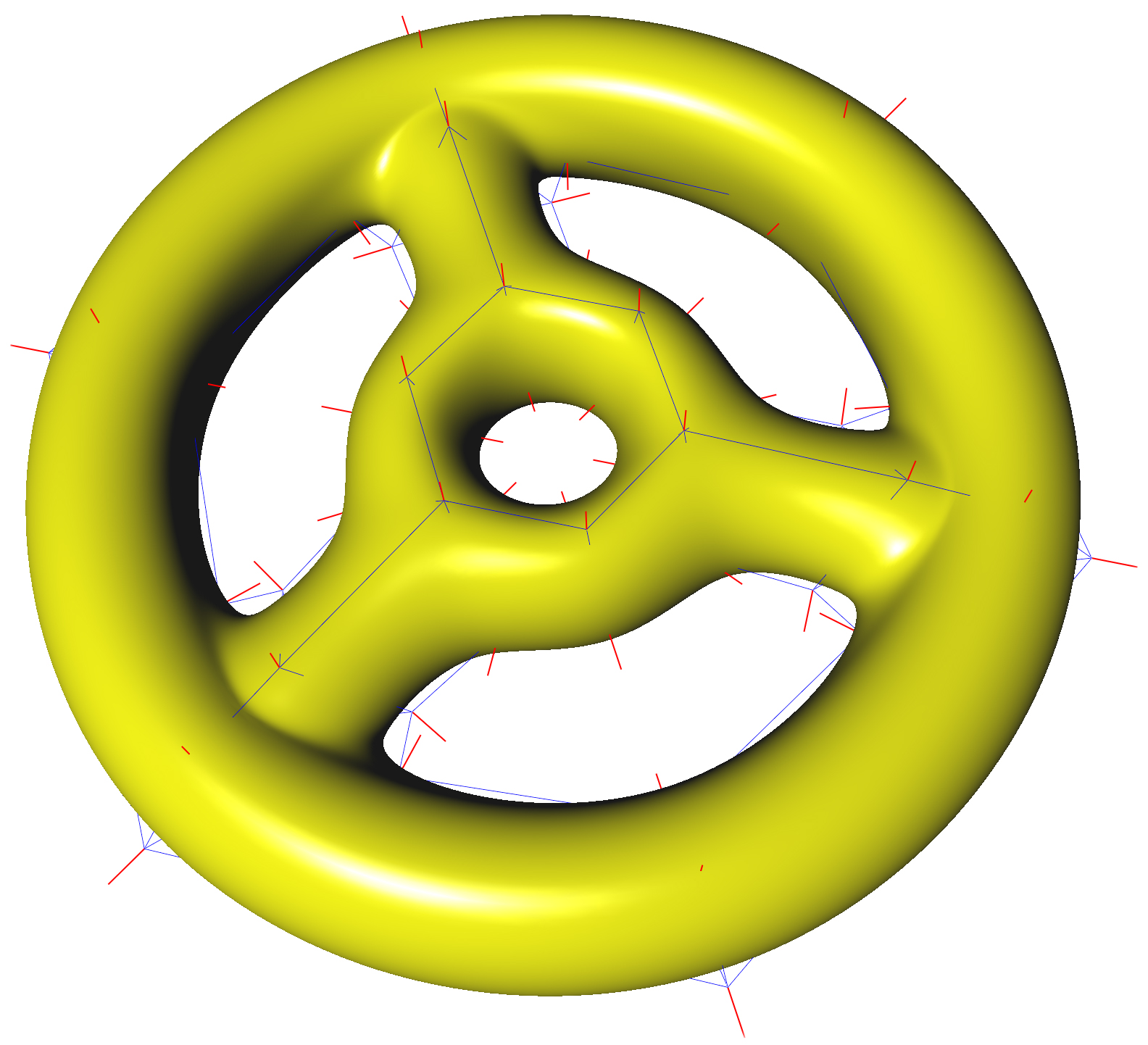}}
  \caption{Wheel shape modeling by (a) Catmull-Clark subdivision; (b)\&(c) PN-Catmull-Clark subdivision. }
  \label{Fig:wheel modeling}
\end{figure*}

Figure \ref{Fig:wheel modeling} illustrates examples of wheel shape modeling by Catmull-Clark subdivision or PN-Catmull-Clark subdivision. Given a control mesh as in Figure \ref{Fig:wheel modeling}(a), a wheel like shape is obtained by Catmull-Clark subdivision. Though the outer part and the inner part of the control mesh are regular, neither the outer contour profile nor the inner one is exactly circular because the subdivision surfaces under regular control meshes are just bicubic B-spline surfaces. Assume the center of the control mesh lies at the origin of a Cartesian coordinates system and the plane on which the control mesh lies on is parallel to the $xy$-plane. We first choose control normal at each control point $\mathbf{p}_i=(x_i,y_i,z_i)^\top$ as $\mathbf{n}_i=\mathrm{normalize}(x_i,y_i,0)^\top$. A wheel like shape that has exact circular contour profiles is obtained by PN-Catmull-Clark subdivision; see Figure \ref{Fig:wheel modeling}(b). Since the control normals are all parallel to the $xy$-plane, the subdivision surface in Figure\ref{Fig:wheel modeling}(b) and the subdivision surface in Figure\ref{Fig:wheel modeling}(a) have the same $z$-coordinates. If the control normals have been changed as in Figure \ref{Fig:wheel modeling}(c), the two ring parts within the PN-Catmull-Clark subdivision surface resemble two toruses very well.

\begin{figure*}[htb]
  \centering
  \subfigure[]{\includegraphics[width=0.285\linewidth]{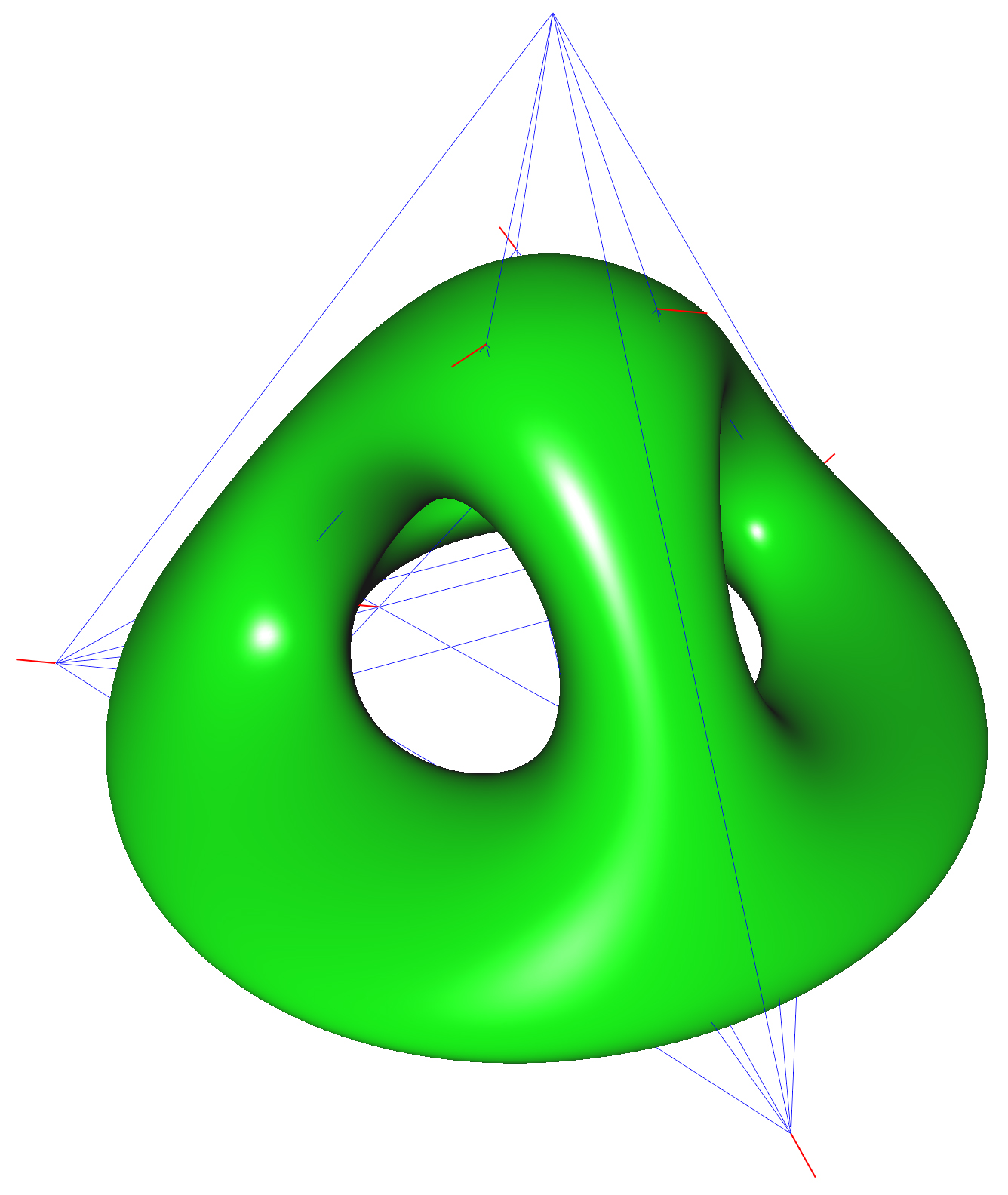}}
  \
  \subfigure[]{\includegraphics[width=0.285\linewidth]{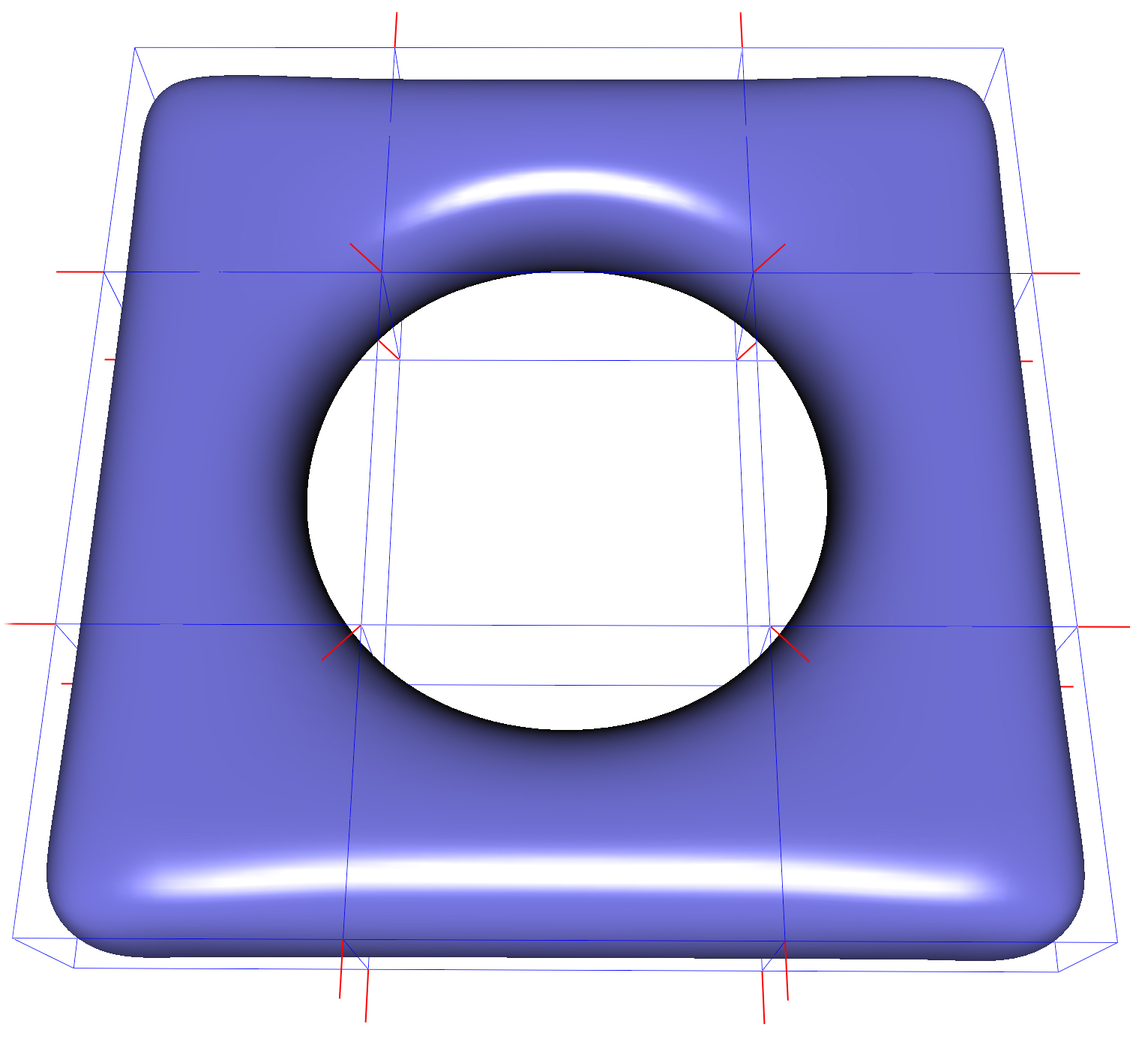}}
  \
  \subfigure[]{\includegraphics[width=0.285\linewidth]{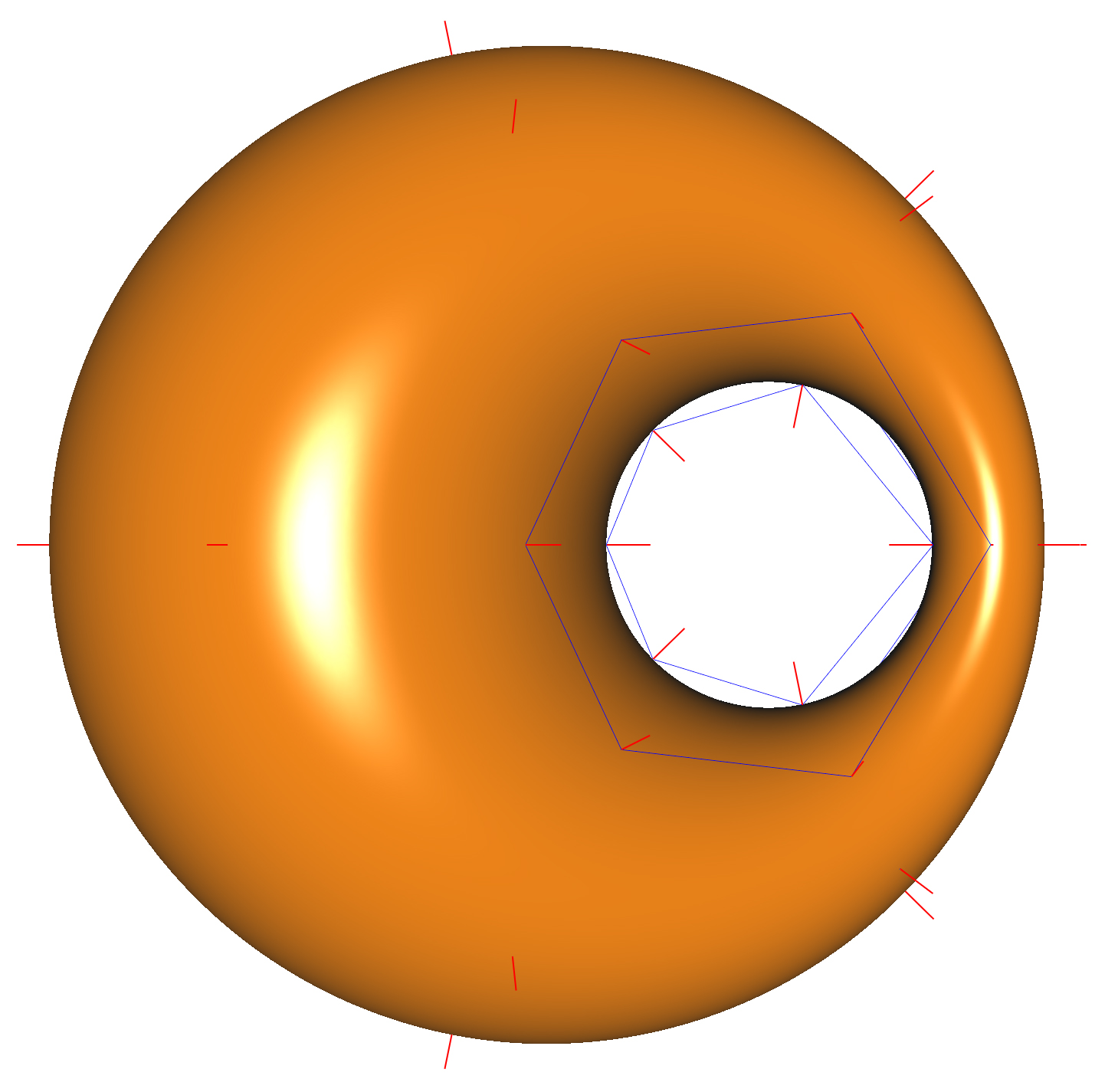}}
  \caption{PN subdivision surface modeling: (a) PN-Catmull-Clark subdivision; (b) PN-Doo-Sabin subdivision; (c) PN-Kobbelt subdivision.}
  \label{Fig:Complex PN subdivision surfce modeling}
\end{figure*}

Figure \ref{Fig:Complex PN subdivision surfce modeling} presents examples of modeling surfaces with complex topology or salient geometric features by PN subdivision schemes.
Figure \ref{Fig:Complex PN subdivision surfce modeling}(a) illustrates a PN-Catmull-Clark subdivision surface using control points and control normals. Except for the top vertex that has no control normal, the control normals at all other control points are parallel to the bottom plane and pointing outwards. As a result, the contour profile of the PN subdivision surface from the top view is circular.
In Figure \ref{Fig:Complex PN subdivision surfce modeling}(b) all vertices of the control mesh are sampled from a cuboid with square bottom while all assigned control normals are parallel to the bottom plane of the cuboid. Particularly, the control normals at the inner control points are pointing outwards and the control normals at points on outside edges are perpendicular to the edges while no control normals are assigned at the corner vertices. A square shaped surface with a circular hole is obtained by PN-Doo-Sabin subdivision.
Figure \ref{Fig:Complex PN subdivision surfce modeling}(c) illustrates an interpolatory PN subdivision surface. A $6\times6$ quad mesh is constructed by points and normals sampled from a Dupin cyclide. Due to the property of circle preserving, the outer silhouette circle, the inner silhouette circle and the six sampled circles across these two silhouette circles are preserved very well by PN-Kobbelt subdivision.

\begin{figure*}[htb]
  \centering
  \subfigure[]{\includegraphics[width=0.25\linewidth]{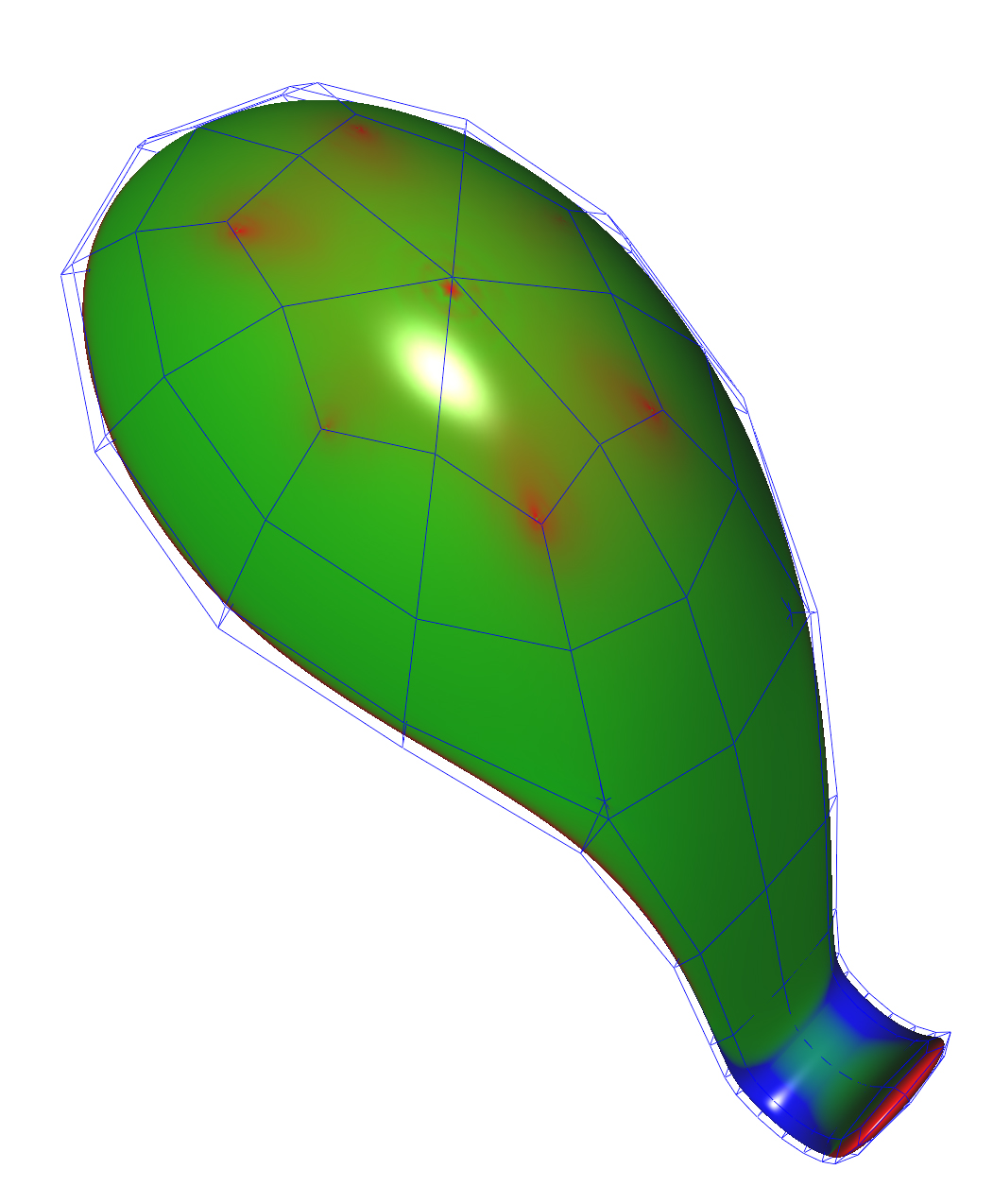}}
  \hskip -0.2cm
  \subfigure[]{\includegraphics[width=0.25\linewidth]{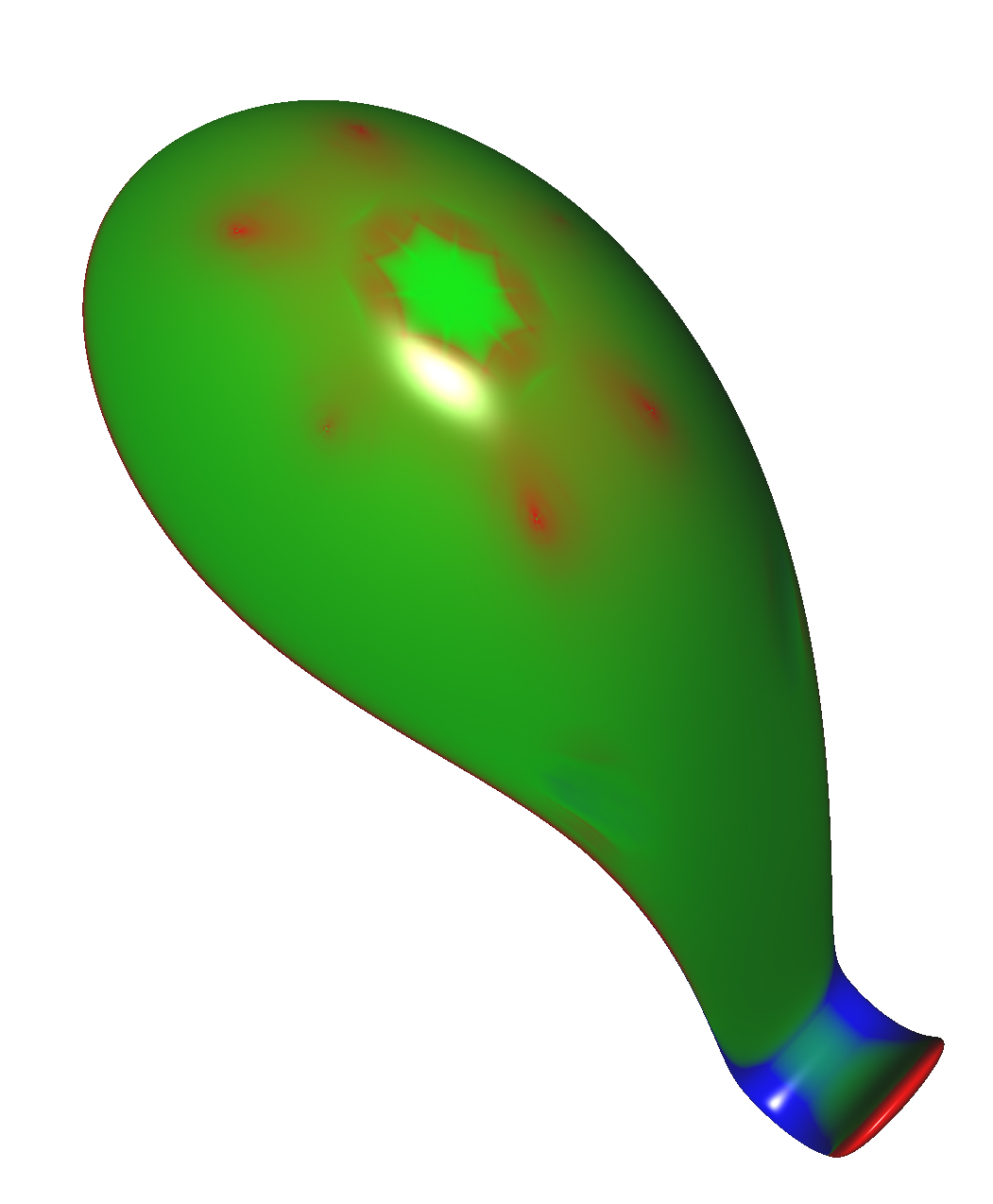}}
  \hskip -0.2cm
  \subfigure[]{\includegraphics[width=0.25\linewidth]{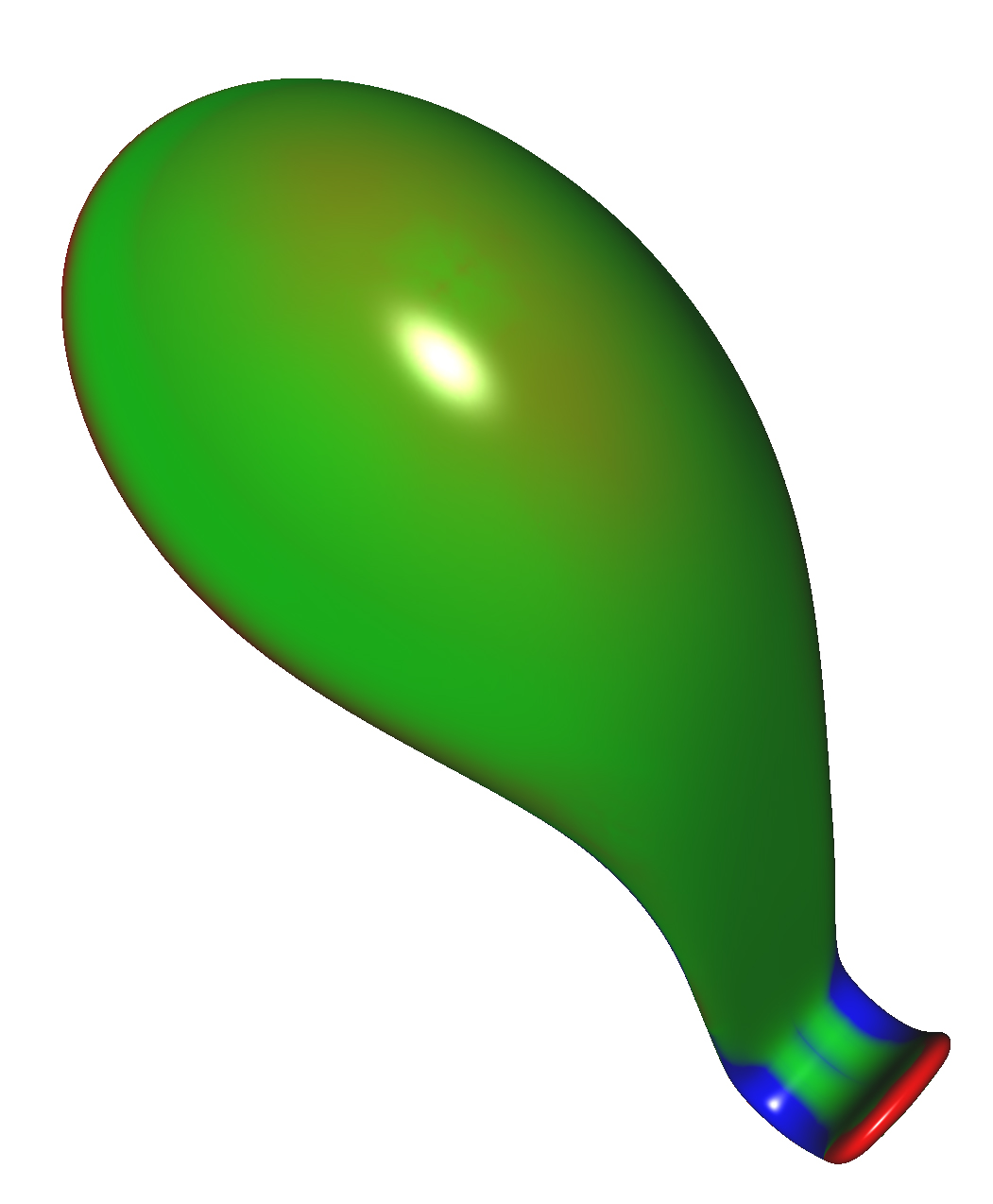}}
  \hskip -0.2cm
  \subfigure[]{\includegraphics[width=0.25\linewidth]{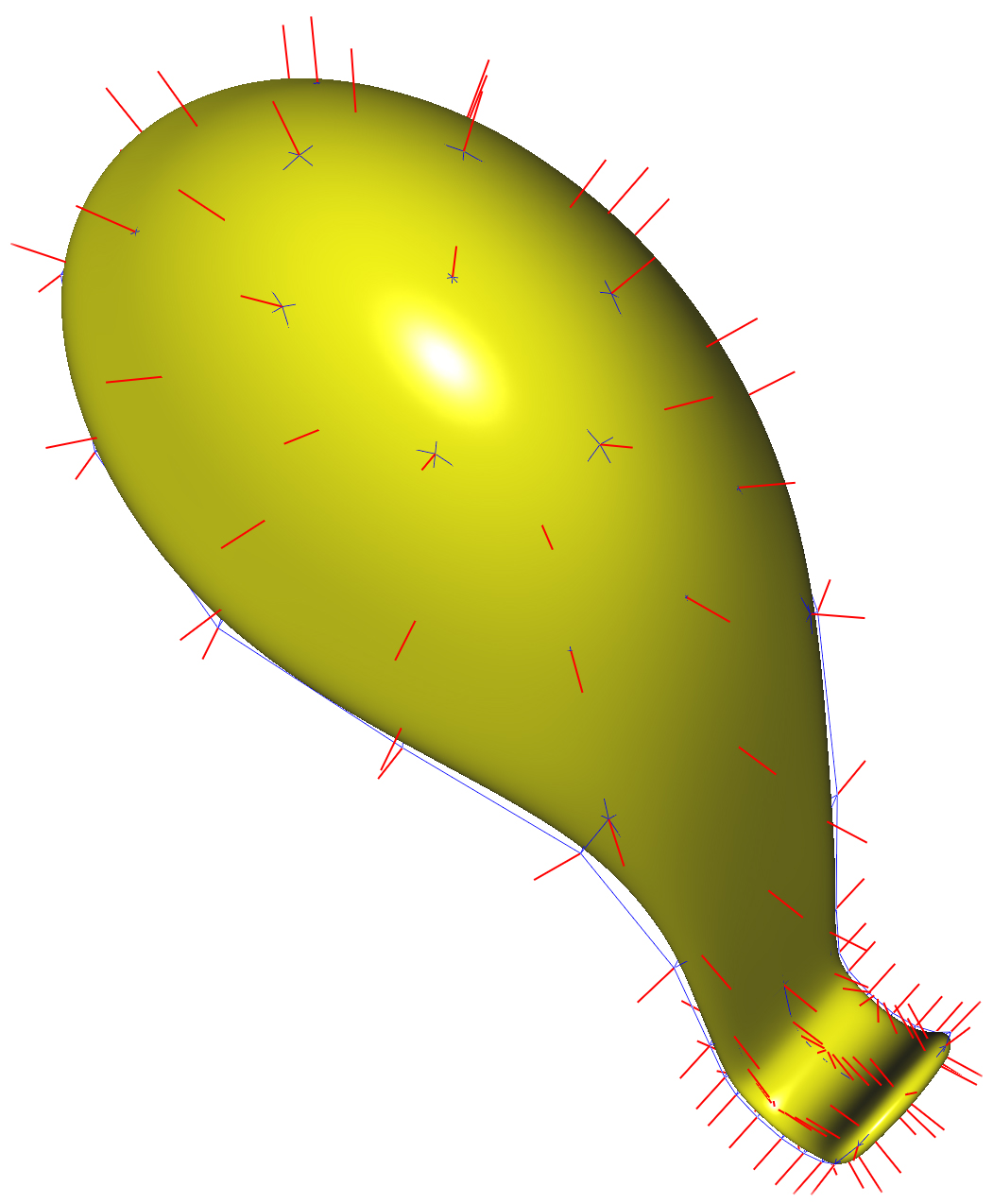}}
  \caption{Subdivision surfaces with Gaussian curvature plots or with control points and control normals by (a) Catmull-Clark subdivision; (b) modified Catmull-Clark subdivision \citep{Prautzsch1998G2subdivisionsurface}; (c)\&(d) PN-modified Catmull-Clark subdivision. }
  \label{Fig:Spoon modeling by PN modified CC subdivision}
\end{figure*}

\begin{figure*}[htb]
  \centering
  \subfigure[]{\includegraphics[width=0.25\linewidth]{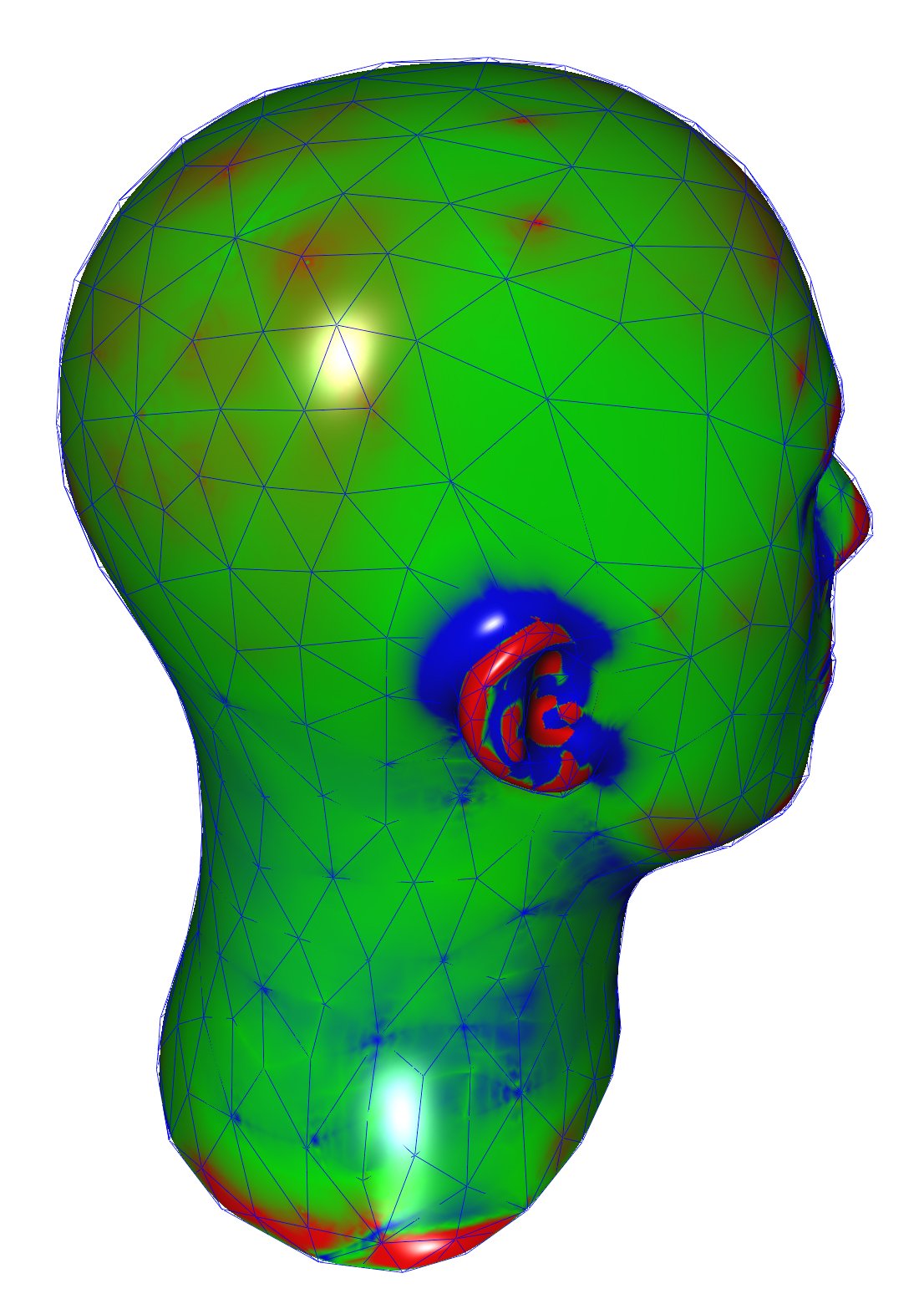}}
  \hskip -0.2cm
  \subfigure[]{\includegraphics[width=0.25\linewidth]{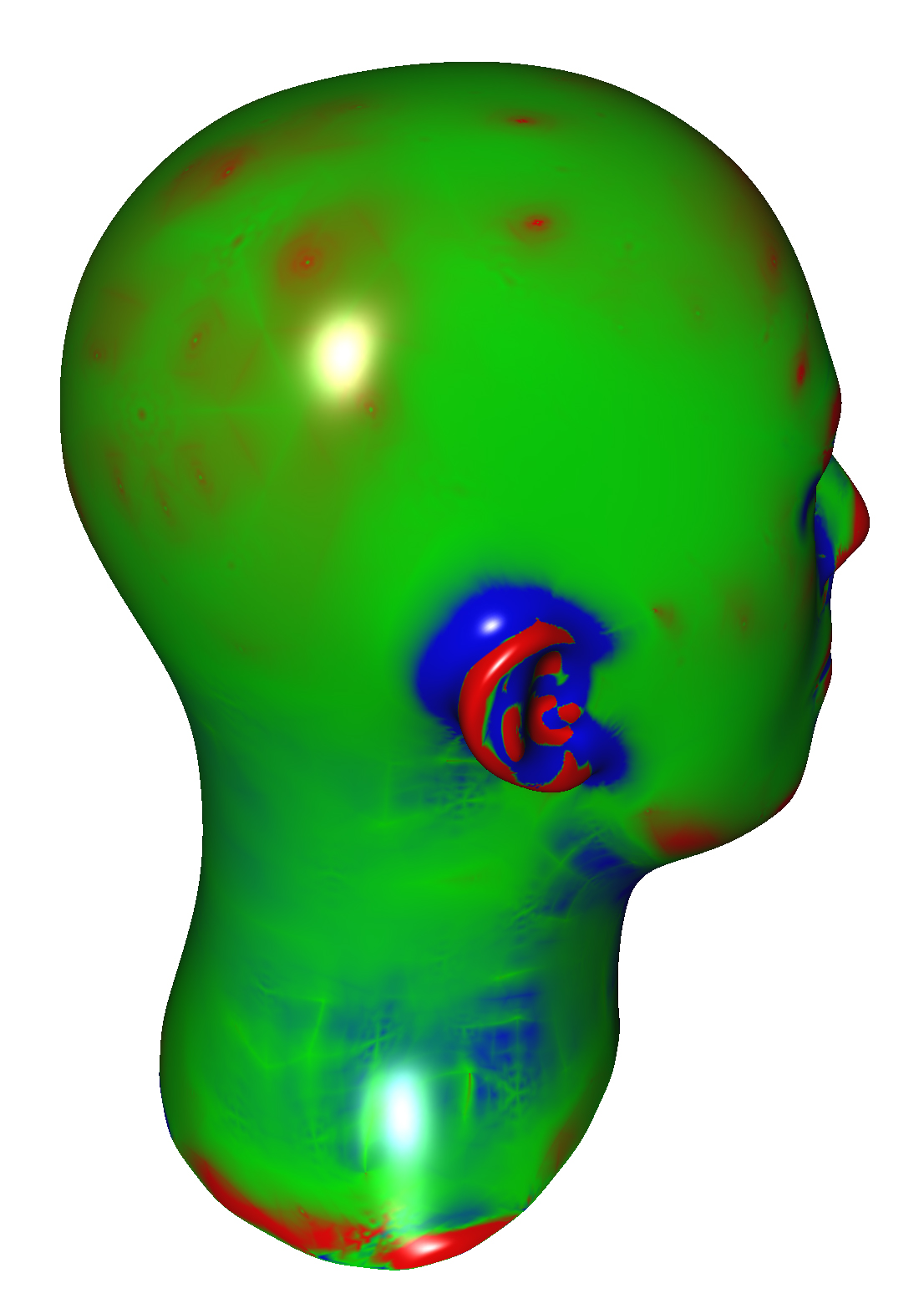}}
  \hskip -0.2cm
  \subfigure[]{\includegraphics[width=0.25\linewidth]{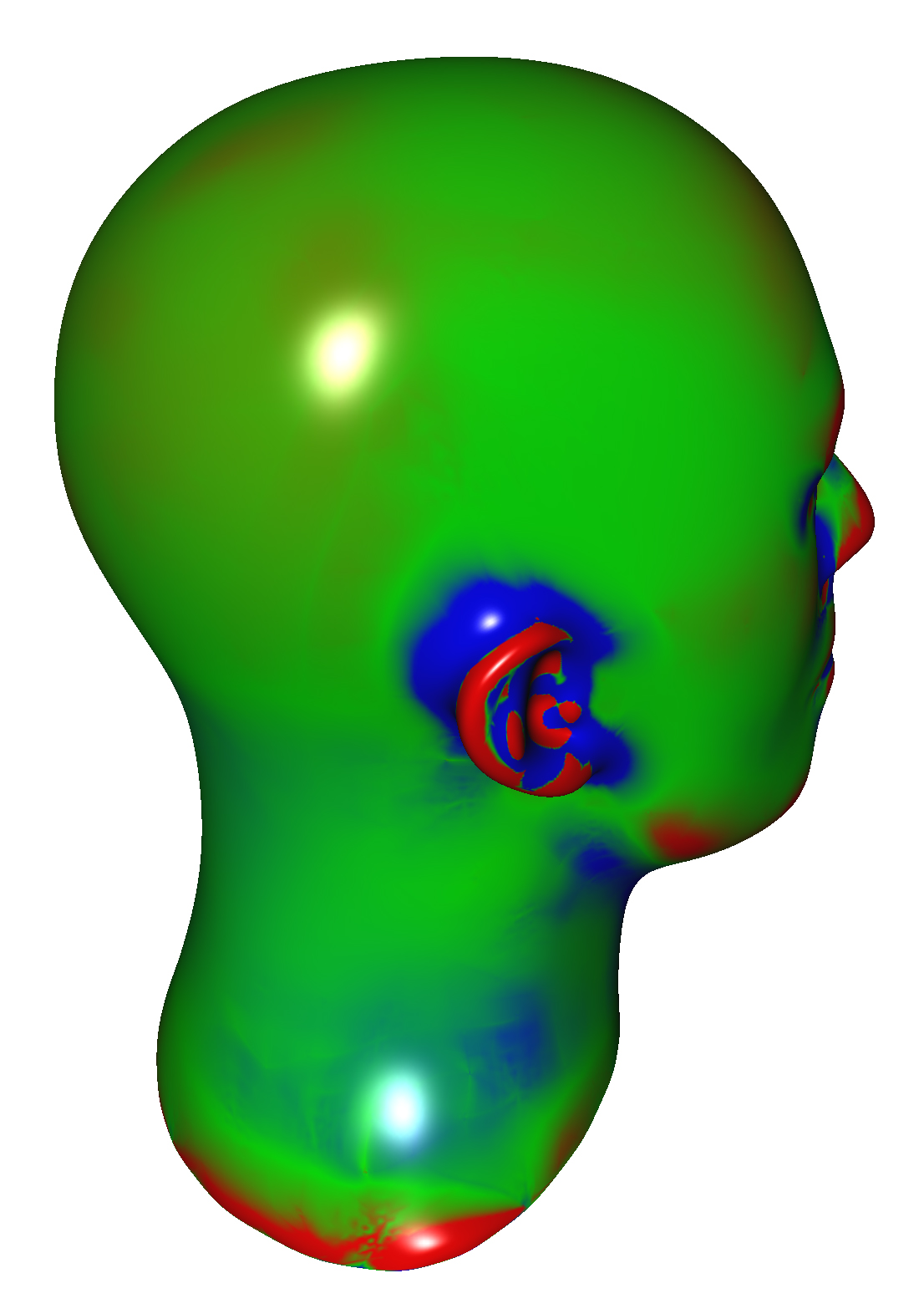}}
  \hskip -0.2cm
  \subfigure[]{\includegraphics[width=0.25\linewidth]{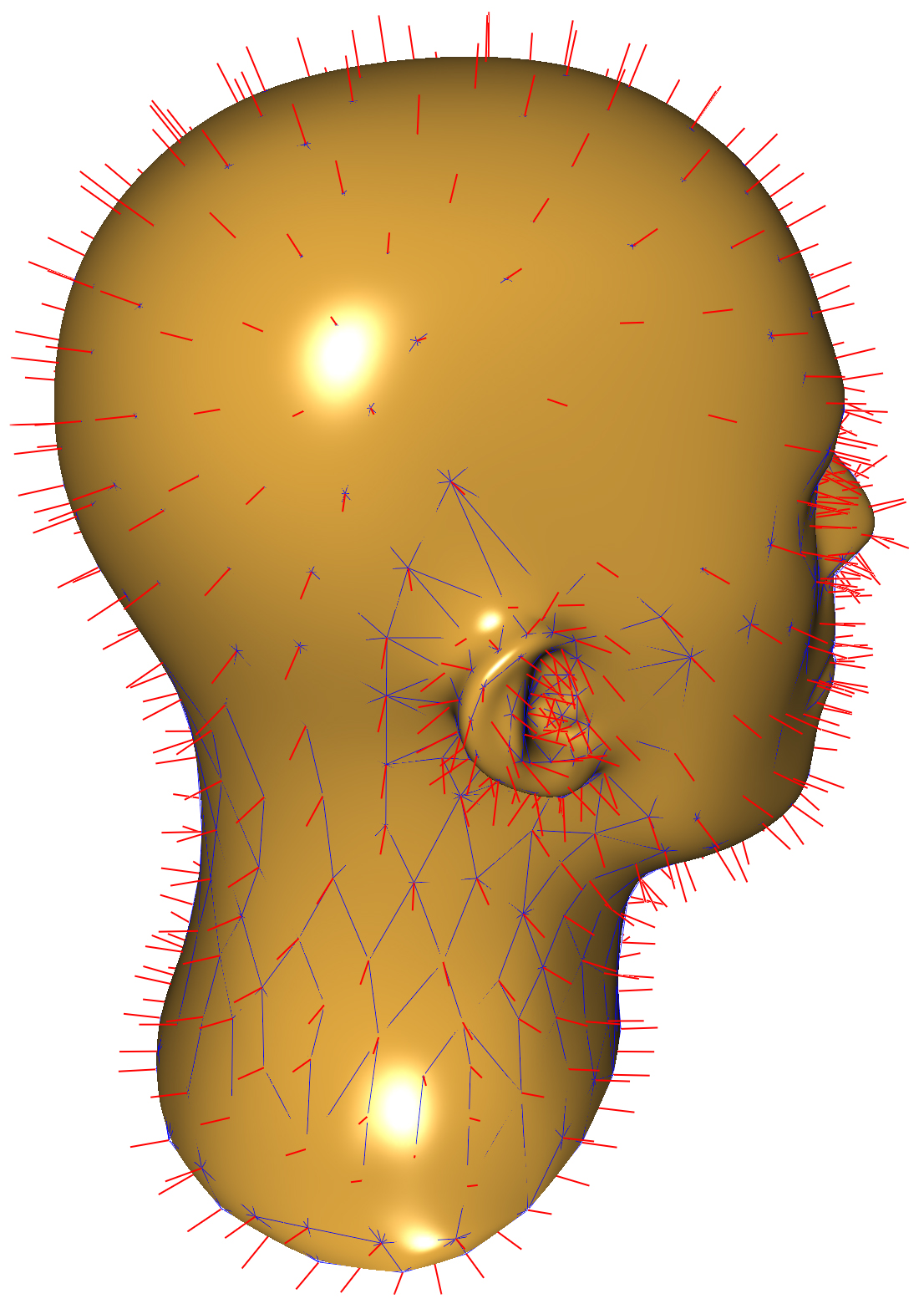}}
  \caption{Subdivision surfaces with Gaussian curvature plots or with control points and control normals by (a) Loop subdivision; (b) modified Loop subdivision \citep{Prautzsch2000G2Loopsurface}; (c)\&(d) PN-modified Loop subdivision. }
  \label{Fig:mannequin modeling by PN modified Loop subdivision}
\end{figure*}

Figure \ref{Fig:Spoon modeling by PN modified CC subdivision}(a) illustrates a quad mesh and the Catmull-Clark subdivision surface computed from the control mesh. The extraordinary points on the surface are evidently noticed based on the Gaussian curvature plot. Figure \ref{Fig:Spoon modeling by PN modified CC subdivision}(b) illustrates the $C^2$ subdivision surface with flat extraordinary points by the modified Catmull-Clark subdivision scheme proposed by \citep{Prautzsch1998G2subdivisionsurface}. Figures \ref{Fig:Spoon modeling by PN modified CC subdivision}(c) and \ref{Fig:Spoon modeling by PN modified CC subdivision}(d) are the subdivision surfaces with or without Gaussian curvature plot by our proposed PN-modified Catmull-Clark subdivision scheme. The control normals at all control vertices for this and the next example are computed as weighted sums of normal vectors of abutting faces with weights proportional to vertex angles of the faces. It is clearly seen that the curvature of the PN-modified Catmull-Clark subdivision surface is visually continuous and the extraordinary points are hardly to be distinguished due to the smoothness and fairness of the subdivision surface.

Figure \ref{Fig:mannequin modeling by PN modified Loop subdivision}(a) illustrates a triangular control mesh and the obtained Loop subdivision surface with Gaussian curvature plot while Figure \ref{Fig:mannequin modeling by PN modified Loop subdivision}(b) is the modified Loop subdivision surface by the technique proposed in \citep{Prautzsch2000G2Loopsurface}. We note that the subdivision rules for extraordinary vertices of valence 4 or 5 are not changed for the modified scheme due to the reason that the original stencils can already generate subdivision surfaces with bounded curvatures there. The curvature plot shows that the modified Loop subdivision surface still suffers concentric undulations around the extraordinary points of which the curvatures are forced zero. Figures \ref{Fig:mannequin modeling by PN modified Loop subdivision}(c) and \ref{Fig:mannequin modeling by PN modified Loop subdivision}(d) are the PN-modified Loop subdivision surfaces with Gaussian curvature plot or with control points and control normals. From the figure we see that the PN-modified Loop subdivision surface is smooth and fair with visually continuous curvature even at the extraordinary points.

\section{Discussions}
\label{Sec:Discussions}

From the theories and experimental results of PN subdivision we learn that control normals together with control polygons or control meshes can achieve exact circular shapes, visually $C^2$ subdivision surfaces with non-flat extraordinary points and flexible detail editing on curves or surfaces. As control normals are subdivided independent of control points, the subdivided normals are \emph{generally not} the normals of subdivision curves or surfaces except that the control points and control normals lie on circles, circular cylinders or spheres. Even though, the effects of control normals on the shapes of PN subdivision curves and surfaces can be predicted well at least in the following two cases:
(1) the end points and end normals of each edge match a local convex curve or lie on a circular arc; (2) the control normals at the two ends of an edge are equal. In the first case the subdivided normals can approximate the normals of final subdivision curves or surfaces well. In the second case, the PN subdivision reduces to linear subdivision with no or less influence of control normals. To achieve even more modeling effects, these two kinds of control normals can be applied together for curve and surface modeling by PN subdivision.

\begin{figure*}[htb]
  \centering
  \subfigure[]{\includegraphics[width=0.28\linewidth]{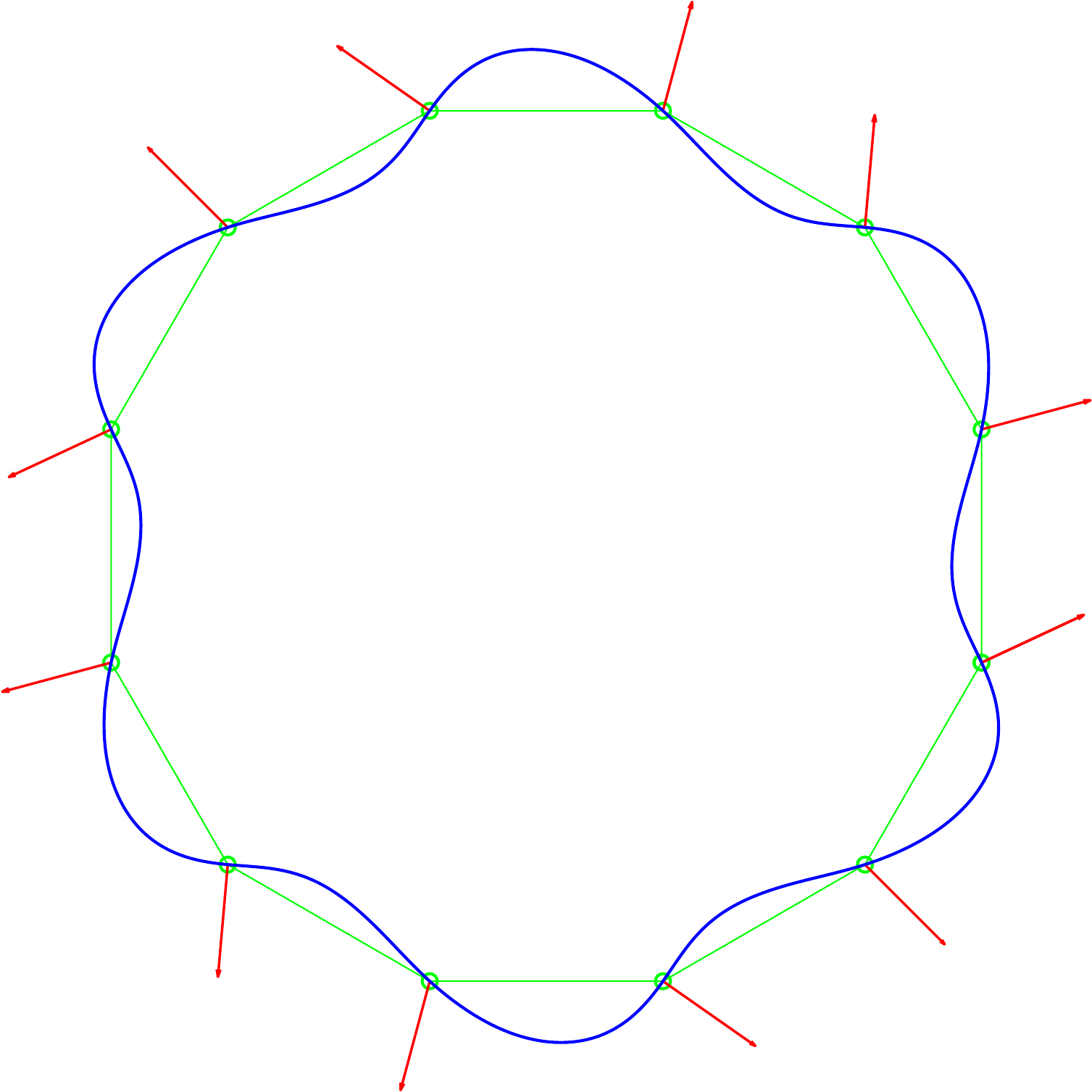}} \ \ \
  \subfigure[]{\includegraphics[width=0.28\linewidth]{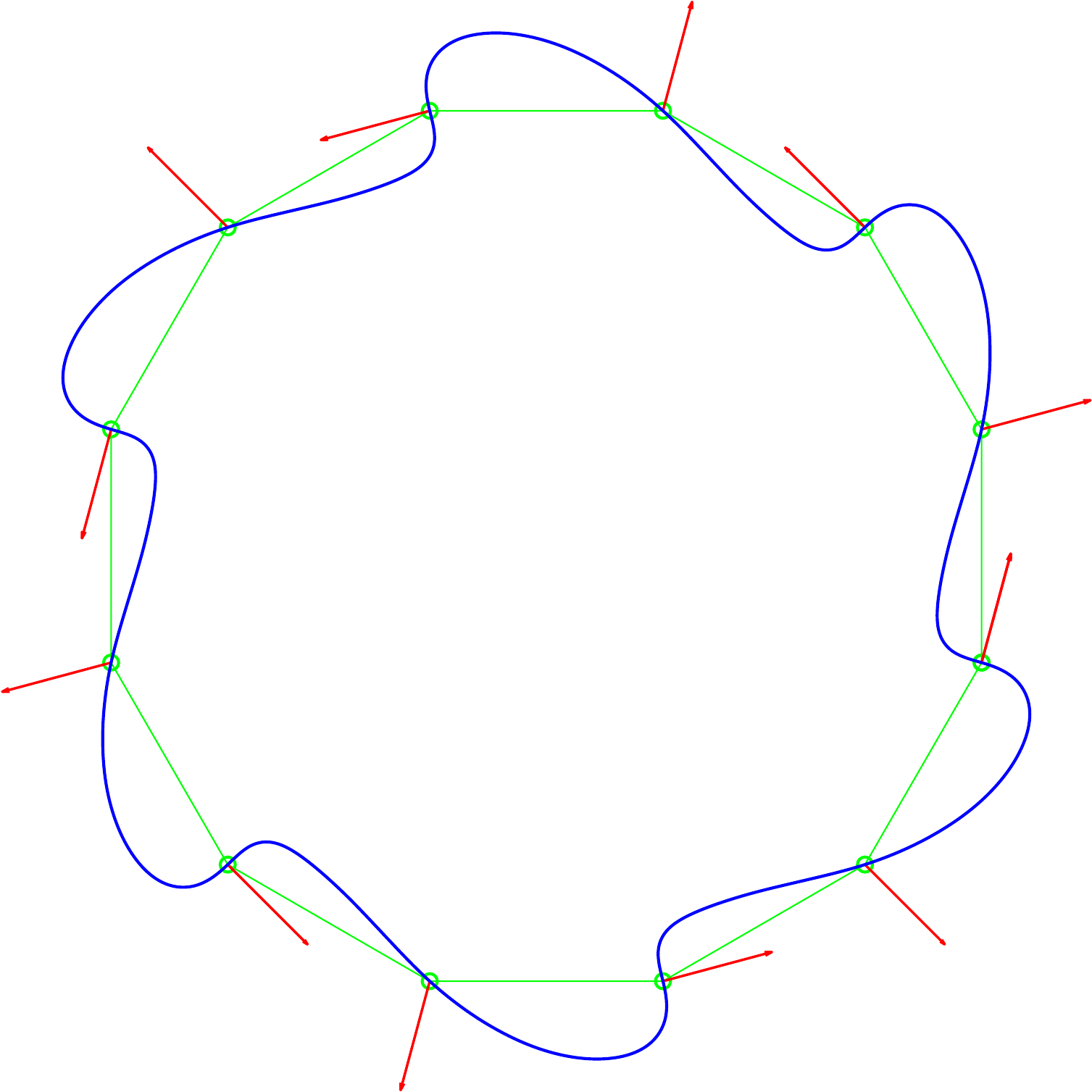}} \ \ \
  \subfigure[]{\includegraphics[width=0.28\linewidth]{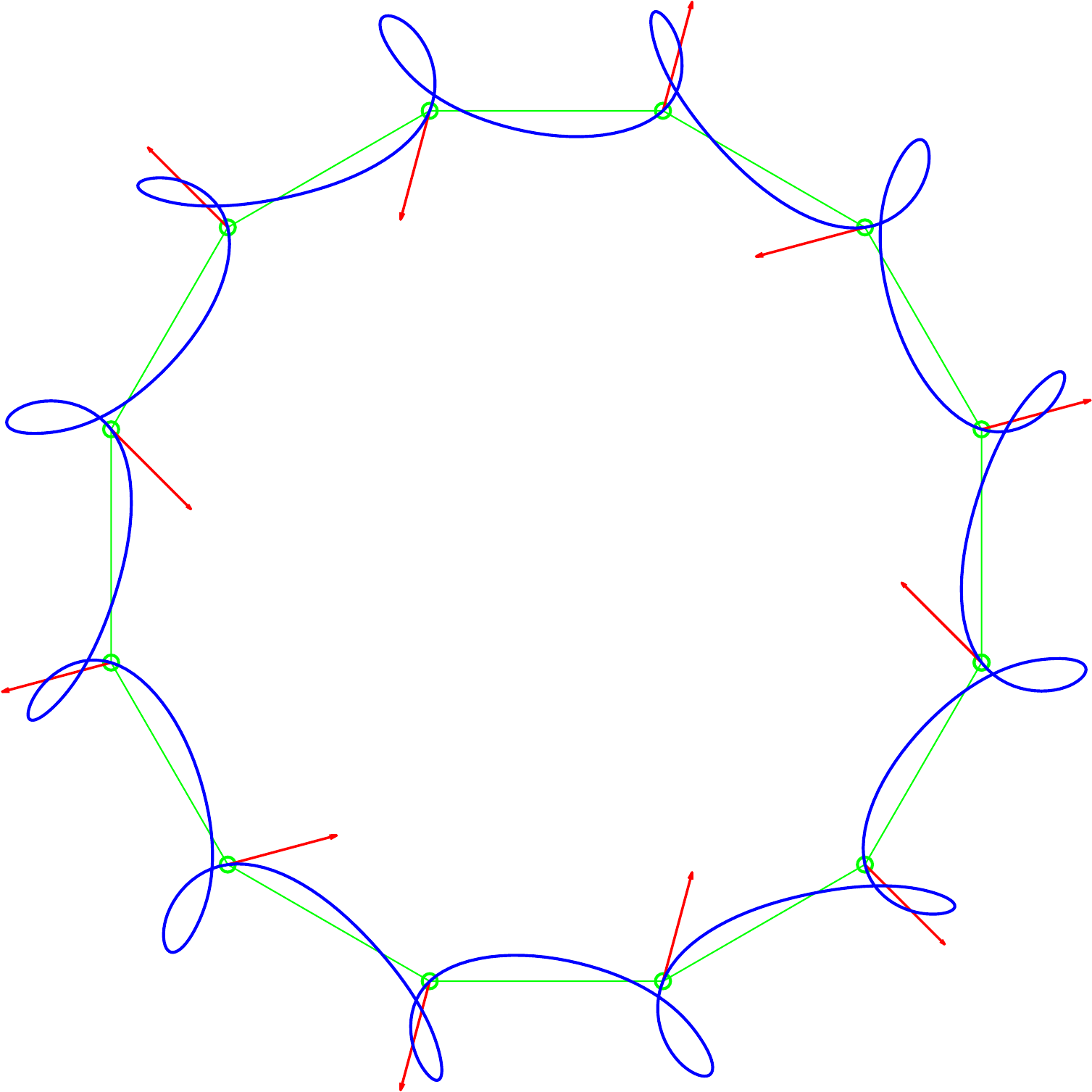}}
  \caption{Curve modeling by PN-10-point subdivision using control points and edited control normals.}
  \label{Fig:CircleNormalEdit}
\end{figure*}

Figure \ref{Fig:CircleNormalEdit} illustrates examples when the shape of a PN-2n-point subdivision curve can or cannot be predicted well from control normals. Similar results hold for other PN subdivision curves or surfaces. The control points and control normals within a closed polygon are first sampled from a circle and then every two initial normals are rotated by $40^\circ$, $90^\circ$ or $150^\circ$ but the remaining ones are kept unchanged. By adapting the recursive linear 2n-point subdivision scheme given in \citep{DengMa13:recursive2npointsubdivision} to PN subdivision, three PN-10-point subdivision curves are obtained from the control points and control normals. From the figure we see that the PN interpolatory subdivision curves can interpolate all control points but not necessarily the control normals. It is also noticed that the subdivision curves follow the shape of control polygon and the control normals as well when there exist local convex curves matching the end points and end normals for each edge; see Figures \ref{Fig:CircleNormalEdit}(a) and \ref{Fig:CircleNormalEdit}(b). Since every two neighboring normals in Figure \ref{Fig:CircleNormalEdit}(c) have almost opposite directions, the normals obtained by interpolatory subdivision also change rapidly and the subdivision curve even has unpredicted self-intersections. To avoid defects like self-intersections or creases, initial control normals should change smoothly or slowly along the control polygon or control mesh, or additional control points and control normals have to be added to help model curves or surfaces with more complex details.

Unlike their linear counterparts, curves and surfaces constructed by approximate PN subdivision schemes such as PN-B-spline subdivision, PN-Catmull-Clark subdivision, etc. may not lie in the convex hulls of their control points. The convex hulls of PN subdivision curves and surfaces have to be computed by taking consideration of control points and control normals together.
In contrast to stationary linear subdivision schemes by which the limit points or even the limit normals can be evaluated explicitly, the limit points of PN subdivision curves and surfaces may not be evaluated directly. They have to be evaluated iteratively at present.

\begin{figure}[htbp]
  \centering
  \subfigure[]{\includegraphics[width=0.28\linewidth]{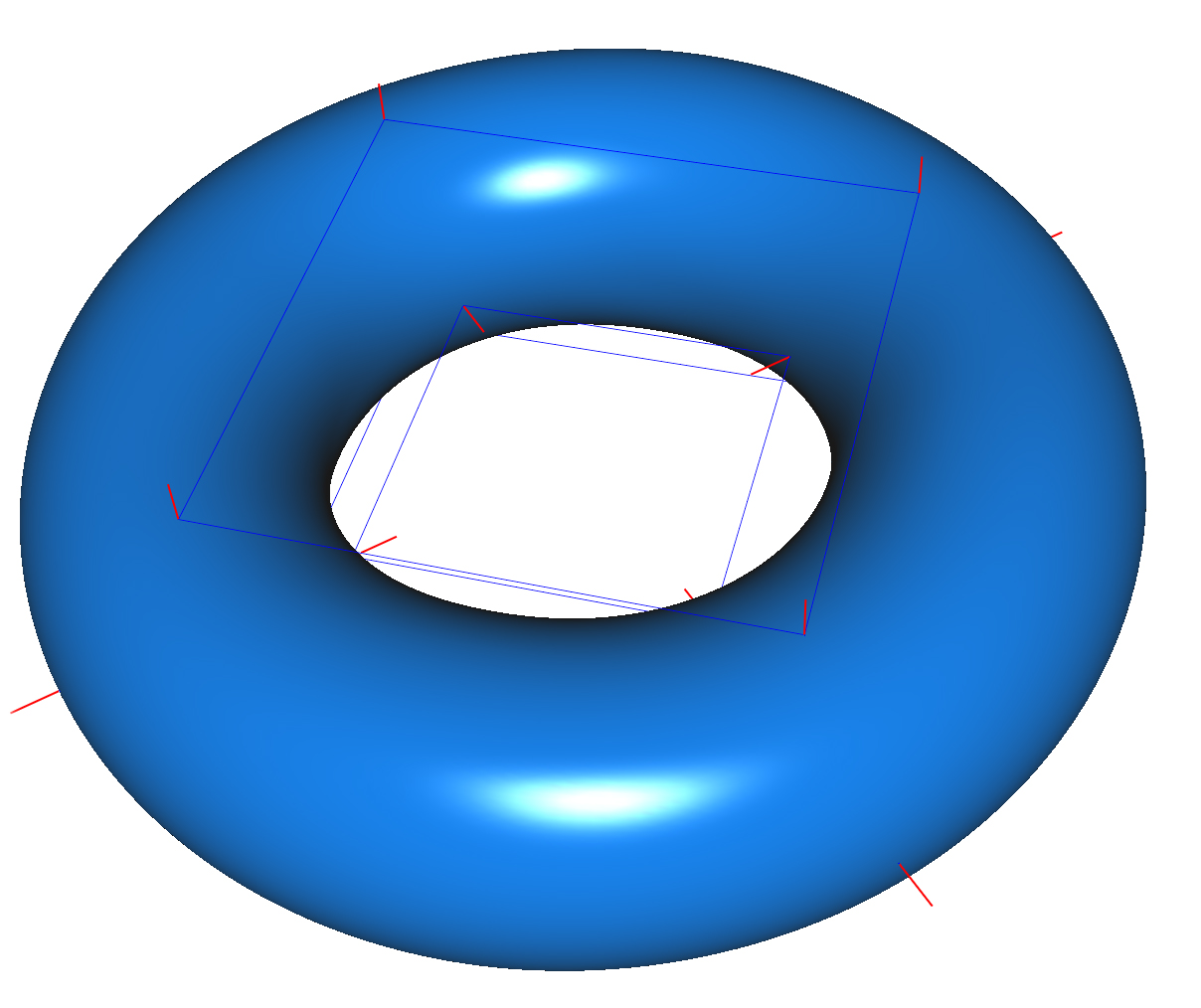}}
  \subfigure[]{\includegraphics[width=0.28\linewidth]{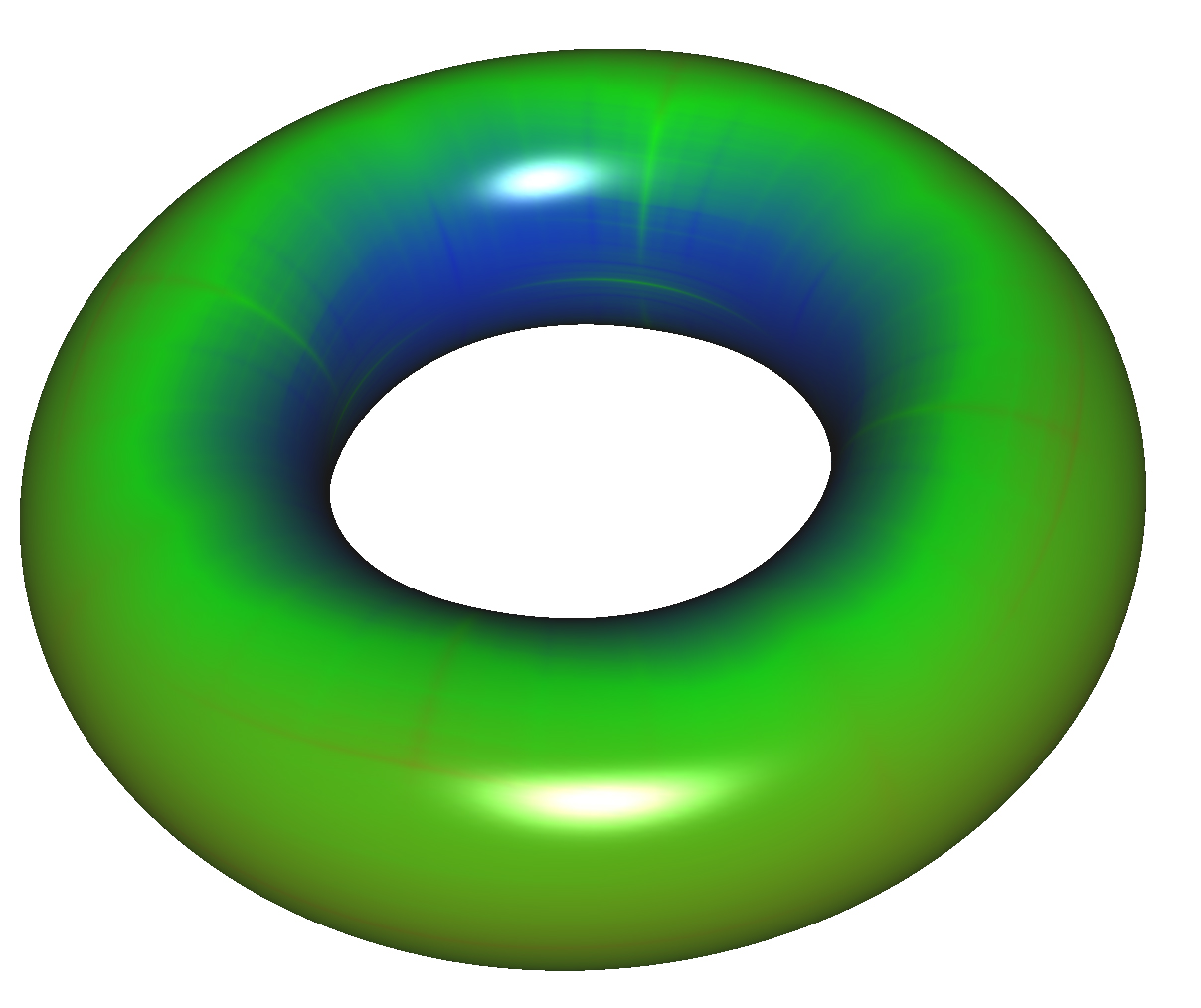}}
  \subfigure[]{\includegraphics[width=0.28\linewidth]{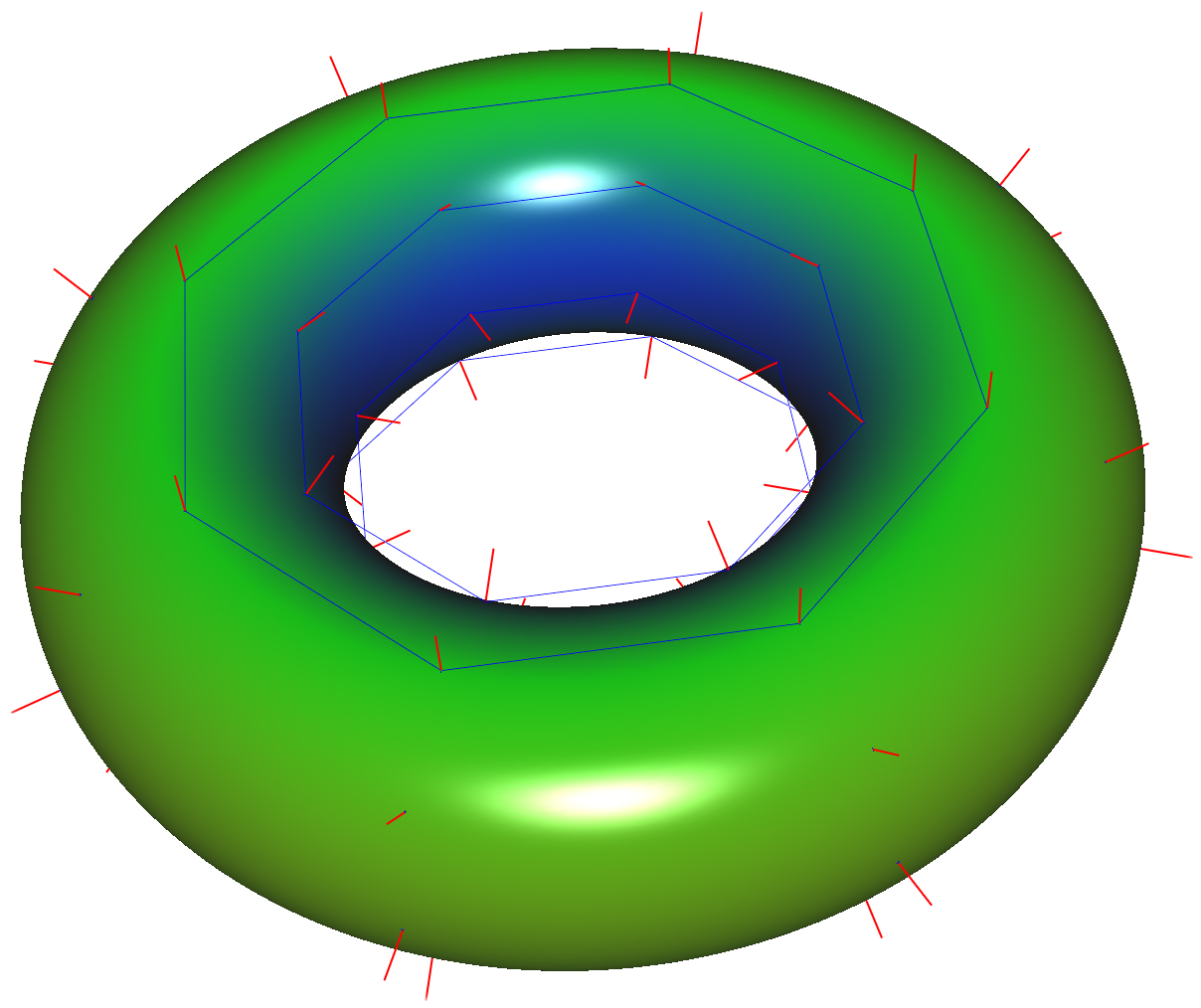}}
  \caption{Torus shape modeling by PN-Kobbelt subdivision: (a) the subdivision surface with $4\times4$ control points and control normals; (b) the Gaussian curvature plot of the surface in (a); (c) the subdivision surface with $8\times8$ control points and control normals. }
  \label{Fig:torusCircle}
\end{figure}

Though PN subdivision curves and surfaces can preserve typical shapes like circles, circular cylinders or spheres exactly, PN subdivision surfaces that generalize simple linear schemes do not preserve toruses or cyclides which are composed of families of circles. Figure \ref{Fig:torusCircle}(a) illustrates a PN-Kobbelt subdivision surface with a total of 16 control points and control normals sampled from a torus. Similar to the Dupin cyclide in Figure \ref{Fig:Complex PN subdivision surfce modeling}(c), several geodesic circles on the torus are preserved because the sampled points and normals on the surface are also the points and normals on the circles. As a result, the PN subdivision surface resembles a torus shape very well. Even so, the Gaussian curvature plot in Figure \ref{Fig:torusCircle}(b) illustrates that the subdivision surface is not exactly a torus. If the subdivision surface is constructed with more control points and control normals sampled from the torus, it resembles the original surface more accurately; see Figure \ref{Fig:torusCircle}(c).
The approximate PN subdivision surfaces may not pass through the control points, they do not preserve toruses or cyclides either.


\section{Conclusions and future work}
\label{Sec:conclusions}

In this paper we have presented novel nonlinear subdivision schemes for constructing curves and surfaces with control points and control normals. Our proposed PN subdivision schemes generalize traditional linear subdivision schemes in a simple and efficient way and the nonlinear subdivision schemes can be implemented almost in the same way as the traditional linear ones.
PN subdivision schemes can have same convergence and smoothness orders as linear subdivision schemes, and they can reproduce circles, circular cylinders and spheres. The nice properties of the proposed subdivision schemes make them powerful tools for geometric modeling.
Besides modeling curves and surfaces with local details, PN subdivision schemes are also capable of modeling fair curves and surfaces using simply chosen control normals. Particularly, PN subdivision schemes can be simple solutions to modeling fair $C^2$ subdivision surfaces with arbitrary topology control meshes by adapting linear $C^2$ subdivision schemes that only generate subdivision surfaces with flat extraordinary points.

As future work, a few interesting topics deserve further study: (a) curvature continuity analysis of PN $C^2$ subdivision surfaces with arbitrary topology control meshes; (b) computation of convex hulls or limit points of PN subdivision curves and surfaces; (c) construction of PN subdivision curves and surfaces that have prescribed normals or curvatures at selected points or curves; (d) exploring surface subdivision schemes that preserve other geometric primitives such as toruses or cyclides.

\section*{Acknowledgment}

This work was supported by the National Natural Science Foundation of China under Grant No. 12171429.

\bibliographystyle{elsarticle-harv}
\bibliography{PNsubdivision-bib}







\end{document}